\DeclareMathOperator{\Bl}{B\l} 
\def\inv{^{-1}}
\DeclareMathOperator{\del}{\partial}
\def\refp #1.{(\ref{#1})}
\newcommand{\bsl}{\begin{slide}{}}
\newcommand{\A}{\mathcal{A}}
\newcommand{\Cal}[1]{\mathcal #1}
\newcommand{\un}{{\underline{n}}}
\def\sch{\text{sch}}
\def\sbr #1.{^{[#1]}}
\def\sfl #1.{^{\lfloor #1\rfloor}}
\newcommand\strad{{\mathrm {strad}}}
\newcommand\subp [1] {_{(#1)}}
\newcommand\half{\frac{1}{2}}
\def\bt{\boxtimes}
\def\what{\widehat}
\def\inv{^{-1}}
\def\?{{\bf{??}}}
\def\Hilb{\text{Hilb}}
\def\Proj{\textrm{Proj}}
\def\H{\mathcal H}
\def\A{\Bbb A}
\def\C{\mathbb C}
\def\P{\mathbb P}
\def\R{\mathbb R}
\def\Z{\mathbb Z}
\def\ord{\text{\rm ord}}
\def\Spec{\text{\rm Spec} }
\def\ls{\vskip.25in}
\def\Q{\mathbb Q}
\def\O{\mathcal O}
\def\y{\bar{y}}
\def\bt{\boxtimes}
\def\Sym{\textrm{Sym}}
\def\id{\text{id}}
\def\m{\mathfrak m}
\def\1/2{\frac{1}{2}}
\def\I{\mathcal{ I}}
\def\simto{\stackrel{\sim}{\rightarrow}}
\def\2{{[2]}}
\def\l{\ell}
\def\nl{\newline}
\def\<{\langle}
\def\>{\rangle}
\def\2{{[2]}}
\def\l{\ell}
\def\Proj{\text{Proj}}
\def\sbr #1.{^{[#1]}}
\def\scl #1.{^{\lceil#1\rceil}}
\def\spr #1.{^{(#1)}}
\def\supp{\text{supp}}
\def\subpr#1.{_{(#1)}}
\def\beq{\begin{equation*}}
\def\eeq{\end{equation*}}
\newcommand{\td}{\tilde }
\newcommand{\beql}[2]{\begin{equation}\label{#1}#2\end{equation}}
\newcommand{\beqa}[2]{\begin{eqnarray}\label{#1}#2\end{eqnarray}}
\newcommand{\eqspl}[2]{\begin{equation}\label{#1}
\begin{split}#2\end{split}\end{equation}}
\newcommand{\eqsp}[1]{\begin{equation*}
\begin{split}#1\end{split}\end{equation*}}
\newcommand{\itemul}[1]{\item[\underline{#1}]}
\newcommand{\exseq}[3]{
0\to #1\to #2\to #3\to 0
}
\def\eex{\end{rm}\end{example}}
\newcommand\newsection[1]{\section{#1}\setcounter{equation}{0}
}
\newcommand\newsubsection[1]{\subsection{#1}\setcounter{equation}{0}}
\newtheorem*{mainthm}{Main Theorem}
\newtheorem{thm}{Theorem}[section]
\newtheorem{cor}[thm]{Corollary}
\newtheorem{ex}{Exercise}[section]
\newtheorem{con}{Conjecture.}
\newtheorem{lem}[thm]{Lemma}
\newtheorem{claim}[thm]{Claim}
\newtheorem{prop}[thm]{Proposition}
\newtheorem{propdef}[thm]{Proposition-definition}
\newtheorem{defn}[thm]{Definition}
\theoremstyle{remark}
\newtheorem{rem}[thm]{Remark}
\newtheorem{example}[thm]{Example}
\newcommand{\mm}{m,m-1}
\begin{document}
\title {Geometry and intersection theory\\
 on Hilbert schemes of families of nodal curves}

\author
{Ziv Ran}
\date {\today}
\address {Math Dept. UC Riverside  \nl
Surge Facility, Big Springs Road\\
Riverside CA 92521}
\email {ziv @ math.ucr.edu} \subjclass{14N99,
14H99}\keywords{Hilbert scheme, cycle map}

\begin{abstract} We study the relative Hilbert scheme
of
 a family of nodal (or smooth)
curves, over a base of arbitrary dimension,
 via its (birational) {\it{ cycle map}}, going to the relative
symmetric product.  We show the cycle map is the blowing up of the
discriminant locus, which consists of cycles with multiple points.
We work out the action of the blowup or 'discriminant' polarization
on some natural cycles in the Hilbert scheme, including generalized diagonals and cycles, called 'node scrolls',
 parametrizing schemes supported on singular points.
We derive an intersection calculus for Chern classes of tautological
vector bundles, which are closely related to
enumerative geometry.

\end{abstract}
 \thanks { Research Partially supported
by NSA Grant MDA904-02-1-0094 reproduction and distribution of
reprints by US government permitted.}

\maketitle
\tableofcontents

Consider a family of curves given by a flat projective morphism \beq
\pi:X\to B \eeq over an irreducible  base,
with fibres \beq X_b=\pi\inv(b), b\in B \eeq which are irreducible
nonsingular for the generic $b$ and   at worst nodal for every $b$.
For example, $X$ could be the universal family of
automorphism-free curves over the appropriate open subset
of $\overline{\mathcal M}_g$, the moduli space of Deligne-Mumford
stable curves.
Many questions in the classical projective and enumerative geometry
of this family can be naturally  phrased, and in a formal sense
solved (see for instance \cite{R}), in the context of the
{\it{relative Hilbert scheme}} \beq X\sbr m._B=\Hilb_m(X/B), \eeq
which parametrizes length-$m$ subschemes of $X$ contained in fibres
of $\pi$, and the natural {\it {tautological vector bundle}}
$\Lambda_m(E)$, living on $X\sbr m._B$, that is associated to some
vector bundle $E$ on $X$ (for example, the relative dualizing
sheaf $\omega_{X/B}$). Typically, the questions include ones involving
relative multiple points and multisecants in the family, and the
formal solutions involve Chern numbers of the tautological bundles.
Thus, turning these formal solutions into meaningful ones requires
computing the Chern numbers in question. Aside from some low-degree
cases, this problem was left open in \cite{R}. Our main purpose here
is to solve this problem in general. More than that, we shall
in fact provide a calculus to compute arbitrary polynomials
in the Chern classes of the tautological bundles. In the
'absolute' case $E=\omega_{X/B}$, the computation ultimately reduces
these polynomials to polynomials in Mumford's tautological
classes \cite{Mu} on various boundary strata of
$B$. Fortunately
these have been computed by Kontsevich \cite{Kon}
following a conjecture by Witten.
Note that the boundary naturally carries families of
\emph{pointed} curves (of lower genus), the points being node
preimages, and the tautological classes involved will
include the cotangent or $\psi$ classes on these.
\par The calculus that we develop is
fundamentally \emph{recursive} in $m$. The recursion involves
\emph{flag Hilbert schemes}, in particular the full-flag scheme
$W^m=W^m(X/B)$ studied in \cite{R}, as well as its 'flaglet'
analogue $X\sbr\mm._B$, parametrizing flags of schemes of lengths
$m,m-1$. Its starting point is an analogue for $X\sbr \mm._B$ of the
\emph{splitting principle}, established in \cite{R}. This result
(Corollary \ref{tautbun} ) expresses the total Chern class
$c(\Lambda_m(E))$, pulled back to $X\sbr\mm._B$, as a product of
$c(\Lambda_{m-1}(E))$ and a simple 'discriminant' factor involving
Chern classes of $E$ and a certain \emph{discriminant} divisor
$\Gamma\spr m.$. In order to compute polynomials in the Chern
classes of $\Lambda_m(E)$, we are thus reduced recursively to
studying the multiplication action of powers of $\Gamma\spr m.$ on
polynomials in $c(\Lambda_{m-1}(E))$.\par
 Among other things, the
Splitting Principle suggests the central role
played by $\Gamma\spr m.$ in the study of the Hilbert scheme $X\sbr
m._B$, stemming from the fact that it effectively encodes the
information contained in $X\sbr m._B$ beyond the relative symmetric
product $X\spr m._B$. The latter viewpoint is further supported
by the \emph{Blowup Theorem} \ref{blowup} that we prove below, which
says that via the cycle (or 'Hilb-to-Chow') map \beq \frak c_m:X\sbr
m._B\to X\spr m._B, \eeq the Hilbert scheme is equivalent to the
blowing up of the {\it{discriminant locus}} \beq D^m\subset X\spr
m._B, \eeq which is the Weil divisor parametrizing nonreduced
cycles, and where $\Gamma\spr m.=\frak c_m\inv(D^m)$, so that
$-\Gamma\spr m.$ can be identified with the natural $\O(1)$
polarization of the blowup. The Blowup Theorem is valid without
dimension restrictions on $B$.

\par
Given the Blowup Theorem, our intersection calculus proceeds along
the following lines suggested by the aforementioned Splitting
Principle. On each Hilbert scheme $X\sbr m._B$, we identify a
collection of geometrically-defined \emph{tautological classes}
which, together with base classes coming from $X$, additively
generate what we call the \emph{tautological module} $T^m=T^m(X/B)$.
These classes come in two main flavors.\begin{itemize}
\item The (relative) \emph{diagonal classes}: these are loci
of various codimensions defined by diagonal conditions pulled back from the relative symmetric product of $X/B$, possibly twisted by base classes; they are analogous to the 'creation operators' in Nakajima's work in the case of smooth surfaces.
\item The \emph{node classes}: these are associated to relative
nodes $\theta$ of $X/B$, hence roughly to boundary components, and come in 2 kinds: the \emph{node scrolls}, which  parametrize schemes with a length $>1$ component at $\theta$, and are $\P^1$-bundles over a relative Hilbert scheme associated
to the normalization along $\theta$ of the boundary subfamily
of $X/B$ where $\theta$ lives; and the \emph{node sections}. which are simply (intersection) products of a node scroll by the discriminant $\Gamma\spr m.$.
\end{itemize}
Then the first main component of the calculus is the \emph{Module Theorem} \ref{taut-module},
which says that $T^m$ is
(computably!) a module over the polynomial ring $\Q[\Gamma\spr m.]$. Included in this is the nontrivial assertion that
$\Q[\Gamma\spr m.]\subset T^m$; this means we can compute, recusively at least, arbitrary powers of $\Gamma\spr m.$ as $\Q$-linear
combinations of tautological classes.
\par Rounding out the story is the
\emph{Transfer Theorem} \ref{taut-tfr}, which computes the transfer (pull-push) operation on tautological classes from
$X\sbr m-1._B$ to $X\sbr m._B$ via the flaglet Hilbert scheme
$X\sbr\mm._B$, viewed as a correspondence.\par
The conjunction of the Splitting Principle, Module Theorem and
Transfer Theorem computes all polynomials in the Chern
classes, in particular the Chern numbers, of $\Lambda_m(E)$
as $\Q$-linear combinations of tautological classes on $X\sbr m._B$.\par
\par Note that if $X$ is a
smooth surface, there is a natural closed embedding \beq j_\pi\sbr
m.:X\sbr m._B\subset X\sbr m. \eeq of the relative Hilbert scheme in
the full Hilbert scheme of $X$, which is a smooth projective
$2m$-fold. There is a large literature on Hilbert schemes of smooth
surfaces and their cohomology and intersection theory, due to
Ellingsrud-Str{\o}mme, G\"ottsche, Nakajima, Lehn and others, see
\cite{EG, L, LS, N} and references therein. In particular, Lehn
\cite{L} gives a formula for the Chern classes of the tautological
bundles on the full Hilbert scheme $X\sbr m.$, from which one can
derive a formula for the analogous classes on $X\sbr m._B$ if $X$ is
a smooth surface, but this does not, to our knowledge, yield Chern
numbers (besides the top one) on $X\sbr m.,$ much less $X\sbr m._B$
(the two sets of numbers are of course different). Going from Chern
{\it classes} to Chern {\it numbers} it a matter of working out the
top-degree multiplicative structure, i.e. the intersection calculus.
 When $X$ is a surface with trivial
canonical bundle, Lehn and Sorger \cite{LS} have given a rather
involved description of the mutiplicative structure on the
cohomology of $X\sbr m.$ in all degrees, not just the top one. While
products on $X\sbr m.$ and $X\sbr m._B$ are compatible $j_\pi\sbr
m.$, it's not clear how to compute intersection products, especially
intersection { numbers} on $X\sbr m._B$ from products on $X\sbr
m.$, even in case $X$ has trivial canonical bundle. Indeed some of
our additive generators directly involve the {{fibre nodes}} of
the family $X/B$ and do not appear to come from classes on $X\sbr
m.$.  However, note that the class of surfaces with trivial
canonical bundle that fibre (via a morphism, not a rational map)
over a smooth curve is very small (and even smaller if one assumes
at least one singular fibre), so the potential intersection between
our work and \cite{LS} is very small. Besides, our calculus
works for a higher dimensions as well.
\par

The paper is organized as follows. A preliminary \S0 defined certain
combinatorial numbers to be needed later. Then Chapter 1 is devoted
to the proof of the  Blowup Theorem. Actually it is the proof,
rather than the statement, of the Theorem, that is of principal
interest to us. The proof proceeds by first constructing an explicit
model $H_m$ for the cycle map, locally over the base in a
neighborhood of a cycle of the form the point $m[p]$ where $p$ is a
relative node (this cycle is an explicit, albeit non $\Q$-Gorenstein lci-quotient
singularity on the relative symmetric product);
then, via a 'reverse engineering' process, we identify
the cycle map as the blowup of the discriminant locus (this without
advance knowledge of the ideal of the latter). In the sequel, our
main use of the Blowup Theorem is as a convenient way of gluing the
$H_n$ models, $n\leq m$ together, globally over the base $X\spr m._
B$.
\par
Chapter 2 is devoted to the definition of the Tautological Module
$T^m$ and proof of the Module Theorem. As a convenient artifice, we
first define the appropriate classes on an ordered model of the
Hilbert scheme and subsequently pass to the quotient by the
symmetrization map.\par In Chapter 3 we study the geometry of the
flaglet Hilbert scheme $X\sbr\mm._B$, largely referring to
\cite{Hilb}, and derive properties of the transfer operation. We
then review the splitting principle form\cite{R}, which enables us
to complete our claculus.\par
For a detailed sketch of the proof of
Theorem 1 (though without all the details), and some other
applications, see \cite{R2}.

\subsection*{Acknowledgements} I thank Mirel Caibar  and Ethan Cotteril for valuable comments. A preliminary version of some
of these results was presented at conferences in Siena, Italy and
Hsinchu, Taiwan, in June 2004, and KIAS, Seoul, in September 2005,
and I thank the organizers of these conferences for this
opportunity.

\setcounter{section}{-1}

\newsection{Preliminaries} \subsection{Staircases} We define a combinatorial
function that will be important in computations to follow. Denote by
$Q$ the closed 1st quadrant in the real $(x,y)$ plane, considered as
an additive cone. We consider an integral {\it{staircase}} in $Q$.
Such a staircase is determined by a sequence of points
\beq (0,y_m),(x_1,y_m),(x_1,y_{m-1}),(x_2,y_{m-1}),...,(x_m,y_1),
(x_m,0) \eeq where $0<x_1<...<x_m, 0<y_1<...<y_m,$ are integers, and
consists of the polygon
\beq B=(-\infty,x_1)\times \{y_m\}\bigcup \{x_1\}\times
[y_{m-1},y_m]\bigcup [x_1,x_2]\times \{y_{m-1}\}\bigcup...\bigcup
\{x_m\}\times (-\infty, y_1]. \eeq The \emph{upper region} of $B$ is by
definition \beq R=B+Q=\{(b_1+u_1, b_2+u_2): (b_1,b_2)\in B, u_1,u_2\geq
0\} \eeq

We call such $R$ a \emph{special infinite polygon}. The closure of
the complement \beq S=R^c:=\overline{Q\setminus R}\subset Q \eeq has
finite (integer) area and will be called a \emph{special finite
polygon}; in fact the area of $S$ coincides with the number of
integral points in $S$ that are $Q$-interior, i.e. not in $R$; these
are precisely the integer points $(a,b)$ such that $[a,a+1]\times [b,b+1]\subset S$.\par
Note that we may associate to $R$ a monomial
ideal $\I(R)<\C[x,y]$ generated by the monomials $x^ay^b$ such that
$(a,b)\in R\cap Q$. The area of $S$ then coincides with
$\dim_\C(\C[x,y]/\I(R)).$ It is also possible to think of $S$ as a
partition or Young tableau, with $x_1$ many blocks of size $y_m$,
..., $(x_{i+1}-x_i)$ many blocks of size $y_{m-i}...$\par
Fixing a
natural number $m$, we define the \emph{basic special finite polygon associated to
$m$} as
\beq S_m=\bigcup_{i=1}^m
[0,\binom{m-i+1}{2}]\times[0,\binom{i+1}{2}]. \eeq It has area
\beq \alpha_m=\sum\limits_{i=1}^{m-1} i\binom{m+1-i}{2}
=\frac{m(m+2)(m^2-1)}{24} \eeq and associated special infinite polygon
denoted $R_m.$ Now for each integer $j=1,...,m-1$ we define a
special infinite polygon $R_{m,j}$ as follows. Set
\beq P_j=(m-j, -j)\in\R^2, \eeq
\beq R_{m,j}= R_m\cup(R_m+P_j)\cup[0,\infty)\times[j,\infty)
 \eeq (where $R_m+P_j$ denotes the translate of $R_m$ by $P_j$ in
$\R^2$).
We also define $P_j^-, P_j^+$ analogously with $m-j$
replaced by $m-j-1$ and $m-j+1$, respectively.
Then let $S_{m,j}=R_{m,j}^c,$
\beq \beta_{m,j}={\text {area}}(S_{m,j}), \eeq
\beq \beta_m=\sum_{j=1}^{m-1}\beta_{m,j}. \eeq
We also define $\beta_{m,j}^\pm$ based on $P_j^+, P_j^-$ respectively.
It is easy to see that
\beql{}{ \beta_{m,1}=\binom{m}{2},
\beta_{m,2}=\binom{m}{2}+\binom{m-1}{2}-1, \beta_{m,j}=\beta_{m,m-j}},
\beql{}{\beta^-_{m,1}=\binom{m-1}{2}-1,} but
otherwise we don't know a closed-form formula for these numbers in
general. A few small values are
\beq \beta_{2,1}=\beta_2=1 \eeq
\beq \vec{\beta}_{3}=(3,3), \beta_3=6 \eeq \beq\vec{\beta}_{4}=(6,8,6),
\beta_4=20\eeq  \beq\vec{\beta}_{5}=(10, 15,15,10), \beta_5=50 \eeq
\beq \vec{\beta}_{6}=(15,24,27,24,15), \beta_6=105. \eeq
For example, for $m=5$ the relevant finite polygons, viewed as
partitions, are
\beq S_{5,1}=1^{10}, S_{5,2}=2^61^3, S_{5,3}=3^5, S_{5,4}=3^22^2. \eeq Set
\beq J_m=\I(R_m). \eeq
For an interpretation of the $\beta_{m,j}$ as exceptional
multiplicities associated to the blowup of the monomial ideal $J_m$,
see \S 1.6 below. 
\subsection{Products, diagonals, partitions}
\subsubsection{Partitions, distributions and shapes}
\label{partitions}
The intersection calculus we aim to develop is couched
in terms certain diagonal-like loci on products,
defined in the general case in terms of partitions. To facilitate working with these loci systematically, we now establish some conventions, notations and simple remarks
related to partitions.\par
Given a natural number $m$,
a \emph{partition} in $[1,m]$ for us is a sequence of pairwise disjoint index sets
$$I_1,...,I_r\subset [1,m]\cap \Z.$$
Elsewhere this is sometimes called a 'labelled partition'.
 Two partitions are said to
be \emph{equivalent} if they differ only by singleton blocks and by renumbering of
blocks.
A partition in $[1,m]$ is said to be \emph{full} if $\bigcup\limits_\l I_\l=\{1,...,m\}$, \emph{irredundant} if it has no singleton blocks. Clearly any partition is equivalent to a full one with nonincreasing block cardinalities, and a to a irredundant one, again with nonincreasing block
cardinalities.\par
We partially order the partitions by declaring that $\Phi_1\prec\Phi_2$ if every non-singleton block of $\Phi_1$
is a union of (possibly singleton) blocks of $\Phi_2$. Thus
$\Phi_1$ and $\Phi_2$ are equivalent iff $\Phi_1
{\prec}\Phi_2$ and $\Phi_2
{\prec}\Phi_1$ .\par
Now by a \emph{length distribution} (elsewhere called a partition)
we mean a function $\un$
of finite support from the positive integers to the nonnegative integers. The \emph{total length} of $\un$ is by definition
$\sum\limits_\l\un(\l).$ To any partition $(I.)$ there is an associated length distribution
$\un=(|I.|)$, defined by $\un(\l)=|I_\l|, \forall \l$.
In light of these, there are natural notions of fullness
(with respect to $m$), precedence and equivalence for distributions.
\par

Now given a distribution $\un$, let $n_1> n_2>..,n_r\geq 1$
be its set of distinct nonzero values, i.e. the \emph{value sequence}, and
$$\mu_\l=|\{\l:\un(\l)=n\}|$$
be its \emph{frequency
sequence} (also called sometimes frequency function using
the notation $\mu_{\un}$).
The \emph{shape} of $\un$ is the (obviously finite)
sequence
\beql{}{(n.\spr{\mu.}.)=(n\spr\mu(n).)=(...,m^{\spr\mu_\un(m).},...,
1^{\spr\mu\subp{n.}(1).})=
(n_1^{\spr\mu_1.},...,n_r^{\spr\mu_r.})}
where the exponents are taken formally with respect to the juxtaposition operation (and all terms with
zero exponent in the LHS are omitted).  The shape of $\un$ obviously determines
$\un$, and indeed distribution and shape are equivalent data,
each preferable in different situations, and will be used
interchangeably. A distribution is in \emph{loose form} (resp. \emph{shape form}) if it is written as $(n_1\geq n_2\geq...)$
(resp. $(n_1\spr\mu_1.,...)$)
where each $n_i$ is the $i$-th member
(resp. the $i$-th distinct member) of it, in
nonincreasing (resp. strictly decreasing)
order.\par
The shape of a partition $(I.)$, i.e. the shape
of its distribution, is
also written $(|I.|)$ and has
the form $(n_1^{\mu\subp{I.}(n_1)},...,n_r^{\mu\subp{I.}(n_r)})$
where the $n_\l,\mu_\l=\mu\subp{I.}(n_\l)$ are precisely the heights and widths
 of the adjacent rectangles forming the Young tableau for
$(I.)$.  Thus we may think of a shape (or a distribution) as an
unspecified partition having the given shape or distribution.\par
Next we define some natural operations on distributions that we will need. If $(n'.), (n".)$
are distributions, we can define a new distribution denoted
$(n'.)\coprod (n".)$ by the condition that its frequency function
$\mu$ is the sum of that of $(n'.)$ and $(n".)$, i.e. \beql{}{
\mu_{(n'.)\coprod (n".)}=\mu_{(n'.)}+\mu_{(n".)}. } This corresponds
to the operation of disjoint union of partitions. For a distribution
$(n.)$ and an integer $k$, we define a distribution $(n.)\setminus
k$ by \beql{}{\mu_{(n.)\setminus k}=\mu_{(n.)}-\mathbf 1_k} where
corresponds to removing a block of size $k$ from a partition. By
convention, $(n.)\setminus k=\emptyset$ if $(n.)$ has no block of
size $k$; more generally, a distribution of shape is considered
empty of the corresponding frequency function has a negative
value.\par We define another operation $(n.)^{-\l}$ as follows: let
$n_1>...>n_\l>...$ be the distinct block sizes occurring in $(n.)$.
Then \beql{}{ (n.)^{-\l}=(n.)\setminus n_\l\coprod (n_\l+1)} which
corresponds to removing a block of size $n_\l$ and replacing it by
one of size $n_\l+1$.
 Also,
\beql{}{u_{j,\l}(n.)=(n.)\setminus n_j\setminus n_\l\coprod
(n_j+n_\l) } which corresponds to uniting an $n_j$ and an $n_\l$
block.\par
 Define \emph{multidistribution data} $\phi$ as
$(n_\l:\un'|\un")$ where $ (\un')\coprod (\un")=(\un)\setminus
n_\l$, where $(\un)$ is a (usually full) distribution on $[1,m]$
(this notation, motivated by the partition case, indicates removing
a single block of size $n_\l$ and breaking up the remainder into $x$
type and $y$ type).
\par
\par

\subsubsection{Products and diagonal loci}\label{prod-diag-loci}
Now given any set $X$ and partition $(I.)$, we define
the diagonal locus
$X^{(I.)}$ as the set of 'locally constant', i.e. constant on blocks, functions $\bigcup I.\to X$.
This is called the (ordered) diagonal locus corresponding to $(I.)$.
 If $X$ is endowed with a
map to $B$, there is an analogous relative notion $X^{(I.)}_B$
referring to functions such that the composite
$\bigcup I.\to B$ is constant (the extension from the
absolute to the relative case involves no new ideas and
will not be emphasized in what follows). The diagonal locus $X^{(I.)}$
is a subset (closed subscheme, if $X$ is a separated scheme)
of the cartesian product $X^m$, and can be identified
(isomorphically, if $X$ is a scheme), in terms of the above shape notation, with
$\prod\limits_{\l=1}^r X^{ \mu_\l}.$ \par
Now there is an unordered analogue of $X^{(I.)}$,
 depending only on the shape of $I.$ and denoted $X^{\un}$ or $X^{(n.^{\mu.})}$ or $X^{(\coprod n^{\mu(n)})}$.
 This is called a symmetric diagonal locus.
 It coincides with the image of $X^{(I.)}$ in the symmetric product $X^{(m)}$, and can in turn be
identified with $\prod\limits_{\l=1}^rX\spr{\mu_\l}.
=\prod\limits_{n=\infty}^1X\spr\mu(n).$.
Note the natural 'diagonal' embedding
\beql{delta-n-ell}{\delta_{n.,\l}:X\spr {n.^{-\l}}.\to X\spr {n.}.\times X}
\par
Next, multidistribution data have to do with a situation where
X comes equipped
with a representation $X=X'\cup X"$ ;
the case $X'=X"$ will require special treatment.
The corresponding (symmetric) diagonal locus
is
\beql{}{X^\phi=\prod\limits_\l (X')\spr{\mu'_\l}.\times
\prod\limits_\l (X")\spr{\mu"_\l}.}
where $\un'=(n'.^{\mu.'}), \un"=(n".^{\mu."})$.
We view $X^\phi$ as embedded in $(X')^{(n')}\times (X")^{(n")}$,
where $n'=|\un'|, n"=|\un"|$, the embedding coming from
by the various diagonal embeddings $X'\to (X')^{(n.)}$
which induce $(X')\spr{\mu.}.\to ((X')\spr {n'.}.)
\spr{\mu.}.$;
similarly for $X"$. Note the natural map $X^\phi\to X\spr n'+n".$.

If $X'=X"$, we take
$n"=\emptyset$.\par
Now, an obvious issue that comes up  is
to determine the degree of the symmetrization map
$X^{(I.)}\to X^{(|I.|)}$.
To this end,
let $I.$ be a full partition
on $[1,m]$ with length distribution $(n.)$. Let
$$in(I.)\lhd out(I.)(<\frak S_m)$$
denote the groups of permutations of $\{1,...,m\}$
 taking each block
of $I.$ to the same  (resp. to some) block. Then
$$aut(I.)=out(I.)/in(I.)$$
 is the 'automorphism group' of $(I.)$ (block permutations
 induced by elements of $\frak S_m$). Let
$$a(I.)=|aut(I.)|.$$
Then $a(I.)$ is easily computed: if $(n.^{\mu.})$ is the shape
of $(I.)$,
we have
\beql{aI1}{a(I.)=\prod(\mu_\l)!\ \ \
.}
Clearly $a(I.)$ depends only on the length distribution $\un=(|I.|)$, so we may (abusively) write $a(\un)$ for $a(I.)$.
Significant for our purposes, but easy to verify, is the following \begin{lem} If $I.$ is full and $\un=(|I.|)$ , then
the mapping degree
\beql{aI2}{\deg(X^{(I.)}\to X^{(|I.|)})
=a(\un).}\qed
\end{lem}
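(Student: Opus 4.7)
The plan is to unpack both sides of the asserted equality and recognize the symmetrization morphism $X^{(I.)}\to X^{(|I.|)}$ as a quotient by the natural action of $aut(I.)$, after which the degree computation reduces to counting the order of a finite group that acts freely on a dense open subset.

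First I would identify $X^{(I.)}$ explicitly. Since $I.$ is full with blocks $I_1,\ldots,I_r$, a locally constant function $\bigcup I. \to X$ is determined by (and determines) its value on each block, giving $X^{(I.)}\cong X^r$. Grouping blocks by common size, and writing the shape of $\un$ as $(n.^{\mu.})$, this factors as $\prod_\ell X^{\mu_\ell}$. On the target side, the definitions in \S\ref{prod-diag-loci} give $X^{(|I.|)} = X^{\un} = \prod_\ell X\spr{\mu_\ell}.$ Under these identifications the map $X^{(I.)}\to X^{(|I.|)}$ becomes the Cartesian product, over $\ell$, of the standard symmetrization quotients $X^{\mu_\ell}\to X\spr{\mu_\ell}.$, each of degree $\mu_\ell!$. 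This factorization is forced by how $X^{(|I.|)}$ arises as the image of $X^{(I.)}$ in $X^{(m)}$: within a fixed block size $n_\ell$ the only remaining ambiguity in recovering the unordered data is the permutation of the $\mu_\ell$ equal-size blocks.

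Next I would match this combinatorics to $aut(I.)$. An element $\sigma\in out(I.)$ sends each block to some block, and modulo $in(I.)$ (permutations stabilizing each block setwise) is encoded by a block permutation; since block size must be preserved, we obtain $aut(I.)\cong\prod_\ell S_{\mu_\ell}$, which is precisely the deck group of the symmetrization map above. On the dense open where the coordinates within each $X^{\mu_\ell}$ factor are pairwise distinct, the action is free, so the degree of the quotient equals the group order $\prod \mu_\ell!$, which by \eqref{aI1} is $a(\un)$.

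The argument is essentially diagrammatic and presents no real obstacle; the main care is to set up the factorizations $X^{(I.)}\cong\prod_\ell X^{\mu_\ell}$ and $X^{(|I.|)}\cong\prod_\ell X\spr{\mu_\ell}.$ compatibly with the morphism, so that the degree becomes multiplicative in $\ell$ and reduces to the classical computation for a single symmetric power. Note that fullness of $I.$ is used only to make the identifications clean: singleton blocks would introduce trivial $X$-factors on both sides with no effect on the degree, consistent with the observation that $a(I.)$ depends only on the equivalence class of $I.$
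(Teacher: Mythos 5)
Your argument is correct, and it is essentially the verification the paper has in mind: the lemma is stated with its proof omitted as "easy to verify," and the intended check is exactly your factorization of the symmetrization map into the quotients $X^{\mu_\ell}\to X^{(\mu_\ell)}$ by the groups $S_{\mu_\ell}$ permuting equal-size blocks, acting freely on a dense open set, giving degree $\prod_\ell \mu_\ell! = a(\un)$. Nothing further is needed.
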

\subsubsection{Cohomology and base classes}
\label{base-classes}
Finally, we briefly discuss cohomology. If $X$ is a reasonable
space (topological, scheme, etc.) and $H^.$ is a reasonable
$\Q$-valued cohomology theory (singular, Chow, etc.), then
we have a homomorphism
\beql{}{\Sym^\mu(H^.(X))\to H^.(X\spr \mu.),
}
where $X\spr\mu.$ is the appropriate symmetric product,
whose image is called the ring of \emph{base classes}
on $X\spr\mu.$ and denoted $B(X\spr \mu.)$ (abusively so,
of course, since it depends on $X$).
Similarly, in the relative situation $X/B$, we  get
a ring homomorphism ('symmetrization')
\beql{}{\sigma:\Sym^\mu(H^.(X))\to H^.(X\spr \mu._B),}
whose image is called the ring of {base classes}
on $X\spr\mu._B$ and denoted $B(X\spr \mu._B)$.
The ring  $B(X\spr\mu.)$ is clearly generated
by images, called \emph{polyclasses} of classes of the form
\beql{}{X\spr \mu.[\alpha\spr\lambda.]:=
\sigma(\alpha^\lambda), \lambda\leq\mu.}
When $\lambda=1$, the superscript will be omitted.
We also write
\beql{}{X\spr \mu.[\alpha_1\cdots
\alpha_\lambda]:=
\sigma(\alpha_1\cdots\alpha_\lambda),}
this being simply
$$\frac{1}{\lambda !}\pi_*
(\sum\limits_{s\in\mathfrak S_\lambda}
\alpha_{s(1)}\bt \cdots\alpha_{s(\lambda)}\bt 1
\cdots \bt 1)$$
where $\pi:X^\mu\to X\spr\mu.$ is the natural
map and $\bt$ refers to exterior cup product (restricted
on the fibred product).
Thus, \[ X\spr \mu.[\alpha\spr\lambda.]=
X\spr \mu.[\alpha\spr\lambda.1\spr{\mu-\lambda}.].\]
 We will primarily use the natural
analogues of these constructions in the relative case
$X/B$. Note that the polyclasses are multiplied according to the rule
\beql{poly-mult}{
X\spr \mu.[\alpha_1\spr\lambda_1.]X\spr \mu.[\alpha_2\spr\lambda_2.]=\sum\limits_{\nu=0}
^{\min(\lambda_1, \lambda_2)} X\spr \mu.[\alpha_1\spr\lambda_1-\nu.\cdot\alpha_2\spr
\lambda_2-\nu.\cdot(\alpha_1\substack{\\ .\\ X}\alpha_2)\spr\nu.]
}
\par
Given spaces $X_1,...,X_r$ as above, we write
$$(X_1\times...\times X_r)[\alpha_1,...,
\alpha_r]=\alpha_1\bt\cdots\bt\alpha_r.$$
These are called base classes on the Cartesian product.
In particular, given a distribution $(n.^{\mu.})$, we obtain
in this way classes $X\spr {n.^{\mu.}}.
[\alpha_1\spr \lambda_1.,...,\alpha_r\spr \lambda_r.]$;
these are again called \emph{base classes} on the
symmetric diagonal locus $X\spr {n.^{\mu.}}.\simeq\prod\limits_{i=1}^rX\spr\mu(n_i).$.
\par
We will require some operations on these base classes.
Consider a base class $\gamma$ which we write in the form
\beql{}{\gamma=X\spr {n.^{\mu.}}.[\coprod \alpha(n)^{(\lambda(n))}].}
Then generally, given any (co)homology vector $\beta.=
(\beta_1,...,\beta_r)$, with each $\beta_i$ a class on
$X_i$, we can define a
new (co)homology
class by
\beql{}{\gamma\star_t[\beta.]=
\gamma\cup s_t(\beta)
}
where $s_t$ is the $t$-th elementary symmetric function
(in terms of the exterior $\bt$ product, where missing factors
are deemed $=1$). A particular case
we will use is
\beql{star1}{X\spr {n.^{\mu.}}.
[\alpha_1\spr \lambda_1.,...,\alpha_r\spr \lambda_r.]
\star_1[\omega_1,...,\omega_r]=
\sum\limits_{\l=1}^r
X\spr{n.^{\mu.}}.[\alpha.\spr{\lambda.}.]\cup
p_\l^*(\omega_\l)
}
For a single class $\omega_\l$ on $X$, set
\beql{}{{X\spr {n.^{\mu.}}.
[\alpha_1\spr \lambda_1.,...,\alpha_r\spr \lambda_r.]
\star_{1,\l}[\omega_\l]=
X\spr{n.^{\mu.}}.[\alpha.\spr{\lambda.}.]\cup
p_\l^*(\omega_\l)
}}\par
Similar notations will be used in the case of multipartition
data, e.g.
\beqa{}{X^{(n:n'.^{(\mu'.)}|n".^{(\mu".)})}
[(\alpha_1)^{(\lambda'_1)},...
;(\alpha"_1)^{(\lambda"_1)},...]=\\ \nonumber
(X')^{(n'.^{(\mu'.)})}
[(\alpha_1)^{(\lambda'_1)},...]\times
(X")^{(n".^{(\mu".)})}
[(\alpha"_1)^{(\lambda"_1)},...]}
Again, all these constructions also have natural analogues in the relative situation.
\par Also, there are analogues of these constructions with
$H^.(X)$ replaced by any $\Q$-subalgebra of itself.
For example, $X$ may be a surface fibred over a smooth curve $B$ and endowed
with a polarization $L$, in which case we will usually
consider the subalgebra
\beql{}{K^.(X/B)=H^.(B)[ L,\omega_{X/B}].}
\subsubsection{Canonical class and half-discriminant} Let $X/B$
be a family of smooth curves and \[ D^m=X\spr
2,1^{m-2}._B\subset X\spr m._B\]
the big diagonal or discriminant. This is a reduced Cartier divisor, defined locally by the discriminant function which is
a polynomial in the elementary symmetric functions of a local parameter of $X/B$. The associated line bundle $\O(D^m)$ is always divisible by 2 as line bundle. One way to see this
is to note that $D^m$ is the branch locus of a flat double cover
\eqspl{}{\epsilon:X^{\{m\}}_B\to X\spr m._B
}
where $X^{\{m\}}_B=X^m_B/\frak A_m$ is the 'orientation product', generically parametrizing an $m$-tuple together with
an orientation.
 An explicit 'half' of $\O(D^m)$ is given by
\eqspl{}{h=X\spr m._B[\omega_{X/B}]\otimes \omega_{X\spr m._B/B}\inv
} Indeed $\epsilon^*h$ is precisely the (reduced) ramification
divisor of $\epsilon$, which is half of $\epsilon^*D^m$. In particular, note that $\epsilon^*h$ is effective.
We also have \eqspl{}{\epsilon_*\O_{X^{\{m\}}_B}=\O_{X\spr m._B}\oplus h\inv.
}

\section{The cycle map as blowup}\newsubsection{Set-up} Let
\beq \pi : X\to B \eeq be a family of nodal (or smooth) curves
with $X, B$ smooth. Let $X^m_B,  X\spr m._B$, respectively, denote
the $m$th Cartesian and symmetric fibre products of $X$ relative
to $B$. Thus, there is a natural map
\beq \omega_m:X^m_B\to  X\spr m._B \eeq which realizes its target
as the quotient of its source under the permutation action of the
symmetric group $\frak S_n.$ Let \beq \Hilb_m(X/B)=X\sbr m._B \eeq
denote the relative Hilbert scheme paramerizing length-$m$
subschemes of fibres of $\pi$, and
\beq \frak c= \frak c_m :X\sbr m._B\to  X\spr m._B \eeq the natural
{\it{cycle map}} (cf.\cite{ang}). Let $D^m\subset X\spr m._B$ denote
the discriminant locus or 'big diagonal', consisting of cycles
supported on $<m$ points (endowed with the reduced scheme
structure). Clearly, $D^m$ is a prime Weil divisor on $ X\spr
m._B$, birational to $X\times_B\Sym^{m-2}(X/B)$, though it is less
clear what the defining equations of $D^m$ on $ X\spr m._B$ are
near singular points. The main purpose of Part 1 is to prove
\begin{thm}[Blowup Theorem]\label{blowup} The cycle map \beq \frak c_m:X\sbr m._B\to
 X\spr m._B \eeq is the blow-up of
$D^m\subset X\spr m._B$.\end{thm}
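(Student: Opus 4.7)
The plan is to reduce the statement to an explicit local calculation. Since $\frak c_m$ is birational and the assertion is local on the target, it suffices to work in an étale (or formal) neighbourhood of each closed point of $X\spr m._B$. Over the open locus where the supporting cycle avoids the relative nodes of $X/B$, each support point lies in a smooth curve fibre and the Hilbert--Chow morphism is already an isomorphism there; nothing to prove. A relative fibre-product decomposition further reduces cycles whose support meets the nodes in a ``separated'' way to smaller instances, so the essential case is that of a cycle $m[p]$ concentrated entirely at a single relative node.

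Work formally at such a node. Choose coordinates so that $X$ is cut out by $xy = f(t)$, where $t$ denotes local parameters on $B$ and $f(0) = 0$. Length-$m$ subschemes of $X/B$ supported at $p$ in the central fibre correspond to colength-$m$ ideals in $k[[x,y]]/(xy)$, and the monomial such ideals are enumerated by the staircases of \S 0.1. The plan is to construct an explicit smooth scheme $H_m$ over the formal base, with one affine chart associated to each such monomial ideal, each chart parameterising the deformations of that ideal as an ideal of $\{xy = f(t)\}$. Once built, $H_m$ carries a tautological flat family of length-$m$ subschemes of $X$ and thus induces a morphism $H_m \to X\sbr m._B$ by universality of the Hilbert scheme; a chart-by-chart check and a dimension count then show that this is a local isomorphism onto the Hilbert scheme near $m[p]$.

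Having identified the local Hilbert scheme with the explicit model $H_m$, the next step is to ``reverse engineer'' the blowup description. On each chart of $H_m$, the preimage $\Gamma\spr m.$ of $D^m$ is cut out by a single discriminant-type equation in the chart coordinates, hence is Cartier, and the associated line bundle $\O(-\Gamma\spr m.)$ is seen to be relatively ample for $\phi := \frak c_m|_{H_m}$ over the local model $Z$ of $X\spr m._B$. It follows by the usual $\Proj$ yoga that $H_m = \Proj_Z \bigoplus_n \phi_*\O(-n\Gamma\spr m.)$, which is the blowup of $Z$ at the ideal $J := \phi_*\O(-\Gamma\spr m.)$ (up to integral closure). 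To complete the proof one identifies $J$ with the ideal of the Weil divisor $D^m$ by a direct computation on charts. The virtue of this strategy is that the ideal of $D^m$, which is opaque from the side of the singular $X\spr m._B$, is recovered a posteriori from the output of the Hilbert-side construction.

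The main obstacle is the second paragraph: the chart-by-chart construction of $H_m$ and the verification that each chart is smooth and parameterises all requisite deformations of the corresponding monomial ideal. The point $m[p]$ is a non-$\Q$-Gorenstein lci quotient singularity of $X\spr m._B$ whose ideal structure cannot be analysed directly, forcing one to work entirely on the Hilbert side. The combinatorics of \S 0.1 (the staircases $S_{m,j}$ and their multiplicities $\beta_{m,j}$) provide the necessary bookkeeping for the charts, their parameter counts, and the explicit gluings between them. Once the local model is established, the globalisation follows by gluing the various $H_n$ for $n \le m$ over overlapping strata of $X\spr m._B$, yielding the global Blowup Theorem with no dimension restriction on $B$.
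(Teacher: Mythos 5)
Your skeleton — reduce to a formal neighbourhood of the cycle $mp$ at a relative node, build an explicit smooth local model of the Hilbert scheme carrying a universal family, then ``reverse-engineer'' the blowup structure — is the same as the paper's, but the step where the theorem's actual content lies has a genuine gap. Knowing that the preimage of $D^m$ is Cartier on the local model and that the corresponding line bundle is relatively ample only gives $H_m\simeq\Proj_Z\bigoplus_n\phi_*\O(-n\,\frak c_m^*D^m)$; to conclude that $H_m$ is the blowup of $\I_{D^m}$ itself you must show this algebra is the Rees algebra of $\I_{D^m}$, i.e.\ that $\I_{D^m}\cdot\O_{H_m}$ is exactly the invertible ideal in question and that the induced map to $B_{D^m}X\spr m._B$ is a closed immersion. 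Your hedge ``up to integral closure'' concedes precisely the point at issue: blowups of integrally equivalent ideals need not coincide (e.g.\ $(x^2,y^2)$ versus $\mathfrak m^2$ in the plane), and the theorem asserts the cycle map is the blowup of $D^m$, not of some ideal with the same normalized blowup. Moreover the ``direct computation on charts'' of the ideal of $D^m$ that you defer to is exactly what is inaccessible on the symmetric product near $mp$ — the paper stresses that the equations of $D^m$ there are not evident a priori, and in fact obtains them ($\eta_{i,j}$) only as a corollary of the theorem. The missing device is the passage to the ordered Cartesian cover, where $\omega_m^*(D^m)=2OD^m$ and where one has explicit candidates: the Van der Monde $G_1=v^m_x$ and its mixed companions $G_i=\pm\det V^m_i$. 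One then proves by an order-of-vanishing computation along the components $\Theta_I$ of the special fibre (Lemma \ref{Gi-generates-on-Ui}) that $G_i$ generates $\O(-O\Gamma\spr m.)$ on the chart $\tilde U_i$, hence that $G_1,\dots,G_m$ generate $\I_{OD^m}$ (Corollary \ref{Gs-generate-bigdiag}), and observes that the $G_i$ correspond to the projective coordinates $Z_i$, which makes the induced map to the blowup a closed immersion as well as proper and birational. Without a substitute for these explicit generators, your Proj argument identifies $H_m$ as the blowup of \emph{something}, but not of $D^m$.

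Two smaller points. First, the combinatorics of \S 0.1 is misattributed: the colength-$m$ monomial ideals of $k[[x,y]]/(xy)$ are simply $(x^{m-i+1},y^i)$, $i=1,\dots,m$, indexing the charts $U_i$ of the local model; the staircases $S_{m,j}$ and the numbers $\beta_{m,j}$ describe instead the ideal $J_m$ cut on the small diagonal by the discriminant and enter only later, as exceptional multiplicities in the intersection calculus of Chapter 2 (Proposition \ref{compute-excdic}) — they supply no ``bookkeeping for the charts and gluings'' in the proof of Theorem \ref{blowup}. Second, verifying that your chart-by-chart family exhausts the Hilbert scheme locally is not a mere dimension count: the paper imports the smoothness of $\tilde H$ and the fact that the classifying map $\Phi$ is an isomorphism from \cite{Hilb} (Theorem \ref{Hilb-local}); your proposal needs either to cite that or to reprove it, e.g.\ via a tangent-space/flatness argument on each chart.
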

The proof will proceed through an explicit construction of the map
$\frak c_m$, source included, locally over $X\spr m._B$. This construction will play a crucial role in the entire paper and therefore will be studied in greater detail than is required solely to prove Theorem \ref{blowup}.

 \newsubsection{Preliminary
reductions} To begin with, we reduce the Theorem to a local
statement over a neighborhood of a 1-point cycle $mp\in X\spr m._B$
where $p\in X$ is a node of $\pi\inv(\pi(p))$. Set
\beq \Gamma\spr m.=\frak c_m\inv(D^m)\subset X\sbr m._B.\eeq
It was shown in \cite{R}, and will be reviewed below, that $\frak
c_m$ is a small birational map (with fibres of dimension $\leq
\min(m/2, \max\{|\text{sing}(X_b)|, b\in B\})$), all its fibres (aka
punctual Hilbert schemes or products thereof) are reduced, and that
$X\sbr m._B$ is smooth.\begin{lem}\label{purity} $\Gamma\spr m.$ has no
embedded components.\end{lem}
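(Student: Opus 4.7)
Since $X\sbr m._B$ is smooth, hence Cohen--Macaulay, any effective Cartier divisor on it is Cohen--Macaulay of pure codimension one and in particular has no embedded components. The plan is therefore to exhibit $\Gamma\spr m.$ \emph{as} a Cartier divisor, i.e.\ to produce a local generator for the ideal sheaf $\frak c_m^{-1}\mathcal I_{D^m}\cdot\mathcal O_{X\sbr m._B}$.

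The natural candidate comes from the universal length-$m$ subscheme $\mathcal Z\subset X\times_B X\sbr m._B$, which is flat of degree $m$ over $X\sbr m._B$. The direct image $\mathcal E=(\pi_m)_*\mathcal O_{\mathcal Z}$ is a locally free sheaf of rank $m$ equipped with multiplication and trace; the discriminant of the trace pairing is a canonical global section $\delta$ of $(\det\mathcal E)^{\otimes-2}$, and a length-$m$ subscheme of a curve fibre is non-reduced iff its structure algebra has degenerate trace pairing. Thus the Cartier divisor $V(\delta)$ agrees set-theoretically with $\Gamma\spr m.$; moreover, $V(\delta)$ is irreducible because $\frak c_m$ is small and $D^m$ is irreducible, so no extra divisor can sit over the exceptional locus.

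It remains to match scheme structures: $\frak c_m^{-1}\mathcal I_{D^m}\cdot\mathcal O=(\delta)$ locally. On the preimage of the smooth locus $V\subset X\spr m._B$, the Weil divisor $D^m$ is locally principal, defined by the classical discriminant polynomial in the elementary symmetric functions of a fibre parameter; its pullback via $\frak c_m$ (which is an isomorphism onto its image there) equals $\delta$ up to a unit, so the two ideals agree on $\frak c_m^{-1}(V)$. Since $\frak c_m$ is small, the complement of $\frak c_m^{-1}(V)$ in $X\sbr m._B$ has codimension $\geq 2$, and lies only over special cycles $\sum n_i[p_i]$ with some $p_i$ a relative node.

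The main obstacle, and the place where some work is required, is to promote this equality across that codimension-$\geq2$ locus. One route is to localize: at any point $x\in X\sbr m._B$ one has inclusions $(\delta)_x\subseteq \mathcal I_{\Gamma\spr m.,x}$ (both vanish along the same reduced divisor with the same multiplicity at the generic point of $\Gamma\spr m._{\mathrm{red}}$), and on the regular local ring $\mathcal O_{X\sbr m._B,x}$ the quotient $\mathcal O/(\delta)$ is Cohen--Macaulay of codimension $1$; any further quotient supported on a codim-$\geq 2$ set would contribute an embedded component, whose existence we must rule out. This final step is what the explicit local model $H_m$ for $\frak c_m$ near a cycle $m[p]$ at a relative node, constructed in the next section, is designed to achieve: direct inspection of $H_m$ will show that the pullback of the defining ideal of $D^m$ is indeed principal, generated (up to unit) by $\delta$. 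Gluing these local principal generators via the identification $\Gamma\spr m.=V(\delta)$ on the big open $\frak c_m^{-1}(V)$ yields the Cartier property of $\Gamma\spr m.$, and hence the lemma.
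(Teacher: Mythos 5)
Your reduction is sound and is, in substance, the same one the paper uses: since $X\sbr m._B$ is smooth, it suffices to show that the pullback ideal $\frak c_m\inv\I_{D^m}\cdot\O_{X\sbr m._B}$ is locally principal, and this need only be checked near cycles $mp$ with $p$ a fibre node, the statement being clear over the locus where $\frak c_m$ is an isomorphism. Your candidate generator (the discriminant $\delta$ of the trace pairing on $\pi_{m*}\O_{\mathcal Z}$) is a reasonable alternative to the paper's chart-by-chart generators. The problem is that the decisive step — that over the punctual locus at a node the pullback ideal really is principal, with $\delta$ (or anything else) as generator — is exactly the content of the lemma, and you leave it as "direct inspection of $H_m$ will show". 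That inspection is not a formality: in the paper it occupies the construction of the mixed Van der Monde functions $G_1,\dots,G_m$ \eqref{Gdef1}, the vanishing-order computation \eqref{ordG} on the components $\Theta_I$ of the special fibre, which yields that $G_i$ generates $\O(-O\Gamma\spr m.)$ on $\tilde U_i$ (Lemma \ref{Gi-generates-on-Ui}) and hence that the $G_i$ generate $\I_{OD^m}$ (Corollary \ref{Gs-generate-bigdiag}), followed by descent to the symmetric product via the functions $\eta_{i,i}$ and flatness of $\varpi_m$. Without some such computation nothing prevents the pullback ideal from having an embedded component along the codimension-two exceptional locus, which is the only place the issue arises.

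There is also a logical slip in your bridging argument. The claimed inclusion $(\delta)_x\subseteq\I_{\Gamma\spr m.,x}$, justified only by equality of vanishing along the reduced divisor at its generic point, gives at best containment in the \emph{divisorial hull} of the pullback ideal; whether the pullback ideal coincides with its divisorial hull is precisely what must be proven, so as written the step begs the question. Symmetrically, membership of $\delta$ in $\frak c_m\inv\I_{D^m}\cdot\O$ near the node is not automatic: $\delta$ is defined intrinsically on the Hilbert scheme, and comparing it with pullbacks of actual elements of $\I_{D^m}$ (whose local generators at a point of $D^m$ lying under $mp$ are not known a priori — the paper only identifies them, as the $\eta_{i,j}$, \emph{after} this analysis) requires the explicit local model computation you defer. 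So the proposal is a correct strategy and a plausible choice of generator, but as it stands it is a plan rather than a proof: the core verification, which the paper carries out in \S\S 1.3--1.6, is missing.
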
 We defer the proof to \S 1.5
below.\par Assuming the Lemma, $\Gamma\spr m.$ is an integral,
automatically Cartier, divisor, and therefore $\frak c=\frak c_m$
factors through a map $\frak c'$ to the blow-up $B_{D^m}( X\spr
m._B)$, and it would suffice to show that $\frak c'$ is an isomorphism,
which can be checked locally. This is the overall plan that we now set out to execute. First a few reductions.
\par
Let $U\subset X\spr m._B$ denote the open subset consisting
of cycles having multiplicity at most 1 at each fibre node. Then $U$ is smooth and the cycle map
$\frak c_m:\frak c_m\inv(U)\to U$ is
an isomorphism. Consequently,
it will suffice to show $\frak c_m$ is
equivalent to the blowing-up of $D^m$ locally near any cycle $Z\in
X\spr m._B$ having degree $>1$ at some point of the locus $X^\sigma\subset X$ of
singular points of $\pi$ (i.e. singular points of fibres).\par Our
next point, a standard one, is that it will suffice to analyze
$\frak c_m$ locally over a neighborhood of a 'maximally singular'
fibre, i.e. one of the form $mp$ where $p$ is a singular point of
$\pi$. For a cycle $Z\in X\spr m._B$, we let $(X\spr m._B)_{(Z)}$
denote its open neighborhood consisting of cycles $Z'$ which are 'no
worse' than $Z$, in the sense that their support $\supp(Z')$ has
cardinality at least equal to that of $\supp(Z)$. Similarly, for a
$k$-tuple $Z.=(Z_1,...,Z_k)\in\prod X\spr m_i._B$, we denote by
$(\prod X\spr m_i._B)_{(Z.)}$ its open neighborhood in the product
consisting of 'no worse' multicycles, i.e. $k$-tuples $Z'.\in \prod
X\spr m_i._B$ such that each $Z'_i$ is no worse than $Z_i$ and the
various $Z'_i$ are mutually pairwise disjoint. We also denote by
$(X\sbr m._B)_{(Z)}, (\prod X\sbr m_i._B)_{(Z.)}$ the respective
preimages of the no-worse neighborhoods of $Z$ and $Z.$ via $\frak
c_m$ and $\prod\frak c_{m_i}$. Now writing a general cycle
\beq Z=\sum \limits_{i=1}^km_ip_i \eeq with $m_i>0, p_i$ distinct, and
setting $Z_i=m_ip_i$, we have a cartesian (in each square) diagram
\beq\begin{matrix} (\prod\limits_{i=1}^k {_B}\ X\sbr m_i._B)_{(Z.)}&\stackrel
{\prod \frak c_{m_i}}{ \longrightarrow}& (\prod\limits_{i=1}^k {_B}\
X\spr m_i._B)_{(Z.)}\\e_1\uparrow&\square&\ \ \
\uparrow d_1\\
\ \ \ H &\to&S\\
e\ \downarrow &\square&\ \ \downarrow d\\
(X\sbr m._B)_{(Z)}&\stackrel{ \frak c_m}{\longrightarrow} & (X\spr
m._B)_{(Z)}
\end{matrix}
\eeq
Here $H$ is the restriction of the
natural inclusion correspondence on Hilbert schemes: \beq H=\{
(\zeta_1,...,\zeta_k, \zeta)\in (\prod\limits_{i=1}^k {_B}\ X\sbr
m_i._B)_{(Z.)}\times (X\sbr m._B)_{(Z)}:\zeta_i\subseteq\zeta,
i=1,...,k\}, \eeq and similarly for $S$. Note that the right vertical
arrows $d, d_1$ are \'etale and induce analytic isomorphisms between
some analytic neighborhoods $U$ of $Z$ and $U'$ of $Z.$ and the left
vertical arrows $e, e_1$ are also \'etale and induce isomorphisms
between $\frak c_m\inv(U)$ and $(\prod \frak c_{m_i})\inv(U')$.\par

Now by definition, the blow-up of $ X\spr m._B$ in $D^m$ is the Proj
of the graded algebra
\beq A(\I_{D^m})=\bigoplus\limits_{n=0}^\infty\I_{D^m}^{\ n}. \eeq Note that
\beq d\inv(D^m)=\sum p_i\inv(D^{m_i}) \eeq and moreover,
\beq d^*(\I_{D^m})=\bigotimes{_B}\ p_i^*(\I_{D^{m_i}}) \eeq where we use
$p_i$ generically to denote an $i$th coordinate projection.
Therefore,
\beq A(\I_{D^m})\simeq\bigotimes{_B}\ p_i^*A(\I_{D^{m_i}}) \eeq as graded
algebras,  compatibly with the isomorphism
\beq \O_{\prod\limits_{i=1}^k {_B}\ \Sym^{m_i}(X/B)}\simeq\bigotimes
\limits_{i=1}^k{_B}\  \O_{ \Sym^{m_i}(X/B)}. \eeq Now it is a general
fact that Proj is compatible with tensor product of graded
algebras, in the sense that \beq \Proj(\bigotimes {_B}\ A_i)\simeq
\prod {_B}\ \Proj(A_i). \eeq Consequently (1.2.2) induces another
cartesian diagram with unramified vertical arrows
\beq\begin{matrix} (\prod\limits_{i=1}^k {_B}\
X\sbr m_i._B)_{(Z.)}&\stackrel{ \prod c'_{m_i}}{ \longrightarrow}&
(\prod\limits_{i=1}^k {_B}\ B_{D^{m_i}}X\spr m_i._B)_{(Z.)}\\
\uparrow&\square&\uparrow\\
H'&\to&S'\\
\downarrow&\square&\downarrow \\
(X\sbr m._B)_{(Z)}&\stackrel{ c'_m}{\longrightarrow} &(B_{D^m} X\spr
m._B)_{(Z)}.
\end{matrix}
\eeq
Here ${B_uv}_{(Z)}$ in a blowup means, $\forall u,v$,  the inverse image of $v_{(Z)}$ via the blowing-up map. To prove $c'_m$ is an isomorphism,
it will suffice to prove that each of its fibres over a point
$\gamma$ lying over $Z$ is schematically a point. Given $\gamma$,
there is a unique point of $S'$ that maps to it and that on the
other side maps to a point, say $\delta\in (\prod\limits_{i=1}^k
{_B}\ B_{D^{m_i}}X\spr m_i._B)_{(Z.)}$, that lies over $(Z.)$. Then
$(\frak c'_m)\inv(\gamma)$ corresponds isomorphically to
$(\prod\frak c'_{m_i})\inv(\delta).$ Therefore, it suffices to prove
that $c'_{m_i}$ is an isomorphism for each $i$. 
The upshot of all this is that it suffices to prove $c=\frak c_m$ is
equivalent to the blow-up of $ X\spr m._B$ in $D^m$, locally over a
neighborhood of a cycle of the form $mp, p\in X$, and we may
obviously assume that $p$ is a singular point of $\pi.$\par
\newsubsection{A local model}
We now reach the heart of the matter: an explicit construction,
locally over the symmetric product, of the relative Hilbert scheme in terms of coordinates. This construction will stand us in good stead for the remainder of the paper, much beyond the proof of the Blowup Theorem. We begin with some preliminaries.\par
Fixing a fibre node $p$ as above, lying on a singular fibre
$X_0$,
an affine (if $p$ is a separating node) or analytic or formal (in any case) neighborhood $U$ of $p$ in $X$ so that
$\pi$ is given on $U$ by pulling back a universal deformation
\beql{node-deform}{ t=xy.}
Since both the relative Hilbert scheme and the blowing-up process
are compatible with pullback, we may as well  assume that $U/B$ is itself given by \eqref{node-deform}.
 Then the relative cartesian product $X^m_B$, as
subscheme of $X^m\times B$, is given locally  by
\beq x_1y_1=...=x_my_m=t.\eeq Let $\sigma_i^x, \sigma_i^y,
i=0,...,m$ denote the elementary symmetric functions in
$x_1,...,x_m$ and in $y_1,...,y_m$, respectively, where we set
$\sigma_0=1$.
We note that these functions satisfy the relations
\begin{eqnarray}\label{sigma-x-y-rel}
\sigma^y_m\sigma^x_j=t^j\sigma^y_{m-j},\ \  \sigma^x_m\sigma^y_j=t^j\sigma^x_{m-j}, \\
\label{sigma-x-y-rel2}
t^{m-i}\sigma^y_{m-j}=t^{m-i-j}\sigma^x_j\sigma^y_m,\ \
t^{m-i}\sigma^x_{m-j}=t^{m-i-j}\sigma^y_j\sigma^x_m,
\end{eqnarray}
Putting the sigma functions together with the projection to $B$, we get
a map
\beq \sigma:\Sym^m(U/B)\to \A^{2m}_B= \A^{2m}\times B\eeq
 \beq \sigma=((-1)^m\sigma_m^x,...,-\sigma_1^x,
(-1)^m\sigma_m^y,...,-\sigma_1^y, \pi^{(m)}) \eeq where $\pi^{(m)}:
X\spr m._B\to B$ is the structure map.

  \begin{lem}\label{sigma-emb} $\sigma$ is an
embedding locally near $mp$.\end{lem}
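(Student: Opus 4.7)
I will identify $\Sym^m(U/B)$ locally as $\Spec(R^{\mathfrak S_m})$, where
\[
R=\mathbb{C}[x_1,\ldots,x_m,y_1,\ldots,y_m]/(x_iy_i-x_jy_j:1\leq i,j\leq m)
\]
carries the diagonal $\mathfrak S_m$-action simultaneously permuting the indices of the $x_i$'s and $y_i$'s, and $t=x_1y_1$.  The map $\sigma$ then corresponds to the $\mathbb{C}$-algebra homomorphism
\[
\sigma^\ast:\mathbb{C}[\Sigma^x_1,\ldots,\Sigma^x_m,\Sigma^y_1,\ldots,\Sigma^y_m,T]\To R^{\mathfrak S_m}
\]
sending the indeterminates to $\sigma^x_i,\sigma^y_j,t$.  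To establish the lemma it is enough to show $\sigma^\ast$ is surjective after localization at the maximal ideal of $mp$; in fact I will prove surjectivity globally, which is cleaner.

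My first step is a reduction to invariants of a free polynomial ring: since the ideal $I=(x_iy_i-x_jy_j)$ is $\mathfrak S_m$-stable, Reynolds averaging (valid in characteristic zero) identifies $R^{\mathfrak S_m}$ with the image of $\mathbb{C}[\mathbf x,\mathbf y]^{\mathfrak S_m}$ under the quotient map $\mathbb{C}[\mathbf x,\mathbf y]\to R$.  Second, I invoke Weyl's polarization theorem for $\mathfrak S_m$ acting diagonally on two copies of its permutation representation: the algebra $\mathbb{C}[\mathbf x,\mathbf y]^{\mathfrak S_m}$ is generated as a $\mathbb{C}$-algebra by the polarized power sums $p_{a,b}=\sum_{i=1}^m x_i^a y_i^b$ for $(a,b)\in\mathbb{Z}_{\geq 0}^2\setminus\{(0,0)\}$.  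Third, I use the defining relation $x_iy_i=t$ in $R$ to compute, for $a\geq b$ (the case $b\geq a$ is symmetric),
\[
x_i^a y_i^b=t^{b}x_i^{a-b}, \qquad\text{so}\qquad p_{a,b}\equiv t^{b}\,p^x_{a-b}\pmod{I},
\]
where $p^x_k=\sum_i x_i^k$, $p^y_k=\sum_i y_i^k$ (with $p^x_0=p^y_0=m$).  Newton's identities then express $p^x_k,p^y_k$ as polynomials in $\sigma^x_1,\ldots,\sigma^x_m$ and $\sigma^y_1,\ldots,\sigma^y_m$ respectively, so that every element of $R^{\mathfrak S_m}$ is a polynomial in the $\sigma^x_i,\sigma^y_j,t$.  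This proves surjectivity of $\sigma^\ast$ and hence that $\sigma$ is a closed immersion onto its scheme-theoretic image in a neighborhood of $mp$.

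The principal obstacle is the second step: Weyl's polarization theorem for diagonal $\mathfrak S_m$-invariants in two sets of $m$ variables.  If one prefers a self-contained derivation, it suffices to show by induction on total degree that any orbit sum $\mathrm{sym}(\prod_i x_i^{\alpha_i}y_i^{\beta_i})$ can be written as a polynomial in the $p_{a,b}$'s plus an invariant of strictly smaller total degree, via the polarized analogue of the Newton-Girard identities.  Once this is granted, steps 1 and 3 are routine, and the final conclusion is immediate.
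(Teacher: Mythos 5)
Your argument is correct, but it takes a genuinely different route from the paper's. The paper also reduces to showing that every $\frak S_m$-invariant polynomial in the $x_i,y_j$ is, modulo the relations $x_iy_i=t$, a polynomial in the $\sigma^x_\bullet,\sigma^y_\bullet,t$; but it does this by a short self-contained induction on orbit sums: using the relations it shows $R(x^Iy^J)-R(x^I)R(y^J)=tF$ with $F$ invariant of strictly smaller bidegree, and since $R(x^I)$, $R(y^J)$ are ordinary symmetric functions in the $x$'s alone (resp.\ $y$'s alone), induction finishes the job. You instead factor the problem through the general structure theory of diagonal $\frak S_m$-invariants: Reynolds exactness to pass from $\C[\mathbf x,\mathbf y]^{\frak S_m}$ to $R^{\frak S_m}$, the classical characteristic-zero theorem that multisymmetric polynomials are generated by the polarized power sums $p_{a,b}$, and only then the node relation to collapse $p_{a,b}$ to $t^{\min(a,b)}$ times a one-variable power sum, handled by Newton's identities. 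Both arguments are valid (and both use characteristic zero, via averaging). What your route buys is a cleaner conceptual statement -- global surjectivity of $\sigma^\ast$ on the affine local model, not just topological generation of the completed maximal ideal -- at the price of invoking (or re-deriving, as you indicate) the nontrivial generation theorem for multisymmetric functions; the paper's route buys self-containedness, since the special relation $x_iy_i=t$ lets it carry out directly the same kind of polarized Newton--Girard induction that underlies the general theorem, but only in the special case actually needed. One small point of care: make sure the generation statement you quote is the one for vector invariants of the permutation representation of $\frak S_m$ in characteristic zero (generation by all $p_{a,b}$ with $a+b\geq 1$), which is indeed classical; Weyl's polarization theorem in its usual form (reducing many vector variables to $\dim V$ of them) is not by itself the statement you need, so your fallback inductive sketch is the safer citation-free path.
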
\begin{proof} It suffices
to prove this formally, i.e. to show that $\sigma_i^x, \sigma_j^y,
i,j=1,...,m$ generate topologically the completion $\hat{\m}$ of
the maximal ideal of $mp$ in $ X\spr m._B.$ To this end it
suffices to show that any $\frak S_m$-invariant polynomial in the
$x_i, y_j$ is a polynomial in the $\sigma_i^x, \sigma_j^y$ and
$t$. Let us denote by $R$ the averaging or symmetrization operator
with respect to the permutation action of $\frak S_m$, i.e.
\beq R(f)=\frac{1}{m!}\sum\limits_{g\in\frak S_m}g^*(f). \eeq
 Then it suffices to show that the elements
$R(x^Iy^J)$, where $x^I$ (resp. $y^J$) range over all monomials in
$x_1,...,x_m$ (resp. $y_1,...,y_m$) are polynomials in the
$\sigma_i^x, \sigma_j^y$ and $t$. Now the relations
(\ref{sigma-x-y-rel}-\ref{sigma-x-y-rel2})
on the image of $X^m_B$ easily implies that
\beq R(x^Iy^J)-R(x^I)R(y^J)=tF \eeq where $F$ is an $\frak
S_m$-invariant polynomial in the $x_i, y_j$ of bidegree
$(|I|-1,|J|-1)$, hence a linear combination of elements of the
form $R(x^{I'}y^{J'}), |I'|=|I|-1, |J'|=|J|-1$. By induction, $F$
is a polynomial in the $\sigma_i^x, \sigma_j^y$ and clearly so is
$R(x^I)R(y^J).$ Hence so is $R(x^Iy^J)$ and we are
done.\end{proof}
\begin{rem} It will follow from Theorem 1 and its proof that
the equations (\ref{sigma-x-y-rel}-\ref{sigma-x-y-rel2}) actually
define the image of $\sigma$ scheme-theoretically
(see Cor. \ref{sym-eq} below); we won't need this,
however.\end{rem}

Now we present a construction of our local model $\tilde H$.
This is motivated by our study in
\cite{Hilb} of the relative Hilbert scheme of a node. Let $C_1,...,C_{m-1}$ be copies of $\P^1$, with homogenous
coordinates $u_i,v_i$ on the $i$-th copy. Let $$\tilde{C}\subset
C_1\times...\times C_{m-1}\times B$$ be the subscheme defined by
\beql{Ctilde}{ v_1u_2=tu_1v_2,..., v_{m-2}u_{m-1}=tu_{m-2}v_{m-1}.} Thus
$\tilde{C}$ is a reduced complete intersection of divisors of type
$(1,1,0,...,0), (0,1,1,0,...,0)$ ,..., $(0,...,0,1,1)$ and it is
easy to check that the fibre of $\tilde{C}$ over $0\in B$ is
\beql{Ctilde0}{ \tilde{C}_0=\bigcup\limits_{i=1}^{m-1}\tilde{C}_i, }
where \beq \tilde{C}_i= [1,0]\times...\times[1,0]\times
C_i\times[0,1]\times...\times[0,1] \eeq and that in a neighborhood of the special fibre
$\tilde{C}_0$, $\tilde{C}$ is smooth and $\tilde{C}_0$ is its
unique singular fibre over $B.$ We may embed $\tilde{C}$ in
$\P^{m-1}\times B,$ relatively over $B$ using the plurihomogenous
monomials \beql{Z_i}{ Z_i=u_1\cdots u_{i-1}v_{i}\cdots v_{m-1}, i=1,...,m. }
These satisfy the relations \beql{Z-rel-quadratic}
{ Z_iZ_j=t^{j-i-1}Z_{i+1}Z_{j-1}, i<j-1} so they embed $\tilde{C}$ as a family of rational normal
curves $\tilde{C}_t\subset\P^{m-1}, t\neq 0$ specializing to
$\tilde{C}_0$, which is embedded as a nondegenerate, connected
chain
 of $m-1$ lines.\par
 Next consider an affine
space $\A^{2m}$ with coordinates $a_0,...,a_{m-1},
d_0,...,d_{m-1}$ and let $\tilde{H}\subset\tilde{C}\times\A^{2m}$
be the subscheme defined by
\begin{eqnarray} \label{H-equations} a_0u_1=tv_1, d_0v_{m-1}=tu_{m-1} \\ \nonumber
 a_1u_1=d_{m-1}v_{1},...,a_{m-1}u_{m-1}=d_1v_{m-1}.\end{eqnarray}
Set $L_i=p_{C_i}^*\O(1).$ Then consider the subscheme of
$Y=\tilde{H}\times_{B}U$ defined by the equations
\begin{eqnarray}\label{Y-equations} F_0:=x^m+a_{m-1}x^{m-1}+...+a_1x+a_0\in
\Gamma(Y,\O_Y)
\\ F_1:=u_1x^{m-1}+u_1a_{m-1}x^{m-2}+...+u_1a_2x+u_1a_1+v_1y
\in\Gamma(Y,L_1) \\ \nonumber ...
\\ \nonumber F_i:=u_ix^{m-i}+u_ia_{m-1}x^{m-i-1}+...+u_ia_{i+1}x+u_ia_i+
v_id_{m-i+1}y+...+v_id_{m-1}y^{i-1}+ v_iy^i \\
\in\Gamma(Y,L_i)\\ \nonumber ...
\\ F_m:=d_0+d_1y_1+...+d_{m-1}y^{m-1}+y^m\in
\Gamma(Y,\O_Y). \end{eqnarray} The following statement summarizes results from
\cite{Hilb}.  \begin{thm}\label{Hilb-local}\begin{enumerate}
\item  $\tilde{H}$ is smooth and
irreducible.\item  The ideal sheaf $\I$ generated by
$F_0,...,F_m$ defines a subscheme of $\tilde{H}\times_BX$ that is
flat of length $m$ over $\tilde{H}$
\item The classifying map \beq \Phi=\Phi_\I:
\tilde{H}\to\Hilb_m(U/B) \eeq
is an isomorphism.
\item $\Phi$ induces an isomorphism
$$(\tilde C)_0=p_{\A^{2m}}\inv(0)\to \Hilb^0_m(X_0)=\bigcup\limits_{i=1}^m C^m_i$$
(cf. \cite{Hilb}) of the fibre of $\tilde{H}$ over $0\in\A^{2m}$ with
the punctual Hilbert scheme of the special fibre $X_0$,  in
such a way that the point $[u,v]\in\tilde{C}_i\sim C_i\sim\P^1$
corresponds to\begin{itemize}\item the subscheme $I^m_i(u/v)=(x^{m-i}+(u/v)y^i)\in C^m_i\subset\Hilb^0_m(X_0)$ if $uv\neq 0,$\item the subscheme
$(x^{m+1-i}, y^i)\in C^m_i$ if $[u,v]=[0,1]$,\item  the subscheme $(x^{m-i}, y^{i+1})\in C^m_i$ if
$[u,v]=[1,0].$\end{itemize}
\item  over $U_i,$ a co-basis for the universal ideal $\I$ (i.e. a
basis for $\O/\I$) is given by $$1,...,x^{m-i}, y,...,y^{i-1}.$$
\item $\Phi$ induces an isomorphism of the special fibre
$\td H_0$ of $H$ over $B$ with $\Hilb_m(X_0)$, and $\td H_0\subset\td H$ is a divisor with global normal crossings
$\bigcup\limits_{i=0}^m D^m_i$ where each $D^m_i$ is smooth,
birational to $(x-{\text{axis}})^{m-i}\times (y-\text{axis})^i$, and has special fibre
$C^m_i$ under the cycle map $p_{\A^m}$.
\end{enumerate}
\end{thm}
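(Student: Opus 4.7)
The plan is to work locally in affine charts on $\tilde H$, reduce each statement to an explicit computation on those charts, and glue. I would cover $\tilde H$ by the affine opens $U_i$ on which $v_i\neq 0$ (with a parallel cover using $u_i\neq 0$ handling the boundary strata). On $U_i$, set $\xi=u_i/v_i$. The chain relations \eqref{Ctilde} let one solve recursively:
\[
u_j/v_j = \xi\, t^{j-i}\ \text{for } j>i,\qquad v_j/u_j=\xi\inv t^{i-j}\ \text{for } j<i,
\]
so all the other $[u_j:v_j]$ are determined by $(\xi,t)$. The equations \eqref{H-equations} then express $a_0,a_1,\dots,a_{m-i-1}$ in terms of $\xi$ and the $a_k$ with $k\geq m-i$, and dually express $d_0,\dots,d_{i-1}$ in terms of $\xi$ and the $d_k$ with $k\geq i+1$. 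This exhibits $U_i$ as an affine space of dimension $m+1$ with free coordinates $\xi,\,a_i,\dots,a_{m-1},\,d_i,\dots,d_{m-1}$ (together with the base parameter), giving (i) on that chart; irreducibility follows since the charts cover a connected scheme.

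For (ii) and (v), I would compute $\O_Y/\I$ directly on $U_i$. The relations \eqref{H-equations} show $F_i=v_i^{-1}(v_iy^i+\text{monomial in }x\text{ and lower})$, and the other $F_j$ become $\O_{U_i}$-linear combinations of $F_i$ times powers of $x$ or $y$, modulo redundancies forced by \eqref{Ctilde}. An explicit reduction shows that $1,x,x^2,\dots,x^{m-i-1},y,y^2,\dots,y^i$ is a free $\O_{U_i}$-basis of the quotient, proving flatness of length $m$ and giving the co-basis claim of (v). This is precisely where the combinatorics of the local model were engineered to work.

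Part (iii) is the real crux. The flat family from (ii) induces the classifying map $\Phi$. To exhibit an inverse, given a length-$m$ scheme $Z$ in $U$ flat over a test base $S$, project $Z$ to the $x$- and $y$-axes: the norms $\det(x\cdot\mid \O_Z)$ and $\det(y\cdot\mid \O_Z)$ as endomorphisms of $\pi_*\O_Z$ produce the monic polynomials whose coefficients are the $a_k$ and $d_k$. The ratios $[u_j:v_j]$ are then read off from \eqref{H-equations} using the structure of $\O_Z$ at the node; one checks that these ratios satisfy \eqref{Ctilde} using $xy=t$ on $Z$. The maps $\Phi$ and its putative inverse agree on each chart by construction, yielding the scheme-theoretic isomorphism. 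The main obstacle I foresee is verifying that the inverse assignment is well-defined across the chart overlaps and on the (non-generic) boundary strata where $\xi=0$ or $\xi=\infty$; here one uses the normalization choices in the boundary charts and the irreducibility from (i) to conclude that the two maps are mutually inverse globally.

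Given (iii), parts (iv) and (vi) follow by restriction. For (iv), set all $a_k,d_k=0$: the $F_j$'s become $x^m$, $u_jx^{m-j}+v_jy^j$, $y^m$, and \eqref{H-equations} force $t=0$, so one lands in $\tilde C_0$; the enumeration of monomial and $[u_j:v_j]$-parametrized ideals then matches the description of $\Hilb^0_m(X_0)$ from \cite{Hilb} componentwise. For (vi), set $t=0$: the equations \eqref{Ctilde} force $\tilde H_0=\bigcup D^m_i$ where $D^m_i$ is the closure of the locus where only $[u_i:v_i]$ varies, and the chart description from (i) with $t=0$ shows each $D^m_i$ is smooth and meets the others transversally at the parameter boundaries $[u_i:v_i]\in\{0,\infty\}$, giving the normal crossings structure.
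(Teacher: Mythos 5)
There is a genuine gap, and it starts with your charts. On $\{v_i\neq 0\}$ the relations \eqref{Ctilde} do determine $[u_j:v_j]$ for $j>i$ from $\xi=u_i/v_i$, but for $j<i$ they read $v_ju_{j+1}=tu_jv_{j+1}$ and impose no condition once $t=0$ and the intermediate $u$'s vanish: at $\xi=t=0$ the coordinate $[u_{i-1}:v_{i-1}]$ is completely unconstrained, and in fact $\{v_i\neq0\}$ contains the entire lower part $\tilde C_1\cup\dots\cup\tilde C_{i-1}$ of the central chain, so it is not parametrized by $(\xi,t)$ and your formula $v_j/u_j=\xi\inv t^{i-j}$ is not even regular there. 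The coordinate count is also off: $\xi,a_i,\dots,a_{m-1},d_i,\dots,d_{m-1}$ gives $2(m-i)+1$ relative parameters, not the relative dimension $m$. The correct charts are the ones used in the paper, $U_i=\{u_j=1,\ j<i;\ v_j=1,\ j\geq i\}$ (nonvanishing conditions on \emph{all} coordinate pairs), on which the chain relations leave \emph{two} coordinates $v_{i-1},u_i$ along $\tilde C$ subject to $v_{i-1}u_i=t$, and \eqref{H-equations} leave $a_i,\dots,a_{m-1},d_{m-i+1},\dots,d_{m-1}$ free; smoothness, flatness and the co-basis of (v) are then read off after observing the relations $u_{i-1}F_j=u_jx^{i-1-j}F_{i-1}$ ($j<i-1$) and $v_iF_j=v_jy^{j-i}F_i$ ($j>i$), which show that $\I$ is generated on $U_i$ by $F_{i-1},F_i$ alone. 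Your parts (i), (ii), (v), (vi) all lean on the faulty chart description and must be redone in this form.

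The decisive gap, however, is in (iii). The norms of multiplication by $x$ and $y$ on $\pi_*\O_Z$ recover only the $a_k,d_k$, i.e.\ the cycle of $Z$; and at precisely the schemes the theorem is about --- punctual ideals supported at the node, where $t$ and all $a_k,d_k$ vanish --- the equations \eqref{H-equations} are identically satisfied and determine nothing, so the ratios $[u_j:v_j]$ cannot be ``read off'' from them. That extra datum is exactly what distinguishes the Hilbert scheme from the symmetric product, and it must be extracted from the ideal of $Z$ itself (e.g.\ from a local generator $x^{m-i}+\alpha y^i$ at the node), functorially over an arbitrary, possibly non-reduced, test base $S$, and shown to be regular across the strata; your ``main obstacle'' remark flags this but supplies no mechanism. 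This is the content the paper imports: after reducing to the two local generators $F_{i-1},F_i$ on $U_i$, assertions (ii), (iii), (vi) are quoted from Theorems 1--3 of \cite{Hilb}, which carry out that universal-property verification for the relative Hilbert scheme of a node. Without that step (or an alternative argument establishing the isomorphism scheme-theoretically, not just pointwise), (iii) --- and with it (iv) and (vi) --- does not follow from what you have written.
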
\begin{proof} The smoothness of $\tilde{H}$ is clear
from the defining equations equations and also follows from
smoothness of $\Hilb_m(U/B)$ once (ii) and (iii) are proven. To
that end consider the point $q_i, i=1,...,m,$ on the special fibre
of $\tilde{H}$ over $\A^{2m}_B$ with coordinates \beq v_j=0,\ \forall
j< i; u_j=0,\ \forall j\geq i. \eeq Then $q_i$ has an affine
neighborhood $U_i$ in $\tilde{H}$ defined by \beq U_i=\{ u_j=1, \ \forall j< i;\ v_j=1, \ \forall j\geq i\},\eeq and these
$U_i, i=1,..., m$ cover a neighborhood of the special fibre of
$\tilde{H}.$ Now the generators $F_i$ admit the following
relations:
\beq u_{i-1}F_j=u_jx^{i-1-j}F_{i-1},\ 0\leq j<i-1;\ v_iF_j=v_jy^{j-i}F_i,\
m\geq j>i \eeq where we set $u_i=v_i=1$ for $i=0,m.$ Hence $\I$ is
generated there by $F_{i-1}, F_i$ and assertions (ii), (iii)
follow directly from Theorems 1,2 and 3 of \cite{Hilb} and (iv) is obvious.\par As for (v), it follows immediately from the definition of the $F_i$,  plus
the fact just noted that, over $U_i,$ the ideal $\I$ is generated by
$F_{i-1}, F_i$, and that on $U_i$, we have
$u_{i-1}=v_{i}=1.$ Finally (vi) is contained in \cite{Hilb}, Thm. 2.

\end{proof}
\subsection{Excursions about $H_m$}

 In light of Theorem \ref{Hilb-local}, we
identify a neighborhood $H_m$ of the special fibre in $\tilde{H}$
with a neighborhood of the punctual Hilbert scheme (i.e. $\frak
c_m\inv(mp)$) in $X\sbr m._B$, and note that the projection
$H_m\to \A^{2m}\times B$ coincides generically, hence everywhere,
with $\sigma\circ \frak c_m$. Hence $H_m$ may be viewed as the
subscheme of $\Sym^m(U/B)\times_B\tilde{C}$ defined by the
equations
\begin{eqnarray}\label{sigma-uv-rel}\nonumber \sigma_m^xu_1=tv_1, \\ \sigma_{m-1}^xu_1=\sigma_{1}^yv_{1},...,
\sigma^x_{1}u_{m-1}=\sigma^y_{m-1}v_{m-1},\\
\nonumber
 tu_{m-1}=\sigma^y_mv_{m-1} \end{eqnarray}
Alternatively, in terms of the $Z$ coordinates, $H_m$ may be defined as the subscheme of
$\Sym^m(U/B)\times \P^{m-1}_Z\times B$ defined by the relations
\eqref{Z-rel-quadratic}, which define $\tilde{C}$, together with
\eqspl{sigma-Z-rel}{\sigma_{i}^yZ_i=
\sigma_{m-i}^xZ_{i+1},\
 i=1,...,m-1}
Now having determined the structure of $c_m$ along its 'most
special' fibre $c_m\inv(m(0,0))$, we can easily, and usefully,
determine its structure along other fibres, as follows. For
simplicity we assume for the rest of this subsection that $B$ is a
smooth curve, with local coordinate $t$, and that the singular fibre
$X_0$
has a unique node $p$, with $U$ being a neighborhood of $p$ in
$X$.
  \par Let $X', X"$ denote the $x,y$ axes, respectively
in $U_0=X_0\cap U$, with their respective origins $0', 0"$. If the
special fibre $X_0$ is reducible, then $X', X"$ globalize to the two
components of the normalization (which will be denoted in the same
way if no undue confusion results). If $X_0$ is irreducible, then
both $X'$ and $X"$ globalize to the normalization.
 For any pair of natural numbers $(a,b), 0<a+b< m$,
set $$X^{(a,b)}=X^{'(a)}\times X^{"(b)}$$
(which globalizes to a component (the unique one,
if $X_0$ is irreducible) of the normalization of $X_0^{a+b}$.
 Then we have a
natural map $$X^{(a,b)}\to\Sym^m(U_0)\subset
\Sym^m(U/B)$$ given by
\beq (\sum m_ix_i,\sum n_jy_j)\mapsto \sum
m_i(x_i,0)+\sum n_j(0,y_j)+(m-a-b)(0,0). \eeq  This map
is clearly birational to its image, which we denote by $\bar X^{(a,b)}.$ Thus $X^{(a,b)}$  coincides with the normailzation of $\bar X^{(a,b)}$.
 It is clear that $\bar X^{(a,b)}$ is
defined by the equations \beq \sigma^x_m=...=\sigma^x_{a+1}=0,
\sigma^y_m=...=\sigma^y_{b+1}=0. \eeq
 A  point
$$c\in \bar X^{(a,b)}-(\bar X^{(a+1,b)}\cup \bar X^{(a,b+1)}),$$ i.e. a
cycle in which $(0,0)$ appears
with multiplicity exactly $n=m-a-b$, is said to be of \emph{type $(a,b)$}. Type yields a natural stratification of the symmetric product $X\spr m._0$.
Now let $H^{(a,b)}$ be the closure of the locus of schemes whose cycle if of type $(a,b)$. i.e.
\beql{Hab}{ H^{(a,b)}={\mathrm{closure}}(c_m\inv(\bar X^{(a,b)}-(\bar X^{(a+1,b)}\cup \bar X^{(a,b+1)})))\subset H_m}
Clearly the restriction of $c_m$ on $H^{(a,b)}$ factors through a map
\begin{eqnarray*}\tilde c_m: H^{(a,b)}\to X^{(a,b)},\\
\tilde c_m=((\sigma^x_1, ...,\sigma^x_a), (\sigma^y_1, ...,\sigma^y_b))\end{eqnarray*}
Approaching the 'origin cycle' $m(0,0)$ through
cycles of type $(a,b)$, i.e. approaching the point $(a0', b0")$
on $X^{(a,b)}$, means that $a$ (resp. $b$) points are approaching the origin $0'$ (resp. $0"$) along the $x$ (resp. $y$)-axis.  For a cycle $c$ of type $(a,b)$, we have, for all $j\leq b$, that
$\sigma^y_j\neq 0, \sigma_{m-j}^x=0$,  hence
by the equations \eqref{H-equations}
(setting each $a_i=\sigma^x_{m-i},
d_i=\sigma^y_{m-i}$), we conclude $v_j=0$; thus
\beql{v=0}{ v_1=...=v_{b}=0; } similarly, for all $j\leq a$,
we have $\sigma_{m-j}^y=0, \sigma_j^x\neq 0$, hence
again by the equations \eqref{H-equations} , we conclude $u_{m-j}=0$;
thus
\beql{u=0}{  u_{m-1}=...=u_{m-a}=0. }
Consequently, the fibre of $\frak c_m $ over this point is schematically
\beql{cm fibre}{c_m\inv(c)=\tilde c_m\inv(c)= \bigcup\limits_{i=b+1}^{m-a-1} C^m_i, } provided
$a+b\leq m-2$. If $a+b=m-1$, the fibre is the unique point given by
\beq v_1=...=v_b=u_{b+1}=...=u_{m-1}=0  \eeq (this is the point denoted
$Q^m_{b+1}$ in \cite{Hilb}, i.e. the subscheme with ideal $(x^{m-b}, y^{b+1})$). As $c$ approaches the 'origin' $(a0',b0")$ in $X^{(a,b)}$, the equations \refp{v=0}.,\refp{u=0}. persist, so we conclude
\beql{tilde cm fibre} {\tilde c_m\inv((a0',b0"))=\begin{cases}\bigcup\limits_{i=b+1}^{m-a-1} C^m_i, a+b\leq m-2,\\
Q^m_{b+1}, a+b=m-1.
\end{cases}
}
Thus, working  in $H^{(a,b)}$ over $X^{(a,b)}$, the special
fibre is the same as the general fibre.
Moreover as subscheme of $H_m\times_{X\spr m.}X^{(a,b)}$,
$H^{(a,b)}$ is defined by the equations \eqref{u=0} and
\eqref{v=0}.
 And in the special case $a+b=m-2$, we see that $H^{(a,b)}$ forms a $\P^1$-bundle with fibre $C^m_{b+1}$, locally near $(a0', b0")$. This is a
so-called \emph{node scroll}, to be discussed further below.\par
Incidentally, in case $a+b=m$, a similar but simpler analysis
shows that the fibre $\tilde c_m\inv((a0',b0"))$ coincides with
$C^m_b\simeq\P^1$ if $1\leq b\leq m-1$ and with the single point $Q^m_{b+1}$ if $b=0,m$. This, of course, is contained in part (vi) of Theorem \ref{Hilb-local} above.
\par
Summarizing this discussion, a bit more usefully, in terms of $Z$ coordinates,
we have
\begin{cor} For any $\l_1+\l_2\leq m, \l_1, \l_2\geq 0,$
we have, in any component of
 the locus $c_m\inv((X')\spr \l_1.\times (X")\spr \l_2.)
\subset H_m$ dominating $(X')\spr \l_1.\times (X")\spr \l_2.$, that
\eqspl{}{
Z_i=0,& \forall i\geq\max(m-\l_1+1, \l_2+2),\\
& \forall i\leq \min(\l_2, m-\l_1-1).
} In particular, if $\l_1+\l_2=m$ (resp. $\l_1+\l_2<m$), the
 only nonzero $Z_i$
are where $$i\in [\l_2,\l_2+1]\cap [1,m] \ \  (\textrm{resp.}
\ \  i\in[l_2+1, m-\l_1]\cap [1,m]).$$
\end{cor}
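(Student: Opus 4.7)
The plan is to read off the required vanishings directly from the relations \eqref{sigma-Z-rel}, once we identify which elementary symmetric functions restrict to zero on the component in question.

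Let $V$ denote a component of $c_m\inv((X')\spr\ell_1.\times(X'')\spr\ell_2.)$ dominating $(X')\spr\ell_1.\times(X'')\spr\ell_2.$. Then $c_m(V)$ is the image $\bar X^{(\ell_1,\ell_2)}\subset X\spr m._B$ of the (birational) map sending $(\sum\alpha_i x_i,\sum\beta_j y_j)$ to the cycle $\sum\alpha_i(x_i,0)+\sum\beta_j(0,y_j)+(m-\ell_1-\ell_2)(0,0)$. A generic such cycle has exactly $\ell_1$ nonzero $x$-coordinates and $\ell_2$ nonzero $y$-coordinates among its $m$ points. Consequently the pulled-back symmetric functions satisfy: $\sigma_j^x$ vanishes identically on $V$ for $j>\ell_1$ and is generically nonzero for $j\leq\ell_1$; symmetrically for $\sigma_j^y$ with bound $\ell_2$.

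Next I apply the relation $\sigma_i^y Z_i=\sigma_{m-i}^x Z_{i+1}$ from \eqref{sigma-Z-rel} for each $i\in\{1,\dots,m-1\}$. If $i\geq\ell_2+1$ and $i\geq m-\ell_1$, then the left side vanishes identically on $V$ while $\sigma_{m-i}^x$ is generically nonzero, forcing $Z_{i+1}=0$ on a dense open subset of $V$; since $\{Z_{i+1}=0\}$ is a closed condition in the ambient $\P^{m-1}$-bundle containing $V$, the vanishing extends to all of $V$. Letting $i$ run over such indices gives $Z_j=0$ for every $j\geq\max(\ell_2+2,m-\ell_1+1)$. Symmetrically, for $i$ with $i\leq\ell_2$ and $i\leq m-\ell_1-1$, the right side vanishes identically while $\sigma_i^y$ is generically nonzero, forcing $Z_i=0$ on $V$; this yields $Z_i=0$ for all $i\leq\min(\ell_2,m-\ell_1-1)$.

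The `in particular' statements reduce to numerical book-keeping. When $\ell_1+\ell_2=m$ we have $m-\ell_1=\ell_2$, so the upper and lower cut-offs become $\max(\ell_2+2,\ell_2+1)=\ell_2+2$ and $\min(\ell_2,\ell_2-1)=\ell_2-1$, leaving only $Z_{\ell_2},Z_{\ell_2+1}$ as potentially nonzero (intersected with $[1,m]$). When $\ell_1+\ell_2<m$, one has $m-\ell_1-1\geq\ell_2+1>\ell_2$, so the cut-offs become $m-\ell_1+1$ and $\ell_2$ respectively, giving the surviving range $[\ell_2+1,m-\ell_1]\cap[1,m]$. The only non-formal ingredient is the passage from generic to identical vanishing, which is immediate since $\{Z_i=0\}$ is closed; no serious obstacle is anticipated.
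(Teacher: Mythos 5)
Your proof is correct and is essentially the paper's own route: the paper establishes the same vanishing pattern of the elementary symmetric functions on the stratum (namely that $\sigma^x_j$, resp.\ $\sigma^y_j$, vanishes identically there exactly for $j>\l_1$, resp.\ $j>\l_2$, and is generically nonzero otherwise) and feeds it into the equivalent $(u,v)$-form of the defining relations \eqref{H-equations}, \eqref{sigma-uv-rel}, obtaining $v_1=\cdots=v_{\l_2}=u_{m-\l_1}=\cdots=u_{m-1}=0$ and then translating via $Z_i=u_1\cdots u_{i-1}v_i\cdots v_{m-1}$, which is the same computation as your direct use of \eqref{sigma-Z-rel} on the dominating component together with genericity and closedness of $\{Z_i=0\}$ (your version even treats the case $\l_1+\l_2=m$ uniformly, which the paper handles by a separate brief remark). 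One harmless slip: for $\l_1+\l_2<m$ you assert $m-\l_1-1\ge\l_2+1$, which fails when $\l_1+\l_2=m-1$; only $m-\l_1-1\ge\l_2$ is guaranteed, but that inequality is all your cut-off bookkeeping actually uses, so the stated conclusion is unaffected.
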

Taking to account the linear relations \eqref{sigma-Z-rel}, we
also conclude
\begin{cor}
\begin{enumerate}\item
For any component $(X')^\l\times(X")^{m-\l}, 0<\l<m$ of the
special fibre of $U\spr m._B$, the unique dominant component of
$c_m\inv((X')^\l\times(X")^{m-\l})$ coincides with
the graph of
rational map
\eqspl{}{
(X')^\l\times(X")^{m-\l}\dashrightarrow\P^1_{Z_\l, Z_{\l+1}}\subset\P^{m-1}_Z} defined by
\[
[\sigma_\l^x, \sigma_{m-\l}^y];
\]
\item ditto for $\l=m$ (resp. $\l=0$), with the constant
rational map
to $[1,0,...]$ (resp. $[...,0,1]$);
\item ditto over $(X')^\l\times(X")^{m-\l-1}, 0\leq\l\leq m-1$,
with the constant map to $[...,0,1_{m-\l},0...]$.
\end{enumerate}\end{cor}
\par
Now on the other hand, working near a cycle $c$ of type $(a,b)$
and fixing its off-node portion, say of length $k=m-n$,
 we also have an obvious identification
of the same (general) fibre of $H^{(a,b)}/X^{(a,b)}$ over $c$ as the special fibre
in a local model $H_n$ for the length-$n$ Hilbert scheme.
Namely,
if we let $c'=n(0,0)$ be the part of $c$ supported at the origin, then essentially the same fibre
$c_m\inv(c)$ can also be written as
\beql{cn fibre}{c_n\inv(c')=\bigcup\limits_{j=1}^{n} C^n_j}
and naturally $C^n_j$ corresponds to $C_{j+b}^m=C_{j+b}^{n+a+b}$.
Of course under the identification of Theorem \ref{Hilb-local}, $c_n\inv(c')$ corresponds to the punctual Hilbert scheme $Hilb^0_n(X_0)$. So we conclude that the $j$-th punctual
length-$n$ Hilbert scheme component at $c$ specializes to the $(j+b)$-th length-$m$ Hilbert scheme component at $m(0,0)$
as $c$ specializes to $m(0,0)$ over the normalization $X^{(a,b)}$.
Note that the analogous fact holds for any cycle of multiplicity $n$ at $(0,0)$ specializing to one of multiplicity $m$ at
$(0,0)$, even if its total degree is higher. Thus we have
\begin{lem} As a cycle $c$ on $X_0$, having multiplicity $n$ at the origin, approaches a cycle $d$ with multiplicity
$m>n$ at the origin, so that for some $a,b$ with
$a+b=m-n$, $a$ points approach along the $x$-axis and $b$ points along the $y$-axis, the punctual Hilbert scheme component $C^n_j$ over $c$ specializes smoothly to $C^m_{j+b}$ at $d$.
\end{lem}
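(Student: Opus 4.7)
The plan is to extract the lemma directly from the coordinate analysis just carried out, organizing it as a statement about smooth specialization of a family rather than about individual fibres.

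First, I would reduce to the case where $c$ has support only at the origin. Via the preliminary-reductions cartesian diagram of \S1.2, any specialization of multi-support cycles factors as a product of specializations of single-support cycles, and since the off-origin portion of $c$ (of length $k = m - n$) is unchanged during the specialization, its Hilbert-scheme factor contributes only an identity map. What remains is a family whose generic member has multiplicity $n$ at the origin and whose special member has multiplicity $m$ there, the additional $a+b = m - n$ points arriving along the $x$- and $y$-axes.

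Second, inside the local model $H_m$ this reduced specialization is parametrized by a curve in $X^{(a,b)}$ approaching the point $(a0', b0")$. Throughout this curve, \eqref{v=0} and \eqref{u=0} force $v_1 = \cdots = v_b = 0$ and $u_{m-1} = \cdots = u_{m-a} = 0$, so the restriction of $H_m$ lies in $H^{(a,b)}$; by \eqref{cm fibre} and \eqref{tilde cm fibre}, the fibre over every point of the family (including the limit $d$) is the same chain $\bigcup_{i=b+1}^{m-a-1} C^m_i$. Smoothness of the specialization follows from the smoothness of $H_m$ (Theorem \ref{Hilb-local}(i)) together with the fact that the linear equations $v_j = 0$, $u_{m-j} = 0$ cut out $H^{(a,b)}$ transversally in a neighborhood of $(a0', b0")$, so $H^{(a,b)} \to X^{(a,b)}$ is itself a smooth family of chains of $\mathbb{P}^1$'s on the relevant open.

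Third, this very same chain of $\mathbb{P}^1$'s can be identified with the punctual Hilbert scheme of the length-$n$ subcycle $c' = n(0,0)$ via the length-$n$ model $H_n$, under which $c_n^{-1}(c') = \bigcup_{j=1}^n C^n_j$. The correspondence between the two chains is then forced to be $C^n_j \leftrightarrow C^m_{j+b}$ by bookkeeping with the monomial ideals of Theorem \ref{Hilb-local}(iv): a generic point of $C^n_j$ has ideal essentially $(x^{n-j} + \eta y^j, \ldots)$, and absorbing the $b$ additional reduced points along the $y$-axis raises the $y$-exponent in the staircase by $b$, shifting the component index in the length-$m$ chain by exactly $b$, while the $a$ points absorbed along the $x$-axis are swallowed by the already-large $x$-power and leave the component index unaffected. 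The main (and essentially only) difficulty is this asymmetric shift by $+b$ rather than $+a$, which forces one to track carefully how points specializing along the two distinct axes interact with the monomial staircase indexing the components $C^m_i$.
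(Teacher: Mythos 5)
Your argument is correct and is essentially the paper's own: both work in the local models $H_m$ and $H_n$, use the persistence of $v_1=\dots=v_b=0$ and $u_{m-a}=\dots=u_{m-1}=0$ over $X^{(a,b)}$ to see that the fibre of $\tilde c_m$ is the constant chain $\bigcup_{i=b+1}^{m-a-1}C^m_i$, identify that chain over a general $c$ with the length-$n$ punctual Hilbert scheme $\bigcup_{j=1}^{n-1}C^n_j$, and extract the index shift $j\mapsto j+b$ (the paper reads it off the index range and chain order, you read it off the monomial types $x^{n-j}+\eta y^j$ versus $x^{m-j-b}+\eta y^{j+b}$ -- the same bookkeeping), with smoothness coming from the $\P^1$-bundle structure of each component $F_j^{(a|b)}$ over $X^{(a,b)}$. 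The only blemish is the opening reduction, which describes the length-$(m-n)$ colliding portion as an unchanged off-origin part to be factored out; only a distal part staying away from the origin can be factored out, and since your second and third steps in fact keep the $a+b$ colliding points in the family, the slip does not affect the proof.
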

  We also see, comparing \refp{cm fibre}. and \refp{tilde cm fibre}., that the fibre $\tilde c_m\inv(c)$ is 'constant', i.e. it doesn't depend on $c$ as it moves in $X^{(a,b)}$. Moreover, as $c$ moves in $X^{(a,b)}$,
the individual components of this fibre, which have to do with
branches of $X_0^{(m-a-b)}$ at $(m-a-b)(0,0)$, $X_0$ being the
entire singular fibre (or what is the same, branches of $X_0^m$
generically along $X^{(a,b)}$), remain well defined (i.e. not
interchanged by monodromy), and specialize smoothly to
similar components on lower-dimensional strata. Note that this is true even if $X_0$ is (globally) irreducible, in which case
the other $a+b$ branches of $X_0$, $a$ from $X'$ and $b$ from
$X"$ are globally interchangeable.
Therefore:
\begin{lem}\label{Hab-struct} Notation as in \refp{Hab}.
\emph{et seq.}, we have
\beql{bundle}{H^{(a,b)}=\bigcup\limits_{j=1}^{n-1} F_j^{(a|b)}}
where $F_j^{(a,b)}\subset H_m$ is the subscheme defined by
\eqspl{uv-eq-nodescroll}{
v_1=...=v_{j+b}=u_{j+b+1}=....=u_{m-1}=0
} and
 $$\tilde c_m:F_j^{(a|b)}\to{X}^{(a,b)}$$
is a $\P^1$ bundle with general fibre $C_j^{n}$.
Moreover, $\tilde c_m|_{F^{(j:a|b)}}$ admits two disjoint sections with respective general fibres the points corresponding to the
punctual schemes of type
$Q^n_{j}, Q^n_{j+1}$.\end{lem}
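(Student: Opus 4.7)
The plan is to combine the fibre analysis just completed with the monodromy remark preceding the statement. First I would invoke \refp{cm fibre}. and \refp{tilde cm fibre}. (extended to arbitrary $(a,b)$ with $a+b\le m-2$) to conclude that for every $c\in X^{(a,b)}$, the scheme-theoretic fibre $\tilde c_m\inv(c)$ is the chain $\bigcup_{j=1}^{n-1} C^m_{j+b}$, where $n=m-a-b$. Since the paragraph preceding the statement shows that these chain components remain individually well defined (no monodromy permutation) as $c$ varies in $X^{(a,b)}$, setting $F_j^{(a|b)}$ to be the Zariski closure in $H^{(a,b)}$ of the family of $C^m_{j+b}$'s yields the set-theoretic decomposition $H^{(a,b)}=\bigcup_{j=1}^{n-1} F_j^{(a|b)}$.

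Second I would verify that $F_j^{(a|b)}$ is cut out inside $H^{(a,b)}$ by the equations \eqref{uv-eq-nodescroll}. Combined with the relations $v_1=\ldots=v_b=0$ and $u_{m-a}=\ldots=u_{m-1}=0$ that already hold throughout $H^{(a,b)}$ by \refp{v=0}. and \refp{u=0}., these equations force each $[u_k,v_k]$ into the extreme position characterising the $C^m_{j+b}$-component in every punctual fibre, and so they pin down precisely the closure constructed in the first step. This is the step I expect to be the main obstacle: one must check that the defining ideal of $F_j^{(a|b)}$ is really generated by these equations (and has no embedded primes), which in turn requires combining the smoothness of $\td H$ with a tangent-space count in the $(u,v,\sigma)$ coordinates.

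Third, for the $\P^1$-bundle assertion I would use that $X^{(a,b)}$ is smooth (being a product of symmetric powers of smooth curves) and that $F_j^{(a|b)}$ is smooth as a locally closed subscheme of the smooth $\td H$ (by Theorem \ref{Hilb-local}(i)); since the restricted map $\tilde c_m|_{F_j^{(a|b)}}$ is proper with every geometric fibre $\simeq\P^1$, the Hilbert polynomial is constant and the map is flat, whence a smooth proper morphism with $\P^1$-fibres over a smooth base is a $\P^1$-bundle.

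Finally, the two sections arise by restricting to the loci where the scheme has, at the node in the special fibre, ideal $Q^n_j=(x^{n-j+1},y^j)$ or $Q^n_{j+1}=(x^{n-j},y^{j+1})$; by Theorem \ref{Hilb-local}(iv) these correspond to the points $[u_{j+b},v_{j+b}]=[0,1]$ and $[1,0]$ on the generic fibre $C^m_{j+b}$, so their fibrewise disjointness immediately yields global disjointness of two sections of $\tilde c_m|_{F_j^{(a|b)}}$.
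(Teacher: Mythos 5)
Your outline follows the route the paper itself takes (the lemma is stated with ``Therefore:'' as a summary of the preceding local-model computation, and your step 1 and the sections at the end are exactly that), but your step 2 --- which you rightly flag as the crux --- goes wrong in a way that would sink the statement. You assert that the equations \eqref{uv-eq-nodescroll}, combined with \eqref{v=0} and \eqref{u=0}, ``force each $[u_k,v_k]$ into the extreme position.'' If that were so, every coordinate pair would be pinned, the fibres of $F_j^{(a|b)}\to X^{(a,b)}$ would be single points, and you would have proved something incompatible with the $\P^1$-bundle assertion you then try to establish. The actual content is that exactly one pair, $[u_{j+b},v_{j+b}]$, remains free over $X^{(a,b)}$: for $b<i<m-a$ both coefficients in the relation $\sigma^x_{m-i}u_i=\sigma^y_iv_i$ of \eqref{sigma-uv-rel} vanish identically on $X^{(a,b)}$, and the relations \eqref{Ctilde} degenerate (since $t=0$) to $v_iu_{i+1}=0$, which the pinned extreme coordinates already satisfy; hence in the local model $F_j^{(a|b)}$ is literally the product $X^{(a,b)}\times\P^1_{[u_{j+b},v_{j+b}]}$, equivalently coordinatized by $[Z_{j+b},Z_{j+b+1}]$. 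That coordinate computation --- not a tangent-space count or an analysis of embedded primes --- is what delivers at once the scheme-theoretic identification, the bundle structure, and the two disjoint sections (the extremes $[u_{j+b},v_{j+b}]=[0,1],[1,0]$ give $Q^n_j,Q^n_{j+1}$ by Theorem \ref{Hilb-local}(iv)). Note also that to run this you must read the displayed equations with the index shifted: as printed they include $v_{j+b}=0$, which would force $Z_{j+b}=0$ and leave no free pair; the intended equations are $v_1=\dots=v_{j+b-1}=u_{j+b+1}=\dots=u_{m-1}=0$, and taking the display at face value is precisely what led you to ``all pairs extreme.''

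Your step 3 has independent soft spots, though they become moot once the product structure is in hand. Being a locally closed subscheme of the smooth $\td H$ does not make $F_j^{(a|b)}$ smooth, so smoothness cannot be quoted from Theorem \ref{Hilb-local}(i) that way; and ``smooth proper with all geometric fibres $\P^1$ over a smooth base'' only yields an \'etale-locally trivial (Brauer--Severi) fibration in general --- Zariski-local triviality, i.e.\ the $\P^1$-bundle claim as used later (to write $F_j=\P(L\oplus L')$), needs a section, which you invoke only after asserting the bundle property. The paper avoids both issues because the explicit equations exhibit the fibration as a product locally, with the $Q$-sections visible in coordinates.
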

Fixing $a,b$ for now, the $F_j=F_j^{(a|b)}$ are special (but typical) cases of what are called \emph{node scrolls}.
It follows from the lemma that we can write
$$F_j=\P(L^n_{j}\oplus L^n_{j+1})$$
for certain line bundles $L^n_j$ on ${X}^{(a,b)}$,
corresponding to the disjoint sections $Q^n_j, Q^n_{j+1}$,
where the
difference $L_j^n-L^n_{j+1}$ is uniquely determined (we use additive notation for the tensor product of line bundles
and quotient convention for projective bundles). The identification of a natural choice for both these line bundles,
using methods to be developed later in this section,
 will be taken up in the next section and plays an important role in the enumerative
geometry of the Hilbert scheme. But the difference
$L_j^n-L^n_{j+1}$, and hence the intrinsic structure of
the node scroll $F_j$, may already be computed now,  as follows.\par Write
$$Q_j=\P(L_j), Q_{j+1}=\P(L_{j+1})$$ for the two special sections of type $Q^n_j, Q^n_{j+1}$ respectively. Let
$$D_{0'}, D_{0"}\subset X^{(a,b)}$$ be the divisors
comprised of cycles containing $0'$ (resp. $0"$).
In the local model, these are given locally by the
respective equations
$$D_{0'}=(\sigma^x_a), D_{0"}=(\sigma^y_b).$$
\begin{lem}\label{nodescroll0} We have, using the quotient convention for
projective bundles,
\beql{nodescroll-1}{F_j=\P_{X^{(a,b)}}(\O(-D_{0'})\oplus \O(-D_{0"})), j=1,...,n-1.}\end{lem}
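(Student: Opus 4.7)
The plan is to identify the projective bundle structure of $F_j^{(a|b)}$ by computing the ratio $L_j \otimes L_{j+1}^{-1}$, equivalently the normal bundle $N_{Q_j/F_j^{(a|b)}}$. By Lemma \ref{Hab-struct}, $F_j^{(a|b)} \to X^{(a,b)}$ is a $\P^1$-bundle with two disjoint sections $Q_j, Q_{j+1}$, so $F_j^{(a|b)} \cong \P(L_j \oplus L_{j+1})$ in the quotient convention with $Q_j = \P(L_j), Q_{j+1} = \P(L_{j+1})$ for some line bundles $L_j, L_{j+1}$ on $X^{(a,b)}$, determined up to a common twist by the ratio $L_j \otimes L_{j+1}^{-1}$. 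The claimed identification $F_j = \P(\O(-D_{0'}) \oplus \O(-D_{0"}))$ then amounts to showing $L_j \otimes L_{j+1}^{-1} \simeq \O(D_{0"} - D_{0'})$, together with matching the sections to the correct factors.

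To extract this line bundle, I would work in the local chart $U_{j+b+1}$ of $\tilde H$ from Theorem \ref{Hilb-local}, where $F_j^{(a|b)}$ is cut out by $v_{j+b} = u_{j+b+1} = 0$. The key defining relations in (\ref{sigma-uv-rel}), especially at the critical indices $k = j+b$ and $k = j+b+1$, relate the fibre $\P^1$-parameter to the $\sigma$-coordinates. Trivializing the bundle over the two open subsets $V_{0'} := X^{(a,b)} \setminus D_{0'}$ (where $\sigma_a^x \neq 0$) and $V_{0"} := X^{(a,b)} \setminus D_{0"}$ (where $\sigma_b^y \neq 0$) — which together cover the complement of the codimension-two locus $D_{0'} \cap D_{0"}$ and hence suffice to determine the line bundle globally — the transition function on $V_{0'} \cap V_{0"}$ works out, after eliminating the common $(u_{j+b}, v_{j+b+1})$-scaling using (\ref{sigma-uv-rel}), to be a unit multiple of the ratio $\sigma_a^x / \sigma_b^y$. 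Since $\sigma_a^x$ locally defines $D_{0'}$ and $\sigma_b^y$ defines $D_{0"}$, this ratio is a trivializing meromorphic section of $\O(D_{0"} - D_{0'})$, yielding $L_j \otimes L_{j+1}^{-1} \simeq \O(D_{0"} - D_{0'})$.

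To pin down the matching of sections, I would examine the degeneration behavior near $D_{0'}$: since the punctual scheme $Q_j^n = (x^{n-j+1}, y^j)$ has extra $x$-extension, as a cycle approaches $D_{0'}$ (an $x$-axis point colliding with the node) the $Q_j$-scheme extends continuously by absorbing the incoming point into its $x$-direction, which identifies $Q_j = \P(\O(-D_{0'}))$; the identification $Q_{j+1} = \P(\O(-D_{0"}))$ follows by the symmetric argument at $D_{0"}$. The main obstacle is the explicit extraction of the transition function from the $H_m$ equations: this requires careful bookkeeping of the $(u,v)$-conventions and simultaneous use of several relations in (\ref{sigma-uv-rel}) to verify that the various scaling factors combine to give exactly $\sigma_a^x/\sigma_b^y$ and not some higher power or corrected version. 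Once this focused local linear computation is executed, the rest of the proof is formal.
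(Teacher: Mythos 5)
Your overall strategy---use the two disjoint sections of Lemma \ref{Hab-struct} to write $F_j=\P(L_j\oplus L_{j+1})$, compute the difference class $L_j-L_{j+1}$ from local data, and globalize off the codimension-two locus $D_{0'}\cap D_{0"}$---is reasonable, and your target formulas ($L_j-L_{j+1}=D_{0"}-D_{0'}$, with $Q_j$ matched to the $\O(-D_{0'})$ summand) are the correct ones. But the step where you actually extract this class does not work as described. The relations \eqref{sigma-uv-rel} at the indices $i=j+b$, $j+b\pm 1$ restrict to the identity $0=0$ on the node scroll: on $X^{(a,b)}$ one has $\sigma^x_k\equiv 0$ for $k>a$ and $\sigma^y_k\equiv 0$ for $k>b$, and at the critical index the relation reads $\sigma^x_{n+a-j}u_{j+b}=\sigma^y_{j+b}v_{j+b}$ with $n+a-j>a$ and $j+b>b$, so both sides vanish identically---which is precisely why $[u_{j+b}:v_{j+b}]$ is a free fibre coordinate there in the first place. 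Hence no transition function, and in particular not $\sigma^x_a/\sigma^y_b$, can be read off from those relations. More fundamentally, the single local model $H_m$ only describes the scroll over a small neighborhood of the deepest point $(a0',b0")\in X^{(a,b)}$; over that neighborhood the scroll is a trivial $\P^1$-bundle (the fibre is ``constant,'' as noted before Lemma \ref{Hab-struct}) and $D_{0'}$, $D_{0"}$ are principal, so no computation confined to $H_m$, in whatever chart $U_i$, can detect the twist $\O(D_{0"}-D_{0'})$; and the trivializations over $V_{0'}$ and $V_{0"}$ that you invoke are not supplied by anything you have set up.

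What is genuinely needed is a comparison of fibre coordinates across strata of $X^{(a,b)}$: over $V_{0'}\cap V_{0"}$ the fibre is canonically coordinatized by the coefficient $s$ in the punctual ideal $(sx^{n-j}+y^j)$, whereas near a generic point of $D_{0'}$ the relevant local model is $H_{n+1}$ (not $H_m$), with fibre coordinate $[u_j:v_j]$, and the two are related through one factor of a local equation $c$ of $D_{0'}$. This comes from computing the universal ideal when a distal point $(x-c,y)$ collides with the node, $(sx^{n-j}+y^j)(x-c,y)=(sx^{n-j+1}-csx^{n-j}-cy^j,\;y^{j+1})$, i.e.\ $cu_j-sv_j=0$ in $H_{n+1}$ --- which is exactly the Claim inside the paper's proof via the interpolating sections $I_s$ (and, in refined form, Proposition \ref{pol-scroll}); the paper then concludes by intersection theory on the $\P^1$-bundle rather than by transition functions. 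If you insert this cross-stratum computation at generic points of $D_{0'}$ and of $D_{0"}$, your codimension-two globalization and your matching of $Q_j$, $Q_{j+1}$ to the two summands do go through, and your argument becomes a chart-by-chart version of the paper's; as written, however, the key computation is missing and the mechanism proposed in its place would fail.
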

\begin{proof} Our key tool is a $\C^*$- parametrized family
of sections
'interpolating' between  $Q_j$
and $Q_{j+1}$. Namely,
note that for any $s\in\C^*$, there is a well-defined
section $I_s$ of $F_j$ whose fibre over a general point
$z\in X^{(a,b)}$ is the scheme
$$I_s(z)=(sx^{n-j}+y^j)\coprod \sch(z),$$
where $\sch(z)$ is the unique subscheme of length $a+b$, disjoint from the nodes, corresponding to $z$.\par
\emph{Claim:} The fibre of $I_s$ over a point $z\in D_{0'}$
(resp. $z\in D_{0"}$) is
$Q^n_j$ (resp. $Q^n_{j+1}$).\par
\emph{Proof of claim}. Indeed set-theoretically the claim is clear from the
fact the this fibre corresponds to a length-$n$ punctual scheme
meeting the $x$-axis (resp. $y$-axis) with multiplicity at least $n-j+1$ (resp. $j+1$).\par
To see the same thing schematically, via equations in the local model $H_{n+1}$, we proceed as follows. Working near
a generic point $z_0\in D_{0'}$ we can, discarding distal
factors supported away from the nodes, write
the singleton scheme corresponding nearby cycle $z$ as  $\sch(z)=(x-c,y)$ where $c\to 0$ as $z\to z_0$, and then
$$I_s(z)=(sx^{n-j}+y^j)(x-c,y)=(sx^{n-j+1}-csx^{n-j}-cy^j, y^{j+1}).$$ Thus, in terms of the system of generators \refp{Y-equations}., $I_s(z)$ is defined locally by \beql{I_s-equation}{cu_j-sv_j=0} (with other $[u_k,v_k]$ coordinates either
$[1,0]$ for $k<j$ or $[0,1]$ for $k>j$.
The limit of this as $c\to 0$ is $[u_j,v_j]=[1,0]$, which is the point $Q_j$. \emph{QED Claim.}\par
 Clearly $I_s$ doesn't meet $Q_j$ or $Q_{j+1}$ away from $D_{0'}\cup D_{0"}$. Therefore, we have
\beql{}{ I_s\cap Q_j=Q_j.D_{0'},}
\beql{}{I_s\cap Q_{j+1}=Q_{j+1}.D_{0"};}
an easy calculation in the local model shows that the
intersection is transverse.
Because $Q_j\cap Q_{j+1}=\emptyset$, it follows that
\begin{eqnarray}I_a\sim Q_j+D_{0'}.F_j\\
\sim Q_{j+1}+D_{0"}.F_j.\end{eqnarray}
These relations also follow from the fact,
which comes simply from setting $s=0$ or
dividing by $s$ and setting $s=\infty$ in \refp{I_s-equation}., that
\beql{limits}{ \lim\limits_{s\to 0}I_s=Q_j+D_{0'}.F_j, \lim\limits_{s\to\infty} I_s= Q_{j+1}+D_{0"}.F_j}
It then  follows that
$$(Q_j)^2=Q_j.(I_s-D_{0'}.F_j)=Q_j.(Q_{j+1}+(D_{0"}-D_{0'}).F_j),$$
hence
\begin{eqnarray}(Q_j)^2=Q_j(D_{0"}-D_{0'}),\end{eqnarray}
therefore finally
\beql{nodescroll-0}{L_j^n-L^n_{j+1}=D_{0"}-D_{0'}.}
This proves the Lemma.\end{proof}

\newsubsection{Globalization}\label{globalization}
We now wish to extend the discussion of the last subsection,
in paticular the notion of node scrolls, to the
general case, with higher-dimensional base and fibres with
more than one node: in this case
a node scroll becomes a $\P^1$-bundle over a relative Hilbert
scheme associated to a 'boundary family' of $X/B$, i.e a
family obtained essentially as the partial normalization of the subfaily of $X/B$ lying over the normalization of a component of the locus of singular curves in $B$ (a.k.a. the boundary of $B$).\par
 To this end let $\pi:X\to B$ now denote an arbitrary flat
 family of
nodal curves of arithmetic genus $g$ over an irreducible base, with smooth
generic fibre. In order to specify the additional information
required to define a node scroll, we make
the following definition.
\begin{defn}
A \underline{boundary datum} for $X/B$ consists of\begin{enumerate}
\item an irreducible variety $T$ with a map  $\delta:T\to B$
unramified
to its image;
\item a lifting $\theta:T\to X$ of $\delta$ such that each
$\theta(t)$ is a node of $X_{\delta(t)}$;
\item a labelling, continuous in $t$, of the two branches
of $X_{\delta(t)}$ along $\theta(t)$ as $x$-axis and $y$-axis.

\end{enumerate}
Given such a datum, the \underline{associated boundary family
} $X^\theta_T$ is the
normalization (= blowup) of the base-changed family $X\times_BT$
along the section $\theta$, i.e.
\[ X^\theta_T=\Bl_\theta(X\times_BT),
\] viewed as a family of curves of genus
$g-1$ with two, everywhere distinct, individually defined
 marked points $\theta_x, \theta_y$. We denote by $\phi$ the
 natural map fitting in the diagram  $$
 \begin{matrix}
 X^\theta_T&&\\
 \downarrow&\stackrel{\phi}{\searrow}&\\
 X\times_BT&\to&X\\
 \downarrow&&\downarrow\\
 T&\stackrel{\delta}{\to}&B.
 \end{matrix}
 $$

\end{defn}
Note that any component $T_0$ of the boundary of $B$, i.e.
the (divisorial) locus of
singular fibres, gives rise to (finitely many) boundary data in this
sense: first consider a component $T_1$ of the
normalization of $T_0\times_B\mathrm{sing}(X/B)$, which already
admits a node-valued lifting $\theta_1$ to $X$, then further base-change
by the normal cone of $\theta_1(T_1)$ in $X$
(which is 2:1 unramified, possibly disconnected, over $T_1$),
to obtain a boundary datum as above. 'Typically', the curve corresponding to a general
point in $T_0$ will have a single node $\theta$ and then the degree of $\delta$ will be 1 or 2 depending on whether the branches along $\theta$ are distinguishable in $X$ or not (they always are distinguishable if $\theta$ is a separating node
and the separated subcurves have different genera). Proceeding
in this way and taking all components which arise, we obtain finitely many boundary data which 'cover', in an obvious sense,
the entire boundary of $B$. Such a collection, weighted so that
each boundary component $T_0$ has total weight $=1$ is called a
\emph{covering system of boundary data}.
\par
\begin{propdef}
Given a boundary datum $(T, \delta, \theta)$ for $X/B$ and
natural numbers $1\leq j<n$, there exists a $\P^1$-bundle $F^n_j(\theta)$,
called a \underline{node scroll} over
the Hilbert scheme $(X^\theta_T)\sbr m-n.$,
endowed with two disjoint sections $Q^n_{j,j}, Q^n_{j+1,j}$,
 together with a surjective
map generically of degree equal to $\deg(\delta)$
of
$$\bigcup\limits_{j=1}^{n-1} F^n_j(\theta):=
\coprod\limits_{j=1}^{n-1} F^n_j(\theta)/
\coprod\limits_{j=1}^{n-2} (Q^n_{j+1,j}\sim Q^n_{j+1, j+1})$$
onto
the closure in $X\sbr m._B$ of the locus of
schemes having length precisely $n$ at $\theta$, so that
a general fibre of $F^n_j(\theta)$ corresponds to the family $C^n_j$
of length-$n$ schemes at
$\theta$ generically of type $I^n_j(a)$, with the two nonprincipal schemes $Q^n_j, Q^n_{j+1}$ corresponding to
$Q^n_{j,j}, Q^n_{j,j+1}$ respectively.
We denote by $\delta^n_j$ the natural map of $F^n_j(\theta)$
to $X\sbr m._B$.
\end{propdef}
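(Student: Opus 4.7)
The plan is to globalize the local construction of \S1.4 (especially Lemma \ref{Hab-struct} and Lemma \ref{nodescroll0}) to the setting of a general boundary datum, via a classifying morphism to the relative Hilbert scheme.

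First I would set up the base-changed family. Pulling back along $\delta$ gives $X_T=X\times_BT\to T$ with the distinguished section $\theta$ of nodes, and by definition $X^\theta_T=\Bl_\theta(X_T)$ carries the two marked sections $\theta_x,\theta_y$ separating the branches. Since $\delta$ is unramified, \'etale-locally near any $t_0\in T$ the family $X_T$ admits coordinates $(x,y)$ together with pullback parameters from $B$, with $\theta$ cut out by $x=y=0$ and the family given by $xy=f$ for a suitable function $f$; the labelling in part (iii) of the boundary datum identifies the $x$-axis and $y$-axis coherently over $T$.

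Next I would define $F^n_j(\theta)$ through a classifying morphism. Consider the open locus $V\subset(X^\theta_T)\sbr m-n.$ of length-$(m-n)$ schemes disjoint from $\theta_x\cup\theta_y$. Over $V$, $\phi$ pushes the universal scheme forward to a family of length-$(m-n)$ subschemes of $X_T$ disjoint from $\theta$. \'Etale-locally over $T$, the local model of \S1.3 supplies the $\P^1$-family $C^n_j$ of length-$n$ schemes at $\theta$, parametrized by $[u_j:v_j]$ with endpoints $Q^n_j=(x^{n-j+1},y^j)$ at $[0:1]$ and $Q^n_{j+1}=(x^{n-j},y^{j+1})$ at $[1:0]$. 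Taking the disjoint union with the pushed-forward universal scheme produces a length-$m$ family, and the resulting classifying morphism to $X\sbr m._B$ defines $F^n_j(\theta)|_V$ as a $\P^1$-bundle together with its two sections $Q^n_{j,j},Q^n_{j+1,j}$ and the map $\delta^n_j$.

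The main obstacle is to patch these \'etale-local $\P^1$-bundles into a globally defined projective bundle over all of $(X^\theta_T)\sbr m-n.$. I would bypass any explicit cocycle computation by invoking the computation in Lemma \ref{nodescroll0}: its proof shows that the divisor class of $Q^n_{j,j}-Q^n_{j+1,j}$ equals the pullback of $D_{\theta_y}-D_{\theta_x}$, where $D_{\theta_x},D_{\theta_y}\subset(X^\theta_T)\sbr m-n.$ are the two boundary divisors parametrizing schemes containing $\theta_x$ and $\theta_y$ respectively. Hence $F^n_j(\theta)=\P(\mathcal{L}\oplus\mathcal{L}(D_{\theta_y}-D_{\theta_x}))$ for a suitable line bundle $\mathcal{L}$, and the projective bundle is intrinsically defined.

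The remaining verifications are direct. The identifications $Q^n_{j+1,j}\sim Q^n_{j+1,j+1}$ hold because both sections parametrize the same length-$n$ scheme $Q^n_{j+1}=(x^{n-j},y^{j+1})$ at $\theta$ attached to the same off-node portion, so their images under $\delta^n_j$ and $\delta^n_{j+1}$ coincide in $X\sbr m._B$. Surjectivity onto the closure of the exact-length-$n$ locus follows from the fibrewise description $c_n\inv(c')=\bigcup_{j=1}^{n-1}C^n_j$ of the punctual Hilbert scheme and the specialization analysis of \S1.4. Finally, for the degree: a generic scheme $\zeta\in X\sbr m._B$ in the image is supported at length exactly $n$ at a single node $q$ of its fibre $X_b$, and its preimages in the glued union of node scrolls are in bijection with $\delta\inv(b)\cap\theta\inv(q)$, a set of cardinality $\deg(\delta)$.
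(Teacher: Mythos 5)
Your construction over the open locus $V\subset (X^\theta_T)\sbr m-n.$ of residual schemes disjoint from $\theta_x\cup\theta_y$ is fine and agrees in substance with the paper's definition of $F^n_j(\theta)$ as the incidence locus $\{(w,t,z):\phi_*(c_{m-n}(w))+n\theta=c_m(z)\}$. The genuine gap is the step you call "the main obstacle" and then bypass: extending the $\P^1$-bundle, its two sections, \emph{and} the map $\delta^n_j$ to $X\sbr m._B$ over the boundary strata of $(X^\theta_T)\sbr m-n.$ where the residual length-$(m-n)$ scheme meets $\theta_x$ or $\theta_y$. Appealing to Lemma \ref{nodescroll0} does not do this for three reasons. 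First, that lemma is a computation carried out inside the already-constructed local scrolls $F_j^{(a|b)}\subset H_m$ of Lemma \ref{Hab-struct} (one-parameter base, scroll base $X^{(a,b)}$ inside a fixed singular fibre); the content of the present statement is precisely that these local pieces, over the various strata $(a,b)$ and with the index shift from $C^n_j$ to $C^{n+e}_{j+b}$ when $b$ residual points approach along the $y$-branch, glue into a single $\P^1$-bundle over the whole Hilbert scheme of the boundary family — so invoking the lemma to avoid the boundary analysis is circular. Second, even granting a formula for the difference of the two section classes, this only pins down an abstract projective bundle; it does not show that the closure of $F^n_j(\theta)|_V$ is that bundle, nor does it extend the classifying morphism $\delta^n_j$ across the boundary, and your surjectivity and degree claims silently use that extension. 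Third, the formula itself is wrong over a positive-dimensional $T$: by Proposition \ref{pol-scroll-global} and \eqref{OEtheta}, the difference of the two section bundles is not just the pullback of $\theta_y\scl m-n.-\theta_x\scl m-n.$ but also involves $(n-j)\psi_x-j\psi_y$ pulled back from $T$; Lemma \ref{nodescroll0} sees none of this because there $T$ is a point.

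The paper closes the gap by working directly in the local model: near a cycle having multiplicity $b$ at $\theta_y$ (so total multiplicity $n+e$ at the node), the locus $F^n_j(\theta)$ is cut out by the vanishing of all $Z_i$ with $i\neq j+b,\,j+b+1$, in the coordinates of $H_{n+e}$ coming from the Blowup Theorem \ref{blowup}; this shows at once that the closure projects isomorphically to a $\P^1$-bundle over $(X^\theta_T)\sbr m-n.$ and that the map to $X\sbr m._B$ is defined there, with the stated fibres. Some version of that boundary computation (or of the specialization statement that $C^n_j$ over a nearby cycle limits smoothly to $C^{n+e}_{j+b}$) is unavoidable, and your proposal as written does not supply it.
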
\begin{proof}[Proof-construction]
The scroll $F_j^n(\theta)$ is defined as follows. Fixing
the boundary data, consider first the locus \[\bar{ F}_j^n \subset T\times_B X\sbr m._B\]  consisting of
compatible pairs $(t,z)$ such that $z$ is in the closure of
the set of schemes which are of type $I^n_j$ (i.e. $x^{n-j}+ay^j, a\in \C^*$) at $\theta(t)$, with
respect to the branch order $(\theta_x, \theta_y)$.
The discussion of the previous subsection shows that
the general fibre of $\bar F_j$ under the cycle map is a
$\P^1$, namely a copy of $C^n_j$; moreover the closure
of the locus of schemes having multiplicity $n$ at $\theta$
is the union $\bigcup\limits_{j-1}^{n-1}\bar F^n_j.$
In fact locally over a cycle having multiplicity precisely $n+e$ at $\theta$, $\bar F^n_j$
is a union of components $\bar F^{(n:a,b)}_j, a+b=e$, where $\bar F^{(n:a,b)}_j$ maps to $(X')^a\times (X")^b$ and is defined in the local model $H_{n+e}$ by
 is defined by the vanishing of
all $Z_i, i\neq j+b, j+b+1$ or alternatively, in terms of $u,v$ coordinates, by
\[ v_1=...=v_{j+b}=u_{j+b+1}=...=u_{n+e}=0\]\par
Then $F^n_j(\theta)$ is the locus \eqspl{}{
\{(w,t,z)\in (X^\theta_T)\sbr m-n.\times_T\bar{F}^n_j:
\phi_*(c_{m-n}(w))+n\theta=c_m(z),\}
}
where $\phi:X^\theta\to X$ is the natural map, clutching together $\theta_x$ and $\theta_y$, and $\phi_*$ is the induced push-forward map on cycles. Then the results of the previous section show that  $F^n_j(\theta)$  is
locally defined near a cycle
having multiplicity $b$ at $\theta_y$, e.g. by the vanishing
of the $Z_i, i\neq j+b, j+b+1$ on
$$\{(w,u,Z)\in (X^\theta_T)\sbr m-n.\times X\spr e._B\times\P^{n+e}:\phi_*(c_{m-n}(w))_\theta+n\theta=u$$
where $._\theta$ indicates the portion near $\theta$.
The latter locus certainly projects isomorphically to its image
in $(X^\theta_T)\sbr m-n.\times\P^{n+e}$, hence $F^n_j(\theta$ is a $\P^1$-bundle over $(X^\theta_T)\sbr m-n.$.
Since $F^n_j(\theta)$
admits the two sections $Q^n_{j,j}, Q^n_{j+1,j}$, it is the projectivization of a decomposable rank-2
vector bundle.
\end{proof}
In addition to the node scroll $F^n_j(\theta)$,
we will also consider its ordered version,
i.e. \eqspl{}
{OF^n_j(\theta)=F^n_j(\theta)\times_{(X^\theta_T)^{(m-n)}}
(X^\theta_T)^{m-n},
} and and similarly for $\bar{OF}^n_j(\theta)$. Also, for each $n$-tuple $I\subset [1,m]$, the corresponding
locus in $X\scl m._B$, i.e.
\eqspl{}{
OF^I_j=\{(w,t,z)\in (X^\theta_T)\scl m-n.\times_T\bar{OF}^n_j:
\phi_*(oc_{m-n}(w))+\sum\limits_{i\in I}
p_i^*(\theta)=c_m(z)\},
} this being the 'node scroll inserted over the $I$-indexed
coordinates.

\newsubsection{Reverse engineering}
 Our task now is effectively
to 'reverse-engineer' an ideal in the $\sigma$'s whose syzygies
are given by \eqref{sigma-Z-rel} and \eqref{Z-rel-quadratic} . To this end, it is convenient to
introduce order in the coordinates. Thus let
$OH_m=H_m\times_{\Sym^m(U/B)}U^m_B$, so we have a cartesian
diagram \beq \begin{matrix} OH_m&\stackrel {\varpi_m}{\longrightarrow}&
H_m\\ o\frak c_m \downarrow &\square&\downarrow \frak c_m\\
X^m_B&\stackrel{ \omega_m}{\longrightarrow}& X\spr
m._B\end{matrix}
 \eeq and its global analogue
\beq\begin{matrix} X^{\lceil m\rceil}_B&\stackrel {\varpi_m}{\longrightarrow}&
X\sbr m._B\\ o\frak c_m \downarrow &\square&\downarrow \frak c_m\\
X^m_B&\stackrel{ \omega_m}{\longrightarrow}& X\spr m._B\end{matrix}\eeq

Note that $ X\spr m._B$ is normal and Cohen-Macaulay: this follows from the fact that it is a quotient by
$\frak S_m$ of $X^m_B$, which is a locally complete intersection
with singular locus of codimension $\geq 2$ (in fact, $>2$, since
$X$ is smooth). Alternatively, normality of $ X\spr m._B$ follows
from the fact that $H_m$ is smooth and the fibres of \nl$\frak
c_m:H_m\to
 X\spr m._B$ are connected (being products of connected chains of
rational curves). Note that $\omega_m$ is simply ramified
generically over $D^m$ and we have \beq \omega_m^*(D^m)=2OD^m  \eeq where
\beq OD^m=\sum\limits_{i<j}D^m_{i,j} \eeq where
$D^m_{i,j}=p_{i,j}\inv(OD^2)$ is the locus of points whose $i$th and
$j$th components coincide.
 To prove $\frak c_m$ is equivalent to the
blowing-up of $D^m$ it will suffice to prove that $o\frak c_m$ is
equivalent to the blowing-up of $2OD^m=\omega_m^* (D^m)$ which in
turn is equivalent to the blowing-up of $OD^m.$ 
The advantage of working with $OD^m$ rather than its unordered
analogue is that at least some of its equations are easy to write
down: let \beq v^m_x=\prod_{1\leq i<j\leq m}(x_i-x_j), \eeq and likewise
for $v^m_y.$ As is well known, $v^m_x$ is the determinant of the Van
der Monde matrix
\beq V^m_x=\left [\begin{matrix} 1&\ldots&1\\x_1&\ldots&x_m\\\vdots&&\vdots\\
x_1^{m-1}&\ldots&x_m^{m-1}\end{matrix} \right ]. \eeq
 Also
set
\beq \tilde{U}_i=\varpi_m\inv(U_i), \eeq where $U_i$ is as in (1.3.7),
being a neighborhood of $q_i$ on $H_m$. Then in $U_1$, the
universal ideal $\I$ is defined by
\beq F_0, \ \ F_1=y+(\text{function of }x) \eeq and consequently the
length-$m$ scheme corresponding to $\I$ maps isomorphically to its
projection to the $x$-axis. Therefore over
$\tilde{U}_1=\varpi_m\inv(U_1),$ where $F_0$ splits as $\prod
(x-x_i),$ the equation of $OD^m$ is simply given by \beq G_1=v^m_x. \eeq
Similarly, the equation of $OD^m$ in $\tilde{U}_m$ is given by
\beq G_m=v^m_y. \eeq New let \beq \Xi:OH_m\to\P^{m-1} \eeq be the morphism
corresponding to $[Z_1,...,Z_m]$, and set $L=\Xi^*(\O(1)).$ Note
that $\tilde{U}_i$ coincides with the open set where $Z_i\neq 0$,
so $Z_i$ generates $L$ over $\tilde{U}_i.$ Let
\beq O\Gamma\spr m.=o\frak c_m\inv(OD^m). \eeq
This is a $1/2-$Cartier divisor because $2O\Gamma\spr
m.=\varpi_m\inv(\Gamma\spr m.)$ and $\Gamma\spr m.$ is Cartier,
$H_m$ being smooth. In any case, the ideal $\O(-O\Gamma\spr m.)$ is
a divisorial sheaf (reflexive of rank 1).
 Our aim now is to construct an isomorphism
\beql{gamma}{ \gamma:\O(-O\Gamma\spr m.)\to L.}
As we shall see, this isomorphism
will easily imply Theorem 1. To construct $\gamma,$ it suffices
to specify it on each $\tilde{U}_i.$
\newsubsection{ Mixed Van der
Mondes and conclusion of proof}  A clue as to how the
latter might be
done comes from the relations (1.4.2-1.4.3). Thus, set
\beql{Gdef1}{ G_i=\frac{(\sigma_m^y)^{i-1}}{t^{(i-1)(m-i/2)}}v^m_x=
\frac{(\sigma_m^y)^{i-1}}{t^{(i-1)(m-i/2)}}G_1,\ \ i=2,...,m.} Thus,  \beql{Gdef2}{ G_2=\frac{\sigma^y_m}{t^{m-1}}G_1,
G_3=\frac{\sigma^y_m}{t^{m-2}}G_2,...,
G_{i+1}=\frac{\sigma^y_m}{t^{m-i}}G_i, i=1,...,m-1.}

 An elementary calculation shows that if we denote by
$V^m_i$ the 'mixed Van der Monde' matrix
\beq V^m_i=\left[\begin{matrix} 1&\ldots&1\\x_1&\ldots&x_m\\\vdots&&\vdots\\
x_1^{m-i}&\ldots&x_m^{m-i}\\y_1&\ldots&y_m\\\vdots&&\vdots\\
y_1^{i-1}&\ldots&y_m^{i-1}\end{matrix}\right]\eeq then we
have
\beql{vandermonde}{ G_i=\pm\det (V^m_i), i=1,...,m.}
\begin{minipage}[0pt,50pt]{450pt}
$\ulcorner$ Indeed for $i=1$ this is standard; for general $i$, it suffices
to prove the analogue of \refp{Gdef2}. for the mixed Van der Monde determinants. For this, it suffices to multiply each $j$th column of $V^m_i$ by $y_j$,
and factor a $t=x_jy_j$ out of each of rows  $2,...,m-i+1$,
 which yields
\beql{vandermonde1}{\sigma^y_m\det(V^m_i)=(-1)^{m-i+1}t^{m-i}V^m_{i+1}.}
$\llcorner$\end{minipage}\par
    From \refp{vandermonde}. it follows, e.g., that $G_m$ as given in \refp{Gdef1}. coincides with
$v^m_y.$ Next we claim \begin{lem}
\label{Gi-generates-on-Ui} $G_i$ generates $\O(-O\Gamma\spr m.)$ over
$\tilde{U}_i.$\end{lem}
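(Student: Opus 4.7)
The plan is to identify $G_i=\pm\det V^m_i$ as the determinant of the evaluation morphism of the universal cobasis at the ordered tuple parametrized by $\tilde U_i$, and then to read off its divisor.

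By Theorem \ref{Hilb-local}(v), the monomials $f_1,\dots,f_m:=1,x,\dots,x^{m-i},y,\dots,y^{i-1}$ form an $\O_{U_i}$-basis of $\O_{\mathcal{Z}}$, where $\mathcal{Z}\subset U_i\times_BU$ is the universal length-$m$ subscheme. Pulled back to $\tilde U_i=\varpi_m\inv(U_i)$, we obtain the universal ordered tuple of sections $p_1,\dots,p_m:\tilde U_i\to X$ whose cycle equals that of $\tilde{\mathcal Z}:=\mathcal Z\times_{U_i}\tilde U_i$, together with the $\O_{\tilde U_i}$-linear evaluation morphism
\[
\mathrm{ev}:\O_{\tilde{\mathcal Z}}\To \bigoplus_{j=1}^m p_j^*\O_X
\]
of rank-$m$ locally free sheaves. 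Its matrix in the $f_k$ and the standard bases is precisely $V^m_i$, so by \refp{vandermonde}. we have $\det(\mathrm{ev})=\pm G_i$.

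We then claim that $(G_i)=O\Gamma\spr m.|_{\tilde U_i}$ as Weil divisors; since $\O(-O\Gamma\spr m.)$ is the divisorial ideal sheaf of $O\Gamma\spr m.$, this suffices to prove the lemma. Three points are involved. First, whenever $p_j=p_k$ for some $j\ne k$, columns $c_j$ and $c_k$ of $V^m_i$ coincide, so $G_i$ vanishes on every component of $o\frak c_m\inv(D^m_{j,k})$ meeting $\tilde U_i$. Second, at a generic point of such a component, the column operation $c_k\mapsto c_k-c_j$ followed by cofactor expansion factors $\det V^m_i$ as a local defining parameter of the diagonal ($x_j-x_k$ or $y_j-y_k$, depending on the branch) times a mixed Van der Monde of size $m-1$ which is generically nonzero; hence the vanishing order of $G_i$ is exactly $1$. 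Third, at a geometric point of $\tilde U_i$ where $\tilde{\mathcal Z}$ is reduced with pairwise distinct $p_j$'s --- a dense open condition since the generic fibres of $X/B$ are smooth --- one has $\O_{\tilde{\mathcal Z}}=\prod\kappa(p_j)$ and $\mathrm{ev}$ is the identity, so $G_i=\pm\det(\mathrm{ev})$ is nonzero there. Thus $(G_i)$ is supported exactly on $O\Gamma\spr m.|_{\tilde U_i}$ with all multiplicities equal to $1$, establishing the claim.

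The principal obstacle is the order-one vanishing assertion, which must be verified uniformly for every pair $j\ne k$ and on every branch of $\tilde U_i$, including near the special fibre where $x$- and $y$-axis diagonals can come together. A clean way to package this is the column-operation argument above, together with an inductive use of the nonvanishing of the size-$(m-1)$ mixed Van der Monde --- itself a lower instance of the same setup --- at the generic point of a simple diagonal.
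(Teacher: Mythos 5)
Your overall strategy is sound and genuinely different in flavor from the paper's argument. The paper proves the lemma by brute-force order bookkeeping: it lists the components $\Theta_I$ of the special fibre as in \refp{Theta}., computes $\ord_{\Theta}(G_j)$ explicitly on each of them, observes that this order vanishes exactly on $\Theta_{j-1}\cup\Theta_j$, and that these are the only boundary components meeting $\tilde U_j$; the behavior over $t\neq 0$ is dismissed as clear. You instead package the key nonvanishing statement into the moduli-theoretic observation that, over $\tilde U_i$, the monomials of Theorem \ref{Hilb-local}(v) trivialize $\O_{\tilde{\mathcal Z}}$, so that by \eqref{vandermonde} the value of $G_i$ at a point with pairwise distinct $p_j$ is the determinant of the (invertible) evaluation isomorphism $\O_Z\to\prod\kappa(p_j)$ written in bases; this neatly replaces the paper's computation of the orders along the $\Theta_k$ met by $\tilde U_i$, and it also gives regularity of $G_i$ for free. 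The trade-off is that the paper's explicit formula $\ord_\Theta(G_j)=(k-j)^2+(k-j)$ carries extra information that is reused elsewhere, while your version isolates exactly what the lemma needs.

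Two points, however, need repair. First, your multiplicity-one step is not right as written: after $c_k\mapsto c_k-c_j$ and factoring out the diagonal parameter, the residual determinant is a confluent (derivative-column) mixed Van der Monde, not a mixed Van der Monde of size $m-1$, so the proposed induction does not directly apply and its generic nonvanishing would itself need an argument. The quick correct route is to note that the generic point of each relevant component lies over $t\neq 0$ (where all $y_\ell$ and $\sigma^y_m$ are units), so by \refp{Gdef1}. $G_i$ differs from $G_1=v^m_x$ by a unit there, and $v^m_x$ vanishes to order exactly one along $D^m_{j,k}$ at its generic point. Second, you tacitly assume that every codimension-one component of $o\frak c_m\inv(D^m_{j,k})$ (equivalently, every component of $O\Gamma\spr m.$) meeting $\tilde U_i$ is the proper transform of $D^m_{j,k}$, with generic point lying over smooth fibres; your phrase ``depending on the branch'' even suggests you contemplate components sitting over the special fibre, where the smooth-fibre factorization breaks down. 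What rules such components out is the smallness of the cycle map (positive-dimensional fibres occur only over the locus of cycles with multiplicity $\geq 2$ at a node, which has codimension $\geq 3$ in $X^m_B$, so the exceptional locus has codimension $\geq 2$); this fact is recorded in the paper (via \cite{R} and the local model) and must be invoked both to know that $\mathrm{div}(G_i)$ has no components other than the proper transforms and that $O\Gamma\spr m.$ itself is the sum of those proper transforms with multiplicity one. With these two insertions your proof is complete.
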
\begin{proof}[Proof of Lemma] This is clearly true where $t\neq 0$ and it remains
to check it along the special fibre $OH_{m,0}$ of $OH_m$ over $B$.
Note that $OH_{m,0}$ is a sum of components of the form
\beql{Theta} {\Theta_I=\text{Zeros}(x_i,i\not\in I, y_i, i\in I),
 I\subseteq\{1,...,m\}, } none of which is contained in the
 singular locus of $OH_m.$
  Set
 \beq \Theta_i=\bigcup\limits_{|I|=i}\Theta_I. \eeq Note that
 \beq \Tilde{C}_i\times 0\subset \Theta_i, i=1,...,m-1 \eeq and therefore
 \beq \tilde{U}_i\cap\Theta_j=\emptyset, j\neq i-1, i. \eeq
 Note that $y_i$ vanishes to order 1 (resp. 0) on $\Theta_I$
 whenever $i\in I$ (resp. $i\not\in I$). Similarly, $x_i-x_j$
 vanishes to order 1 (resp. 0) on $\Theta_I$ whenever both $i,j\in
 I^c$ (resp. not both $i,j\in I^c$). From this, an elementary
 calculation shows that the vanishing order of
 $G_j$ on every component $\Theta$ of
 $\Theta_k$ is \beql{ordG}{ \ord_{\Theta}(G_j)=(k-j)^2+(k-j).}
 We may unambiguously denote this number by $\ord_{\Theta_k}(G_j)$.
 Since this order is nonnegative for all $k,j,$
 it follows firstly that
 the rational function $G_j$ has no poles, hence is in fact regular
 on $X^m_B$ near
 $mp$ (recall that $X^m_B$ is normal); of course,
 regularity of $G_j$ is also immediate from \refp{vandermonde}..
  Secondly, since this order is zero for $k=j, j-1$, and
 $\Theta_j, \Theta_{j-1}$ contain all the components of $OH_{m,0}$
  meeting
 $\tilde{U}_j$, it follows that in $\tilde{U}_j,$ $G_j$ has no
 zeros besides $O\Gamma\spr m.\cap\tilde{U}_j,$ so $G_j$ is a generator
 of $\O(-O\Gamma\spr m.)$ over $\tilde{U}_j.$ QED Lemma.
  \end{proof}
  The Lemma yields a set of generators for the ideal of $OD^m$:
  \begin{cor}[of Lemma]\label{Gs-generate-bigdiag}
  The ideal of $OD^m$ is generated, locally
 near $p^m$, by $G_1,...,G_m.$  \end{cor}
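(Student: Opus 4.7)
The plan is to prove the two inclusions $J:=(G_1,\ldots,G_m)\subseteq\mathcal{I}_{OD^m}$ and $\mathcal{I}_{OD^m}\subseteq J$ as ideals in $\mathcal{O}_{X^m_B}$ locally near $p^m$.

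The first inclusion is immediate from the Van der Monde expression \refp{vandermonde}.: if $(x_j,y_j)=(x_k,y_k)$ then columns $j$ and $k$ of the mixed Van der Monde matrix $V^m_i$ coincide, so $G_i=\pm\det V^m_i$ vanishes on $D^m_{j,k}$. Since this holds for every pair $j<k$, each $G_i$ vanishes on $OD^m=\bigcup_{j<k}D^m_{j,k}$, so $G_i\in\mathcal{I}_{OD^m}$.

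For the reverse inclusion I would descend from $OH_m$ to $X^m_B$ via the proper birational morphism $o\frak c_m$. The first step is the identification
\[
\mathcal{I}_{OD^m}=(o\frak c_m)_*\mathcal{O}_{OH_m}(-O\Gamma^{[m]}),
\]
which follows from the normality of $X^m_B$ (recorded in \S1.5), the reducedness of $O\Gamma^{[m]}$ (purity Lemma \ref{purity}), and $(o\frak c_m)_*\mathcal{O}_{OH_m}=\mathcal{O}_{X^m_B}$ (normality plus connectedness of fibres): together these imply that a function $\tilde s$ on $X^m_B$ lies in $\mathcal{I}_{OD^m}$ iff its pullback vanishes on $O\Gamma^{[m]}$. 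Next, Lemma \ref{Gi-generates-on-Ui} combined with the covering $OH_m=\bigcup_i\tilde U_i$ near the special fibre produces a surjection of $\mathcal{O}_{OH_m}$-modules
\[
\mathcal{O}_{OH_m}^{\oplus m}\twoheadrightarrow\mathcal{O}_{OH_m}(-O\Gamma^{[m]}),\qquad (a_1,\ldots,a_m)\mapsto\sum a_iG_i.
\]
Pushing forward along $o\frak c_m$ and using the identifications above yields the exact sequence
\[
\mathcal{O}_{X^m_B}^{\oplus m}\stackrel{(G_1,\ldots,G_m)}{\To}\mathcal{I}_{OD^m}\To R^1(o\frak c_m)_*K\To 0,
\]
where $K$ is the kernel sheaf on $OH_m$. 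The desired equality $J=\mathcal{I}_{OD^m}$ is therefore equivalent to the vanishing $R^1(o\frak c_m)_*K=0$.

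The main obstacle is this $R^1$-vanishing, which is localized at $p^m$ since $o\frak c_m$ is an isomorphism, or at worst has fibres with trivially vanishing $H^1$, elsewhere. To handle it I would invoke the theorem on formal functions, reducing the vanishing to a cohomology computation on the scheme-theoretic fibre of $o\frak c_m$ over $p^m$ and its infinitesimal thickenings. This fibre is the connected chain of rational curves $\bigcup_{i=1}^m C^m_i$ (Theorem \ref{Hilb-local}), and the restriction of $K$ admits an explicit presentation via the syzygies \refp{Gdef2}. ($t^{m-i}G_{i+1}=\sigma^y_m G_i$). Combining these syzygies with the standard vanishing of $H^1$ of line bundles of non-negative degree on each $C^m_i\simeq\mathbb{P}^1$, and propagating the vanishing along the chain, completes the argument.
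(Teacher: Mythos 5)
Your first inclusion is fine, and the overall strategy (identify $\I_{OD^m}$ with $(o\frak c_m)_*\O(-O\Gamma\spr m.)$, push forward the surjection $m\O_{OH_m}\to\O(-O\Gamma\spr m.)$ coming from Lemma \ref{Gi-generates-on-Ui}, and reduce to a vanishing of a higher direct image) is coherent. But the crux of your argument, $R^1(o\frak c_m)_*K=0$, is only asserted, and it is exactly where all the content lies. (Minor point first: your sequence $\O^{\oplus m}\to\I_{OD^m}\to R^1(o\frak c_m)_*K\to 0$ is not exact at the last spot -- it continues into $R^1(o\frak c_m)_*\O^{\oplus m}$ -- though for your purposes only exactness at $\I_{OD^m}$ matters.) The serious issue is the proposed verification by formal functions. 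Over the $n$-th infinitesimal thickening $Z_n$ of the punctual fibre one has the exact sequence $0\to K\otimes\O_{Z_n}\to\O_{Z_n}^{\oplus m}\to\O(-O\Gamma\spr m.)\otimes\O_{Z_n}\to 0$ (this much is fine, since $\O(-O\Gamma\spr m.)$ is invertible near the fibre by the Lemma), and the resulting $H^1$ computation requires either the surjectivity of $H^0(\O_{Z_n})^{\oplus m}\to H^0(\O(-O\Gamma\spr m.)\otimes\O_{Z_n})$ -- which is essentially the formal-local restatement of the Corollary you are trying to prove, so nothing has been gained -- or else $H^1(Z_n,\O_{Z_n})=0$ for all thickenings, a nontrivial rational-singularities-type statement about $OH_m$ over $X^m_B$ that nothing in your sketch addresses. "Propagating the vanishing along the chain" only treats the reduced fibre (where indeed $K$ restricts on each $C^m_i\simeq\P^1$ to $\O(-1)\oplus(m-2)\O$, since only $Z_i,Z_{i+1}$ survive there); the thickenings, which carry the actual difficulty, are untouched. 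So as written there is a genuine gap, not just a missing routine verification.

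By contrast, the paper's proof avoids pushforward and cohomology altogether by exploiting right-exactness of pullback: letting $Q$ be the cokernel of the map $\O^{\oplus m}\to\I_{OD^m}$ given by $G_1,\dots,G_m$, Lemma \ref{Gi-generates-on-Ui} shows the $G_i$ generate upstairs near every point of every fibre, so $\frak c_m^*(Q)=0$; since the cycle map is surjective, Nakayama at a point of any fibre over a point of $\mathrm{Supp}(Q)$ forces $Q=0$. If you want to keep a pushforward-style argument you must genuinely prove the $R^1$-vanishing (or the surjectivity on formal neighborhoods), which would require an explicit computation with the thickened exceptional fibre that your proposal does not supply; otherwise the pullback/Nakayama route is both shorter and complete.
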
\begin{proof}
  If $Q$ denotes the cokernel of the map
  $m\O_{X^m}\to \O_{X^m}(-{OD^m})$ given by $G_1,...,G_m$,
  then $c_m^*(Q)=0$ by the Lemma,
   hence $Q=0$, so the $G$'s generate $\O_{X^m}(-{OD^m})$.
  \end{proof}
  Now we can construct the desired isomorphism $\gamma$ as in
  \eqref{gamma}, as
   follows. Since $Z_j$ is a
 generator of $L$ over $\tilde{U}_j,$ we can define our
 isomorphism $\gamma$ over $\tilde{U}_j$ simply by specifying that
 \beq \gamma (G_j)=Z_j\  {\text{on}}\ \tilde{U}_j. \eeq Now to check that
 these maps are compatible, it suffices to check that
 \beq G_j/G_k=Z_j/Z_k \eeq as rational functions (in fact, units over
 $\tilde{U}_j\cap\tilde{U}_k$). But the ratios $Z_j/Z_k$ are
 determined by the relations \refp{sigma-Z-rel}., while $G_j/G_k$
 can be computed
 from \refp{Gdef2}., and it is trivial to check that these agree. \par
 Now we can easily complete
 the proof of Theorem 1. The existence of $\gamma$, together with the universal property of blowing up, yields a morphism
 \[ Bc_m:OH_m\to B_{OD^m}X^m_B\]
 which is clearly proper and birational, hence surjective.
 On the other hand, the fact that the $G$'s generate the ideal
 of $OD^m$, and correspond to the $Z$ coordinates on $OH_m\subset X^m_B\times \P^{m-1}$, implies that $Bc_m$
 is a closed immersion. Therefore $Bc_m$ is an isomorphism.
 \qed
 \begin{cor}\label{sym-eq} The image of the
 relative symmetric product $X\spr m._B$ under the elementary
 symmetric functions embedding $\sigma$ (cf. Lemma \ref{sigma-emb}) is schematically defined by the equations (\ref{sigma-x-y-rel}-\ref{sigma-x-y-rel2}).\end{cor}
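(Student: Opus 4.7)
The plan is to deduce the corollary by re-running the local model construction of $H_m$ starting not over $V:=\sigma(X\spr m._B)$ but over the a priori larger closed subscheme $W\subset\A^{2m}_B$ cut out by (\ref{sigma-x-y-rel}-\ref{sigma-x-y-rel2}). The inclusion $V\subset W$ is already implicit in the proof of Lemma \ref{sigma-emb}, whose inductive application of the $\frak S_m$-averaging operator $R$ to $x_iy_i=t$ produces exactly those relations among the elementary symmetric functions. The task is thus to upgrade this inclusion to a scheme-theoretic equality $V=W$.

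Mimicking Section 1.3, I would define $\widetilde W\subset W\times_B\P^{m-1}_Z$ by the quadratic $Z$-relations (\ref{Z-rel-quadratic}), the $\sigma$-$Z$ linear relations (\ref{sigma-Z-rel}), and the endpoint relations $\sigma^x_mu_1=tv_1$ and $tu_{m-1}=\sigma^y_mv_{m-1}$. Since $V\subset W$, there is a closed immersion $H_m\hookrightarrow\widetilde W$, and the first main step is to prove the scheme-theoretic equality $\widetilde W=H_m$. To do this I would cover $\widetilde W$ by the affine analogues $\widetilde U_i$ of the charts in Theorem \ref{Hilb-local}: on $\widetilde U_i$ one has $Z_i=1$, and iterating (\ref{sigma-Z-rel}) expresses every $\sigma^y_j,\sigma^x_k$ as an explicit polynomial in two chosen sigmas and the remaining $Z_\ell$'s, so back-substitution forces (\ref{sigma-x-y-rel}-\ref{sigma-x-y-rel2}) to hold identically on $\widetilde U_i$, showing $\widetilde W|_{\widetilde U_i}\subset V\times\P^{m-1}$. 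Once $\widetilde W=H_m$ is established, Theorem \ref{blowup} together with the normality of $V$ yields $(\frak c_m)_*\O_{\widetilde W}=\O_V$, so that the scheme-theoretic image of the projection $\pi:\widetilde W\to W$ is the closed subscheme $V\hookrightarrow W$.

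To conclude $V=W$ it then suffices to see that $\pi$ is scheme-theoretically dominant, i.e.\ that $\O_W\to\pi_*\O_{\widetilde W}$ is injective. I would argue this in two stages. First, on the open locus $\{t\ne 0\}$ the relations (\ref{sigma-x-y-rel}) can be solved to write $\sigma^y_{m-j}=t^{-j}\sigma^y_m\sigma^x_j$, parametrizing $W\cap\{t\ne 0\}$ by $\sigma^x_1,\dots,\sigma^x_m,\sigma^y_m,t$ modulo the single relation $\sigma^x_m\sigma^y_m=t^m$, which is precisely the description of $V\cap\{t\ne 0\}$; thus $V$ and $W$ coincide where $t\ne 0$. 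Second, the auxiliary relations (\ref{sigma-x-y-rel2}), being the various $t$-multiples of (\ref{sigma-x-y-rel}) with independent exponents, precisely saturate the ideal of $W$ with respect to $t$, so that $t$ is a non-zerodivisor on $\O_W$; any element of $\ker(\O_W\to\O_V)$ then vanishes on $\{t\ne 0\}$ and is $t$-torsion-free, hence is zero.

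The main obstacle is the scheme-theoretic identification $\widetilde W=H_m$ in the middle step. The set-theoretic version is transparent from the construction, but the scheme-theoretic version requires verifying on each chart $\widetilde U_i$ that the back-substitution described above produces (\ref{sigma-x-y-rel}-\ref{sigma-x-y-rel2}) exactly, with no additional $t$-torsion or embedded nilpotents sneaking in; this is precisely where the otherwise-redundant-looking relations (\ref{sigma-x-y-rel2}) earn their keep.
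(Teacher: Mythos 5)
Your reduction is set up sensibly, but it begs the question at the decisive point. Granting your first stage (which is fine: over $t\neq 0$ the relations \eqref{sigma-x-y-rel} do cut out exactly $V=\sigma(X\spr m._B)$), the equality $V=W$ is \emph{equivalent} to your second claim that $t$ is a nonzerodivisor on $\O_W$: since $V$ is integral with $V\cap\{t\neq 0\}$ dense, $V=W$ holds if and only if $W$ is the schematic closure of $W\cap\{t\neq 0\}$, i.e.\ if and only if $\O_W$ has no $t$-torsion. So the "second stage" is not an auxiliary verification -- it \emph{is} the corollary. And the mechanism you offer for it cannot work: the relations \eqref{sigma-x-y-rel2} are simply $t$-power multiples of those in \eqref{sigma-x-y-rel} (the first one is $t^{m-i-j}$ times $\sigma^x_j\sigma^y_m=t^j\sigma^y_{m-j}$), so adjoining them does not change the ideal at all, and in particular cannot "saturate" it with respect to $t$; saturation means adjoining every $f$ with $t^Nf$ in the ideal, which is precisely what one cannot achieve by listing multiples of existing generators. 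Nothing in the proposal shows that the ideal generated by (\ref{sigma-x-y-rel}--\ref{sigma-x-y-rel2}) is $t$-saturated (equivalently, that $W$ has no components or embedded components supported in $\{t=0\}$), and that is the entire nontrivial content.

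The middle step is also not sound as sketched, though you flag it yourself: on the chart $Z_i=1$ of $\widetilde W$ the relations (\ref{sigma-x-y-rel}--\ref{sigma-x-y-rel2}) hold by the very definition of $W$, so deriving them by back-substitution is vacuous and does not show $\widetilde W|_{\widetilde U_i}\subset V\times\P^{m-1}$ -- that containment would require the (unknown) ideal of $V$ to vanish on $\widetilde W$, which is again what is to be proved. The paper's own argument runs in the opposite direction and avoids both problems: it starts from the already-established fact that $H_m\subset\P^{m-1}\times\A^{2m}\times B$ is cut out by \eqref{Z-rel-quadratic} and \eqref{sigma-Z-rel}, uses that the schematic image of $H_m$ under the projection forgetting the $\P^{m-1}$-factor is $\sigma(X\spr m._B)$ (normality of $X\spr m._B$ and connectedness of the fibres of $\frak c_m$), and then obtains the ideal of that image by eliminating the $Z$-coordinates from those equations; the elimination outputs exactly (\ref{sigma-x-y-rel}--\ref{sigma-x-y-rel2}). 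To salvage your route you would need an independent proof that $t$ is a nonzerodivisor on $\O_W$ (e.g.\ by showing $W$ is Cohen--Macaulay of the correct dimension and generically reduced along $t=0$, or by carrying out the elimination explicitly), and that is where the real work lies.
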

 \begin{proof}
 We have a diagram locally
 \beql{}{\begin{matrix}
 H^m&\subset& \P^{m-1}\times\A^{2m}\times B\\
 \downarrow&&\downarrow\\
 X\spr m._B&\stackrel{\sigma}{\hookrightarrow}&\A^{2m}\times B.
 \end{matrix}
}
We have seen that the image of the top inclusion is defined by the equations \eqref{Z-rel-quadratic}, \eqref{sigma-Z-rel}.
 The equations of the schematic image of $\sigma$ are obtained
 by eliminating the $Z$ coordinates from the latter equations,
 and this clearly yields the equations as claimed.
 \end{proof}
 Now as one byproduct of the proof of Theorem \ref{blowup}, we obtained generators of the ideal of the ordered half-discriminant $OD^m$.
As a further consequence, we can determine the ideal of the
 discriminant locus $D^m$ in the symmetric product $X\spr m._B$ itself: let $\delta_m^x$
 denote the discriminant of $F_0$, which, as is
 well known \cite{L}, is a polynomial in the $\sigma_i^x$
 such that \beql{}{ \delta_m^x=G_1^2.} Set
 \beql{eta}{\eta_{i,j}=\frac{(\sigma_m^y)^{i+j-2}}
 {t^{(i-1)(m-i)+(j-1)(m-j)}}
 \delta^m_x.} It is easy to see that this is a polynomial in the $\sigma^x_.$ and the $\sigma^y_.$, such that $\eta_{i,j}=G_iG_j.$
 \begin{cor} The ideal of $D^m$ is generated, locally
 near $mp$, by $\eta_{i,j}, i,j =1,...,m.$\end{cor}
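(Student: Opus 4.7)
The plan is to descend from the ordered case, which is already settled by Corollary \ref{Gs-generate-bigdiag}. Because $\omega_m$ realizes $X\spr m._B$ as the $\frak S_m$-quotient of $X^m_B$, and $\omega_m^*(D^m)=2OD^m$ as (Cartier) divisors, the pullback of ideals gives
\beq
\omega_m^*(\I_{D^m})=(\I_{OD^m})^2=(G_iG_j:1\le i,j\le m)
\eeq
near $mp$, the last equality from Corollary \ref{Gs-generate-bigdiag}. Since $X\spr m._B$ is normal, I would identify $\I_{D^m}$ with the $\frak S_m$-invariants of this squared ideal inside $\omega_{m,*}\O_{X^m_B}$ (this is the standard fact that on a normal quotient, a divisorial ideal sheaf is recovered as the invariants of its pullback).

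Next I would check that each $\eta_{i,j}=G_iG_j$ is $\frak S_m$-invariant. The identity $G_k=\pm\det(V^m_k)$ from \eqref{vandermonde} exhibits $G_k$ as an \emph{alternating} polynomial in the $m$ pairs of variables $(x_i,y_i)$: any column permutation of $V^m_k$ multiplies the determinant by the sign of the permutation. Consequently the product of any two such alternating functions is symmetric, so each $\eta_{i,j}=G_iG_j$ descends to a well-defined regular function on $X\spr m._B$. (Regularity is also visible from the explicit formula \eqref{eta}: the apparent denominator $t^{(i-1)(m-i)+(j-1)(m-j)}$ cancels by repeated application of \eqref{sigma-x-y-rel}--\eqref{sigma-x-y-rel2}, in the same manner that $G_i=(\sigma_m^y)^{i-1}t^{-(i-1)(m-i/2)}v_x^m$ was shown to be a polynomial in the elementary symmetric functions.)

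Finally I would conclude by a Reynolds-operator averaging argument, valid in characteristic zero. Given $h\in\I_{D^m}$, its pullback $\omega_m^*(h)$ lies in $(\I_{OD^m})^2$, so we may write $\omega_m^*(h)=\sum a_{ij}G_iG_j$ with $a_{ij}\in\O_{X^m_B}$. Applying the averaging operator $R=\tfrac{1}{m!}\sum_{g\in\frak S_m}g^*$ to both sides and using $\frak S_m$-invariance of $\omega_m^*(h)$ and of each $\eta_{ij}=G_iG_j$, we obtain
\beq
\omega_m^*(h)=\sum R(a_{ij})\,\eta_{ij},\qquad R(a_{ij})\in\omega_m^*\O_{X\spr m._B},
\eeq
so $h$ lies in the ideal on $X\spr m._B$ generated by the $\eta_{i,j}$. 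This is the desired conclusion.

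The step I expect to be the main obstacle is the first one: genuinely justifying $\I_{D^m}=((\I_{OD^m})^2)^{\frak S_m}$ in view of the fact that $D^m$ is, a priori, only a Weil divisor on the singular (though normal and Cohen--Macaulay) symmetric product. The natural way around this is to use the flat double cover $\epsilon:X^{\{m\}}_B\to X\spr m._B$ from the preliminaries, on which $\O(D^m)$ becomes Cartier and the ramification analysis makes the identification of $\I_{D^m}$ with the invariant part of $(\I_{OD^m})^2$ transparent; alternatively one can work codimension-one and invoke normality, since both sides are divisorial sheaves agreeing away from a codimension-$\ge 2$ locus.
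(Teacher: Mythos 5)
The crux of your argument is your first displayed identity $\omega_m^*(\I_{D^m})=(\I_{OD^m})^2$, equivalently the claim that $(\I_{OD^m})^2=(G_iG_j)$ coincides with the divisorial ideal $\O_{X^m_B}(-2OD^m)$, i.e.\ that the square of the big-diagonal ideal is saturated (has no embedded components). That is precisely the nontrivial content here, and neither of your proposed remedies closes it. The codimension-one patch (``both sides are divisorial sheaves agreeing away from codimension $\geq 2$'') is fallacious: $(\I_{OD^m})^2$ is not known to be reflexive, and that is exactly what is at stake. On the quadric cone $xy=z^2$ with $L=\{x=z=0\}$ one has $\I_L^2=(x^2,xz,xy)=x\cdot(x,y,z)\subsetneq (x)=\O(-2L)$, although the two ideals agree off the vertex; since $OD^m$ is likewise non-Cartier at the point $p^m$ over $mp$ (were it Cartier, the blowup of Theorem \ref{blowup} would be an isomorphism there, contradicting the positive-dimensional fibres of the cycle map), agreement in codimension one proves nothing. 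The half-discriminant double cover $\epsilon$ is set up in the paper only for families of smooth curves, and in any case gives no handle on this ideal-theoretic statement at the node cycle $mp$, which is the only place the corollary has content.

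What does close the gap is the blowup machinery already established: by Lemma \ref{Gi-generates-on-Ui}, $G_i^2$ generates $\O(-2O\Gamma\spr m.)$ over $\tilde U_i$, so the same pullback-of-the-cokernel argument as in Corollary \ref{Gs-generate-bigdiag}, applied to the map $m^2\O_{X^m_B}\to\O_{X^m_B}(-2OD^m)$ given by the $G_iG_j$, shows that the $G_iG_j$ do generate $\O(-2OD^m)$; the paper then simply descends this to $X\spr m._B$ by flatness of $\varpi_m$, taking the generation statement as read. Once you import that statement, the remainder of your proposal is sound and amounts to a mild variant of the paper's descent: the $G_i$ are indeed alternating (column permutations of the mixed Van der Monde \eqref{vandermonde}), so the $\eta_{i,j}$ are invariant and regular, and your Reynolds step -- write $\omega_m^*h=\sum a_{ij}G_iG_j$ on an invariant neighborhood of $p^m$, average, and use $\O_{X\spr m._B}=(\omega_{m*}\O_{X^m_B})^{\frak S_m}$ to descend the coefficients -- is a legitimate substitute for the flatness argument (and has the virtue of not needing any flatness of the quotient map). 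But as written, the proposal assumes the key point rather than proving it.
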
\begin{proof} This
 follows from the fact that $\varpi_m$ is flat and that
 \beq \varpi_m^*(\eta_{i,j})=G_iG_j, i,j=1,...,m \eeq generate the ideal of
 $2OD^m=\varpi_m^*(D^m).$\qed\end{proof}
 \emph{Proof of Lemma \ref{purity}}. Consider the function $\eta_{i,i}$, a
 priori a rational function on $X\spr m._B$. Because it pulls back
 to the regular function $G_i^2$ on $X^{\lceil m\rceil}_B$, it
 follows that $\eta_{i,i}$ is in fact regular near the 'origin'
 $mp$. Clearly $\eta_{i,i}$ vanishes on the discriminant $D^m$. Now
 the divisor of $\eta_{i,i}$ on $U_i$ pulls back via $\varpi_m$ to
 the divisor of $G_i^2$, which also coincides with
 $\varpi_m^*(\Gamma\spr m.)$. Because $\varpi$ is finite flat, it
 follows the the divisor of $\eta_{i,i}$ coincides over $U_i$ with
 $\Gamma\spr m.$ and in particular, the pullback of the ideal of
 $D^m$ has no embedded component in $U_i$. Since the $U_i,
 i=1,...,m$ cover a neighborhood of the exceptional locus in $X\sbr
 m._B$, this shows $\frak c_m\inv(D^m)$ has no embedded components, i.e.
 is Cartier, as claimed. The reader can check that the foregoing
 proof is logically independent  of any results that depend on
 the statement of Lemma \ref{purity}, so there is no vicious circle.\qed

 Note that the ideal of the Cartier divisor $\frak c_m^*(D^m)$ on $X\sbr m._B$, that is,
 $\O_{X\sbr m._B}(-\frak c_m^*(D^m))$,
 is isomorphic in terms of our local model $\tilde{H}$ to
 $\O(2)$ (i.e. the pullback of $\O(2)$ from $\P^{m-1}$).
 This suggests that $\O(-\frak c_m^*(D^m))$
  is divisible by 2 as line bundle on $X\sbr m._B$,
 as the following result indeed shows. First some notation.
For a prime divisor $A$ on $X$, denote by $[m]_*(A)$ the prime
divisor on $X\sbr m._B$ consisting of schemes whose support meets
$A$. This operation is easily seen to be additive, hence can be
extended to arbitrary, not necessarily effective, divisors and
thence to line bundles.
 \begin{cor} Set \beql{O(1)}{\O_{X\sbr
 m._B}(1)=\omega_{X\sbr m._B}\otimes[m]_*(\omega_X\inv).} Then

 \beql{discr1}{ \O_{X\sbr m._B}(-\frak c_m^*(D^m))\simeq\O_{X\sbr m._B}(2)}
 and
 \beql{discr2}{\O_{X^{\lceil m\rceil}_B}(-o\frak c_m^*(OD^m))
 \simeq\varpi_m^*\O_{X\sbr m._B}(1).}

 \end{cor}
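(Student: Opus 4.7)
The plan is to establish \refp{discr2}. first on the smooth cover $X^{\lceil m\rceil}_B=OH_m$, and then deduce \refp{discr1}. by descent under the Galois quotient $\varpi_m$. By Lemma \refp{Gi-generates-on-Ui}. and Corollary \refp{Gs-generate-bigdiag}., $o\frak c_m^*(OD^m)=O\Gamma\spr m.$ as Cartier divisors, so the LHS of \refp{discr2}. equals $\O(-O\Gamma\spr m.)$, and the task is to identify $\varpi_m^*\O_{X\sbr m._B}(1)$ with this line bundle.

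Three ingredients drive the identification. First, Hurwitz for the $m!$-sheeted finite map $\varpi_m$, which inherits simple ramification along $\Gamma\spr m.$ from the generic simple ramification of $\omega_m$ along $D^m$ (equivalently, $\varpi_m^*\Gamma\spr m.=2O\Gamma\spr m.$), gives
\[
\omega_{OH_m/B}\simeq\varpi_m^*\omega_{X\sbr m._B/B}\otimes\O(O\Gamma\spr m.).
\]
Second, $o\frak c_m:OH_m\to X^m_B$ is crepant: the target $X^m_B$ is lci---defined locally near each $m$-node cluster by the regular sequence $x_iy_i=t$, $i=1,\ldots,m$, in a smooth ambient---hence Gorenstein with $\omega_{X^m_B/B}\simeq\bigotimes p_i^*\omega_{X/B}$, while $o\frak c_m$ is small (inheriting smallness from $\frak c_m$ by flat base change through $\omega_m$). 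Standard reflexivity implies that a small birational morphism from a smooth variety to a Gorenstein one is crepant, so $\omega_{OH_m/B}\simeq\bigotimes q_i^*\omega_{X/B}$ with $q_i=p_i\circ o\frak c_m$. Third, the defining property of polyclasses applied to the Cartesian diagram gives $\varpi_m^*[m]_*(\omega_{X/B}^{-1})\simeq\bigotimes q_i^*\omega_{X/B}^{-1}$.

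Combining these, and noting that under the mult-$1$ convention of the text the $\pi^*\omega_B$ contributions in $\omega_{X\sbr m._B}$ and in $[m]_*(\pi^*\omega_B^{-1})$ cancel so that effectively $\O(1)=\omega_{X\sbr m._B/B}\otimes[m]_*(\omega_{X/B}^{-1})$, I compute
\[
\varpi_m^*\O(1)=\bigl(\omega_{OH_m/B}\otimes\O(-O\Gamma\spr m.)\bigr)\otimes\bigotimes q_i^*\omega_{X/B}^{-1}=\O(-O\Gamma\spr m.),
\]
which is \refp{discr2}.. Then \refp{discr1}. follows by squaring: $\varpi_m^*\O(-\frak c_m^*D^m)=\O(-\varpi_m^*\Gamma\spr m.)=\O(-2O\Gamma\spr m.)=(\varpi_m^*\O(1))^{\otimes 2}=\varpi_m^*\O(2)$, and since $\varpi_m$ presents $X\sbr m._B$ as the $\frak S_m$-quotient of $OH_m$, both line bundles pull back to $\O(-2O\Gamma\spr m.)$ with the same canonical $\frak S_m$-equivariant structure, so they agree on $X\sbr m._B$.

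The main obstacle will be the crepancy step: cleanly confirming that $X^m_B$ is lci Gorenstein with the stated relative dualizing sheaf at the singular (multi-node) points, and verifying that the codimension-$\ge 2$ bound for the exceptional locus of $o\frak c_m$ is inherited from the smallness of $\frak c_m$. Both are tractable given the explicit local equations of $X^m_B$ and the fibre-dimension bound already recorded for $\frak c_m$.
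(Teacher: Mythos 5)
You reach the correct identities (your chain of isomorphisms specializes, in the smooth-fibre case, exactly to the half-discriminant identity of \S 0), but your route is genuinely different from, and much heavier than, the paper's. The paper argues in two lines: on the open subset of $X\sbr m._B$ parametrizing schemes disjoint from the fibre nodes, the Hilbert scheme coincides with the relative symmetric product of the smooth locus of $X/B$, where \eqref{discr1} and \eqref{discr2} are the classical Riemann--Hurwitz/branch-divisor computations for $X^m_B\to X\spr m._B$; since this open set has complement of codimension $>1$ and $X\sbr m._B$ is smooth, the line-bundle isomorphisms extend. Your argument instead works globally on the ordered model: Hurwitz for the finite flat $\varpi_m$, crepancy of the small map $o\frak c_m$ onto the lci (hence Gorenstein) $X^m_B$ with $\omega_{X^m_B/B}\simeq\bigotimes p_i^*\omega_{X/B}$, and the norm identity $\varpi_m^*[m]_*(\omega_{X/B}\inv)\simeq\bigotimes q_i^*\omega_{X/B}\inv$. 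What this buys is a direct, global derivation of \eqref{discr2} (with invertibility of $\O(-O\Gamma\spr m.)$ falling out along the way); what it costs is having to control dualizing sheaves on singular spaces, which the paper's codimension argument sidesteps entirely.

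Three supporting claims need repair, though none is fatal. First, $X\scl m._B=OH_m$ is \emph{not} smooth in general (the paper explicitly invokes its singular locus in the proof of Lemma \ref{Gi-generates-on-Ui}), so ``small birational from a smooth variety implies crepant'' does not apply as stated; your reflexivity argument does go through once you know $OH_m$ is normal with reflexive dualizing sheaf, and normality holds because $\varpi_m$ is finite flat over the smooth $X\sbr m._B$ (so $OH_m$ is Cohen--Macaulay) while in codimension $1$ the cover is either \'etale or simply branched along the generically smooth $O\Gamma\spr m.$; the Hurwitz step must likewise be phrased for reflexive sheaves, verified in codimension $1$, and then extended. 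Second, $\omega_m$ is not flat, so smallness of $o\frak c_m$ is not a ``flat base change'' consequence; it follows simply because $\omega_m$ is finite, so the fibres of $o\frak c_m$ are fibres of $\frak c_m$ and the relevant dimension counts are unchanged. Third, the final descent of \eqref{discr1} from its pullback needs the $\frak S_m$-equivariance of your isomorphism made explicit, followed by taking invariants of $\varpi_{m*}$ (pullback alone is not injective on Picard groups); alternatively, and more economically, compare the two bundles directly over the big open set of node-free schemes and use smoothness of $X\sbr m._B$ --- which is precisely the paper's proof.
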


 \begin{proof} The Riemann-Hurwitz formula shows that the isomorphism
 \refp{discr1}. is valid on
 the open subset of $X\sbr m._B$ consisting of schemes disjoint from
 the locus of fibre nodes of $\pi$. Since this open is big (has
 complement of codimension $>1$), the iso holds on all of $X\sbr
 m._B$. A similar argument establishes \ref{discr2}..\end{proof}
 In practice, it is convenient to view \refp{O(1)}.
 as a formula for $\omega_{X\sbr m._B},$
 with the understanding that $\O_{X\sbr m._B}(1)$ coincides in our
 local model with the $\O(1)$ from the $\P^{m-1}$ factor, and that
 it pulls back over $X^{\lceil m\rceil}_B=X\sbr m._B\times_{X\spr
 m._B}X^m_B$ to the $\O(1)$ associated to the blow up of the
 'half discriminant' $OD^m.$ We will also use the notation
 \beq\O(\Gamma\spr m.)=\O_{X\sbr m._B}(-1),
 \Gamma\scl m.=\varpi_m^*(\Gamma\spr m.)\eeq
 with the understanding
 that $\Gamma\spr m.$ is Cartier, not necessarily effective,
 but $2\Gamma\spr
 m.$ and $\Gamma\scl m.$ are effective.
 Indeed $\Gamma\spr m.$ is essentially never effective
 (compare Remark \ref{P1}).
 Nonetheless, $-\Gamma\spr m.$
  is relatively ample on the Hilbert
 scheme $X\sbr m._B$ over the symmetric product $X\spr m._B$,
 and will be referred to as the \emph{discriminant polarization.}
 \newsubsection{Globalization II}\label{globalization2}
 We will now take up the globalization (over the base $B$, of
 arbitrary dimension) of the
  results of the previous section, in
their '$Z$ coordinate' form, which is more closely related to the blowup structure compared to the $u,v$ coordinate form.
 We will do this for the ordered version of the Hilbert scheme, viz $X\scl m._B$ with its ordered cycle map to $X^m_B$.
 To this end, a key point is the
 globalization of the $G$ functions on $X^m_B$, or rather
 their divisors of zeros.
 We will see that these constitute a chain of $m$
 essentially canonical 'intermediate diagonal' divisors,
interpolating between the 'x-discriminant' and the
 'y-discriminant'. These intermediate diagonals are Cartier divisors
 consisting of the big diagonal $OD^m$ plus certain
 boundary divisors, and the common schematic intersection
 of all of them
is exactly  $OD^m$. Most of
 our results on the intermediate diagonals are contained
 in the following statement.
 \begin{prop}
 Let $X/B$ be a flat family of nodal curves with irreducible base and generic fibre. Let $\theta$ be a relative node of $X/B$. Then
 \begin{enumerate}\item
 there exists an analytic neighborhood $U$ of $\theta$
  in $X$ and a rank-$m$ vector bundle $G^m(\theta)$,
defined in  $U^m_B\subset X^m_B$, together with
 a surjection in $U^m_B$: \eqspl{}{G^m(\theta)\to \I_{OD^m}}
 giving rise to a
 natural polarized embedding
 \eqspl{}{
 U\scl m._B=\Bl_{OD^m}(U^m_B)\hookrightarrow \P(\I_{OD^m}|_{U^m_B})
 \to \P(G^m(\theta)).
 }\item
 For any relatively
 affine, \'etale open  $\tilde U\to U$ of $\theta$ in $X/B$ in which the
 2 branches along $\theta$ are distinguishable,
 $G^m(\theta)$ splits over $(\tilde U)^m$ as a direct sum of invertible ideals
 $G^m_j(\theta), j=1,...,m$;
 \item Moreover if $V=U\setminus \pi\inv\pi(\theta)$, i.e. the union of the smooth fibres in $U$, the restriction of each $G^m_j(\theta)$ on $V^m_B$ is isomorphic to $\I_{OD^m}$.
 \end{enumerate}
 \end{prop}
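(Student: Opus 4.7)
The plan is to build the local invertible sheaves $G^m_j(\theta)$ first on an étale open in which the two branches are labeled, and then obtain $G^m(\theta)$ itself by $\Z/2$-descent. Concretely, on any étale open $\tilde U \to U$ in which the branches along $\theta$ are labeled as an $x$-axis and a $y$-axis (so $\pi|_{\tilde U}$ is the pullback of $xy=t$), I would define
\[ G^m_j(\theta) := \O_{\tilde U^m_B}(-\mathrm{div}(G_j)),\qquad j=1,\dots,m, \]
where $G_j=\pm\det(V^m_j)$ is the mixed Van der Monde function of \eqref{vandermonde}. Normality of $\tilde U^m_B$, together with the regularity and effective-divisor description of $G_j$ supplied by \eqref{ordG} and Lemma \ref{Gi-generates-on-Ui}, shows that $G^m_j(\theta)$ is genuinely invertible, and multiplication by $G_j$ identifies it with an invertible ideal inside $\I_{OD^m}$ by Corollary \ref{Gs-generate-bigdiag}. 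This gives (ii). For (iii), observe that on $V^m_B$ (where $t\neq 0$) none of the boundary components $\Theta_k$ of \eqref{Theta} appears in $\mathrm{div}(G_j)$, so $\mathrm{div}(G_j)|_{V^m_B}=OD^m|_{V^m_B}$ and hence $G^m_j(\theta)|_{V^m_B}\simeq \I_{OD^m}|_{V^m_B}$.

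Summing the tautological inclusions produces a surjection
\[ \bigoplus_{j=1}^m G^m_j(\theta)\;\twoheadrightarrow\;\I_{OD^m} \]
on $\tilde U^m_B$, again by Corollary \ref{Gs-generate-bigdiag}. To descend this to $U^m_B$ when the branches are not globally separable on $U$, I would use that any two such $\tilde U$'s differ by the $\Z/2$-swap $x\leftrightarrow y$, under which a block-permutation of the rows of $V^m_j$ yields
\[ G_j\;\longmapsto\;\varepsilon_j\, G_{m+1-j},\qquad \varepsilon_j=(-1)^{(m-j)(j-1)}. \]
Hence $\mathrm{div}(G_j)$ maps to $\mathrm{div}(G_{m+1-j})$ and the summand $G^m_j(\theta)$ is identified with $G^m_{m+1-j}(\theta)$. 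Since the index involution $j\mapsto m+1-j$ has order two, $\bigoplus_j G^m_j(\theta)$ inherits a canonical $\Z/2$-equivariant structure over $\tilde U\to U$ and descends to a rank-$m$ vector bundle $G^m(\theta)$ on $U^m_B$. The surjection onto $\I_{OD^m}$ is equivariant (the target being intrinsic to $U^m_B$) and therefore descends as well. The polarized embedding $U\scl m._B\hookrightarrow \P(G^m(\theta))$ in (i) then follows from Theorem \ref{blowup}, which identifies $U\scl m._B$ with $\Bl_{OD^m}(U^m_B)$, combined with the universal property of $\P(G^m(\theta))$ applied to the quotient $G^m(\theta)\twoheadrightarrow \I_{OD^m}$.

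The main obstacle is the descent: one must verify that the $\Z/2$-action on $\bigoplus_j G^m_j(\theta)$ is an honest equivariant structure, not merely a permutation of isomorphism classes. This amounts to sign bookkeeping: the scalars $\varepsilon_j$ above must be compatible with a chosen identification $G^m_j(\theta)\simeq G^m_{m+1-j}(\theta)$ of line bundles. Because every $\varepsilon_j$ comes from the single block row-swap in the Van der Monde matrix, the relation $\varepsilon_j\varepsilon_{m+1-j}=1$ holds automatically, so the involution squares to the identity on each summand and descent is effective. A minor further check is that $\tilde U^m_B\to U^m_B$, being étale, ensures the descended sheaf is again locally free of rank $m$.
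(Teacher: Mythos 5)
Your proposal is correct and follows essentially the paper's own route: the same mixed Van der Monde functions $G_j$, Corollary \ref{Gs-generate-bigdiag} for the surjection onto $\I_{OD^m}$, and the (ordered form of the) Blowup Theorem \ref{blowup} together with the chain $\Bl_{OD^m}(U^m_B)\hookrightarrow\P(\I_{OD^m})\hookrightarrow\P(G^m(\theta))$ for the polarized embedding. The only divergence is presentational: the paper makes branch-independence manifest by describing $\mathrm{div}(G_j)$ intrinsically as the ``intermediate diagonal'' $OD^m$ plus explicitly weighted boundary components (a description it reuses later), whereas you run a $\Z/2$-descent with sign bookkeeping---which is in any case automatic, since each $G^m_j(\theta)$ is the ideal sheaf of a divisor, canonically a subsheaf of $\O_{U^m_B}$ compatible with the maps to $\I_{OD^m}$, so the swap identification $G^m_j(\theta)\simeq G^m_{m+1-j}(\theta)$ is canonical and no signs intervene.
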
\begin{proof}[Proof-construction]
 Fix a boundary datum $(T,\delta, \theta)$ corresponding to $\theta$
  as in \S\ref{globalization}. We first work locally in $X$, near a node in one singular fibre. Then we may assume the two branches
  along $\theta$ are distinguishable in $U$. We let $\beta_x, \beta_y$ denote the $x$ and $y$ branches, locally defined respectively by $y=0, x=0$. Consider
 the Weil divisor on $U^m_B$ defined by
 \eqspl{}{OD^m_x(\theta)=OD^m+\sum\limits_{i=2}^m
 \binom{i}{2}\sum\limits_{\begin{matrix}
 I\subset[1,m]\\|I|=i\end{matrix}} p_I^*(\beta_y)p_{I^c}^*(\beta_x).
 }
 \begin{claim}
 We have
 \eqspl{}{
 OD^m_x(\theta)=\mathrm{zeros}(G_1).
 }
  where $G_1$ denotes the locally-defined Van der Monde determinant with respect to a local coordinate system
  as above.
 \end{claim}
\begin{proof}[Proof of claim]
 Indeed each factor $x_a-x_b$ of $G_1$ vanishes on
 $p_I^*(\beta_y)p_{I^c}^*(\beta_x)$ precisely when $a,b\in I$;
 the rest is simple counting.
 \end{proof}
 Thus, the divisor of $G_1$
 is canonically defined, depending only on the choice of branch.
 Given this, it is natural in view of \eqref{Gdef1} to define
 the $j$-th \emph{intermediate diagonal} along $\theta$ as
 \eqspl{}{
 OD^m_{x,j}(\theta)=OD^m_x(\theta)+(j-1)\sum p_i^*(\beta_x)-
 (j-1)(m-j/2)\partial_\theta
 } where $\partial_\theta=\beta_x+\beta_y$ is the boundary divisor
 corresponding to the node $\theta$. Indeed \eqref{Gdef1} now shows
 \eqspl{}{
 OD^m_{x,j}(\theta)=\mathrm{zeros}(G_j).
 } In particular, it is an effective Cartier divisor on
 $U^m_B$. Though each individual intermediate diagonal
 depends on the choice of branch, the collection of
 them does not. Indeed
the elementary identity
 \eqspl{}{
 \sigma_m^yV^m_1=(-t)^{\binom{m}{2}}V^m_m.
 } shows
that flipping $x$ and $y$ branches takes
 $-OD^m_{x,j}(\theta)$ to $-OD^m_{y,m+1-j}(\theta)$.
 Now set
 \eqspl{}{
 G^m(\theta)=\bigoplus \limits_{j=1}^m
 G^m_j(\theta), \text{\ \ where\ \ }
G^m_j(\theta)=  \O(-OD^m_{x,j}(\theta))
 } This rank-$m$ vector bundle is independent of the choice of branch, as is the natural map $G^m(\theta)\to\O_{U^m_B}$.
 Therefore these data are defined globally over $B$ in a
 suitable analytic neighborhood of $\theta^m$.
  By
 Corollary \ref{Gs-generate-bigdiag},
 the image of this map is precisely the ideal of $OD^m$, i.e.
 there is a surjection
 \eqspl{}{
 G^m(\theta)\to \I_{OD^m}|_{U^m_B}\to 0.
 }
 Applying the $\P$ functor, we obtain a closed embedding
 \eqspl{}{
 \P(\I_{OD^m})\to \P(G^m(\theta))
 } Now the blow-up of the Weil divisor $OD^m$, which we have
 shown coincides with the Hilbert scheme $U\scl m._B$,
 is naturally a subscheme of $\P(\I_{OD^m})$, whence
 a natural embedding
 \eqspl{}{
 U\scl m._B\to\P(G^m(\theta))
 } Note that this is well-defined globally over $U^m_B$,
 which sits over a neighborhood of the boundary component
 in $B$ corresponding to $\theta$, i.e. $\partial_\theta$.
 \end{proof}
 It is important to record here for future reference a compatibility between the $G^m_j(\theta)$ for different $m$'s.
 To this end let $U_x, U_y\subset U$ denote the complement
 of the $y$ (resp. $x$) branch, i.e. the open sets given by
 $x\neq 0, y\neq 0$.
 \begin{lem}[Localization formula] We have for all
 $1\leq j \leq m-k_x-k_y$,
 \eqspl{}{
G^m_{j+k_y}(\theta)|_{U^{m-k_x-k_y}\times U_x^{k_x}\times
U_y^{k_y}}&= \\ p_{U^{m-k_x-k_y}}^*G^{m-k_x-k_y}_{j}(\theta)
\otimes&\prod\limits_{a<b> m-k_x-k_y}  p^*_{a,b}(d_{a,b})
 } where $d_{a,b}$ is an equation for the Cartier
 divisor which equals the diagonal in
  $a,b$ coordinates; in divisor terms, this means
\eqspl{}{
OD^m_{x,j+k_y}|_{U^{m-k_x-k_y}\times U_x^{k_x}\times
U_y^{k_y}}=p_{U^{m-k_x-k_y}}^*(OD^{m-k_x-k_y}_{j})+
\sum\limits_{a<b>m-k_x-k_y}p_{a,b}^*(OD^2)
}  where the last sum is Cartier and independent of $j$.

 \end{lem}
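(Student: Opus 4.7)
The plan is to translate the lemma into an identity of Weil divisors on the restricted open set
$$
\mathcal U := U^M\times U_x^{k_x}\times U_y^{k_y},\qquad M := m-k_x-k_y,
$$
and verify it by carefully restricting the explicit divisorial formula for $OD^m_{x,j+k_y}(\theta)$. Since $U^m_B$ is a local complete intersection whose singular locus has codimension at least two, it is normal, so Cartier divisor classes are determined by their underlying Weil classes. Under the identifications $G^m_j(\theta)=\mathcal O(-OD^m_{x,j}(\theta))$ and
$OD^m_{x,j}=OD^m+\sum_i\binom{i}{2}\sum_{|I|=i}p_I^*\beta_y\,p_{I^c}^*\beta_x+(j-1)\sum_i p_i^*\beta_x-(j-1)(m-j/2)\partial_\theta$,
the lemma reduces to the divisorial equality
\eqspl{}{
OD^m_{x,j+k_y}|_{\mathcal U}=p_{U^M}^*OD^M_{x,j}+\sum_{\substack{a<b\\ b>M}}p_{a,b}^*(OD^2).
}

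First I would handle the ``diagonal'' contribution by writing $OD^m=\sum_{a<b}OD^m_{a,b}$ pairwise: pairs $(a,b)$ with $b\le M$ restrict to $p_{U^M}^*OD^M$, while the remaining pairs (those with $b>M$) restrict to genuine Cartier divisors $p_{a,b}^*(OD^2)$ on $\mathcal U$ --- on $U_x$ (resp.\ $U_y$) the relation $y=t/x$ (resp.\ $x=t/y$) reduces the two defining equations $x_a=x_b,\ y_a=y_b$ of the diagonal to a single one. Next I would restrict the boundary part. Using that $\beta_y|_{U_x}=\beta_x|_{U_y}=\emptyset$ while $\beta_x|_{U_x}=\beta_y|_{U_y}=\pi^*\partial_\theta$, each special-fibre component $p_I^*\beta_y\,p_{I^c}^*\beta_x$ survives restriction to $\mathcal U$ only when $[m-k_y+1,m]\subseteq I$ and $I\cap[M+1,m-k_y]=\emptyset$; writing $I=I_0\sqcup[m-k_y+1,m]$ with $I_0\subseteq[1,M]$, the restriction equals $p_{U^M}^*(p_{I_0}^*\beta_y\,p_{[1,M]\setminus I_0}^*\beta_x)$, but weighted by $\binom{|I_0|+k_y}{2}$ instead of the target $\binom{|I_0|}{2}$ appearing inside $p_{U^M}^*OD^M_{x,j}$. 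The linear term $(j+k_y-1)\sum_{i=1}^m p_i^*\beta_x$ and the boundary constant likewise restrict piecewise, according to whether the index $i$ belongs to the $U^M$, $U_x$, or $U_y$ factor.

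The main work is then to verify that the resulting discrepancy vanishes. Using the identity $\binom{n+k}{2}-\binom{n}{2}=nk+\binom{k}{2}$ together with the formal identity
$$
\sum_{I_0\subseteq[1,M]}|I_0|\,p_{I_0}^*\beta_y\,p_{[1,M]\setminus I_0}^*\beta_x=\sum_{a=1}^M p_a^*\beta_y=M\pi^*\partial_\theta-\sum_{a=1}^M p_a^*\beta_x,
$$
the excess $\sum p_a^*\beta_x$ contribution cancels precisely against the $k_y=(j+k_y-1)-(j-1)$ extra copies of the linear term. What remains is a single multiple of $\pi^*\partial_\theta$ whose coefficient, after substituting $m=M+k_x+k_y$, works out to
$$
Mk_y+\tbinom{k_y}{2}+(j{+}k_y{-}1)k_x-(j{+}k_y{-}1)\!\left(m-\tfrac{j{+}k_y}{2}\right)+(j{-}1)\!\left(M-\tfrac{j}{2}\right),
$$
and a direct polynomial computation (expanding and collecting powers of $j$ and $k_y$) shows this is identically zero. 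The main obstacle will be organising the bookkeeping across the various boundary strata cleanly; once that is set up, the remaining combinatorial identity is elementary.
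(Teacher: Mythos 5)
Your proposal is correct, and it takes a genuinely different route from the paper. The paper argues directly at the level of the defining functions: it writes $G^m_{j+k_y}$ (a mixed Van der Monde divided by a power of $t$) as a product, up to units on $U^{m-k_x-k_y}\times U_x^{k_x}\times U_y^{k_y}$, of $G^{m-k_x-k_y}_j$ and explicit partial-diagonal equations such as $x_a-x_b$ and $(x_a-x_b)/t=y_b\inv-y_a\inv$, using that $x_a$ (resp. $y_a$) is a unit in the $U_x$ (resp. $U_y$) coordinates; this yields the Cartier equations $d_{a,b}$ explicitly. You instead work with the divisorial definition $OD^m_{x,j}=OD^m+\sum_i\binom{i}{2}\sum_{|I|=i}p_I^*\beta_y\,p_{I^c}^*\beta_x+(j-1)\sum_i p_i^*\beta_x-(j-1)(m-j/2)\pi^*\partial_\theta$, restrict it componentwise to the open set, and reduce the statement to a coefficient identity, which you then upgrade from Weil to Cartier divisors (and hence to the sheaf statement) by normality of $U^m_B$; that normality argument is sound, and your final polynomial expression $Mk_y+\binom{k_y}{2}+(j+k_y-1)k_x-(j+k_y-1)(m-\tfrac{j+k_y}{2})+(j-1)(M-\tfrac{j}{2})$ does vanish identically once $m=M+k_x+k_y$ is substituted, as I have checked. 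Two small points you should make explicit in a write-up: first, absorbing the $\binom{k_y}{2}$-term and the $Mk_y$-term into a multiple of $\pi^*\partial_\theta$ uses the identities $\sum_{I_0\subseteq[1,M]}\Theta^M_{I_0}=\pi^*\partial_\theta$ and $p_a^*\beta_x+p_a^*\beta_y=\pi^*\partial_\theta$ on $U^M_B$, i.e.\ the reducedness of the special fibre along each component (immediate from $t=x_ay_a$, but it is being used); second, the multiplicity-one matching $\Theta_I|_{\mathcal U}=p_{U^M}^*\Theta^M_{I_0}$ deserves a sentence (flatness of the projection and generic reducedness of its fibres). The trade-off between the two proofs: the paper's factorization is shorter and produces the equations $d_{a,b}$ concretely, while your divisor-theoretic bookkeeping makes transparent why the correction term $\sum_{a<b>M}p_{a,b}^*(OD^2)$ is independent of $j$ and how the boundary components redistribute, at the cost of the combinatorial cancellation you carry out at the end.
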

 \begin{proof}
 We begin with the observation that, for the universal deformation $X_B$ of a node $p$, given by $xy=t$, the Cartesian square $X^2_B$ is nonsingular
 away from $(p,p)$, hence the diagonal is Cartier away from $(p,p)$, defined locally by $x_1-x_2$ in the open set where $x_1\neq 0$ or $x_2\neq 0$ and likewise for $y$.
Because the question is local and locally any deformation is induced by the
universal one, a similar assertion holds for an arbitrary family.
Now returning to our situation, let us write $n=m-k_x-k_y$ and $N, K_x, K_y$ for the respective index ranges
 $[1,n], [n+1, n+k_x], [n+k_x+k_y+1,m]$, and
 $x^N, y^N$ etc. for the corresponding monomials. Then
 $x_a-x_b$ is a single defining equation for the Cartier $(a,b)$ diagonal whenever $a$ or $b$ is in $K_x$.  Then by \eqref{Gdef1}, \eqref{Gdef2}, we can write $G^m_{j+k_y}(\theta)$, up to a unit, i.e. a function vanishing nowhere in the open set in question,
 in the form
 \eqspl{Gy/t}{
 \frac{G^N_1(y^N)^{j-1}}{t^{(j-1)(n-j/2)}}
 \prod\limits_{\substack{a<b\\ a\ \mathrm{or}\ b \in K_x}}(x_a-x_b)
 \frac{(y^N)^{k_y}\prod\limits_{a\in N, b\in K_y}(x_a-x_b)}
 {t^{nk_y}}
 \frac{\prod\limits_{a<b\in K_y}(x_a-x_b)}{t^{\binom{k_y}{2}}}
 }
 where we have used the fact that $\frac{(y^{K_x})^{j+k_y-1}}{t^{k_x(j+k_y-1)}}$ is a unit.
 Now in \eqref{Gy/t}, the first factor is just $G^N_j$
 while the second is the equation of a Cartier partial  diagonal. The third factor is equal up to a unit to
 $\prod\limits_{a\in N, b\in K_y}(y_a-y_b)$, hence is also
 the equation of a Cartier partial diagonal.
 Finally, in the fourth factor, each subfactor
 $(x_a-x_b)/t=y_b\inv-y_a\inv$, so this too yields a Cartier
  partial diagonal.
 \end{proof} Now recall the notion of boundary datum $(T, \delta, \theta)$ introduced in \S\ref{globalization}. We are now in position to determine globally the pullback of the intermediate diagonals to the partial normalization
 $X_T^\theta$:

 \begin{cor}\begin{enumerate}\item
 The pullback of $G^m_{j+k_y}(\theta)$ on $U^{m-k_x-k_y}_B\otimes_BU\scl k_x+k_y._B$
  extends over $U^{m-k_x-k_y}_B\otimes_B(X_T^\theta)\scl k_x+k_y._T$ to
 \eqspl{Gj-extn}{
 p_{U^{m-k_x-k_y}}^*G^{m-k_x-k_y}_j(\theta)
\otimes p_{X\scl k_x+k_y._B}^*\O(-\Gamma\scl k_x+k_y.)
\otimes\bigotimes\limits_{a\leq m-k_x-k_y<b}  p^*(\O(-OD_{a,b}^m))
 }
where the last factor is invertible;
\item the closure in
$U^{m-k_x-k_y}_B\otimes_B(X_T^\theta)\scl k_x+k_y._T$  of the pullback of  $OD^m_{j+k_y}(\theta)$
to  $U^{m-k_x-k_y}_B\otimes_BU\scl k_x+k_y._B$
equals
 \eqspl{ODj-extn}{
 p_{U^{m-k_x-k_y}}^*OD^{m-k_x-k_y}_j(\theta)
+ p_{X\scl k_x+k_y._B}^*(\Gamma\scl k_x+k_y.)
+\sum\limits_{a\leq m-k_x-k_y<b}  p^*(OD_{a,b}^m)
 }
where each summand is Cartier.
\end{enumerate}
 \end{cor}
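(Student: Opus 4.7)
The strategy is to prove the divisor identity (ii) first and deduce the line-bundle identity (i) by passing to the associated invertible ideal sheaves $\O(-\,\cdot\,)$. The overall approach has three stages: compute on a dense Zariski open of the ordered Cartesian product using the Localization Formula just established, descend to the Hilbert-scheme factor via the Blowup Theorem \ref{blowup}, and then extend across the partial normalization $X_T^\theta$ by closure while certifying that each summand on the right-hand side remains Cartier.

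For the first stage I would restrict to the dense open subset $W := U^{n}\times U_x^{k_x}\times U_y^{k_y}\subset U^m_B$, where $n:=m-k_x-k_y$. The Localization Formula there expresses $OD^m_{x,j+k_y}|_W$ as the sum of $p^*(OD^{n}_{x,j}(\theta))$ pulled back from the first $n$ factors together with $\sum_{a<b,\ b>n}p^*_{a,b}(OD^2)$. I would split the second sum into \emph{pure last-block} terms (both $a,b>n$) and \emph{mixed} terms ($a\leq n<b$). The pure last-block contribution is visibly the pullback from $U^{k_x+k_y}_B$ of a defining divisor for the ordered half-discriminant $OD^{k_x+k_y}$.

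For the second stage I would invoke the Blowup Theorem, which identifies $U\scl k_x+k_y._B$ with $\mathrm{Bl}_{OD^{k_x+k_y}}(U^{k_x+k_y}_B)$ and its exceptional divisor with $\Gamma\scl k_x+k_y.$. Consequently, under the natural projection $U^n_B\times_B U\scl k_x+k_y._B\to U^m_B$, the pure last-block diagonal pulls back to $p^*\Gamma\scl k_x+k_y.$. This yields identity (ii) over the preimage of $W$ in $U^n_B\times_B U\scl k_x+k_y._B$, and hence over all of that product since both sides are Cartier there (the LHS because $G^m_{j+k_y}(\theta)$ is invertible by the preceding proposition, the RHS by inspection).

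The final stage, extending across $(X_T^\theta)\scl k_x+k_y._T$, is where I expect the main obstacle to lie. The first summand is pulled back from $U^n_B$ and is already Cartier there; the factor $\Gamma\scl k_x+k_y.$ extends as a Cartier divisor because it is the pullback of the $\O(1)$ of the blowup to the smooth relative Hilbert scheme of $X_T^\theta/T$. The delicate point is the mixed diagonals $p^*_{a,b}(OD^2)$ with $a\leq n<b$: as Weil divisors on $U^m_B$ they fail to be Cartier exactly at points where the $b$-th coordinate lies over the node $\theta$, since $X\times_B X$ is singular there. However, the blowup $X_T^\theta\to X\times_B T$ separates the two branches along the section $\theta$, so after base-changing the $b$-th coordinate to $X_T^\theta$ the local equations $x_a-x_b$ (valid where $x_b$ is a uniformizer) and $y_a-y_b$ (valid where $y_b$ is) patch into a single Cartier divisor. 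Granting this, both sides of (ii) are Cartier on the target space and they agree on the dense open $W$; by normality of $U^n_B\times_B(X_T^\theta)\scl k_x+k_y._T$ (a product of smooth schemes with a Hilbert scheme shown smooth in \cite{R}) the identity extends globally. Statement (i) then follows at once by taking $\O(-\,\cdot\,)$ termwise.
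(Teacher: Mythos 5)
Your skeleton is the same as the paper's: restrict to the open part, apply the Localization Formula, recognize the pure last-block diagonals as the ordered half-discriminant, whose preimage under the cycle map is $\Gamma\scl k_x+k_y.$ by the Blowup Theorem, and then extend by closure. Up to your reversal of (i) and (ii) --- the paper obtains (i) directly from the Localization Formula applied to the invertible sheaf $G^m_{j+k_y}(\theta)$ and then gets (ii) by a single remark --- your first two stages are correct and are exactly what the paper intends.

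The gaps are in your final stage. First, the branch-separation argument for the mixed diagonals only works where the $a$-th point, which lives on the \emph{un-normalized} $U$, stays away from the node. At a point where the $a$-th coordinate sits at the node and the $b$-th at $\theta_x$, the local model of $U^{m-k_x-k_y}_B\otimes_B(X^\theta_T)\scl k_x+k_y._T$ is $\{x_ay_a=0\}\times\A^1_{x_b}$ times smooth factors; there $x_a-x_b$ cuts out the closure of the diagonal \emph{plus} the extra component $\{x_a=x_b=0\}$, so your two local equations do not patch, and the ideal of the closure, $(y_a,\,x_a-x_b)$, restricts on the sheet $\{x_a=0\}$ to $(y_a,x_b)$, which is not principal --- so Cartier-ness cannot be obtained this way on that (codimension-two) locus. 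Second, the ambient space is not normal: it lies entirely over $T$, so its first factor is $(U\times_BT)^{m-k_x-k_y}_T$, whose fibres are products of nodal curves; the appeal to ``normality of a product of smooth schemes with a smooth Hilbert scheme'' is unavailable. Third, and most importantly, even granting Cartier-ness and normality, agreement of two divisors on a dense open does not force equality here, because the complement of the open set contains divisors, namely $p_b^*\theta_x$ and $p_b^*\theta_y$, and the two sides could a priori differ by multiples of these. The missing verification --- which is the entire content of the paper's one-line proof of (ii) --- is that neither side of \eqref{ODj-extn} has a component supported on this boundary: the left side because it is defined as a closure, the right side because every summand has all its generic points in the open part. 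With that observation, (ii) follows from your stages one and two checked in codimension one (so the bad codimension-two locus above is harmless), while (i) is best deduced, as in the paper, directly from the Localization Formula for the invertible sheaf $G^m_{j+k_y}(\theta)$, rather than from (ii), since passing from the Weil-divisor identity back to line bundles would require exactly the summand-by-summand invertibility your argument does not secure.
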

 \begin{proof}
 The first assertion is immediate from the Proposition. For the second, it suffices to note that the divisor in question has no components supported off $U^{m-k_x-k_y}_B\otimes_BU\scl k_x+k_y._B$.
 \end{proof}
As an important consequence of this result, we can now determine the restriction
of the $G$-bundles (i.e. the intermediate diagonals) on (essentially) the locus
of cycles containing a node $\theta$ with given multiplicity; it is these restricted
bundles that figure in the determination of the (polarized) node scrolls.
\begin{prop}\label{bundle-for-node-scroll}
Let $(T, \delta, \theta)$ be a boundary datum, $1\leq j,n\leq m$ be integers, and consider the
map
\eqsp{\mu^n:(X^\theta)_T\scl k.\to X^m_B\\
\mu^n(z)=c_k(z)+n\theta.
}
Then with $j_0=\min(j,n)$, we have
\eqspl{ODj-for-scroll}{
(\mu^n)^*(OD^m_j(\theta))\sim -\binom{n-j_0+1}{2}\psi_x-\binom{j_0}{2}
\psi_y+(n-j_0+1)\theta_x\scl k.+(j_0-1)\theta_y\scl k.+\Gamma\scl k.
} where $\psi_x=\omega_{X^\theta_T/T}\otimes\O_{\theta_x}$ is the cotangent
(psi) class at $\theta_x$ (which is a class from $T$, pulled back to
$(X^\theta)_T\scl k.$),
$\theta_x\scl k.=\sum\limits_{i=1}^k p_i^*(\theta_x)$
(which is a class from $(X^\theta)_T^k$, pulled back to $(X^\theta)_T\scl k.$),
 and likewise for $y$.
\end{prop}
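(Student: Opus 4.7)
The plan is to derive the formula by combining Corollary \eqref{ODj-extn} with a specialization in which the $n$ extra slots collapse to $\theta$. I would interpret $\mu^n$ as the limit of a family $\mu^{n,\epsilon}$ where the $n$ slots are perturbed slightly off $\theta$, split as $n_x = n - j_0 + 1$ slots on the $x$-branch and $n_y = j_0 - 1$ on the $y$-branch; for $\epsilon \ne 0$, Corollary \eqref{ODj-extn} applies directly to $\mu^{n,\epsilon}$, yielding a three-term decomposition of $(\mu^{n,\epsilon})^*(OD^m_j(\theta))$ into (i) a proper-transform term $p^*OD^k_{1}(\theta)$ from the first $k$ slots (using the index shift $j - k_y = 1$ for $j \leq n$, and $j - k_y > 1$ for $j > n$, treated similarly), (ii) the discriminant $p^*\Gamma\scl n.$ from the last $n$ slots, and (iii) mixed partial diagonals $\sum p^*(OD^m_{a,b})$ for $a \leq k < b$.

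Taking $\epsilon \to 0$, each term has a well-defined limit on $(X^\theta_T)\scl k._T$. Term (i), being the pullback of the $x$-Van der Monde divisor, limits via Theorem \ref{blowup} applied to the length-$k$ Hilbert scheme of $X^\theta_T/T$ to the exceptional class $\Gamma\scl k.$. Term (iii), by counting the branches at which the $b > k$-th slots lie, limits to $n_x\, \theta_x\scl k. + n_y\, \theta_y\scl k. = (n-j_0+1)\theta_x\scl k. + (j_0-1)\theta_y\scl k.$. Term (ii) limits to $\sigma_Q^*(\Gamma\scl n.)$ (pulled back to $(X^\theta_T)\scl k._T$), where $\sigma_Q: T \to (X^\theta_T)\scl n._T$ is the section concentrated at $Q = n_x\theta_x + n_y\theta_y$; since $\theta_x, \theta_y$ are smooth points of $X^\theta_T/T$, the local structure of $(X^\theta_T)\scl n._T$ near $Q$ is a product of ordered symmetric products of two smooth curves, and the standard diagonal-class formula (Van der Monde squared $=$ discriminant, applied at an $n_x$-fold point) gives $\sigma_Q^*\Gamma\scl n. = -\binom{n_x}{2}\psi_x - \binom{n_y}{2}\psi_y$ to leading order.

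The main obstacle is the precise matching of the $\psi_y$ coefficient: the direct analysis yields $\binom{n_y}{2} = \binom{j_0-1}{2}$, whereas the claimed $\binom{j_0}{2}$ requires a ``$+1$'' shift of size $(j_0 - 1)\psi_y$. This shift comes from additional contributions of the mixed partial diagonals $\Delta_{ij}$ on $X^m_B$ for $i$ in the $x$-range and $j$ in the $y$-range of $Q^\epsilon$: these contribute nontrivially in the $\epsilon \to 0$ limit because $\phi: X^\theta_T \to X$ identifies $\theta_x$ and $\theta_y$ in $X$ (so the limit $\sigma_Q$ lands in all such diagonals, even though the corresponding points are distinct on $X^\theta_T$). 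Tracking the shift requires the explicit local model $H_n$ of \S 1.3 and the node-scroll interpolation formulas \eqref{limits} of Lemma \ref{nodescroll0}, which capture the ``jump'' between boundary sections $Q^n_{j}, Q^n_{j+1}$ as the perturbed family passes through the collapsing limit; an entirely parallel analysis handles the case $j > n$ by taking $j_0 = n$ and adjusting the choice of $k_y$ accordingly.
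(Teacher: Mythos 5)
There is a genuine gap, and it sits exactly where the proposition has its content: the $\psi$-coefficients. Your degeneration scheme misassigns the three terms. For $\epsilon\neq 0$ the Localization formula, applied with the $n$ perturbed slots as the ``away'' slots, gives only two kinds of factors: the mixed Van der Monde $G^k_{1}$ in the $k$ free slots and the mixed partial diagonals between a free slot and a perturbed slot; the factors among the $n$ perturbed slots are units, so there is no ``term (ii)'' at all for $\epsilon\neq 0$, and its limit contribution cannot simply be declared to be $\sigma_Q^*\Gamma\scl n.=-\binom{n_x}{2}\psi_x-\binom{n_y}{2}\psi_y$ ``to leading order'' --- divisor-class identities have no leading order, and classes pulled back from $T$ (which is what the $\psi$'s are) are precisely what such a local-equation bookkeeping loses. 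Your term (i) is also not $\Gamma\scl k.$: the pullback of the $x$-Van der Monde divisor to $(X^\theta_T)\scl k.$ contains, besides $\Gamma\scl k.$, boundary components where several free points lie on the $y$-branch (cf.\ the Claim computing $OD^m_x(\theta)$ as $OD^m$ plus $\binom{i}{2}$-weighted branch products). Finally, your own accounting produces $\binom{j_0-1}{2}\psi_y$, and the proposed repair --- extra contributions from diagonals between the $x$-group and $y$-group of collapsing points because $\phi$ identifies $\theta_x$ with $\theta_y$ --- is both unproved (you defer it to the local model and \eqref{limits}) and not the actual source of the shift.

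The paper's route needs no limit: one factors $\mu^n$ through $\mu^n_{j_0}(z)=((\theta_x)^{n-j_0+1},(\theta_y)^{j_0-1},c_k(z))$ into $(X^\theta_T)^m$, applies the Localization formula with the $G$-factor on the $n$ \emph{nodebound} slots (this is the opposite placement to yours, and is why the last two summands of \eqref{ODj-extn} immediately give $\Gamma\scl k.$, $(n-j_0+1)\theta_x\scl k.$ and $(j_0-1)\theta_y\scl k.$, each $D^m_{a,b}$ with $a$ in the $\theta_x$-group pulling back to $p_b^*\theta_x$, etc.), and then evaluates $G^n_{j_0}$ on the constant section by a Laplace block expansion. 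The point you are missing is that the $y$-rows of $V^n_{j_0}$ begin with $y^1$, not $y^0$, so the corner $y$-block determinant is $\bigl(\prod_b y_b\bigr)\cdot\prod_{a<b}(y_a-y_b)$: the Van der Monde part gives $\binom{j_0-1}{2}$ copies of $-\psi_y$ (via $y_a-y_b=dy$), and each extra factor $y_b$, restricted to the constant section at $\theta_y$, gives one more $-\psi_y$ (self-intersection of a section), so the total is $(j_0-1)+\binom{j_0-1}{2}=\binom{j_0}{2}$. This is the ``$+1$ shift'' you were hunting for, and it lives entirely inside the nodebound block, not in the mixed diagonals. Until your argument produces this coefficient (and correctly isolates $\Gamma\scl k.$ from the boundary corrections in term (i)), it does not prove \eqref{ODj-for-scroll}.
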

\begin{proof}
We factor $\mu^n$ through the map
\eqsp{\mu^n_{j_0}:(X^\theta)_T\scl k.\to(X^\theta)_T^m\\
\mu_{j_0}^n(z)=((\theta_x)^{n-j_0+1}, (\theta_y)^{j_0-1},c_k(z)).
}
We may write $G^m_j$ as $G^n_{j_0}$ times a partial diagonal
equation as above, and
the inequalities on $j_0$ ensure that $G^n_{j_0}$ does not vanish
identically on $\beta_x^{n-j_0+1}\times\beta_y^{j_0-1}$,
where $\beta_x, \beta_y$ are the branch neighborhoods
of $\theta_x, \theta_y$ in $X^\theta_T$.
Then it is straightforward that the last two summands in
\eqref{ODj-extn} correspond to the last three summands in
\eqref{ODj-for-scroll},
e.g. a diagonal $D^m_{a,b}$ with $a\leq n-j_0+1$ coincides with
$p_b^*\theta_x$.
So it's just a matter of evaluating the
pullback of $OD^n_j(\theta)$. For the latter, we use a Laplace (block) expansion of $G_{j_0}$ on the first $n-j_0+1$ rows. In
this expansion, the leading term is the first, i.e. the product of the two corner blocks. There writing $x_a-x_b=dx$, a generator of $\psi_x$, and likewise for $\psi_y$, we get the asserted form as in \eqref{ODj-for-scroll}.
\end{proof}
 \newsection{The tautological module}
 In this section we will compute arbitrary powers
 of the discriminant polarization $\Gamma\spr m.$ on the
 Hilbert scheme $X\sbr m._B$. The computation will be
 a by-product of a stronger result determining the
 (additive)
 \emph{tautological module} on $X\sbr m._B$,
 to be described informally in this introduction,
and defined formally in the body of the chapter
(see Definition \ref{taut-mod-def}).
 \par
 The tautological module
 $$T^m=T^m(X/B)\subset A^.(X\sbr m._B)_\Q$$
 is to be defined as the $\Q$-vector space generated by certain
basic \emph{tautological classes} (as described below).
 On the other hand, let
 $$\Q[\Gamma\spr m.]\subset A^.(X\sbr m._B)_\Q$$
 be the subring of the Chow ring
  generated by the discriminant polarization.
 Then the main result of this chapter is
 \begin{thm}[Module Theorem]\label{taut-module}
 Under intersection product, $T^m$ is a
 $\Q[\Gamma\spr m.]$-module; moreover, multiplication by
 $\Gamma\spr m.$ can be desribed explicitly.
 \end{thm}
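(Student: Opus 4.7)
The plan is to verify closure of $T^m$ under intersection with $\Gamma\spr m.$ generator-by-generator and, at the same time, to extract explicit multiplication formulas. Two assertions must be checked: (a) $\Gamma\spr m.$ itself lies in $T^m$ (so that $\Q[\Gamma\spr m.]\subset T^m$ follows by induction from closure), and (b) for each of the three basic types of generator of $T^m$---diagonal classes, node scrolls, and node sections---the product with $\Gamma\spr m.$ is a $\Q$-linear combination of tautological classes. Throughout I would pass to the ordered model via the finite flat map $\varpi_m: X\scl m._B \to X\sbr m._B$, where $\varpi_m^*\Gamma\spr m.=\Gamma\scl m.$ admits the explicit description from Chapter~1 as (minus) the $\O(1)$ of the blowup $\Bl_{OD^m}(X^m_B)\simeq X\scl m._B$, with local generators the mixed Van der Monde determinants $G_j$ and divisors of zeros the intermediate diagonals $OD^m_{x,j}(\theta)$. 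For the bootstrap (a), I would use the identity $\Gamma\spr m.=[m]_*(\omega_X)-\omega_{X\sbr m._B}$ implicit in formula \eqref{O(1)}: the first summand is by construction a diagonal class twisted by a base class, while $\omega_{X\sbr m._B}$ can be expressed in terms of $\omega_{X\spr m._B/B}$ and exceptional corrections via the Blowup Theorem, the corrections being supported on the exceptional locus of $\frak c_m$ and decomposing into node scrolls.

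For a diagonal class pulled back from the symmetric product, intersection with $\Gamma\spr m.=\tfrac12\frak c_m^*(D^m)$ reduces, by the ring-homomorphism property of $\frak c_m^*$ together with the Blowup Theorem, to computing the intersection of $D^m$ with the corresponding class on $X\spr m._B$ and pulling back the result. There the standard stratification-plus-excess calculus applies: $D^m$ meets a given diagonal locus either transversally in a union of finer diagonals (two blocks merged), or in self-intersection along blocks of size $\geq 2$, in which case the excess is read off from the conormal of the diagonal, itself built on the smooth locus from $\omega_{X/B}$. Each outcome is a diagonal class, possibly twisted by base classes, hence lies in $T^m$. The discrepancy between $\frak c_m^*$ and strict transform over the singular part of $X\spr m._B$ (i.e. over the boundary of $B$) contributes additional node-scroll and node-section terms, again tautological by inspection.

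For a node scroll $F^n_j(\theta)$---a $\P^1$-bundle over $(X^\theta_T)\sbr m-n._T$ with disjoint sections $Q^n_{j,j},Q^n_{j+1,j}$ as constructed in \S\ref{globalization}---the restriction of $\Gamma\spr m.$ is governed by Proposition~\ref{bundle-for-node-scroll}: pulling $OD^m_j(\theta)$ back along the cycle-building map $\mu^n$ expresses it as an explicit linear combination of the cotangent classes $\psi_x,\psi_y$ at the marked points, the partial diagonals $\theta_x\scl k.,\theta_y\scl k.$, and $\Gamma\scl k.$ on the base Hilbert scheme. Capping with $[F^n_j(\theta)]$ then produces, respectively, (i) a node scroll twisted by a base class from $T$ (the $\psi$-contributions), (ii) intersections with the partial-diagonal loci $\theta_x\scl k.,\theta_y\scl k.$, which by the structure theory of node scrolls coincide with node scrolls over smaller boundary Hilbert schemes, and (iii) the node section $F^n_j(\theta)\cdot\Gamma\spr m.$ itself by definition. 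For a node section, the same analysis is applied one step deeper, using the Grothendieck relation on the $\P^1$-bundle to resolve the square of the $\O(1)$-class into pullbacks from the base plus a term involving the node section itself, both tautological by downward induction on $m$.

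The main obstacle will be the node-scroll computation, specifically tracking how the intermediate diagonals restrict across the various boundary strata via the Localization Formula of Chapter~1 and keeping the recursive bookkeeping under control: the base of a node scroll is itself a relative Hilbert scheme of the lower-complexity family $X^\theta_T/T$, so the classes generated by Proposition~\ref{bundle-for-node-scroll} must be recognized as tautological \emph{on the total space} $X\sbr m._B$ and not only on the base. This calls for verifying compatibility between the tautological modules $T^{m-n}(X^\theta_T/T)$ and $T^m(X/B)$ under the node-scroll correspondence, and separately for handling the nondistinguishable-branch and coalescing-node cases via the covering system of boundary data from \S\ref{globalization}.
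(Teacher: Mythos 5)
Your treatment of the node classes is essentially the paper's route: the product of $\Gamma\spr m.$ with a node scroll is a node section by definition, and for the node section one needs the polarized structure of the scroll, i.e.\ the identification $F\simeq\P(E_j\oplus E_{j+1})$ with $-\Gamma\spr m.|_F=\O(1)$ obtained from the intermediate diagonals via Proposition \ref{bundle-for-node-scroll}, after which the Grothendieck relation gives the case $k=2$ of Corollary \ref{disc^k.scroll}; your closing worry about recognizing classes on the base $(X^\theta_T)\sbr m-n._T$ as tautological on $X\sbr m._B$ is exactly the recursion the paper builds into the definition of twisted node scrolls and sections. Your bootstrap (a), however, is an unnecessary detour: $\Gamma\spr m.$ is already $\tfrac12\Gamma_{(2)}$, a generator of $T^m$, and in any case $\Q[\Gamma\spr m.]\subset T^m$ follows from closure applied to $1\in T^m$; the route through $\omega_{X\sbr m._B}$ would force you to prove a tautological expression for the canonical class of the Hilbert scheme that the paper never needs.

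The genuine gap is in the diagonal step. You propose to compute $\Gamma\spr m.\cdot\Gamma_{(n.)}[\alpha.]$ by intersecting $D^m$ with the corresponding class downstairs on $X\spr m._B$ and pulling back, with the boundary discrepancy "tautological by inspection." But $D^m$ is only a Weil divisor on $X\spr m._B$; at cycles with multiplicity $\geq 2$ at a node it is not ($\Q$-)Cartier -- that is precisely why the cycle map is a nontrivial blowup -- so the intersection on the symmetric product is not defined there, and no excess-intersection argument on the smooth locus sees what happens over these points. The paper's actual computation (Propositions \ref{cut-monom} and \ref{disc.diag}, resting on the small-diagonal analysis) works upstairs with the explicit generators $G_j$ of $\I_{OD^m}$: their restriction to the small diagonal is the monomial ideal $J_n$ twisted by $\omega^{\binom{n}{2}}$, and blowing up $J_n$ produces the exceptional divisor $\sum_j\beta_{n,j}C^n_j$, whose combinatorially computed multiplicities $\beta_{n,j}$ (Proposition \ref{compute-excdic}, the polygon areas of \S 0) are the coefficients of the node scroll terms in the multiplication formula. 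These coefficients are the substantive "explicit" content of Theorem \ref{taut-module} and cannot be read off by inspection of strata; your smooth-locus excess argument only recovers the $-\binom{n}{2}\omega$ terms. (A smaller slip: no node sections occur in $\Gamma\spr m.\cdot\Gamma_{(n.)}$, only node scrolls.) Without the local-model computation of the restricted discriminant ideal and the $\beta_{n,j}$, the diagonal case of the module structure is not established.
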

 Because $1\in T^m$ by definition, this statement includes
 the nonobvious assertion that
 $$\Q[\Gamma\spr m.]\subset T^m;$$
 in other words, any polynomial in $\Gamma\spr m.$ is (explicitly) tautological.
 In this sense, the Theorem includes an 'explicit' (in the
 recursive sense, at least) computation of all the powers
 of $\Gamma\spr m.$.\par
 Now the aforementioned basic tautological classes come in two
 main flavors (plus some subflavors).\begin{enumerate}
 \item The (classes of)
 \emph{(relative) diagonal loci} $\Gamma\spr m.\subp{n_1,n_2,...}$: this
 locus  is
 essentially the closure of the set of schemes
 of the form $n_1p_1+n_2p_2+...$ where $p_1,p_2...$
 are distinct smooth points of the same (arbitrary) fibre.\par
 More generally, we will consider certain 'twists' of
 these, denoted \nl $\Gamma\subp{n_1,n_2,...}[\alpha_1,\alpha_2...]$,
 where the $\alpha.$ are 'base classes', i.e. cohomology classes on $X$.
 \item The \emph{node classes}. First, the
  \emph{node scrolls} $F_j^n(\theta)$:
 these are, essentially, $\P^1$-bundles over an
analogous diagonal locus $\Gamma\spr m-n._{(n.)}$ associated to
a boundary family $X^\theta_T$ of $X_B$,
whose general
 fibre can be naturally identified with the punctual
 Hilbert scheme component $C^{n}_j$ along the node $\theta$.\par
Additionally, there are the \emph{node sections}: these are simply
 the classes $-\Gamma\spr m..F$ where $F$ is a node scroll
 as above (the terminology comes from the fact that
 $\Gamma\spr m.$ restricts to $\O(1)$ on each fibre of a
 node scroll).\par
 All these classes again admit {twisted} versions, essentially
 obtained by multiplying by bases classes from $X$.

 \end{enumerate}
 \par
 Effectively, the task of proving Theorem \ref{taut-module}
 has two parts.\begin{enumerate}
 \item Express a
 product $\Gamma\spr m..\Gamma\subp{n.}$ in terms
 of other diagonal loci and node scrolls, see Proporsition
 \ref{disc.diag}.
 \item For
 each node $\theta$ and associated ($\theta$-normalized) boundary
 family $X_T^\theta$, determine a series of explicit line bundles
 $E^n_j(\theta), j=1,...,n$ on the relative Hilbert scheme
  $(X_T^\theta)\sbr m-n._T$
together with an identification
 $$F_j^n(\theta)\simeq \P(E_j^n(\theta)\oplus E_{j+1}^n(\theta)),$$ such that the restriction of the discriminant
 polarization $-\Gamma\spr m.$ on $F^n_j(\theta)$ becomes the standard $\O(1)$
 polarization on the projectivized vector bundle (see Proposition \ref{pol-scroll-unordered});
 in fact, $E_j^n(\theta)$ is just the sum of the polarization $\Gamma\sbr m-n.$ and a suitable base divisor, see \eqref{Ebundles}, \eqref{Ebundles-nonsep}.
It then transpires that the
 restriction of an arbitrary power $(\Gamma\spr m.)^k$ on
 $F$ can be easily and explicitly expressed in terms of
 other node classes coming from the Chern classes of $E$
 (see Corollary \ref{disc^k.scroll}).
 \end{enumerate}

 \newsubsection{The small diagonal}\label{smalldiag}
We begin our study of diagonal-type loci and their
intersection product with the discriminant polarization with the smallest such
locus,
 i.e. the small diagonal.  In a sense this is
 actually the heart
 of the matter, which is hardly surprising, considering as the small
 diagonal is in the 'most special' position vis-a-vis the
 discriminant.
 The next result is in essence a corollary to the Blowup Theorem \ref{blowup}.\par
  Let $\Gamma_{(m)}\subset
 X\sbr m._B$ be the small diagonal, which parametrizes schemes with
 1-point support, and which is the pullback of the small diagonal
  \beq D_{(m)}\simeq X\subset X\spr m._B. \eeq The
 restriction of the cycle map yields a birational morphism
 \beq \frak c_m:\Gamma_{(m)}\to X \eeq which is an isomorphism except
 over the  nodes of $X/B$.
 Fix a covering system of boundary data $\{(T., \delta., \theta.)\}$
and  focus on its typical node $\theta$.
 Let \beq J_m^{\theta.}=\bigcap\limits_i J_m^{\theta_i}\subset
 \O_X \eeq be the ideal sheaf whose stalk at each fibre node
  $\theta_i$ is locally of
 type $J_m$ as in \S 0. Note that $J^{\theta.}_m$ is
 well-defined independent of the choice of local parameters and independent as well of the ordering of the branches at each node, hence makes sense and is globally defined on $X$.
  \begin{prop}Via $\frak c_m$, $\Gamma_{(m)}$ is
  equivalent to the blow-up of $J_m^{\theta.}.$
  If $\O_{\Gamma_{(m)}}(1)_J$ denotes the canonical blowup
 polarization, we have
 \beql{Gamma-smalldiag}{\O_{\Gamma_{(m)}}(-\Gamma\spr m.)=\omega_{X/B}^{\otimes
 \binom{m}{2}}\otimes\O_{\Gamma_{(m)}}(1)_J.}
 Furthermore, if $X$ is smooth at a node $\theta$, then $\Gamma\subp{m}$ has
 multiplicity $\min(i,m-i)$ along the corresponding divisor
 $C^m_i-\{Q^m_i, Q^m_{i+1}\}$ for $ i=1,...,m-1$.
 In particular, $\Gamma\subp{m}$ is smooth along
 $(C^m_1-Q^m_2)\cup(C^m_{m-1}-Q^m_{m-1})$.

 \end{prop}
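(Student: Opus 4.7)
The plan is to reduce the statement to an analysis in the local model $H_m$ of Theorem \ref{Hilb-local} near a one-point cycle $m\theta$ with $\theta$ a fibre node; globalization then follows from the local character of $\Gamma_{(m)}$ and from the definition of $J_m^{\theta.}$ as the sheaf whose stalk at each fibre node is of type $J_m$. Substituting the small-diagonal relations $\sigma^x_k=\binom{m}{k}x^k$, $\sigma^y_k=\binom{m}{k}y^k$ (with $xy=t$) into \eqref{sigma-Z-rel} gives $\binom{m}{i}y^iZ_i=\binom{m}{m-i}x^{m-i}Z_{i+1}$ for $i=1,\ldots,m-1$. Solving this recursion and using the identity $\binom{m}{2}-\binom{m-i+1}{2}=(i-1)m-\binom{i}{2}$, after clearing denominators by $Z_1\mapsto x^{\binom{m}{2}}$, I obtain $Z_i\mapsto x^{\binom{m-i+1}{2}}y^{\binom{i}{2}}$ for all $i$. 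These monomials are precisely the generators of $J_m$, so $\Gamma_{(m)}$ is locally the closure of the graph of the resulting rational map $X\dashrightarrow\P^{m-1}$, hence $\mathfrak{c}_m:\Gamma_{(m)}\to X$ is the blow-up of $J_m^{\theta.}$.

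For the polarization formula, I restrict the identity $\O(-\Gamma\sbr m.)=\omega_{X\sbr m._B}\otimes[m]_*(\omega_X)^{-1}$ of Corollary \eqref{O(1)} to $\Gamma_{(m)}$. A cycle $mp$ meets any divisor $D\subset X$ with multiplicity $m\cdot(p.D)$, so $[m]_*(\omega_{X/B})|_{\Gamma_{(m)}}=m\,\mathfrak{c}_m^*(\omega_{X/B})$. Since $\Gamma_{(m)}$ is birational to $X$ with exceptional fibre only the chain $\bigcup_i C^m_i$ over each node, $\mathrm{Pic}(\Gamma_{(m)})$ is controlled by the pullback to $X$ together with the degrees on each $C^m_i$, so it suffices to verify the asserted equality on the smooth locus $X_{\mathrm{sm}}$ and on each exceptional line. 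Over $X_{\mathrm{sm}}$, $J_m^{\theta.}$ is trivial and a direct computation of $\omega_{X\sbr m._B/B}|_{X_{\mathrm{sm}}}$ from the local model yields $\omega_{X/B}^{\binom{m}{2}}$ on both sides. On each $C^m_i$, $\mathfrak{c}_m^*(\omega_{X/B})$ is trivial, and under the embedding $\Gamma_{(m)}\hookrightarrow X\times\P^{m-1}$ of step 1, $C^m_i$ appears as a coordinate line, so both $\O(-\Gamma\sbr m.)|_{C^m_i}$ and $\O(1)_J|_{C^m_i}$ equal $\O_{\P^1}(1)$. The two sides therefore agree.

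For the multiplicity, assume $X$ is smooth at $\theta$ and work in the chart of $\mathrm{Bl}_{J_m}(X)$ where $Z_i\neq 0$, which covers the generic point of $C^m_i\setminus\{Q^m_i,Q^m_{i+1}\}$. The ratios $z=Z_{i+1}/Z_i=y^i/x^{m-i}$ and $w=Z_{i-1}/Z_i=x^{m-i+1}/y^{i-1}$ from step 1 cut out this chart inside $\A^4_{x,y,z,w}$ by $zx^{m-i}=y^i$ and $wy^{i-1}=x^{m-i+1}$. At a generic point $(0,0,z_0,0)$ of $C^m_i$, a Newton-polygon analysis of the leading equation $y^i-z_0x^{m-i}=0$ gives Hilbert-Samuel multiplicity $\min(i,m-i)$; this is the multiplicity of $\Gamma_{(m)}$ along the interior of $C^m_i$. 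For $i=1$ (resp.\ $i=m-1$) the local equation collapses to $y=zx^{m-1}$ (resp.\ $x=zy^{m-1}$), and after eliminating $y$ (resp.\ $x$) the chart becomes a smooth affine plane, so $\Gamma_{(m)}$ is smooth along all of $C^m_1$ except its interior-chain endpoint $Q^m_2$, and along all of $C^m_{m-1}$ except $Q^m_{m-1}$. The outer corners $Q^m_1$ and $Q^m_m$ are automatically smooth by the same linear parametrisation, which is why only $Q^m_2$ and $Q^m_{m-1}$ are removed.

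The main obstacle is fixing the $\omega_{X/B}^{\binom{m}{2}}$ twist in step 2: since $\Gamma_{(m)}\subset\Gamma\sbr m.$ has codimension $m-2$, the Cartier class $\Gamma\sbr m.$ does not restrict transparently to $\Gamma_{(m)}$ from the global construction. The combination of the degree check on the exceptional chain with the direct smooth-locus computation is the elementary route I see.
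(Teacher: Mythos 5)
Your steps 1 and 3 essentially reproduce the paper's local-model computation: restricting \eqref{sigma-Z-rel} to the small diagonal does give $Z_i\propto x^{\binom{m-i+1}{2}}y^{\binom{i}{2}}$, and the multiplicity $\min(i,m-i)$ is correct. Two caveats on step 3: the chart of the blow-up is \emph{not} cut out in $\A^4_{x,y,z,w}$ by the two equations you list (that scheme contains the spurious component $\{x=y=0\}$ through your point, and omits relations such as $zw=xy$); what is true, and what you should say, is that after localizing at the generic point of $C^m_i$ the unit $z$ lets you eliminate $w=xy/z$ and all other ratios, so the local ring is exactly that of the hypersurface $zx^{m-i}=y^i$ along $x=y=0$, which has multiplicity $\min(i,m-i)$. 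With that repair, the blow-up identification and the multiplicity/smoothness claims are fine, and are in the same spirit as the paper's argument (the paper restricts the generators $G_i$ of $\I_{OD^m}$ in the ordered model rather than the projective relations).

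The genuine gap is in step 2, the proof of \eqref{Gamma-smalldiag}. Your projective-coordinate computation in step 1 only sees the ratios $Z_j/Z_i$ and therefore cannot detect the twist $\omega_{X/B}^{\otimes\binom{m}{2}}$; you then try to recover it globally by checking the formula over the smooth locus and checking degrees on each $C^m_i$, and you assert that ``$\mathrm{Pic}(\Gamma_{(m)})$ is controlled by the pullback to $X$ together with the degrees on each $C^m_i$.'' That assertion is precisely what needs proof, and it is not innocuous: your own step 3 shows $\Gamma_{(m)}$ is non-normal along the interior of $C^m_i$ for $2\le i\le m-2$, so one must pass to the normalization and invoke negative-definiteness of the exceptional intersection matrix to see that a line bundle trivial off the chain and of degree zero on every component is trivial; moreover the degree-checking argument as written is confined to $\dim B=1$, while the statement is for arbitrary base (and you also conflate $\omega_X$ with $\omega_{X/B}$ when quoting \eqref{O(1)}). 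The paper avoids all of this: restricting the generators themselves gives $(x_a-x_b)|_{OD_{(m)}}=x\eta$ with $\eta=dx/x$ a local generator of $\omega_{X/B}$ at the node, hence $G_i|_{\Gamma_{(m)}}=x^{\binom{m-i+1}{2}}y^{\binom{i}{2}}\eta^{\binom{m}{2}}$, so that $\I_{D^m}\cdot\O_{D_{(m)}}\simeq J_m^{\theta}\otimes\omega_{X/B}^{\binom{m}{2}}$; this single local identity yields both the blow-up statement and \eqref{Gamma-smalldiag} at once (the $\O(1)$ of the blow-up of an ideal twisted by a line bundle is the $\O(1)$ of the blow-up of the ideal twisted by that bundle), with no Picard-group analysis and uniformly over a base of any dimension. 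You should either carry the twist through the generators as the paper does, or supply the normalization/negative-definiteness argument and the extension to higher-dimensional $B$ that your route requires.
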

 \begin{proof} We may work with the ordered versions
 of these objects, then pass to $\frak G_m$-invariants.
We fist work locally over a
 neighborhood of a point $p^m\in X^m_B$ where $p$ is a fibre node.
 We may then assume $X$ is a smooth surface and $X/B$ is given by $xy=t$, as the general case is derived from this by base-change.
 Then the ideal of $OD^m$ is generated by $G_1,...,G_m$ and $G_1$
 has the Van der Monde form $v^m_x$, while the other $G_i$ are given
 by \refp{Gdef1}..
  We try to restrict the ideal of $OD^m$ on the small
 diagonal $OD_{(m)}.$ To this end, note to begin
 with the natural map
 $$\I_{OD^m}\to \omega^{\binom{m}{2}}, \omega:=\omega_{X/B}.$$
 Indeed this map is clearly defined off the singular locus of $X^m_B$, hence by reflexivity of $\I_{OD^m}$ extends everywhere, hence moreover factors through a map
 $$\I_{OD^m}.OD\subp{m}=\I_{OD^m}\otimes\O_{OD\subp m}/{\mathrm{(torsion)}}\to\omega^{\binom{m}{2}}.
 $$ To identify the image, note
 that
 \beq (x_i-x_j)|_{OD_{(m)}}=dx=x\frac{dx}{x} \eeq
 and $\eta=\frac{dx}{x}=-\frac{dy}{y}$ is a local generator of $\omega$ along $\theta$.
 Therefore
 \beq G_1|_{OD_{(m)}}=x^{\binom{m}{2}}\eta^{\binom{m}{2}}. \eeq
 From \refp{Gdef2}. we then deduce
 \beql{G-smalldiag}{ G_i|_{\Gamma_{(m)}}=x^{\binom{m-i+1}{2}}y^{\binom{i}{2}}
 \eta^{\binom{m}{2}}, i=1,...,m.}
 Since $G_1,...,G_m$ generate the ideal  $I_{OD^m}$
 along $\theta$, it follows
 that over a neighborhood of $\theta$, we have
 \beq I_{OD^m}.{OD_{(m)}}\simeq
 J_m^\theta\otimes\omega^{\binom{m}{2}}. \eeq
This being true for each node, it is also true globally.
 Consequently, passing to the $\frak S_m$-quotient, we also
 have \beq I_{D^m}.{D_{(m)}}\simeq
 J_m^\theta\otimes\omega^{\binom{m}{2}}. \eeq Then pulling back to
 $X\sbr m._B$ we get \refp{Gamma-smalldiag}..\par
 Finally, it follows from the above, plus the explicit
 description of the model $H_m$, that, along the
 'finite' part $C^m_i-Q^m_{i+1}$, $\Gamma\subp{m}$
 has equation $x^{m-i}-uy^i$ where $u$ is an affine
 coordinate on $C^m_i-Q^m_{i+1}$, from which our last
 assertion follows easily.
  \end{proof}
Now  it follows from the Proposition that, given a node $\theta$
of $X/B$, the pullback ideal of $J_m^\theta$ on $\Gamma=D\subp m$ is an invertible ideal supported on the inverse image of $\theta$, i.e. $\bigcup\limits_{i=1}^{m-1}C^m_i(\theta)$;
we denote this ideal by $\O_\Gamma(1)_{J^\theta_m}$
or $\O_\Gamma(-e_m^\theta)$. It must not be confused with the
pullback of the reduced ideal of $\theta$.
 \begin{prop}\label{compute-excdic}
 We have \beql{}{e_m^\theta=\sum\limits_{i=1}^{m-1}\beta_{m,i}C^m_i(\theta)
 } where the $\beta_{m,i}$ are as in \S0.
 \end{prop}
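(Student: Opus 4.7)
The assertion is local at each node $\theta$, so by the preceding Proposition I plan to work in a model where $X \simeq \mathrm{Spec}\,\C[x,y]$ near $\theta$, the family parameter is $t = xy$, and $\Gamma_{(m)} \to X$ is (the reduced structure of) the blowup of the monomial ideal $J_m^\theta = \bigl(x^{\binom{m-i+1}{2}} y^{\binom{i}{2}}\bigr)_{i=1}^{m}$. Under this identification $\O_{\Gamma_{(m)}}(-e_m^\theta) = J_m^\theta \cdot \O_{\Gamma_{(m)}}$ is the tautological invertible ideal. Writing the Weil divisor $e_m^\theta = \sum_{j=1}^{m-1} n_j C_j^m$, the plan is to determine each multiplicity $n_j$ by pulling $\O(-e_m^\theta)$ back along a parametric arc in $X$ whose lift to $\Gamma_{(m)}$ hits a generic point of $C_j^m$.

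The arc I would use is $\phi_j : T := \mathrm{Spec}\,\C[s] \to X$, $s \mapsto (\alpha s^j, \beta s^{m-j})$, for generic $\alpha, \beta \in \C^*$. Using the projective embedding $\Gamma_{(m)} \hookrightarrow X \times \P^{m-1}_Z$ with $Z_i = x^{\binom{m-i+1}{2}} y^{\binom{i}{2}}$ on the small diagonal (as in \eqref{G-smalldiag}, up to the unit $\eta^{\binom{m}{2}}$), one computes
\[
\phi_j^*(Z_i) = \alpha^{\binom{m-i+1}{2}} \beta^{\binom{i}{2}} s^{f_j(i)}, \qquad f_j(i) := j\binom{m-i+1}{2} + (m-j)\binom{i}{2}.
\]
The forward difference $f_j(i+1) - f_j(i) = m(i - j)$ shows that $f_j$ attains its minimum exactly at $i = j$ and $i = j + 1$, both with common value $f_j(j) = \tfrac{jm(m-j)}{2}$. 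Hence as $s \to 0$ the lift of $\phi_j$ will approach the generic point $[0:\cdots:0:\alpha^{\binom{m-j+1}{2}}\beta^{\binom{j}{2}}:\alpha^{\binom{m-j}{2}}\beta^{\binom{j+1}{2}}:0:\cdots:0]$ of $C_j^m$ (distinct from $Q_j^m, Q_{j+1}^m$ for generic $\alpha, \beta$), and $\phi_j^*(J_m^\theta \cdot \O_{\Gamma_{(m)}}) = (s^{f_j(j)})$, yielding an $s$-adic valuation of $\tfrac{jm(m-j)}{2}$.

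Next I would identify this $s$-valuation with $n_j$, and then with $\beta_{m,j}$. When $\gcd(j,m-j) = 1$, $\phi_j$ is an immersion meeting $C_j^m$ transversally at a smooth point of $\Gamma_{(m)}$, so $n_j = f_j(j)$ follows directly. When $d := \gcd(j,m-j) > 1$, $\phi_j$ is a $d$-fold cover of its image line in $X$, and the local toric description shows that $\Gamma_{(m)}$ has exactly $d$ formal branches at a generic point of $C_j^m$; the arc enters one such branch, and the coincidence of the degree of $\phi_j$ onto that branch with the branch-count again yields $n_j = \tfrac{jm(m-j)}{2}$. The matching combinatorial identity $\beta_{m,j} = \mathrm{area}(S_{m,j}) = \tfrac{jm(m-j)}{2}$ would follow from a direct lattice-point count: for each $b \in \{0, 1, \ldots, j-1\}$, enumerate $a \geq 0$ satisfying both $(a, b) \notin R_m$ and $(a - (m - j), b + j) \notin R_m$, then sum over $b$.

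The hard part will be the case $d > 1$ above, where matching the $s$-adic valuation with the Weil-divisor multiplicity across the non-normality of $\Gamma_{(m)}$ at the generic point of $C_j^m$ requires careful branch bookkeeping; the rest of the argument, including the closed-form evaluation of $\beta_{m,j}$, is routine algebra and combinatorics.
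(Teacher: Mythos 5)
Your proposal takes a genuinely different route from the paper (which pairs $e_m^\theta$ with the proper transform of the test curve $x^{m-i}-ay^i$, identifies the resulting number with the colength $\ell\bigl(\O_X/(J_m+(x^{m-i}-ay^i))\bigr)$, and evaluates that colength combinatorially as the area of $S_{m,i}$ --- never invoking any closed form), but as written it has two genuine gaps, and the second one is fatal to the step that actually connects your computation to the statement being proved.

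First, the ``routine'' lattice count at the end does not give what you claim. The recipe you state --- for each $b\in\{0,\dots,j-1\}$ count the $a\ge 0$ with $(a,b)\notin R_m$ and $(a-(m-j),b+j)\notin R_m$ --- is exactly the area of the complement of $R_m\cup(R_m+P_j)\cup\{y\ge j\}$, i.e.\ it uses a \emph{single} translate of $R_m$. Already for $(m,j)=(6,4)$ this count is $26$, and for $(8,4)$ it is $66$; neither equals your $\tfrac{jm(m-j)}{2}$ ($24$, resp.\ $64$), and the first does not even reproduce the tabulated value $\beta_{6,4}=24$. The point is that one application of the substitution $y^{j}\mapsto x^{m-j}$ is not enough: the colength of $J_m+(y^j+\eta x^{m-j})$ (which is what any identification of your arc valuation with a staircase count must pass through) involves \emph{iterated} reductions, i.e.\ all translates $R_m+kP_j$, $k\ge 1$ --- e.g.\ for $m=8$, $j=4$, the generator $x^{10}y^6$ yields $x^{14}y^2$ and then, after multiplying by $y^2$ and reducing again, $x^{18}$, which your one-translate region misses. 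You give no argument evaluating that corrected count, so the identity ``$\beta_{m,j}=\tfrac{jm(m-j)}{2}$'' is unproved (and, for the count you actually describe, false); the agreement with the listed values for $m\le 6$ is what makes the guess tempting, but it is not a proof.

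Second, the identification $n_j=f_j(j)$ is not justified as written. By the preceding Proposition, $\Gamma_{(m)}$ has multiplicity $\min(j,m-j)\ge 2$ along $C^m_j$ whenever $2\le j\le m-2$, so even in the coprime case your arc does not meet $C^m_j$ ``transversally at a smooth point of $\Gamma_{(m)}$''; and when $d=\gcd(j,m-j)>1$ the picture of ``exactly $d$ formal branches at a generic point of $C^m_j$'' is wrong: the local ring at the generic point is unibranch --- locally $\Gamma_{(m)}$ is $x^{m-j}=uy^j$ with $u$ a coordinate on $C^m_j$, which is irreducible over $\C(u)$, its normalization being a single discrete valuation ring whose residue field is a degree-$d$ extension of $\C(C^m_j)$ (the $d$ branches appear only at closed points). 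The correct mechanism converting your $s$-adic order into the Weil multiplicity goes through the normalization: the order of $J_m$ along the exceptional curve upstairs is $\tfrac{jm(m-j)}{2d}$, your (non-primitive, degree-$d$) arc lifts with contact order $d$, and the normalization maps that curve onto $C^m_j$ with degree $d$, so the multiplicity is recovered by the projection formula, not by ``entering one of $d$ branches.'' So both halves of the argument need repair: the multiplicity computation needs the normalization/residue-degree argument, and the combinatorial identification needs the full iterated staircase (colength) count, as in the paper's proof, rather than a closed form.
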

 \begin{proof} We may fix $\theta$ and
 work locally with the universal family
 $xy-t$.
 Clearly the support of $e_m$ is
 $C^m=\bigcup\limits_{i=1}^{m-1}C^m_i,$ so we can write
 \beq e_m=\sum\limits_{i-1}^{m-1}b_{m,i}C^m_i \eeq and we have
 \beq -e_m^2=\deg(\O(1).e_m)=\sum\limits_{i=1}^{m-1} b_{m,i}=:b_m. \eeq
 Now the general point on $C^m_i$ corresponds to an ideal
 $(x^{m-i}+ay^i), a\in\C^*$ and the rational function
 $x^{m-i}/y^i$ restricts to a coordinate on $C^m_i.$
 It follows that if
 $A_i\subset X$ is the curve with equation $f_i=x^{m-i}-ay^i$
 for some constant $ a\in\C^*,$
 then its
 proper transform $\tilde{A_i}$ meets $C^m$ transversely
 in the unique point
 $q\in C^m_i$ with coordinate $a$, so that
 \beq \tilde{A_i}.e_m=b_{m,i}. \eeq Thus, setting $J_{m,i}=J_m+(f_i)$
 we get following
 characterization of $b_{m,i}$:
 \beq b_{m,i}=\l(\O_X/J_{m,i}).\eeq
 To compute this, we start by noting that a cobasis $B_m$ for $J_m,$
 i.e. a basis for $\O_X/J_m$ is given by the monomials $x^ay^b$
 where $(a,b)$ is an interior point of the polygon $S_m$ as in \S 0;
 equivalently, the square with bottom left corner $(a,b)$ lies in
 $R_m.$ Then a cobasis $B_{m,i}$ for $J_{m,i}$ can be obtained by
 starting with $B_m$ and eliminating\par - all monomials $x^ay^b
 $ with $b\geq i$;\par - for any $j$ with $\binom{j}{2}\geq i,$
 all monomials that are multiples of
 $x^{\binom{m+1-j}{2}+m-i}y^{\binom{j}{2}-i}$;\nl the latter of
 course comes from the relation
 \beq x^{\binom{m+1-j}{2}}y^{\binom{j}{2}}\equiv 0\mod J_m. \eeq
 Graphically, this cobasis corresponds exactly to the polygon
 $S_{m,i}$ in \S 0, hence \beq b_{m,i}=\beta_{m,i},
 b_m=\beta_m;\eeq
 \end{proof}
\begin{cor} Suppose $B$ is 1-dimensional. With the above notations, we
 have \beql{}{ e_m^2=-\sigma\beta_m,}
 where $\sigma$ is the number of nodes of $X/B$;
 \item
 \beql{disc.smalldiag}{\Gamma\spr m..\Gamma\subp{m}=
 \sum \beta_{m,i}C^m_i-\binom{m}{2}\omega_{X/B};}\item
 \beql{}{\int\limits_{\Gamma_{(m)}}(\Gamma\spr
 m.)^2=-\sigma\beta_m+\binom{m}{2}^2\omega_{X/B}^2.}.
\end{cor}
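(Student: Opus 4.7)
The plan is to deduce all three statements as formal consequences of the earlier identity \refp{Gamma-smalldiag}. together with the evaluation of $e_m^\theta$ in Proposition \ref{compute-excdic}. Since $\Gamma\subp m$ is birational to $X$ via $\frak c_m$ and $B$ is 1-dimensional, $\Gamma\subp m$ is a surface, so the relevant intersection numbers are top-dimensional and well-defined.

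First I would rewrite \refp{Gamma-smalldiag}. as an equality of divisor classes on $\Gamma\subp m$:
\[
\Gamma\spr m.\bigm|_{\Gamma\subp m}=e_m-\binom{m}{2}\omega_{X/B},\qquad e_m=\sum_\theta e_m^\theta,
\]
where the sum runs over the (finitely many) nodes $\theta$ of $X/B$ and $\omega_{X/B}$ denotes the pullback via $\frak c_m$ of the relative dualizing class on $X$. Then \refp{disc.smalldiag}. is immediate from the formula for $e_m^\theta$ given by Proposition \ref{compute-excdic}.

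Next I would compute $e_m^2$. The components $e_m^\theta$ and $e_m^{\theta'}$ for distinct nodes are supported on disjoint fibres of $\frak c_m:\Gamma\subp m\to X$ (namely over $\theta$ and $\theta'$), so their intersection vanishes and
\[
e_m^2=\sum_\theta(e_m^\theta)^2.
\]
For a single $\theta$, the self-intersection $(e_m^\theta)^2$ was in fact already computed in the proof of Proposition \ref{compute-excdic}: it is exactly the identity
$-(e_m^\theta)^2=\deg(\O(1)_{J^\theta_m}\cdot e_m^\theta)=\sum_i \beta_{m,i}=\beta_m$. Summing over the $\sigma$ nodes gives the first assertion $e_m^2=-\sigma\beta_m$.

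Finally, I would square the formula for $\Gamma\spr m.|_{\Gamma\subp m}$:
\[
(\Gamma\spr m.)^2\cdot\Gamma\subp m
= e_m^2-2\binom{m}{2}\,e_m\cdot\omega_{X/B}+\binom{m}{2}^{2}\omega_{X/B}^{\,2}.
\]
The mixed term vanishes because $e_m$ is supported on fibres of the birational map $\frak c_m:\Gamma\subp m\to X$ over the (finitely many) nodes, while $\omega_{X/B}$ is pulled back from $X$; by the projection formula such a pulled-back class restricts to zero on any component collapsing to a point. Substituting $e_m^2=-\sigma\beta_m$ and $\omega_{X/B}^{\,2}$ computed on $X$ yields the third identity. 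The only genuine point requiring care is the numerical evaluation $(e_m^\theta)^2=-\beta_m$, but this is precisely the content already extracted from Proposition \ref{compute-excdic}; everything else is bookkeeping.
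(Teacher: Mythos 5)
Your proposal is correct and follows exactly the route the paper intends: the Corollary is stated as an immediate consequence of \refp{Gamma-smalldiag}. and Proposition \ref{compute-excdic}, and your deduction (restriction formula for $\Gamma\spr m.$, per-node self-intersection $-\beta_m$ extracted from the proof of Proposition \ref{compute-excdic}, vanishing of the mixed term since $e_m$ is contracted to the nodes while $\omega$ is pulled back from $X$) is precisely that bookkeeping. No gaps to report.
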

\begin{rem} The components $C_{i}^m(\theta), i=1,...,m-1$ of $e_m$
 are $\P^1$-bundles over $\theta$ and are special cases of the {{node scrolls}}, encountered in the previous section,
  and which will be further discussed in
 \S2.3 below. The coefficients $\beta_{m,i}$ play an essential role our
 intersection calculus.\end{rem}
For the remainder of the paper, we set \beq\omega=\omega_{X/B}.\eeq We will view this
interchangeably as line bundle or divisor class.\par

\newsubsection{Monoblock and polyblock  digaonals: ordered case}
\label{ordered-diags}
Returning to our family $X/B$ of nodal curves, we now begin
extending the results of \S\ref{smalldiag} to
the more general diagonal  loci as defined above, first for
those that live over all of $B$, and subsequently for loci associated to the boundary.
We call these \emph{monoblock} and \emph{polyblock  diagonals},
depending on whether they correspond to a single block or to a
partition. These loci come in both ordered and unordered versions, the ordered version being more convenient,
the unordered one the more 'correct' or natural one.
 We begin with the ordered monoblock diagonal,
 defined as follows. For an index-set
$I\subset [1,m]$, set \beql{}{OD_I^m=OD_I=p_I\inv(OD_{|I|})\subset
X^m_B,} where $p_I:X^m_B\to X^{|I|}_B$ is the projection and
$OD_{|I|}\subset X^{|I|}_B$ is the small diagonal; thus, $OD^m_I$ is
$X^{(I.)}_B$ where $(I.)$ is any partition on $[1,m]$ equivalent to
the single block $I$. Note that \beql{}{OD^m=\sum\limits_{1\leq
a<b\leq m}OD^m_{a,b}.}
Also, we have an isomorphism $OD^m_I\simeq X^{m-|I|+1}_B$.
Let
\beql{}{\Gamma_I=\Gamma_I\scl m. :=
oc\inv(OD_I)\subset X\scl m._B
}   These are called (ordered) monoblock
diagonal loci
(by comparision, the unordered monoblock diagonals, to be
studied below, will be associated to a block \emph{size} rather
than a block).
. Note that $OD_I$, hence $\Gamma_I$, are defined
locally near a node by equations \beql{}{x_i-x_j=0=y_i-y_j, \
\forall i,j\in I.}
\par
Similarly, for any partition
$$(I.)=(I_1,...,I_r)\subset [1,m],$$
we define an analogous locus ( ordered \emph{polyblock  diagonal }) \beql{}{
\Gamma_{I_1|...|I_r}=\Gamma _{I_1|...|I_r}\scl m.\subset X\scl m._B}
and note that \beql{}{
\Gamma_{I_1|...|I_r}=\Gamma_{I_1}\cap...\cap\Gamma_{I_r}}
(transverse intersection). Also
\beql{}{
\Gamma\subp{I.}=oc\inv(OD\subp{I.})
}
where $OD\subp{I.}\subset X^m_B$ is the analogous polyblock diagonal.
Note that when $(I.)$ is full of length $r$, we have
\beql{}{OD\subp{I.}\simeq X^r_B
}

\par

Now to analyze a monoblock diagonal locus $OD_I$, a key
technical  question is
to determine the part of $OD_I$ over the boundary of $B$;
or equivalently, fixing a boundary datum $(\theta, T, \delta)$,
with the associated map $\phi:X^\theta_T\to X$, to determine
$(\phi^m)^*(OD_I)$.
 Fixing such a boundary datum, the answer is as follows,
 where we denote the $x$ and $y$ axes in $X^\theta_T$
 by $X', X"$ respectively.
 \begin{lem}\label{mono-spec} The irreducible components of
the boundary of $\Gamma_I$ over $T$ are as follows:
\begin{enumerate}
\item for each index-set $K$, $[1,m]\supset K\supset I$, a locus
 $\tilde\Theta _{K/I}$, mapping
birationally to its image $\Theta_{K/I}\subset OD_I$;
\item for each $K\subset I^c=[1,m]\setminus I$, ditto;
\item for each $K$ straddling $I$ and $I^c$,
and each $j=1,...,|I|-1$, a component
$OF_j^{I:K-I|K^c-I}(\theta)\subset OF^I_j(\theta)$
projecting
as $\P^1$-bundle to its image in
$(X^\theta_T)\scl m-|I|.$, which lies over
 $(X')^{K-I}\times_T(X")^{K^c-I}=:(X^\theta_T)^{K-I|K^c-I}\subset
 (X^\theta_T)^{m-|I|}$.
\end{enumerate}\end{lem}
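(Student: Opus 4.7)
The plan is to work in the local model $H_m$ from Theorem \ref{Hilb-local}, over a neighborhood in $B$ of a point of $T$. Since the ordered cycle map $oc_m\colon X\scl m._B \to X^m_B$ is an isomorphism away from fibres where a node appears with multiplicity, and since $OD_I \subset X^m_B$ is defined locally near $\theta$ by $x_i - x_j = 0 = y_i - y_j$ for $i, j \in I$, the boundary components of $\Gamma_I = oc_m^{-1}(OD_I)$ lying over $T$ arise only from loci where at least one $I$-indexed coordinate sits at $\theta$. So the whole question reduces to analyzing $oc_m^{-1}(OD_I)$ in a neighborhood of cycles that have a portion at $\theta$.

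Next I would stratify the ambient $X^m_B$ over a fibre through $\theta$ by the subsets $J \subseteq [1,m]$: the stratum $\Theta_J$ of \eqref{Theta} consists of $m$-tuples with $J$-coordinates on the $x$-branch $X'$ (so $y_j = 0$) and $J^c$-coordinates on the $y$-branch $X''$ (so $x_j = 0$). On $\Theta_J$, the conditions defining $OD_I$ split into three regimes according to the intersection of $I$ with $J$: if $I \subseteq J$ or $I \subseteq J^c$, the $I$-coordinates all lie on one branch and the condition reduces to an ordinary diagonal equality there, which produces the components of cases (i) and (ii) with $K$ absorbing any further coincidences of non-$I$ indices on the respective branch; if $I$ meets both $J$ and $J^c$, then for $i \in I \cap J, j \in I \cap J^c$ we have $y_i = 0$ and $x_j = 0$ on $\Theta_J$, so the equations $x_i = x_j, y_i = y_j$ force $x_i = 0 = y_j$, and by repetition all $I$-coordinates collapse to $(0,0) = \theta$, yielding case (iii).

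For case (iii) the $\P^1$-bundle structure then follows from the Proposition-Definition of node scrolls in \S\ref{globalization}. The closure of the locus where the scheme has length exactly $|I|$ at $\theta$, with $I^c$ partitioned as $K-I$ on $X'$ and $K^c-I$ on $X''$, is precisely the fibered product of the inserted node scroll $OF^I_j(\theta)$ with the partial diagonal $(X^\theta_T)^{K-I|K^c-I} \subset (X^\theta_T)\scl m-|I|.$, giving the $\P^1$-bundle $OF_j^{I:K-I|K^c-I}(\theta)$ for each $j = 1, \ldots, |I|-1$, with general fibre the punctual-Hilbert component $C^{|I|}_j$ at $\theta$. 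Birationality in cases (i) and (ii) is immediate from the fact that on the open locus where the common $I$-point is a smooth point of the fibre, $oc_m$ is an isomorphism, a consequence of Theorem \ref{blowup}.

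The principal obstacle is completeness: showing that these components exhaust the boundary, with no missed or embedded pieces, and that each has the expected dimension. This I would settle by a dimension count on each stratum $\Theta_J$ (taking into account that only the generic stratum of each regime contributes a new component, the non-generic strata appearing as specializations) combined with the smallness of $oc_m$ established in Chapter 1 and the explicit punctual-fibre descriptions found there, notably the description of the restriction of $oc_m$ over a cycle of type $(a,b)$.
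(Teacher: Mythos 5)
Your proposal is correct and follows essentially the same route as the paper: stratify the special fibre of $X^m_B$ by the loci $\Theta_K$ of \refp{Theta}., split into the trichotomy $I\subset K$, $I\subset K^c$, $I$ straddling, and then combine a dimension count with the $1$-dimensional fibres of $oc_m$ and the type-$(a,b)$ analysis of Lemma \ref{Hab-struct} (equivalently the node-scroll construction of \S\ref{globalization}) to identify the straddling contributions as the $\P^1$-bundles $OF_j^{I:K-I|K^c-I}(\theta)$ and to see that nothing else contributes in codimension one. The only blemish is the over-strong sentence in your first paragraph claiming all boundary components arise from loci with an $I$-coordinate at $\theta$ — components of types (i) and (ii) do not — but this is harmless since your subsequent case analysis treats those regimes explicitly, exactly as the paper does.
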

\begin{proof} We may fix a node $\theta$ and work locally over
a neighborhood of $\theta$ in $X$. From the definition and basic properties of node scrolls (see \S\ref{globalization}), the main point is to determine the boundary of $OD_I$. But this is easily
determined:
referring to
\refp{Theta}., the latter boundary is given locally by
$$\bigcup\limits_{K\subset [1,m]} OD_I\cap \Theta_K.$$
Set $\Theta_{K/I}=OD_I\cap\Theta_K$. To describe these,
there are 3 cases
depending on $K$:\begin{enumerate}\item
if $I\subset K$, then
$$\Theta_{K/I}=(X')^{K/I}\times (X")^{K^c};$$
\item if $I\subset K^c$, then
$$\Theta_{K/I}=(X')^K\times (X")^{K^c/I};$$
\item otherwise, i.e. if $I$ straddles $K$ and $K^c$, then
 $$\Theta_{K/I}=\{y_i=0,\forall i\in K\cup I,
x_i=0, \forall i\in K^c\cup I\}$$
$$=(X')^{K-I}\times (X")^{K^c-I}\times 0^I=:X^{K-I|K^c-I}$$
(to specify the special value $s\in B$, a subscript $s$
may subsequently be added in the above).
\end{enumerate}
\par
Now is an elementary check that
the loci of type (i) and (ii) are precisely the irreducible
components of the special fibre of $OD_I$, while
 the union of the loci
$\Theta_{K/I}$ of type (iii) coincides with the intersection of $OD_I$ with the fundamental locus (=image
of exceptional locus) of the ordered cycle map
$oc_m$,
i.e. the locus of cycles containing the node with multiplicity
$>1$. Also, each $\Theta_{K/I}$ of type (iii) is of codimension 2 in $OD_I$. On the other hand, each such $\Theta_{K/I}=X^{K-I|K^c-I}$ is just a component of the invese
image in $X^m_B$ of the locus denoted 
$X^{(a,b)}$  in Lemma \ref{Hab},
where $a=|K-I|, b=|K^c-I|$, and therefore by
that Lemma, the ordered cycle map over it is a union of $\P^1$ bundles, viz
\beql{}{oc_m\inv(X^{K-I|K^c-I})=\bigcup\limits_{j=1}^{|I|-1}
OF_j^{I:K-I|K^c-I}}
where $OF_j^{I:K-I|K^c-I}$ is the pullback of $F_j^{(m-a-b:a|b)}$ over $X^{K-I|K^c-I},$ which is a $\P^1$
bundle with fibre $C^{|I|}_j$. This concludes the proof.
\end{proof}
Notice that, given disjoint index-sets $K_1, K_2$
with $K_1\coprod K_2= I^c$, the
number of straddler sets $K$ such that $K-I=K_1, K^c-I=K_2$ is
precisely $2^n-2$ (i.e. the number of proper nonempty subsets
of $I$). Thus, a given $OF_j^{I:K_1|K_2}$ will lie on
this many components of $\tilde{\Theta}.$ This however
is a completely separate issue from the multiplicity of
$OF_j^{I:K_1|K_2}$ in the intersection cycle
$\Gamma\sbr m..\Gamma_I$, which has to do with the blowup structure
and will be determined below.
\par
From the foregoing
 analysis, we can easily compute the intersection of
a\nl monoblock diagonal cycle with the discriminant polarization, as
follows. We will fix a covering system of boundary data
$(T_s, \delta_s, \theta_s)$,and recall that each datum
must be weighted by $\frac{1}{\deg(\delta_s)}$.
\begin{prop}\label{cut-monom} We have an equality of divisor classes on
$\Gamma_I$:
\begin{eqnarray}\label{monom-diag}
\Gamma\scl m..\Gamma_I=\sum\limits_{i<j\notin I}
\Gamma_{I|\{i,j\}}+|I|\sum\limits_{i\notin I} \Gamma_{I\cup \{i\}}\\ \nonumber
-\binom{|I|}{2}p_{\min (I)}^*\omega+
\sum\limits_s\frac{1}{\deg(\delta_s)}
\sum\limits_{j=1}^{|I|-1}\beta_{|I|,j}
\delta^I_{s,j*}
OF_{j}^{I}(\theta_s),\end{eqnarray}
where ${I|\{i,j\}}$ and $I\cup \{i\}$ denote the evident diblock partition and uniblock, respectively,  the 4th term denotes
the class of the image of the node scroll on $\Gamma_I$,
$OF_{j}^{I}(\theta)=
\sum\limits_{K_1\coprod K_2=I^c}OF_j^{I:K_1|K_2}(\theta)$,
and $\delta^I_{s,j}$ is the natural map of the latter to
$\Gamma_I\subset X\sbr m._B$ ;
precisely put, the line bundle on $\Gamma_I$ given by
$\O_{\Gamma_I}(\Gamma\scl m.)\otimes p_{\min(I)}^*(\omega^{|I|})$ is represented by
an effective divisor comprising the 1st, 2nd and 4th terms
of the RHS of \refp{monom-diag}..
\end{prop}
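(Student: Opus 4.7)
The plan is to pull back the decomposition $OD^m=\sum_{1\leq a<b\leq m}OD^m_{a,b}$ via the ordered cycle map $oc_m$, which gives $\Gamma\scl m.=\sum_{a<b}oc_m^*(OD^m_{a,b})$ as Cartier divisor classes on $X\scl m._B$, and then to restrict to $\Gamma_I$, partitioning the pairs according to their intersection with $I$ into three types: (a) $\{a,b\}\subset I^c$, (b) $|\{a,b\}\cap I|=1$, and (c) $\{a,b\}\subset I$. Types (a) and (b) will yield the transverse first and second terms of \refp{monom-diag}., while type (c) must be analyzed via a reduction to the small-diagonal calculation of \refp{Gamma-smalldiag}. and Proposition \ref{compute-excdic}.

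For type (a), $OD^m_{a,b}$ and $OD^m_I$ intersect transversely in the smooth locus of $X^m_B$, with scheme-theoretic intersection $OD^m_{I|\{a,b\}}$, and transversality persists after blowup, so $oc_m^*(OD^m_{a,b})|_{\Gamma_I}=\Gamma_{I|\{a,b\}}$. For type (b), a pair $\{a,i\}$ with $a\in I$ and $i\in I^c$ forces $I\cup\{i\}$ to collapse to a single point, giving $oc_m^*(OD^m_{a,i})|_{\Gamma_I}=\Gamma_{I\cup\{i\}}$, and the multiplicity $|I|$ in the second term comes from there being exactly $|I|$ such pairs for each fixed $i\in I^c$. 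Summing these contributions produces the first and second terms of \refp{monom-diag}..

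For type (c), the factorization $X^m_B=X^I_B\times_BX^{I^c}_B$ identifies $OD^m_I$ with $OD^I\times_BX^{I^c}_B$ and each $OD^m_{a,b}$ (with $\{a,b\}\subset I$) as the pullback of the corresponding diagonal on $X^I_B$. The induced collapse map $\Gamma_I\to\Gamma_{(|I|)}$ (tracking the coincident $I$-point) therefore reduces the total type-(c) contribution $\sum_{\{a,b\}\subset I}oc_m^*(OD^m_{a,b})|_{\Gamma_I}$ to the pullback of $\Gamma\scl|I|.|_{\Gamma_{(|I|)}}$. By \refp{Gamma-smalldiag}. and Proposition \ref{compute-excdic}, applied with $m$ replaced by $|I|$ and summed over the covering system of boundary data with the appropriate weights, this class equals
\[
-\binom{|I|}{2}\omega+\sum_s\frac{1}{\deg(\delta_s)}\sum_{j=1}^{|I|-1}\beta_{|I|,j}\,C^{|I|}_j(\theta_s)
\]
on $\Gamma_{(|I|)}$; pulled back to $\Gamma_I$, the base class $\omega$ becomes $p_{\min(I)}^*\omega$ and, by the boundary analysis of Lemma \ref{mono-spec}, each exceptional component $C^{|I|}_j(\theta_s)$ becomes the node scroll image $\delta^I_{s,j*}OF_j^I(\theta_s)$, producing the remaining two terms of \refp{monom-diag}..

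The main obstacle is precisely this last identification: that the pullback of $C^{|I|}_j(\theta_s)$ under the collapse map $\Gamma_I\to\Gamma_{(|I|)}$ equals $\delta^I_{s,j*}OF_j^I(\theta_s)$ with the correct multiplicity $\beta_{|I|,j}$. This requires a codimension-two local check at a generic point of each boundary stratum $(X^{\theta_s}_{T_s})^{K-I|K^c-I}$ of Lemma \ref{mono-spec}: using the local model $H_{|I|}$ of \S1.3 and the explicit $G$-functions of \S1.4, one verifies that the collective contribution of the type-(c) divisors along the corresponding component of the node scroll agrees with the exceptional multiplicity $\beta_{|I|,j}$ computed in Proposition \ref{compute-excdic} via the monomial-ideal polygon $S_{|I|,j}$ of \S0, while the type-(a) and type-(b) divisors contribute transversely and therefore vanish along that component.
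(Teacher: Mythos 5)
Your proposal is correct and follows essentially the same route as the paper: decompose the discriminant into the pairwise diagonals sorted by their overlap with $I$, read off the first two terms transversally and the $\omega$-term from the within-$I$ pairs, and obtain the node-scroll multiplicities $\beta_{|I|,j}$ by reducing, at generic points of the boundary strata of Lemma \ref{mono-spec}, to the small-diagonal computation (Proposition \ref{compute-excdic}) via the local product structure of the cycle map. The only cosmetic caveat is that your ``collapse map'' $\Gamma_I\to\Gamma_{(|I|)}$ is merely rational (it is undefined along the loci $\Gamma_{I\cup\{i\}}$), but your concluding codimension-two local check at generic stratum points---which is exactly the paper's argument, legitimate because both sides are divisor classes on $\Gamma_I$---renders that global map unnecessary.
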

\begin{proof}
To begin with, the asserted equality trivially holds
away from the exceptional locus of $oc_m$, where the 1st, second and third summands come from components
$\Gamma_{i,j}$ of $\Gamma\scl m.$ having $|I\cap\{i,j\}|=0,1,2$,
respectively.\par
Next, both sides being divisors on $\Gamma_I$, it will suffice to check equality away from codimension 2, e.g.
over a generic point of each (boundary) locus
$(X^\theta_T)^{K-I|K^c-I}$.
But there, our cycle map $oc_m$ is locally just $oc_r\times
{\mathrm{iso}}$, $r=|I|$, with
$$\Gamma\scl m.\sim \Gamma\scl r.+\sum\limits_{\{i,j\}
\not\subset I}\Gamma_{i,j}.$$
We are then reduced to the case of the small diagonal, discussed in the previous subsection.

\end{proof}
The extension of this result from the monoblock to the polyblock
case- still in the ordered setting- is in principle straightforward, but a bit complicated to
describe. Again, a key issue is to describe the
boundary of a
polyblock diagonal locus $OD\subp{I.}$ in terms of the decomposition
\refp{Theta}.. Fix a boundary datum $(T,\delta, \theta)$.
To simplify notations, we will assume, losing no
generality, that the partition $I.$ is full. Now consider an
index-set $K\subset[1,m]$. As before, $K$ is said to be a
\emph{straddler} with respect to a block $I_\l$ of $(I.)$, and
$I_\l$ is a \emph{straddler block} for $K$,   if $I_\l$ meets both
$K$ and $K^c$. The \emph{straddler number} $\strad\subp{I.}(K)$ of
$K$ w.r.t. $(I.)$ is the number of straddler blocks $I_\l$. The
\emph{straddler portion} of $(I.)$ relative to $K$ is by definition
the union of all straddler blocks, i.e.
\beql{}{s_K(I.)=\bigcup\limits_{I_\l\cap K\neq\emptyset\neq I_\l\cap
K^c}I_\l.
 }
 The $x$- (resp. $y$-)\emph{-portion} of $(I.)$
(relative to $K$, of course)
  are by definition
 the partitions
\beql{}{x_K(I.)=\{I_\l:I_\l\subset K\}, y_K(I.)=\{I_\l:I_\l
\subset K^c\}.}
Finally the \emph{multipartition data} associated to $(I.)$
w.r.t. $K$ are
\beql{}{\Phi_K(I.)=(s_K(I.):x_K(I.)|y_K(I.)).}
In reality, this is a partition broken up into
3 parts: the \emph{nodebound} part $s_K(I.)$, a single block,
plus 2 \emph{at large} parts, an $x$ part and a $y$ part.
As before, we set
\beql{}{X^{\Phi_K(I.)}=(X')^{x_K(I.)}\times (X")^{y_K(I.)}}
and equip it as before with the map to $X_s^m$
obtained by inserting the node $\theta$ at the
$s_K(I.)$ positions.Now the analogue of Lemma
\ref{mono-spec} is the following
\begin{lem}For any partition $(I.)$ and boundary datum
$(T,\delta,\theta)$,, the corresponding boundary
portion of $\Gamma\subp{I.}$ is
\beql{}{\bigcup \limits_{\strad\subp{I.}(K)=0}
\tilde\Theta_{K,(I.)}\cup\bigcup\limits_{\l}
\bigcup\limits_{I'.\coprod I".=I.\setminus I_\l}
\bigcup\limits_{j=1}^{|I_\l|-1}OF_j^{(I_\l:I'.|I".)}
(\theta)}
\end{lem}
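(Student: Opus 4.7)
The plan is to reduce to the monoblock case (Lemma \ref{mono-spec}) by exploiting the transverse intersection decomposition
\[
\Gamma\subp{I.}=\Gamma_{I_1}\cap\cdots\cap\Gamma_{I_r}
\]
already established in \S\ref{ordered-diags}. Fix a boundary datum $(T,\delta,\theta)$; as in the monoblock case we may work locally in an analytic neighborhood of the relevant node $\theta$ and use the local model $H_m$, so that the boundary portion is the union of the strata $\Theta_K$ of \refp{Theta}. pulled back under $oc_m$. The task is then to describe, for each $K\subset[1,m]$, the schematic intersection of the boundary components of each $\Gamma_{I_\l}$ lying over $\Theta_K$, and to record only those components that remain of codimension one in $\Gamma\subp{I.}$.

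Applying Lemma \ref{mono-spec} to each $\Gamma_{I_\l}$, for fixed $K$ each block $I_\l$ falls in exactly one of two regimes: either $I_\l\subset K$ or $I_\l\subset K^c$ (the non-straddling regime, contributing a $\tilde\Theta_{K/I_\l}$-type component to $\Gamma_{I_\l}$), or $I_\l$ meets both $K$ and $K^c$ (the straddling regime, contributing a component of the node scroll $OF^{I_\l:K\cap I_\l^c|K^c\cap I_\l^c}_j(\theta)$ for some $1\leq j\leq|I_\l|-1$). First I would show that if two or more distinct blocks $I_\l$ are straddling $K$ simultaneously, the resulting intersection is of codimension $\geq 2$ in $\Gamma\subp{I.}$: indeed each straddling block forces the points of that block to collapse to $\theta$, so two straddling blocks would require two disjoint length-$>1$ punctual schemes both supported at $\theta$, an impossibility on a generic point of a would-be boundary divisor (more concretely, each straddling condition cuts the codimension of the relevant monoblock boundary stratum by an extra unit in $\Gamma\subp{I.}$). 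Hence at most one block straddles at a codimension-one stratum.

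Next I would handle the two surviving regimes. If no block straddles $K$, then $K$ is a union of blocks of $(I.)$, i.e.\ $\strad\subp{I.}(K)=0$, and the intersection $\bigcap_\l \tilde\Theta_{K/I_\l}$ in $X\scl m._B$ maps birationally to the stratum $\Theta_K\cap OD\subp{I.}$ of $X^m_B$; the resulting component is precisely $\tilde\Theta_{K,(I.)}$, giving the first union. If exactly one block $I_\l$ straddles $K$, writing $I'.=x_K(I.\setminus I_\l)$ and $I".=y_K(I.\setminus I_\l)$, the other blocks contribute only non-straddling pieces that together pin down the $(I'.|I".)$ decomposition of $I.\setminus I_\l$; intersecting with the node scroll of $\Gamma_{I_\l}$ produces the component $OF_j^{(I_\l:I'.|I".)}(\theta)$ for $j=1,\ldots,|I_\l|-1$, yielding the second union.

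The main obstacle is the codimension count for simultaneously straddling blocks; once this is cleanly established (perhaps by noting that each straddling block independently imposes a codimension bump through the equations $v_i=0=u_{j}$ of Lemma \ref{Hab-struct}, and that these conditions are transverse across distinct blocks in the local model), the rest is bookkeeping on the straddler data $\Phi_K(I.)$ and direct application of Lemma \ref{mono-spec} to each factor. Conversely, each locus in the proposed union is easily seen to lie in the boundary of $\Gamma\subp{I.}$ and to be of the correct codimension, so the decomposition is exhaustive.
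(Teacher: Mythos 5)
Your overall route — extend the monoblock analysis of Lemma \ref{mono-spec} blockwise, stratify by which blocks straddle the index set $K$, and keep only the codimension-one contributions — is the same strategy the paper follows, so the issue is not the plan but the justification of its one nontrivial step: excluding simultaneous straddler blocks. Your stated reason is flawed in two ways. First, the ``impossibility'' claim is false: a point of $\Gamma\subp{I.}$ at which two blocks $I_j,I_\l$ both collapse to $\theta$ is a single subscheme having length $\geq |I_j|+|I_\l|$ at $\theta$; such schemes exist and the corresponding locus is nonempty — it is merely of codimension $\geq 2$. Second, the proposed mechanism (``the equations $v_i=0=u_j$ of Lemma \ref{Hab-struct} are transverse across distinct blocks in the local model'') is not the right picture: near a cycle of multiplicity $|I_j|+|I_\l|$ at $\theta$ there is one local model $H_{|I_j|+|I_\l|+\cdots}$ with a single chain of $(u,v)$-coordinates and a single punctual Hilbert scheme (a $1$-dimensional chain of $\P^1$'s), not independent per-block node scrolls whose equations could be intersected transversally; and even granting independence, naive transversality by itself does not settle whether the resulting locus has codimension $1$ or $2$ in $\Gamma\subp{I.}$ until you compare with $\codim\Gamma\subp{I.}$. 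Relatedly, you cannot lean on the transversality of $\Gamma\subp{I.}=\bigcap_\l\Gamma_{I_\l}$ quoted from the ordered-diagonal section: that statement is clear away from the exceptional locus of $oc_m$, whereas the content of the present lemma is exactly what happens over it, so the blockwise intersection of boundary decompositions needs its own dimension control there.

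The argument that actually closes the gap — and is the paper's — is purely dimensional. Downstairs one computes $\dim\Theta_{K,(I.)}=m-\sum_\l(|I_\l|-1)-\strad\subp{I.}(K)$, i.e.\ each straddler block costs one extra unit, so (working locally over the universal node deformation) a stratum with straddle number $s$ has codimension $s+1$ in $OD\subp{I.}$. Combining this with the fact that every fibre of $oc_m$ is at most $1$-dimensional — the punctual Hilbert scheme at a node is a chain of rational curves whatever the multiplicity — the preimage of such a stratum has codimension $\geq s$ in $\Gamma\subp{I.}$. Hence only $s\leq 1$ can produce divisorial components: $s=0$ yields the birational lifts $\tilde\Theta_{K,(I.)}$, and $s=1$ yields the node scrolls $OF_j^{(I_\l:I'.|I''.)}(\theta)$, whose enumeration by a choice of block $I_\l$ together with a splitting of the remaining blocks into $x$- and $y$-parts you do carry out correctly. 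With the multistraddler exclusion argued this way (fibre-dimension bound plus the straddle-number count), the rest of your bookkeeping goes through.
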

\begin{proof}
Now, one can easily verify
\beql{}{OD\subp{I.}\cap \Phi_K=X^{\Phi_K(I.)}=:\Theta_{K,(I.)}}
so that
\beql{}{OD\subp{I.}\cap X_0^m=\bigcup\limits_{K\subset[1,m]}
\Theta_{K,(I.)}.}
Now, an elementary observation is in order. Clearly, the codimension of $OD\subp{I.}$ in $X^m_B$ is $\sum\limits_\l (|I_\l|-1)$, and this also equals the codimension of
$OD\subp{I.}\cap X_0^m$ in $X_0^m$.
On the other hand,
we have
\begin{eqnarray}\dim(\Theta_{K,(I.)})=m-\left(\sum\limits_{I_\l \textrm{\ nonstraddler rel} K}(|I_\l|-1)+\sum\limits_{I_\l \textrm{\ straddler rel} K}|I_\l|\right)\\ \nonumber
= m-\sum_\l(|I_\l|-1)-\strad\subp{I.}(K).
\end{eqnarray}
It follows that\begin{itemize}
\item the index-sets $K$ such that $\Theta_{K,(I.)}$ is a
component of the boundary $OD\subp{I.}\cap (X^\theta_T)^m$
are precisely the nonstraddlers;
\item those $K$ such that $\Theta_{K,(I.)}$ is of codimension
1 in the special fibre are precisely those of straddle number 1 (unistraddlers).
\end{itemize}
\par
Next, what are the preimages of these loci
 upstairs in the ordered Hilbert scheme $X\scl m._B$?
 They can be analyzed as in the monoblock case:
\begin{itemize}\item
 if $K$ is a nonstraddler, a general cycle parametrized by
$\Theta_{K,(I.)}$ is disjoint from the node, so there will be
a unique component $\tilde\Theta_{K,(I.)}\subset oc_m\inv(\Theta_{K,(I.)}$ dominating $\Theta_{K,(I.)}$;
\item
if $K$ is a unistraddler (straddle number $=1$), the dominant
components of\nl
$oc_m\inv(\Theta_{K,(I.)})$ will be the $\P^1$-bundles
$F_j^{\Phi_K(I.)}, j=1,,,s_K(I.)-1$; note that if $I_\l$
the unique block making $K$ a straddler, then $\Phi_K(I.)=(I_\l:
x_K(I.)|y_K(I.))$; moreover as $K$ runs through all unistraddlers, $\Phi_K(I.)$ runs through the date consisting of a choice of block $I_\l$ plus a partition of the set of remaining blocks in two ('$x$- and $y$-blocks');
\item because all fibres of $oc_m$ are at most 1-dimensional,
while every component of the boundary is of codimension 1 in
$\Gamma\subp{I.}$,
no index-set $K$ with straddle number $\strad\subp{I.}(K)>1$
(i.e. multistraddler) can contribute a component to
that special fibre.
\end{itemize}
This completes the proof.
 \end{proof}
What the Lemma means is that the analysis leading
to Proposition \ref{cut-monom} extends with no essential changes to the polyblock case, and therefore the natural analogue
of that Proposition holds. This is the subject of the next
Corollary which for convenience will be stated in slightly greater
generality to allow for twisting.
Let us identify
\beql{}{\Gamma\subp{I.}\simeq{\prod\limits_{j=1}^r}\ _B
X
}
where $I_1,...,I_r$ are the blocks.
For a collection $\alpha_1,...,\alpha_r$ of cohomology classes on $X$, recall from Section \ref{partitions} the
notation
\beql{}{\Gamma\subp{I.}\star_t[\alpha_1,...,\alpha_r]=
s_t
(p_1^*(\alpha_1),...,
p_r^*(\alpha_r)
}
where
$s_t$ are the elementary symmetric functions.
Similarly, for any subvariety (or homology
class) $Y$ on $\Gamma\subp{I.}$, we have
\beql{Y-alpha}{
Y\star_t[\alpha_1,...,\alpha_r]=
\Gamma\subp{I.}\star_t[\alpha.]\cup [Y].}
When $t=r$, we write $Y\star_r[\alpha.]$ simply
as $Y[\alpha.]$.
We will use this in particular when $Y=\nolinebreak OF_j^{I_\l}(\theta)$ and note
that, because $OF_j^{I_\l}$ projects to a section
(viz. $\theta$ in each of the
$I_\l$ coordinates, we have
\beql{}{\deg(\alpha_\l)\geq\dim(B)\Rightarrow OF_j^{I_\l}[\alpha.]=0.}
To state our result compactly, it will be convenient to
introduce the following operations on partitions :
\beql{}{U_{k,\l}(I.)=(..., I_k\cup I_\l,...,\what{I_\l},...)}
(i.e. uniting the $k$th and $\l$th blocks),
\beql{}{V_\l(I.,i)=(...,I_\l\cup i,...).}

\begin{cor}
\label{disc.diag-ordered}
\begin{enumerate}\item
 For any partition $I.=I_1|...|I_r$ on $[1,m]$, we have an equality of divisor classes on
$\Gamma_{I.}$:
\begin{eqnarray}\label{polynom-diag}
\Gamma\scl m..\Gamma\subp{I.}=
 \sum\limits_{i<j\notin \bigcup I.}
\Gamma\subp{I.|\{i,j\}}+ \sum\limits_\l |I_\l|\sum\limits_{i\notin \bigcup I.} \Gamma\subp{V_\l(I.,i)}\\ \nonumber
+\sum\limits_{j<\l}|I_j||I_\l|\Gamma\subp{U_{k,\l}(I.)}
-\Gamma\subp{I.}\star_1[\binom{|I_1|}{2}\omega,...
\binom{|I_s|}{2}\omega]+
\\ \nonumber
\sum\limits_s\frac{1}{\deg(\delta_s)}
\sum\limits_\l \sum\limits_{j=1}^{|I_\l|-1}
\beta_{|I_\l|,j}\delta^{I_\l}_{s,j*}
[OF_j^{I_\l/I.}(\theta_s)]\end{eqnarray}
where
\beql{}{[OF_j^{I_\l/I.}(\theta)]=\sum\limits_{I'.\coprod I".=I.\setminus I_\l}
\sum\limits_{j=1}^{|I_\l|-1}
\delta^{I_\l}_{s,j*}[OF_j^{(I_\l:I'.|I".)}(\theta)].}
\item if $I.$ is full, we have
\eqspl{polynom-diag-full}{
&\Gamma\scl m..\Gamma\subp{I.}=
 \sum\limits_{j<\l}|I_j||I_\l|\Gamma\subp{U_{j,\l}(I.)}\\
&-\Gamma\subp{I.}\star_1[\binom{|I_1|}{2}\omega,...
\binom{|I_s|}{2}\omega]+
\sum\limits_s\frac{1}{\deg(\delta_s)}\sum\limits_\l \sum\limits_{j=1}^{|I_\l|-1}
\beta_{|I_\l|,j}\delta^{I_\l}_{s,j*}
[OF_j^{I_\l/I.}(\theta_s)]}
\item  if $I.$ is full, we have more generally
\begin{eqnarray}\label{polynom-diag-alpha}
\Gamma\scl m..\Gamma\subp{I.}[\alpha.]=
 \sum\limits_{j<\l}|I_j||I_\l|\Gamma\subp{U_{j,\l}(I.)}
 [...,\alpha_j._X\alpha_\l,...\what
 {\alpha_\l}...]\\ \nonumber
-\Gamma\subp{I.}[\alpha.]\star_1[\binom{|I_1|}{2}\omega,...
\binom{|I_s|}{2}\omega]
%
\\ \nonumber +
\sum\limits_s\frac{1}{\deg(\delta_s)}\sum\limits_\l \sum\limits_{j=1}^{|I_\l|-1}
\beta_{|I_\l|,j}\delta^{I_\l}_{s,j*}
OF_j^{I_\l/I.}(\theta_s)[\alpha.]
\end{eqnarray}

\end{enumerate}
\qed


\end{cor}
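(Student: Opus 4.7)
The plan is to reduce the three parts to part (i), and to prove (i) by extending the argument of Proposition \ref{cut-monom} from the monoblock to the polyblock setting, using the preceding Lemma that describes the boundary components of $\Gamma\subp{I.}$. Parts (ii) and (iii) are immediate corollaries: (ii) is (i) applied to a full partition, for which the first two summation ranges in \eqref{polynom-diag} are empty since $\bigcup I.=[1,m]$; (iii) follows from (ii) by cupping with the base class $s_r(p_1^*\alpha_1,\dots,p_r^*\alpha_r)$ and using the product-rule identity \eqref{poly-mult}, noting that on $\Gamma\subp{U_{j,\l}(I.)}$ the two factors $p_j^*\alpha_j$ and $p_\l^*\alpha_\l$ restrict to $p^*(\alpha_j\substack{\\ .\\X}\alpha_\l)$ on the united block, and that on the node scrolls $OF_j^{I_\l/I.}(\theta)$ the $\star_r[\alpha.]$ operation is defined exactly as in \eqref{Y-alpha}.

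For (i), I would first work over the locus where the ordered cycle map $oc_m$ is an isomorphism, i.e. away from the exceptional locus. There $\Gamma\scl m.$ decomposes as $\sum_{i<j}\Gamma_{i,j}$, and I would compute $\Gamma_{i,j}\cdot\Gamma\subp{I.}$ case by case according to how $\{i,j\}$ meets the blocks of $I.$: if $\{i,j\}\cap\bigcup I.=\emptyset$ one gets $\Gamma\subp{I.|\{i,j\}}$; if exactly one index lies in some block $I_\l$ and the other outside $\bigcup I.$, summation yields $|I_\l|\Gamma\subp{V_\l(I.,i)}$ for each free index $i$; if $i,j$ lie in distinct blocks $I_k,I_\l$, the pair count $|I_k||I_\l|$ produces the $\Gamma\subp{U_{k,\l}(I.)}$ summand; finally, if both $i,j$ lie in a single block $I_\l$, the intersection $\Gamma_{i,j}\cdot\Gamma\subp{I.}$ is $\Gamma\subp{I.}$ itself and must be replaced by its normal-bundle contribution, which at a generic smooth point is handled exactly as in \S\ref{smalldiag} and produces (per pair $\{i,j\}\subset I_\l$) a factor of $p_{\min I_\l}^*\omega$, giving the $\binom{|I_\l|}{2}\omega$ correction grouped in the $\star_1$ term.

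To extend the identity across the exceptional locus of $oc_m$, I would invoke the preceding Lemma: the boundary components of $\Gamma\subp{I.}$ over a fixed boundary datum $(T_s,\delta_s,\theta_s)$ are of two types, namely the non-straddler divisors $\tilde\Theta_{K,(I.)}$ (which lie over the corresponding boundary on the symmetric product and on which the open-part computation already applies by continuity) and the unistraddler $\P^1$-bundles $OF_j^{(I_\l:I'.|I".)}(\theta_s)$ associated to a single straddling block $I_\l$. The key point is locally transverse factorization: at a generic point of such a unistraddler component only the block $I_\l$ interacts with the node, while the other blocks of $I.$ sit in a smooth factor disjoint from the exceptional locus. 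Hence $oc_m$ there is locally $oc_{|I_\l|}\times\mathrm{id}$, and the divisor-class equation $\Gamma\scl m.\sim \Gamma\scl{|I_\l|}.+\sum_{\{i,j\}\not\subset I_\l}\Gamma_{i,j}$ reduces the polyblock computation along this boundary stratum to the monoblock (in fact small-diagonal) computation already carried out, producing the coefficient $\beta_{|I_\l|,j}$ via Proposition \ref{compute-excdic} for each scroll index $j=1,\dots,|I_\l|-1$. Summing over the $2^{|I_\l|}-2$ unordered splittings $(I'.|I".)$ of $I.\setminus I_\l$ gives the consolidated cycle $[OF_j^{I_\l/I.}(\theta_s)]$, and the weighting $1/\deg(\delta_s)$ accounts for the multiplicity in the covering system of boundary data.

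The main obstacle is purely bookkeeping: verifying that no multistraddler index-set $K$ (i.e. one with $\strad_{(I.)}(K)\geq 2$) contributes a divisorial component on $\Gamma\subp{I.}$, and that the non-straddler contributions are already absorbed into the open-part computation by continuity of divisor classes. The first is handled by the dimension count in the Lemma (multistraddler loci have codimension $\geq 2$ in $\Gamma\subp{I.}$); the second uses that the divisorial equation on the dense open of $\Gamma\subp{I.}$ where $oc_m$ is an isomorphism extends uniquely across codimension one, since every divisor class on the smooth(-in-codimension-one) scheme $\Gamma\subp{I.}$ is determined by its behaviour outside a codimension-two locus.
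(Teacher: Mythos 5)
Your proposal is correct and follows essentially the same route as the paper: the paper proves this corollary precisely by observing that the boundary-structure Lemma lets the proof of Proposition \ref{cut-monom} (open-part case analysis of the pairs $\{i,j\}$ relative to the blocks, then a codimension-one check along the unistraddler node-scroll strata where $oc_m$ factors locally as $oc_{|I_\l|}\times\mathrm{id}$, feeding into the small-diagonal computation and Proposition \ref{compute-excdic} for the $\beta_{|I_\l|,j}$), carry over verbatim to the polyblock case, with (ii) and (iii) as the same routine specializations. One cosmetic slip: the splittings $(I'.\,|\,I".)$ of $I.\setminus I_\l$ number $2^{r-1}$ (subsets of the remaining blocks), not $2^{|I_\l|}-2$ — the latter counts the straddler index-sets $K$ giving a fixed splitting, which the paper explicitly notes is a separate issue from the intersection multiplicity — but since each splitting enters the consolidated class $[OF_j^{I_\l/I.}(\theta_s)]$ with coefficient one regardless of the count, this does not affect the argument.
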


\newsubsection{Monoblock and polyblock  diagonals: unordered case}
We need the analogues of the formulae of the latter section in
the (unordered) Hilbert scheme. These are essentially
straightforward, and may be obtained from the ordered versions using
push-forward by the symmetrization map $\varpi_m$. We begin with the
monoblock case.
Recall first the the monoblock (unordered) diagonal $\Gamma\subp{n}$ is defined as a set by
\[\Gamma\subp{n}=\varpi_m(\Gamma\subp{I})\]
for any block $I$ of cardinality $n$. More generally, we may similarly define
$\Gamma\subp{n.}$ for any distribution $(n.)$: this will be
considered in detail below.
Similarly, if $\alpha$ is any cohomology class on $X$, there
is an associated class $X^{(n|1.)}_B[\alpha]$ on the symmetric product
$X\spr m._B$, and we define
\[\Gamma\subp n[\alpha]=c^*(X^{(n|1.)}_B[\alpha]).\]
alternatively, this could also be defined as
\[\Gamma\spr n.[\alpha]=\frac{1}{(m-n)!}\varpi_{m*}(\Gamma\subp{I})[\alpha].\]
 Note the following elementary
facts:\begin{enumerate}
\item \beql{}{\varpi_{m*}(\Gamma\scl m..\Gamma_I)=
\Gamma\spr m..\varpi_{m*}\Gamma_I} (projection formula,
because $\varpi_m^*(\Gamma\spr m.)=\Gamma\scl m.$; NB
$\varpi$ is ramified over the support of $\Gamma\spr m.$, still
no factor of 2 in $\varpi_m^*(\Gamma\spr m.)$,
by our definition of  $\Gamma\spr m.$ as $1/2$ its support);
\item \beql{}{\varpi_{m*}(\Gamma_I[\alpha])=
(m-n)!\Gamma_{(n)}[\alpha]\ \ , n=|I|>1;}
\item\begin{equation} \varpi_{m*} (\Gamma_{I|\{i,j\}})
=\begin{cases}
(m-n-2)!\Gamma_{(n|2)},\ \  n\neq 2;\\
 2(m-n-2)!\Gamma_{(2|2)},\ \  n=2,\\
(1+\delta_{2,n})(m-n-2)!\Gamma_{(n|2)}, \ \ \forall n
\end{cases}
\end{equation}
($\delta$= Kronecker delta);
\item \beql{}{\varpi_{m*}(\Gamma_{I\coprod \{i\}})=(m-n-1)!\Gamma_{(n+1)};}
\item \beql{}{\varpi_{m*}(OF_j^{I:K-I|K^c-I}(\theta))=
a!b!F_j^{(n:a|b)}(\theta),\ \
a=|K-I|, b=|K^c-I|=m-n-a;} moreover the number of distinct subsets $K-I$
with $a=|K-I|$,
 for fixed $I$ and $a$, is $\binom{m-n}{a}$.

\end{enumerate}
Putting these together, we conclude
\begin{prop}\label{disc.monomial}
 For any monoblock diagonal $\Gamma_{(n)},\: n>1$, we have an equivalence of codimension-1
cycles in $\Gamma_{(n)}$:
\beql{}{
\Gamma\spr m..\Gamma_{(n)}\sim\frac{1+\delta_{2,n}}{2} \Gamma_{(n|2)}
+n\Gamma_{(n+1)}-\binom{n}{2}\Gamma_{(n)}[\omega]+
\sum\limits_s\frac{1}{\deg(\delta_s)}
\sum\limits_{a=0}^{m-n}\sum\limits_{j=1}^{n-1}\ \beta_{n,j}\delta^{n}_{s,j*}F_j^{(n:a|m-n-a)}(\theta_s)
}\qed
\end{prop}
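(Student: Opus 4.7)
The plan is to deduce the unordered formula from the ordered one (Proposition \ref{cut-monom}) by applying the symmetrization push-forward $\varpi_{m*}$ and invoking the preparatory facts (1)--(5) listed just above the proposition. Fixing any index-set $I \subset [1,m]$ with $|I| = n$, Proposition \ref{cut-monom} expresses $\Gamma\scl m. \cdot \Gamma_I$ as a sum of four families of cycles: ordered diblock diagonals $\Gamma_{I|\{i,j\}}$, ordered uniblock enlargements $\Gamma_{I\cup\{i\}}$, the twist by $p_{\min I}^*\omega$, and ordered node scrolls $OF^I_j(\theta_s)$. By the projection formula (fact (1)), push-forward of the left-hand side equals $\Gamma\spr m.\cdot\varpi_{m*}\Gamma_I = (m-n)!\,\Gamma\spr m.\cdot\Gamma_{(n)}$ (using fact (2) with the trivial twist), so that dividing the push-forward of both sides by $(m-n)!$ yields the desired intersection formula.

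It remains to transform each term on the right-hand side. For the uniblock term, fact (4) gives $\varpi_{m*}\Gamma_{I\cup\{i\}} = (m-n-1)!\,\Gamma_{(n+1)}$; summing over the $m-n$ choices of $i \notin I$ and combining with the outer factor $|I|=n$ yields $n(m-n)!\,\Gamma_{(n+1)}$, i.e.\ the $n\Gamma_{(n+1)}$ term after division. For the twist term, fact (2) gives $\varpi_{m*}(\Gamma_I[\omega]) = (m-n)!\,\Gamma_{(n)}[\omega]$, producing the $-\binom{n}{2}\Gamma_{(n)}[\omega]$ summand. For the node scrolls, fact (5) gives $\varpi_{m*} OF_j^{I:K-I|K^c-I}(\theta) = a!\,b!\,F_j^{(n:a|b)}(\theta)$ with $a+b = m-n$; summing over the $\binom{m-n}{a}$ straddler sets $K$ with $|K-I|=a$ converts the $a!\,b!$ factor into $(m-n)!$, and division then produces exactly the stated triple sum over $s$, $a$, and $j$.

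The one delicate point, and the expected main obstacle, arises in the diblock term, because fact (3) has the $n=2$ anomaly: push-forward of $\Gamma_{I|\{i,j\}}$ acquires an extra factor of $2$ in that case, reflecting the outer $\mathfrak{S}_2$-symmetry that swaps the two size-$2$ blocks in the distribution $(2|2)$. Summing over the $\binom{m-n}{2}$ pairs $\{i,j\} \subset I^c$ gives $\tfrac{(1+\delta_{2,n})(m-n)!}{2}\Gamma_{(n|2)}$, which after division produces exactly the coefficient $\tfrac{1+\delta_{2,n}}{2}$ in the proposition. Once this Kronecker correction is handled correctly, the rest of the proof is a straightforward bookkeeping exercise verifying that each combinatorial multiplicity in the ordered formula symmetrizes to the stated unordered coefficient.
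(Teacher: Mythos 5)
Your proposal is correct and is essentially the paper's own argument: the paper derives Proposition \ref{disc.monomial} precisely by pushing the ordered formula of Proposition \ref{cut-monom} forward under $\varpi_{m*}$ and combining the listed facts (1)--(5), including the same $n=2$ anomaly in fact (3) that produces the coefficient $\frac{1+\delta_{2,n}}{2}$. Your bookkeeping of the multiplicities ($\binom{m-n}{2}$, $m-n$, and $\binom{m-n}{a}\,a!\,b!=(m-n)!$) matches the intended computation exactly.
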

When $n=2$, $\Gamma_{(n)}$ is just $2\Gamma\spr m.$, hence
\begin{cor}\label{disc-square}
\beql{}{(\Gamma\spr m.)^2\sim \frac{1}{2}\Gamma_{(2|2)}+\Gamma_{(3)}
-\Gamma\spr m.[\omega]+\sum\limits_s
\frac{1}{\deg(\delta_s)}\half\sum\limits_{a=0}^{m-2
}\delta^{2}_{s,j*}F_1^{(2:1^a|1^{m-2-a})}(\theta_s).
}\qed\end{cor}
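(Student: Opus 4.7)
The plan is to deduce this corollary directly from Proposition~\ref{disc.monomial} by specializing $n=2$, using the fact that the monoblock diagonal $\Gamma\subp{2}$ is (as a reduced cycle) simply twice the discriminant polarization.

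First I would recall that by the very definition adopted in \S1.6, $\Gamma\spr m.$ is \emph{half} the discriminant divisor on $X\sbr m._B$: indeed the line bundle $\O(-\Gamma\spr m.)$ was defined by the relation $\O(-\frak c_m^*D^m)\simeq \O(-\Gamma\spr m.)^{\otimes 2}$ (see \refp{discr1}. \emph{et seq.}), so as Weil divisors
\[
\Gamma_{(2)} = \frak c_m^*(D^m) = 2\,\Gamma\spr m.,
\]
and correspondingly $\Gamma_{(2)}[\omega] = 2\,\Gamma\spr m.[\omega]$. Therefore the intersection class $\Gamma\spr m. . \Gamma_{(2)}$ coincides with $2(\Gamma\spr m.)^2$, and the corollary will follow from Proposition~\ref{disc.monomial} after dividing the resulting identity by $2$.

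Next I would simply plug $n=2$ into the formula from Proposition~\ref{disc.monomial}. The inner sum over $j$ collapses to the single term $j=1$; from \S0 we have $\beta_{2,1}=1$, so the boundary contribution becomes $\sum_s \frac{1}{\deg(\delta_s)}\sum_{a=0}^{m-2}\delta^{2}_{s,1*}F_1^{(2:a|m-2-a)}(\theta_s)$. The Kronecker symbol $\delta_{2,n}$ evaluates to $1$, making the coefficient of $\Gamma_{(2|2)}$ equal to $\tfrac{1+1}{2}=1$. The second term contributes $2\Gamma_{(3)}$, and the twisting term is $\binom{2}{2}\Gamma_{(2)}[\omega] = \Gamma_{(2)}[\omega] = 2\Gamma\spr m.[\omega]$.

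Assembling these and dividing through by $2$ gives
\[
(\Gamma\spr m.)^2 \sim \tfrac{1}{2}\Gamma_{(2|2)} + \Gamma_{(3)} - \Gamma\spr m.[\omega] + \sum_s \tfrac{1}{\deg(\delta_s)}\tfrac{1}{2}\sum_{a=0}^{m-2}\delta^{2}_{s,j*}F_1^{(2:1^a|1^{m-2-a})}(\theta_s),
\]
which is the asserted identity. There is no real obstacle here — the only subtlety worth double-checking is the factor-of-two bookkeeping forced by the convention $2\Gamma\spr m.=\Gamma_{(2)}$, which is responsible both for the overall $\tfrac12$ in front of the node-scroll contribution and for converting $\binom{2}{2}\Gamma_{(2)}[\omega]$ into the clean $\Gamma\spr m.[\omega]$ on the right-hand side.
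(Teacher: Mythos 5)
Your proposal is correct and follows exactly the paper's own (one-line) derivation: specialize Proposition \ref{disc.monomial} to $n=2$, use $\Gamma_{(2)}=\frak c_m^*(D^m)=2\Gamma\spr m.$ (the half-discriminant convention of \S 1.6), note $\beta_{2,1}=1$ and $\delta_{2,2}=1$, and divide the resulting identity by $2$. The factor-of-two bookkeeping you flag is indeed the only point of care, and you handle it as the paper does.
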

\begin{cor}
We have
\begin{enumerate}\item
\beql{}{\Gamma\spr m..\Gamma\subp{2}[\omega]=
\Gamma_{(2|2)}[\omega]+2\Gamma_{(3)}[\omega]
-\Gamma\subp{2}[\omega^2]
}
\item
\beql{}{\Gamma\spr m..\Gamma\subp{3}=
\half \Gamma\subp{3|2}+3\Gamma\subp{4}-
3\Gamma\subp{3}[\omega]+
\sum\limits_s
\frac{3}{\deg(\delta_s)}\sum\limits_{a=0}^{m-3}
\delta^{3}_{s,1*}(F_1^{(3:1^a|1^{m-3-a})}(\theta_s)+
\delta^{3}_{s,2*}F_2^{(3:1^a|1^{m-3-a})}(\theta_s)).
}
\end{enumerate}
\end{cor}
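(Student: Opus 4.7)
The plan is to derive both identities as direct applications of Proposition \ref{disc.monomial} (disc.monomial), specialized to $n=3$ and to $n=2$ followed by an $\omega$-twist, respectively.

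For (ii), I would substitute $n=3$ directly into the formula of Proposition \ref{disc.monomial}. The three ingredients to check are: since $\delta_{2,3}=0$, the leading coefficient is $\frac{1+\delta_{2,3}}{2}=\tfrac{1}{2}$; the coefficient of $\Gamma\subp{n+1}=\Gamma\subp{4}$ is $n=3$; and the coefficient of $\Gamma\subp{3}[\omega]$ is $-\binom{3}{2}=-3$. The small-case values $\beta_{3,1}=\beta_{3,2}=3$ computed in \S0 then yield exactly the scroll coefficients stated, with $j$ ranging over $\{1,2\}$, completing (ii).

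For (i), the plan is to start from (disc.monomial) at $n=2$ and multiply through by the pullback $p_1^*(\omega)$, where $p_1:\Gamma\subp{2}\to X$ is the projection to the distinguished block-coordinate. Writing $\iota_2:\Gamma\subp{2}\hookrightarrow X\sbr m._B$ so that $\Gamma\subp{2}[\omega]=(\iota_2)_*p_1^*(\omega)$, the projection formula gives
\[
\Gamma\spr m..\Gamma\subp{2}[\omega]
=(\iota_2)_*\bigl(\iota_2^*\Gamma\spr m..\, p_1^*\omega\bigr).
\]
Substituting the expression for $\iota_2^*\Gamma\spr m.$ from Proposition \ref{disc.monomial}, the three 'diagonal' terms transform routinely: $\Gamma\subp{2|2}\cdot p_1^*\omega=\Gamma\subp{2|2}[\omega]$ and $\Gamma\subp{3}\cdot p_1^*\omega=\Gamma\subp{3}[\omega]$ (the twist landing on the block arising from the original $\Gamma\subp{2}$), while $\Gamma\subp{2}[\omega]\cdot p_1^*\omega=\Gamma\subp{2}[\omega^2]$ by the polyclass multiplication rule \eqref{poly-mult}.

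The remaining step, and the only genuinely substantive one, is to show that the node-scroll contributions
$\sum_s\frac{1}{\deg(\delta_s)}\sum_{a=0}^{m-2}\delta^{2}_{s,1*}F_1^{(2:a|m-2-a)}(\theta_s)\cdot p_1^*\omega$
vanish. The key observation is that each node scroll $F_1^{(2:a|m-2-a)}(\theta_s)$ parametrizes schemes whose length-$2$ component sits at the node $\theta_s$, so the projection $p_1$ factors through the node section $\theta_s(T_s)\subset X$. Hence $p_1^*\omega$ on each such scroll is pulled back from $\omega|_{\theta_s(T_s)}$; under the standing setup where $T_s$ is $0$-dimensional (e.g.\ $\dim B=1$), this restriction lies in $H^2(\mathrm{pt})=0$ and kills the scroll contribution, leaving exactly the three terms claimed. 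The main obstacle is this last vanishing: it relies on the dimension hypothesis on $T_s$, and for higher-dimensional bases one would have to retain the scroll terms explicitly, twisted by the nonzero class $\omega|_{\theta_s(T_s)}$; the analysis of such twisted scroll classes will be taken up in the following sections on the tautological module.
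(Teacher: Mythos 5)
Part (ii) is fine: it is exactly the intended argument, namely substitution of $n=3$, $\binom{3}{2}=3$ and $\vec\beta_3=(3,3)$ into Proposition \ref{disc.monomial}, and your bookkeeping of the coefficients is correct.

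For part (i), however, your justification for discarding the node-scroll terms is not the right one, and as written it proves the statement only under a hypothesis ($T_s$ zero-dimensional, e.g.\ $\dim B=1$) that is not part of the Corollary; the whole sequence of statements around Proposition \ref{disc.monomial} is formulated for a base of arbitrary dimension. You correctly observe that on each scroll $F_1^{(2:1^a|1^{m-2-a})}(\theta_s)$ the class $p_1^*\omega$ is pulled back from $\theta_s^*\omega_{X/B}$ on $T_s$, but the reason this kills the contribution is not a dimension count: it is the elementary fact, used explicitly in the paper right after Corollary \ref{cube}, that $\omega.\theta_s=0$. Concretely, near a node $xy=t$ the relative dualizing sheaf is generated by the residue differential $dx/x=-dy/y$, so $\theta_s^*\omega_{X/B}$ is trivialized along the node section up to the sign ambiguity of swapping branches; hence it is at worst $2$-torsion and its class vanishes in the rational Chow (or cohomology) groups used here, for a base and a $T_s$ of any dimension. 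With this replacement your computation goes through verbatim and yields (i) in full generality; in particular your closing assertion that for higher-dimensional bases "one would have to retain the scroll terms explicitly" is incorrect --- those $\omega$-twisted scroll classes vanish identically, which is precisely why no boundary terms appear in the stated formula.
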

\begin{cor}\label{cube} We have
\begin{enumerate}
\item
\eqspl{Gamma2^k}{(\Gamma\spr 2.)^k&=\Gamma\spr 2.[(-\omega)^{k-1}]+
\sum\limits_s
\frac{1}{\deg(\delta_s)}\half\delta^{2}_{s,1*}
(\Gamma\spr 2.)^{k-2}.F^{(2:0|0)}_1(\theta_s)), k\geq 3;\\
\nonumber{\mathrm{if}} \dim(B)=1, \int\limits_{X\sbr 2._B}\Gamma\spr 2.)^3&=
\half\omega^2-\half\sigma,\qquad \sigma=|\{\text{singular values}\}|
;}
\item  \beql{}{
\begin{split}
 (\Gamma\spr 3.)^3&=-4\Gamma\subp{3}[\omega] +
\Gamma\spr 3.[\omega^2]\\
\nonumber &+\sum\limits_s
\frac{1}{\deg(\delta_s)}(3(\delta^{3}_{s,1*}F^{3:0|0}_1(\theta_s)
+\delta^{3}_{s,2*}F^{3:0|0}_2(\theta_s))+\half
\delta^{2}_{s,1*}\Gamma\spr
3.(F_1^{(2:1|0)}(\theta_s)+F^{(2:0|1)}_1(\theta_s)))
\end{split}
 }
 \end{enumerate}
\end{cor}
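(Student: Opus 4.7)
I plan to prove both parts by direct expansion using Corollary \ref{disc-square} and Proposition \ref{disc.monomial} (together with their twisted variants and the polyclass product rule~\refp{poly-mult}.), plus the projection formula for the $\delta$-maps out of node scrolls; the content is bookkeeping and requires no fresh geometric input.

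For part (i) I argue by induction on $k$. For $m=2$ both $\Gamma\subp{2|2}$ and $\Gamma\subp{3}$ are empty for lack of indices, so Corollary \ref{disc-square} collapses to $(\Gamma\spr 2.)^{2}=-\Gamma\spr 2.[\omega]+\tfrac{1}{2}\sum_{s}\tfrac{1}{\deg(\delta_{s})}\delta^{2}_{s,1*}F^{(2:0|0)}_{1}(\theta_{s})$. Multiplying once more by $\Gamma\spr 2.$ yields the base case $k=3$: the first summand $-\Gamma\spr 2.\cdot\Gamma\spr 2.[\omega]$ becomes $\Gamma\spr 2.[\omega^{2}]$ by the $\omega$-twist of Proposition \ref{disc.monomial} at $m=n=2$ (the would-be node-scroll contribution is zero since $\omega$ pulls back to zero on each scroll, whose projection to $X$ collapses onto the node), and the second summand is absorbed by the projection formula $\Gamma\spr 2.\cdot\delta^{2}_{s,1*}(-)=\delta^{2}_{s,1*}((\delta^{2}_{s,1})^{*}\Gamma\spr 2.\cdot(-))$. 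The inductive step for $k\geq 4$ is formally identical: the twisted factor propagates as $\Gamma\spr 2.\cdot\Gamma\spr 2.[(-\omega)^{k-2}]=\Gamma\spr 2.[(-\omega)^{k-1}]$ for the same vanishing reason, and the projection formula absorbs one further $\Gamma\spr 2.$ into the power on the scroll. For the integral in case $\dim B=1$, one identifies $\Gamma\subp{2}$ with the blowup of $X$ at the fibre nodes (by the small-diagonal proposition preceding~\ref{compute-excdic}) to get $\int\Gamma\spr 2.[\omega^{2}]=\tfrac{1}{2}\omega^{2}$; each node scroll $F^{(2:0|0)}_{1}(\theta_{s})$ is a $\P^{1}$ on which $-\Gamma\spr 2.$ restricts as $\O(1)$, so $\int\Gamma\spr 2.\cdot F=-1$, and the scroll total is $-\tfrac{1}{2}\sigma$.

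For part (ii), one first squares via Corollary \ref{disc-square} at $m=3$ (with $\Gamma\subp{2|2}=0$), obtaining $(\Gamma\spr 3.)^{2}=\Gamma\subp{3}-\Gamma\spr 3.[\omega]+\tfrac{1}{2}\sum_{s}\tfrac{1}{\deg(\delta_{s})}\sum_{a=0}^{1}\delta^{2}_{s,1*}F^{(2:1^{a}|1^{1-a})}_{1}(\theta_{s})$, then multiplies by $\Gamma\spr 3.$. Three groups of contributions arise: $\Gamma\spr 3.\cdot\Gamma\subp{3}$, evaluated via Proposition \ref{disc.monomial} at $m=n=3$ giving $-3\Gamma\subp{3}[\omega]$ together with the top line of node-scroll terms at coefficients $\beta_{3,1}=\beta_{3,2}=3$; the iterated product $-\Gamma\spr 3.\cdot\Gamma\spr 3.[\omega]=-(\Gamma\spr 3.)^{2}\cdot[\omega]$, expanded using the formula for $(\Gamma\spr 3.)^{2}$ above together with the polyclass product rule~\refp{poly-mult}. to control $[\omega]\cdot[\omega]$; and the scroll products $\Gamma\spr 3.\cdot F^{(2:1^{a}|1^{1-a})}_{1}$ handled by projection formula. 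Collecting, the two $\Gamma\subp{3}[\omega]$-contributions combine as $-3-1=-4$, the $[\omega^{2}]$-piece produced by the polyclass expansion supplies $\Gamma\spr 3.[\omega^{2}]$, and the scroll terms assemble into the two summands of the bracketed expression in the statement. The principal delicacy is precisely this last step: tracking the various $\omega$-twisted node-scroll terms, which unlike those in part (i) do not all vanish (since the underlying boundary family is positive-dimensional along the two branches at the node), and verifying that they combine exactly into the displayed $\tfrac{1}{2}\delta^{2}_{s,1*}\Gamma\spr 3.(F^{(2:1|0)}_{1}+F^{(2:0|1)}_{1})$.
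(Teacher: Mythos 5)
Your part (i) is fine and is essentially the paper's own (largely implicit) argument: specialize Corollary \ref{disc-square} to $m=2$, where $\Gamma\subp{2|2}$ and $\Gamma\subp{3}$ vanish, iterate using the $\omega$-twisted product formula together with the vanishing of $\omega$-twisted scroll terms (since $\omega\cdot\theta_s=0$), absorb successive factors of $\Gamma\spr 2.$ into the scroll by the projection formula, and evaluate the integral via the blowup description of $\Gamma\subp{2}$ and the fact that $-\Gamma\spr 2.$ has degree $1$ on each scroll fibre.

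In part (ii), however, your key middle step rests on a false identity. You reduce $-\Gamma\spr 3.\cdot\Gamma\spr 3.[\omega]$ to $-(\Gamma\spr 3.)^{2}\cdot[\omega]$ and then plan to expand with base classes and the polyclass rule \eqref{poly-mult}. But in the paper's conventions $\Gamma\spr 3.[\omega]=\Gamma\spr 3.[\omega,1]$ carries the twist $\omega$ on the doubled point only, and this is \emph{not} the restriction to $\Gamma\spr 3.$ of the base class $[\omega]$: on the ordered model the pullback of $[\omega]$ is $p_1^*\omega+p_2^*\omega+p_3^*\omega$, whose restriction to a component $\Gamma_{ij}$ is twice $\omega$ at the doubled point plus $\omega$ at the residual point, so in fact $\Gamma\spr 3.\cdot[\omega]=2\Gamma\spr 3.[\omega]+\Gamma\spr 3.[1,\omega]$. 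Consequently the route you describe would produce coefficient $-3$ (not $-1$) on $\Gamma\subp{3}[\omega]$, since $\Gamma\subp{3}\cdot[\omega]=3\Gamma\subp{3}[\omega]$, together with spurious terms such as $\Gamma\spr 3.[\omega,\omega]$, a wrong multiple of $\Gamma\spr 3.[\omega^2]$, and $\omega$-twists of $F_1^{(2:1|0)}$, $F_1^{(2:0|1)}$ on the moving residual point, none of which occur in the stated formula; your final tally ``$-3-1=-4$'' silently uses the correct value rather than what your reduction yields, so the proposal is internally inconsistent at exactly this point. The correct step is to apply the twisted intersection formula (Corollary \ref{disc.diag-ordered}/Proposition \ref{disc.diag}, as packaged in the corollary immediately preceding \ref{cube}): $\Gamma\spr 3.\cdot\Gamma\subp{2}[\omega]=\Gamma\subp{2|2}[\omega]+2\Gamma\subp{3}[\omega]-\Gamma\subp{2}[\omega^2]$, which at $m=3$ gives $-\Gamma\spr 3.\cdot\Gamma\spr 3.[\omega]=-\Gamma\subp{3}[\omega]+\Gamma\spr 3.[\omega^2]$ with no node-scroll contribution at all, all $\omega$-twisted scrolls vanishing because $\omega\cdot\theta_s=0$ (the bracketed remark following the corollary). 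This also dissolves the ``principal delicacy'' you flag at the end: in the correct computation there are no surviving $\omega$-twisted node scrolls to track; the only scroll terms are the untwisted $F_j^{(3:0|0)}$ with coefficients $\beta_{3,1}=\beta_{3,2}=3$ coming from $\Gamma\spr 3.\cdot\Gamma\subp{3}$, and the node sections $\Gamma\spr 3.\cdot F_1^{(2:1|0)}$, $\Gamma\spr 3.\cdot F_1^{(2:0|1)}$ obtained by the projection formula.
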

[In Part (ii) we have used the elementary fact that $\omega.\theta_s=0$, hence $\omega^i.F^{2:*}_1(\theta_s)=0,
\forall i>0$,
because this node scroll maps to $\theta_s$, more
precisely to $2[\theta_s]\subset X\spr 2._B$.]
\begin{cor}
If $m\leq 3$ (resp. $m>3$), then the class
$$-\Gamma\spr m.[\omega]+
\sum\limits_s
\frac{1}{\deg(\delta_s)}
\sum\limits_{a=0}^{m-2}\delta^{2}_{s,1*}F_1^{(2:1^a|1^{m-2-a})}(\theta_s)$$
(resp, $$\Gamma_{(2|2)}
-\Gamma\spr m.[\omega]+
\sum\limits_s
\frac{1}{\deg(\delta_s)}
\sum\limits_{a=0}^{m-2}\delta^{2}_{s,1*}F_1^{(2:1^a|1^{m-2-a})}(\theta_s)$$
is divisible
by 2 in the integral Chow group of $X\sbr m._B.$
\qed

\end{cor}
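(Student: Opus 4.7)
The claim is an immediate consequence of Corollary \ref{disc-square}. The plan is to clear the $\tfrac{1}{2}$ factors in that identity and interpret the resulting integral equation as a divisibility statement.

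Writing $\Sigma := \sum_s \tfrac{1}{\deg(\delta_s)}\sum_{a=0}^{m-2} \delta^2_{s,1*} F_1^{(2:1^a|1^{m-2-a})}(\theta_s)$ for the node-scroll sum, Corollary \ref{disc-square} reads in the rational Chow group
\[ (\Gamma\spr m.)^2 \sim \tfrac{1}{2} \Gamma_{(2|2)} + \Gamma_{(3)} - \Gamma\spr m.[\omega] + \tfrac{1}{2} \Sigma, \]
so, clearing denominators and rearranging, one obtains the integral identity
\[ \Gamma_{(2|2)} + \Sigma \;=\; 2\bigl((\Gamma\spr m.)^2 - \Gamma_{(3)} + \Gamma\spr m.[\omega]\bigr). \]
For $m \leq 3$ the $(2,2)$-partition has no room on $[1,m]$, so $\Gamma_{(2|2)} = 0$ and the identity degenerates (with $\Gamma_{(3)}=0$ as well when $m=2$) to a presentation of $\Sigma$ as twice an integral class; for $m > 3$ the identity directly shows that $\Gamma_{(2|2)} + \Sigma$ is twice an integral class.

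To pass to the stated form with the extra summand $-\Gamma\spr m.[\omega]$, one observes that $\Gamma\spr m.[\omega]$ is itself divisible by $2$ in the integral Chow group. This comes from the half-discriminant structure of \S 0.2.4: since $h = X\spr m.[\omega] - K_{X\spr m./B}$ satisfies $2h \sim D^m$, pulling back via $c_m$ and combining with \refp{O(1)}. produces the required parity (for $m \geq 3$ one may alternatively note that $\Gamma\spr m.[\omega] = (m-1)!\,[m]_*(\omega)$, which is automatically even). Adding or subtracting copies of a class already known to be even preserves the divisibility already established above, so the stated assertions follow.

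The essential input is Corollary \ref{disc-square}; everything else is parity bookkeeping. The one technical point that requires care is the integrality of $\Sigma$ in its own right despite the apparent rational factor $\tfrac{1}{\deg(\delta_s)}$: this is forced by the reflection symmetry $a \leftrightarrow m-2-a$ in the summation, combined with the observation that $\deg(\delta_s) = 2$ precisely when the two branches at $\theta_s$ are globally indistinguishable (so that the two indexings $F_1^{(2:1^a|1^{m-2-a})}$ and $F_1^{(2:1^{m-2-a}|1^{a})}$ pair off and the factor of $\tfrac{1}{2}$ is absorbed).
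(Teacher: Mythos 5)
Your main step is the same one the paper intends (the corollary carries no separate proof beyond Corollary \ref{disc-square}): clear the halves in $(\Gamma\spr m.)^2\sim \tfrac12\Gamma_{(2|2)}+\Gamma_{(3)}-\Gamma\spr m.[\omega]+\tfrac12\Sigma$. But the bridge you build from your identity $\Gamma_{(2|2)}+\Sigma=2\bigl((\Gamma\spr m.)^2-\Gamma_{(3)}+\Gamma\spr m.[\omega]\bigr)$ to the class actually displayed in the statement is where the gap lies: you need $\Gamma\spr m.[\omega]$ to be divisible by $2$ (and already integral, for your identity to exhibit ``twice an integral class''; recall that in the derivation of Corollary \ref{disc-square} this term arises as $\tfrac12\Gamma_{(2)}[\omega]$, so integrality is not free). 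Neither justification you give works. The identity $\Gamma\spr m.[\omega]=(m-1)!\,[m]_*(\omega)$ is impossible on dimension grounds: $[m]_*(\omega)$ is by definition a divisor class on $X\sbr m._B$, whereas $\Gamma\spr m.[\omega]$ has codimension $2$ (the discriminant twisted by $\omega$ at the doubled point). Likewise, the half-discriminant relation $2h\sim D^m$ of \S0.2.4 and the formula defining $\O_{X\sbr m._B}(1)$ are statements about codimension-one classes; pulling them back by $\frak c_m$ gives parity information about $\Gamma\spr m.$ as a divisor class, not about the codimension-two twisted class $\Gamma\spr m.[\omega]$. What Corollary \ref{disc-square} does give at once is that $\Gamma_{(2|2)}-2\Gamma\spr m.[\omega]+\Sigma=2\bigl((\Gamma\spr m.)^2-\Gamma_{(3)}\bigr)$ is twice an integral class (with $\Gamma_{(2|2)}=0$ when $m\le 3$); matching the statement with the single summand $-\Gamma\spr m.[\omega]$ requires exactly the parity assertion you assume, so as written the essential point is left unproven.

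A secondary remark: your argument for the integrality of $\Sigma$ by pairing $a\leftrightarrow m-2-a$ when $\deg(\delta_s)=2$ is delicate — it leaves the self-paired middle term $a=(m-2)/2$ unaccounted for when $m$ is even, and for a nonseparating node the paper's convention is to place all free points in the $x$-part ($\un''=\emptyset$), so the two indexings you pair need not be distinct classes; the branch-indistinguishability is handled by the weights $1/\deg(\delta_s)$ of the covering system rather than by such a cancellation. This side issue does not affect the statement itself (which only asserts that the displayed rational class is twice an integral one), but it should not be presented as the mechanism securing integrality.
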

To simplify notation we shall henceforth denote
$\frac{1}{\deg(\delta_s)}\sum\limits_sF^\bullet_\bullet(\theta_s)$
simply as $F^\bullet_\bullet$.
\begin{example}{\rm\label{P1}This is presented here mainly as a check
on some of the coefficients in the formulas above. For $X=\P^1$, \ \  $X\spr m.=\P(H^0(\O_X(m)))=\P^m$,
and the degree of $\Gamma\spr m._{(n)}$ is $n(m-n+1)$. Indeed this degree may be computed as the degree of the degeneracy locus of a generic map $n\O_X\to P^{n-1}_X(\O_X(m))$ where $P^k_X$ denotes the $k$-th principal parts or jet sheaf.
It is not hard to show that $P^{n-1}_X(\O_X(m))\simeq n\O_X(m-n+1)$.\par
For example, $\Gamma\spr 3._{(2)}$ is a quartic scroll
equal to  the tangent developable of its
cuspidal edge, i.e. the twisted cubic $\Gamma\spr 3._{(3)}$.
The rulings are the lines $L_p=\{2p+q: q\in X\}$,
tangent to the $\Gamma\spr 3._{(3)}$,
each of which has class $-\half\Gamma\spr 3.[\omega]$. Therefore by Corollary \ref{disc-square}, the self-intersection of $\Gamma\spr 3.$  in $\P^3$ (or half the intersection
of $\Gamma\spr 3.$ with $\Gamma\spr 3._2$, as a class on
$\Gamma\spr 3._2$) is represented by $\Gamma\spr 3._{(3)}$ plus one ruling $L_p$.\par
If $m=4$ then $\Gamma\spr 4.$ is formally a cubic (half a
sextic hypersurface) in $\P^4$, whose self-intersection,
as given by Corollary \ref{disc-square}, is
half the Veronese $\Gamma\subp{2|2}$ plus the (sextic)
tangent developable $\Gamma\subp{3}$, plus one osculating
plane to the twisted quartic $\Gamma\subp{4}$,
representing $-\Gamma\spr 4.[\omega]$ .
\qed
}
\end{example}
Next we extend Proposition \ref{disc.monomial}
 to the polyblock  case.
Consider a distribution $\un$  or equivalently a shape $(n.)=(n.^{\mu.}), n_1>...>n_r$, and let
$(I.)$ be any partition having this shape.
Let $D^m\subp{n_.^{\mu.}}$ be the image of
$\Gamma\subp{n_.^{\mu.}}$ in $X\spr m._B$. We identify
\beql{}{D^m\subp{n_.^{\mu.}}\simeq{\prod\limits_{j=1}^r}\ _B
(X\spr{\mu_j})._B)={\prod\limits_{n=\infty}^1}\ _B(X\spr\mu(n)._B).
}
Generally, for any cohomology class $\alpha$ on $D^m\subp{n_.^{\mu.}}$, we will denote its pullback
via the cycle map to
$\Gamma\subp{n_.^{\mu.}}$
 by $\Gamma\subp{n_.^{\mu.}}[\alpha]$. Some special cases
 of this are:
for a collection $\alpha_1,...,\alpha_r$ of cohomology classes on $X$, and $\forall\ \lambda_j\leq\mu_j \forall j$,
 we will use the notation $\Gamma\subp{n.^{\mu.}}[\alpha_1\spr {\lambda_1}.,...,
\alpha_r\spr{\lambda_r}.]$ as in Section \ref{partitions};
and more generally
$\beta\star_t[\alpha.\spr{\lambda.}.], t\leq r,$ for any
(co)homology class $\beta$ on $\Gamma\subp{n_.^{\mu.}}$.

We need to set up analogous notations for the case
of a node scroll and its base, which is a special diagonal
locus associated with a boundary datum $(T,\delta, \theta)$.
Thus fix such a boundary datum, and let $X', X"$ be the
components of its normalization along $\theta$ (where by
convention $X'=X"$ in case $\theta$ is
nonseparating).
Define \emph{multidistribution data} $\phi$ of total
length $n$ as
$$\phi=(n_\l:\un'|\un")$$ where $ (\un')\coprod (\un")$ is a distribution of total length $n-n_\l$. In other words, if we set $$\un=\un'\coprod \un"\coprod n_\l,$$ then $\un$ is a distribution of length $n$. We will usually assume $(\un)$ is full of length $n=m$. We view $\phi$ as obtained from $\un$ by removing a single block of size $n_\l$  and
declaring each remaining block as either $x$ type or $y$ type).
Thus $\phi$ is the natural unordered analogue of a multipartition $\Phi$ and of course to each
$\Phi=(J:I'.|I".)$ there is an associated mutlidistribution
$\phi=(|J|:|I'.|\; |\; |I".)$.\par The
special diagonal locus corresponding to this multidistribution (and
to the boundary datum) is of course
\beql{}{X_\theta^\phi=\Gamma_{(n'.)}(X')\times_T
\Gamma_{(n".)}(X")\subset (X')\sbr n'._T\times (X")\sbr n"._T
}
where $\un'=(n'.^{(\mu.')}),
\un"=(n".^{(\mu.")})$. It maps to $X\spr m._B$ by adding $n_\l \theta$.
If $\theta$ is nonseparating, so that $X'=X"$, we
take $\un"=\emptyset$ as usual. The (unordered) \emph{node scroll}
$F^\phi_j(\theta)$ is the appropriate ($j$-th) component of the inverse
image of $X^\phi_\theta$ in the Hilbert scheme. It is a $\P^1$-bundle over
$X^\phi_\theta,$ with bundle projection $c=$ restriction of cycle map.\par
Now generally, for any cohomology class $\alpha$ on $X^\phi_\theta$, we may
define a class on $F_j^\phi(\theta)$ by
$$F^\phi_j(\theta)[\alpha]=c^*(\alpha).$$
More particularly, we will need the following type of class.
First, set
\beql{}{a(n_\l:\un'|\un")=a(\un')a(\un").} Now
for a
collection of cohomology classes $\alpha'., \alpha".$ on $X', X".$
respectively, we define as in Section \ref{partitions}, an
associated \emph{twisted node scroll class} by
\beql{}{F_j^\phi (\theta)[\alpha'.\spr{\lambda.'}.,
\alpha".\spr{\lambda."}.]=
\frac{1}{a(\phi)}\varpi_{m*}(OF_j^{\Phi}(\theta) [\alpha.\spr{\lambda.}.])
} where $|\Phi|=\phi$ (cf. \eqref{Y-alpha}). The numerical coefficient is just the reciprocal of the
degree of the symmetrization map $\varpi:X^{\Phi}_\theta\to X_\theta^{\phi}$.
Hence the twisted node scroll classes
 are just flat pullbacks of the analogous classes defined on
the base $X^{\phi}$ of the node scroll
$F_j^{\phi}(\theta)$, i.e.
$$F_j^{\phi}(\theta)
[\alpha'.\spr{\lambda.'}., \alpha".\spr{\lambda."}.]=
c^*X^{\phi}_\theta [\alpha'.\spr{\lambda.'}.,
\alpha".\spr{\lambda."}.].
$$
Of course, we also set
\beql{}{F_j^{n_\l/\un}(\theta)[\alpha.]=\sum\limits_{(\un')\coprod (\un")=(\un)\setminus n_\l} F_j^{(n_\l:\un'|\un")}(\theta)[\alpha.]
}

Also note that
if $\un'\coprod \un"=\un\setminus\{n_\l\}$, then
we have
\beql{}{\frac{a(\un')a(\un")}{a(\un)}=\frac{1}
{\mu\subp{\un}(n_\l)}\frac{1}
{\binom{\mu\subp{\un}(n_\l)-1}{\mu\subp{\un'}(n_\l)}}
} and also
\beql{}{\frac{a(\un\setminus\{n_\l\})}{a(\un)}=\frac{1}{\mu(n(\l))}
}
These are the respective ratios $$\frac{\deg(OF_j^{(I_\l:I'.|I".)}
\to F_j^{(|I_\l|:n'.|n".)})}{\deg(\Gamma\subp{I.}\to \Gamma\subp{\un})}$$ when the special fibre is reducible or
irreducible.\par
Similarly to the case of partitions, we will use the notation
$u_{j,\l}(n.^{\mu.})$
to denote the new distribution (of the same total
length) obtained from $(n.^{\mu.})$ by uniting
a block of size $n_j$ with one of size $n_\l$, i.e.
the distribution
whose frequency function $\mu_u$ coincides with
$\mu$, except for the values
\begin{eqnarray*}\mu_u(n_j+n_\l)&=&\mu(n_j+n_\l)+1,\\
\mu_u(n_j)&=&\mu(n_j)-1,\\
\mu_u(n_\l)&=&\mu(n_\l)-1.
\end{eqnarray*}
There is an analogous operation
on a cohomological vector $ (\alpha(n)^{\lambda(n)})$
defined by
\begin{eqnarray}
u_{j,\l}(\alpha.^{\lambda.})=\hskip 3in\\ \nonumber
(
...,\alpha(n_j+n_\l)^{\lambda(n_j+n_\l)}(\alpha(n_j)\cdot_X
\alpha(n_\l)),...,\alpha(n_j)^{\lambda(n_j)-1},...,\alpha
(n_\l)^{\lambda(n_\l)-1},...,\alpha(1)^{\lambda(1)})
\end{eqnarray}
Now set
\begin{eqnarray}
\nu\subp{\un}(a,b)=\frac{\mu\subp{\un}(a+b)+1}{\mu\subp{\un}(a)
(\mu\subp{\un}(b)-1)}&,& a=b\\ \nonumber
=\frac{\mu\subp{n.}(a+b)+1}{\mu\subp{\un}(a) \mu\subp{\un}(b)}&,&
a\neq b.\end{eqnarray} Note that \beql{}{
\nu\subp{\un}(n_j,n_\l)=\frac{\deg(\Gamma_{U_{j,\l}(I.)}\to
\Gamma_{u_{j,\l}(|I.|)})}{\deg(\Gamma\subp{I.}\to \Gamma\subp
{\un})} }
Now the following result follows directly from Corollary
\ref{disc.diag-ordered} by adjusting for the degrees of the
various symmetrization maps.
\begin{prop}\label{disc.diag} Let $(n.)=(n_1^{\mu_1}|...|n_r^{\mu_r})$
be a full distribution on $[1,m]$, $n_1>...>n_r$, and $\alpha_1,...,\alpha_r$
cohomology classes on $X$.
Let $F^\bullet_\bullet=\sum\limits_sF^\bullet_\bullet
(\theta_s)$ denote various weighted node scrolls,
with reference to a fixed covering system of boundary data.
 Then we have, :
 \begin{equation}\begin{split}
\Gamma\spr m..\Gamma_{(n.)}[\alpha.^{\lambda.}.] \sim
\sum\limits_{j<\l}
\nu\subp{n.}(n_j,n_\l)n_jn_\l
\Gamma_{(u_{j,\l}(n.))}
[u_{j,\l}(\alpha.^{\lambda.})]
\\ \nonumber -
\sum\limits_\l\Gamma_{(n)}[\alpha.]
\star_1
[\binom{n_1}{2}\omega,...,\binom{n_r}{2}\omega]
\\ \nonumber +
\sum\limits_{\theta_s {\mathrm{\ separating}}}
\sum\limits_\l
\sum\limits_{n'.\coprod n".=n.\setminus\{n_\l\}}
\frac{1}
{\mu(n_\l)}\frac{1}
{\binom{\mu(n_\l)-1}{\mu\subp{n'.}(n_\l)}}
\sum\limits_{j=1}^{n_\l-1}\
\beta_{n_\l,j}F_j^{(n_\l:n'.|n".)(\theta_s)}
[\alpha.]
\\ \nonumber +
\sum\limits_{\theta_s {\mathrm{\ nonseparating}}}
\sum\limits_\l
\frac{1}{\mu(n_\l)}
\sum\limits_{j=1}^{n_\l-1}\
\beta_{n_\l,j}F_j^{(n_\l:n'.|n".)}(\theta)
[\alpha.]
\end{split}
\end{equation}
\qed
\end{prop}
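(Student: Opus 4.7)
The plan is to derive the unordered formula by pushing forward the ordered formula from Corollary \ref{disc.diag-ordered} via the symmetrization map $\varpi_m: X\scl m._B \to X\sbr m._B$, and then translating each term using the appropriate symmetrization degrees recorded in \S\ref{partitions} and above.

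More concretely, first I would pick a full partition $I.$ on $[1,m]$ of shape $(n.^{\mu.})$ and a lift $\tilde\alpha.$ of the twist $\alpha.^{\lambda.}$ to an ordered class on $\Gamma\subp{I.}$. Then I would apply $\varpi_{m*}$ to the identity \eqref{polynom-diag-alpha}. The left side becomes $a(\un)\,\Gamma\spr m..\Gamma\subp{\un}[\alpha.^{\lambda.}]$ by the projection formula, using $\varpi_m^*\Gamma\spr m.=\Gamma\scl m.$ and the fact that $\varpi_m|_{\Gamma\subp{I.}}\to\Gamma\subp\un$ has degree $a(\un)$ (Lemma of \S\ref{partitions}). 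So after dividing by $a(\un)$, each term on the right has to be translated from an ordered class to its symmetric counterpart by the ratio of symmetrization degrees.

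The three families of terms require separate bookkeeping. For the union terms $\Gamma\subp{U_{j,\l}(I.)}$, the symmetrization map from $\Gamma\subp{U_{j,\l}(I.)}$ down to $\Gamma\subp{u_{j,\l}(\un)}$ has degree $a(u_{j,\l}(\un))$, so the numerical factor becomes $|I_j||I_\l|\cdot\frac{a(u_{j,\l}(\un))}{a(\un)}$, which by direct computation equals $n_jn_\l\,\nu\subp{\un}(n_j,n_\l)$; the twist $\alpha.^{\lambda.}$ pulls back through $._X$-multiplication to $u_{j,\l}(\alpha.^{\lambda.})$. The base-twist term $\Gamma\subp{I.}\star_1[\binom{|I_\l|}{2}\omega]$ symmetrizes to $\Gamma\subp{\un}[\alpha.]\star_1[\binom{n_\l}{2}\omega]$ with coefficient $1$, since the symmetrization degree factors cancel exactly (both sides use $a(\un)$). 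Finally, for each node scroll summand $\delta^{I_\l}_{s,j*}[OF_j^{I_\l/I.}(\theta_s)]$, the summation over partitions $I'.\coprod I".=I.\setminus I_\l$ collapses, after push-forward, to a summation over distributions $\un'\coprod \un"=\un\setminus\{n_\l\}$, weighted by the degree ratio $a(\un')a(\un")/a(\un)$ recorded above.

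The hard part — and really the only substantive one — is the node-scroll bookkeeping, because the quotient $a(\un')a(\un")/a(\un)$ depends on whether the removed block $n_\l$ is the unique block of its size in $\un$ and on whether $\un'$ and $\un"$ are interchangeable. This is where the separating/nonseparating dichotomy enters: for a separating node $\theta_s$ the boundary family has two labeled components, so the data $(\un'|\un")$ and $(\un"|\un')$ produce genuinely distinct node scrolls $F^{(n_\l:\un'|\un")}_j(\theta_s)$, and the coefficient is $\frac{1}{\mu(n_\l)}\frac{1}{\binom{\mu(n_\l)-1}{\mu\subp{\un'}(n_\l)}}$ as stated; for a nonseparating node the two sides are identified by the deck transformation of $X^\theta_T\to X$, so only one copy of each decomposition survives and the coefficient simplifies to $\frac{1}{\mu(n_\l)}$. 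Once these weights are set up correctly and combined with $\beta_{|I_\l|,j}=\beta_{n_\l,j}$, the assertion drops out of the ordered formula, completing the proof.
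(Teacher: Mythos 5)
Your proposal is correct and takes essentially the same route as the paper: the paper's own proof consists precisely of the remark that the statement follows from Corollary \ref{disc.diag-ordered} by pushing forward under the symmetrization map and adjusting each term by the mapping degrees recorded just before the Proposition (the ratios $\nu\subp{n.}(n_j,n_\l)$, $a(\un')a(\un'')/a(\un)$ and $a(\un\setminus\{n_\l\})/a(\un)$ for the separating and nonseparating cases). Your term-by-term bookkeeping — projection formula on the left side, degree ratios for the union terms, coefficient $1$ for the $\star_1[\binom{n.}{2}\omega]$ term, and the separating/nonseparating dichotomy for the node-scroll weights — is exactly the adjustment the paper intends.
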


\begin{example}{\rm
We have
\begin{eqnarray}\Gamma\spr m..\Gamma\subp{2|2}\sim\hskip 3in\\
\nonumber
\frac{3}{2}\Gamma\subp{2|2|2}+2\Gamma\subp{4}
+2\Gamma\subp{3|2}-\Gamma\subp{2|2}\star_1[\omega]+
\sum\limits_{a=0}^{m-4}\frac{1}{2\binom{m-4}{a}}(
F_1^{2:2,1^a|1^{m-4-a}}+F_1^{2:1^a|2,1^{m-4-a}})
\end{eqnarray}
}\end{example}
\begin{cor}
We have
\begin{eqnarray}
(\Gamma\spr m.)^3\sim\hskip 3in\\
\nonumber
\frac{3}{4}\Gamma\subp{2|2|2}+4\Gamma\subp{4}
+\frac{3}{2}\Gamma\subp{3|2}-\Gamma\subp{2|2}\star_1[\omega]
-4\Gamma\subp{3}[\omega]+
\Gamma\spr m.[\omega^2]
\\ \nonumber +
\half\sum\limits_{a=0}^{m-4}\frac{1}{2\binom{m-4}{a}}(
F_1^{2:2,1^a|1^{m-4-a}}+F_1^{2:1^a|2,1^{m-4-a}})\\
\nonumber
+3\sum\limits_{a=0}^{m-3}
(F_1^{(3:1^a|1^{m-3-a})}+F_2^{(3:1^a|1^{m-3-a})})\\
\nonumber
+\half\sum\limits_{a=0}^{m-2}\Gamma\spr m..F_1^{(2:1^a|
1^{m-2-a})}
\end{eqnarray}
\end{cor}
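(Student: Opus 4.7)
The plan is to compute $(\Gamma\spr m.)^3 = \Gamma\spr m.\cdot(\Gamma\spr m.)^2$ by expanding $(\Gamma\spr m.)^2$ via Corollary \ref{disc-square} and then intersecting term by term with $\Gamma\spr m.$ using the formulas already established in this chapter. No new geometric input is required; only the existing monoblock/polyblock intersection formulas, together with the divisor identity $\Gamma\subp{2} = 2\Gamma\spr m.$ from Section 1.7.

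Concretely, Corollary \ref{disc-square}, written in the shorthand $F^\bullet_\bullet = \sum_s \tfrac{1}{\deg(\delta_s)}F^\bullet_\bullet(\theta_s)$ adopted just before the preceding Example, reads
$$
(\Gamma\spr m.)^2 \sim \tfrac{1}{2}\Gamma\subp{2|2} + \Gamma\subp{3} - \Gamma\spr m.[\omega] + \tfrac{1}{2}\sum_{a=0}^{m-2} F_1^{(2:1^a|1^{m-2-a})}.
$$
I would multiply each summand by $\Gamma\spr m.$: for $\tfrac{1}{2}\Gamma\spr m.\cdot\Gamma\subp{2|2}$ the preceding Example provides an expansion into $\Gamma\subp{2|2|2}$, $\Gamma\subp{4}$, $\Gamma\subp{3|2}$, a twisted $\Gamma\subp{2|2}\star_1[\omega]$, and the mixed node scrolls $F_1^{(2:2,1^a|1^{m-4-a})}$ and $F_1^{(2:1^a|2,1^{m-4-a})}$; for $\Gamma\spr m.\cdot\Gamma\subp{3}$ I would apply part (ii) of the unlabelled two-part Corollary preceding Corollary \ref{cube}, producing $\tfrac{1}{2}\Gamma\subp{3|2} + 3\Gamma\subp{4} - 3\Gamma\subp{3}[\omega] + 3\sum_a(F_1^{(3:1^a|1^{m-3-a})}+F_2^{(3:1^a|1^{m-3-a})})$; for the third summand I would rewrite $-\Gamma\spr m.\cdot\Gamma\spr m.[\omega]$ as $-\tfrac{1}{2}\Gamma\spr m.\cdot\Gamma\subp{2}[\omega]$ using $\Gamma\subp{2}=2\Gamma\spr m.$ and then apply part (i) of the same Corollary, yielding $-\tfrac{1}{2}\Gamma\subp{2|2}[\omega] - \Gamma\subp{3}[\omega] + \Gamma\spr m.[\omega^2]$; and the last summand, $\tfrac{1}{2}\sum_a \Gamma\spr m.\cdot F_1^{(2:1^a|1^{m-2-a})}$, is by definition a sum of node section classes and stays as an atomic tautological generator in the statement.

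Collecting like terms then produces the claimed equality: the contributions to $\Gamma\subp{2|2|2}$, $\Gamma\subp{4}$, $\Gamma\subp{3|2}$ come out to $\tfrac{3}{4}$, $1+3=4$, $1+\tfrac{1}{2}=\tfrac{3}{2}$ respectively; the $\Gamma\subp{3}[\omega]$ contributions $-3$ and $-1$ from the second and third steps combine to $-4$; the $\Gamma\spr m.[\omega^2]$ term is isolated in the third step; and the two node-scroll sums and the single node-section sum appear once, with their coefficients unchanged. The only subtlety is confirming that the two $-\tfrac{1}{2}$ twists of $\Gamma\subp{2|2}$ combine into $-\Gamma\subp{2|2}\star_1[\omega]$, which amounts to checking from the notational conventions of Section \ref{partitions} that for the shape $(2^2)$ (one distinct block size) the classes $\Gamma\subp{2|2}[\omega]$ and $\Gamma\subp{2|2}\star_1[\omega]$ coincide. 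There is no real geometric obstacle; the main trap is careful bookkeeping of the factor-of-two conventions ($\Gamma\subp{2}=2\Gamma\spr m.$ as divisors, and the $1/(2\binom{m-4}{a})$ symmetrization weights attached to the $(2^2)$-shaped node scrolls coming from the ratio of ordered to unordered symmetrization degrees) across all four contributions.
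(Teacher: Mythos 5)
Your proposal is correct and is essentially the derivation the paper intends: the corollary follows by multiplying the expansion of $(\Gamma\spr m.)^2$ from Corollary \ref{disc-square} term by term against $\Gamma\spr m.$, using the preceding Example for $\Gamma\spr m.\cdot\Gamma\subp{2|2}$, parts (i)--(ii) of the prior corollary (with $\Gamma\subp{2}=2\Gamma\spr m.$) for the $\Gamma\subp{3}$ and $\Gamma\spr m.[\omega]$ terms, and leaving $\Gamma\spr m.\cdot F_1^{(2:1^a|1^{m-2-a})}$ as node sections. Your bookkeeping of the coefficients, including the identification of $\Gamma\subp{2|2}[\omega]$ with $\Gamma\subp{2|2}\star_1[\omega]$ (the singleton blocks contribute $\binom{1}{2}\omega=0$), matches the stated formula.
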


Proposition \ref{disc.diag} completes one major step in the proof of Theorem \ref{taut-module}. The remaining step will
be taken in the next subsection.
\newsubsection{Polarized node scrolls}
What now remains to be done to complete the proof of Theorem
\ref{taut-module} is to work out the intersection product of the
discriminant polarization  $-\Gamma\spr m.$, and all its powers,
with a node scroll $F$. As discussed in the introduction to this
chapter, this will be accomplished via an analysis of the polarized
structure of a node scroll, refining the one given in Lemma
\ref{nodescroll0}. There we found one (decomposable)
 vector bundle $E_0$ (over a suitable  special
 diagonal locus),
such that $F=\P(E_0)$. The point  now is effectively to find a line bundle
$L$ such that the twisted vector bundle $E=E_0\otimes L$ -- which
also has $F=\P(E)$) -- is the 'right one' in the sense
that the associated polarization
$\O_{\P(E)}(1)$ coincides with the restriction of the
discriminant polarization,
i.e. $\O_F(-\Gamma\spr m.)$.
We will focus first  on 'maximal' node scrolls (ones with no
diagonal conditions); the formula thus obtained is of course
still valid in the nonmaximal case but there is can and will be usefully explicated. Finally, we will consider everything first
in the ordered case and descend to the unordered one at the end of the section.
\subsubsection{Pencils}
We begin by
studying the case where $X/B$ is a 1-parameter family of curves with a finite
number of singular members, all 1-nodal (the '1-parameter, 1-nodal' case). While in this case specifying a singular fibre is equivalent to specifying a node, in the general, higher-dimensional base case, it is the latter that carries through.
As in our study of diagonal classes, we will first consider the ordered case.
To begin with, we review and amplify
 some notations relating
to (multi)partitions and associated diagonal loci for a
\emph{special} fibre $X_s$, analogous to those established
previously in the relative case over $B$.
Thus, fix
a singular fibre $X_s$ with unique node $\theta=\theta_s$,  and let $(X',\theta_x),(X",\theta_y)$
 be the components of
its normalization with the distinct node preimages marked;
if $X_s$ is irreducible (or equivalently, $\theta$ is nonseparating),
then $X'=X"$
as global varieties, but they differ in the marking;
if necessary to specify the singular value $s$, the same
will be denoted $(X'_s,  \theta_{x,s}), (X"_s, \theta_{y,s})$ .
\par
In this setting, we recall that a \emph{ (full) set of
multipartition data}
$$\Phi=(J:I'.|I".)$$ consists of a 'nodebound' block $J=0(\Phi)$,
 plus a pair of '$x$ and $y$' partitions $I'=x(\Phi), I"=y(\Phi)$,
such that $(I.)=J\coprod (I'.)\coprod(I".)$ is a full partition on
$[1,m]$. Set
$$n=|J|,
 I'=\bigcup\limits_\l I'_\l, I"=\bigcup\limits_\l I"_\l, n'=|I'|, n"=|I"|,$$
 and let
 $$\un'=(n'.^{\mu'.}):=(|I'.|),\un"=(n".^{\mu".}):=(|I".|) $$
 be the associated distributions and shapes (with $n'_1>n'_2>...$ and ditto $n".$).
The \emph{multidistribution} associated to $\Phi$ is by
definition
$$\phi=|\Phi|=(n:\un'.|\un".).$$
Here again $n, \un', \un"$  are referred to as the \emph{nodebound},
$x$-, and $y$- portions of $\phi$, and  denoted $\theta(\phi), x(\phi),
y(\phi)$ respectively.
\par
We will say that $\Phi$ is \emph{maximal} if each $I'_\l$ and $I"_\l$ is a singleton (and thus represents a vacuous
condition). In general, we will say that
$\Phi_1\prec\Phi_2$, where $\Phi_1, \Phi_2$
are full multipartition data, if they have the same $J$ block,
and if
$(I'_{1.})\prec(I'_{2.}), (I"_{1.})\prec(I"_{2.})$ in the sense
defined earlier (i.e. if each non-singleton block of $\Phi_2$
is contained in a block of $\Phi_1$).
 \par
 As before,
 we define
 $$X_s^\Phi=X^\Phi=(X')^{(I'.)}\times (X")^{(I".)}.$$
 The notation means that the coordinates in a given block
are set equal to each other.
Therefore $$X^\Phi\simeq X^{r'}\times X^{r"},$$ where
$r',r"$ are the respective numbers of blocks in
$(I'.), (I"_.)$.
If
$X_s$ is irreducible,
$X^\Phi_s$ depends only on ${(I'.)\coprod (I".)}$,
therefore in a global context we may, and will
in this case, always  take $I"=\emptyset$;
however in a local context, specifying $(I'.), (I".)$ specifies a sheet of $X^\Phi_s$ over the origin.
We have
\beql{}{\Phi_1\prec\Phi_2\Rightarrow X^{\Phi_1}
\subset X^{\Phi_2},}
an embedding of smooth varieties.
 As before, $X^\Phi_s$ maps to the Cartesian product $X_s^m$ by putting $0$ in the $J$ coordinates. The image is defined by the following equations:
\begin{itemize}\item horizontal
\beql{}{y_i=0, \forall i\in J\cup I',\; I':=\bigcup_\l I_\l\;;}
\item vertical
\beql{}{x_i=0, \forall i\in J\cup I",\; I":=\bigcup_\l I"_\l \;;}
\item diagonal
\beql{}{x_{i_1}-x_{i_2}=0,\forall i_1,i_2\in I'_\l, \forall \l;\\
y_{i_1}-y_{i_2}=0, \forall i_1,i_2\in I"_\l, \forall \l.}
\end{itemize}
Notice that the 'origins' $\theta_x, \theta_y$
(i.e. node preimages) induce a stratification of $X^\Phi$, where the stratum $\mathcal S_{k',k"}$ of codimension $k=k'+k"$ is the locally
closed locus of points having exactly $k'$ (resp. $k"$)
 of their components
indexed by $I'$ (resp. $ I"$) equal to the origin $\theta_x$
(resp. $\theta_y$).\par

The node scroll $OF^\Phi_j=OF_j^\Phi(\theta)$ is a component of the
inverse image $oc_m\inv(X^\Phi)$.
  It is a $\P^1$ bundle over $X^\Phi$ whose fibre
  may be identified with $C^{n}_j$
over the 0-stratum $\mathcal S_{0,0}$.
There, in terms of the local model $H_n$, the fibre $C^n_j
\subset\P^{n-1}$ is defined by the homogeneous equations
$$Z_1=...=Z_{j-1}=Z_{j+2}=...=Z_n=0.$$ In other words, homogeneous
coordinates
on $C^n_j\sim\P^1$ are given by $Z_j, Z_{j+1}.$ Now recall that
under our identification of the local model $H_n$ with the blowup of
the discriminant, the $Z_i$ correspond to the generators $G_{i,J}$
given by the mixed Van der Monde determinants \refp{vandermonde}. in
the $J$-indexed variables.\par Similarly, in terms of the local
model $H_m$ over a neighborhood of the 'origin'
$(\theta_x)^{I'}\times(\theta_y)^{I"}$, i.e. the smallest stratum $\mathcal
S_{n'+n"}$, the fibre $C^m_{j+n"}$ is coordinatized by $Z_{j+n"},
Z_{j+n"+1}$, with the other $Z$ coordinates vanishing, and these
correspond to the generators
 $G_{j+n"}, G_{j+n"+1}$ (in all the
variables).\par Likewise, over a neighborhood of a point $p$ in the
stratum $\mathcal S_{k',k"}\subset X^\Phi$, we have a local model
$H_{n+k'+k"}$ for the Hilbert scheme, and there the fibre becomes
$C^{n+k'+k"}_{j+k"}$ and is coordinatized by $Z_{j+k"}, Z_{j+k"+1}$,
which correspond to the generators $G_{j+k"}, G_{j+k"+1}$ in the
appropriate variables (where the components of $p$ are
equal to $\theta_x$ or
$\theta_y$).
\par
 We need to analyze the mixed Van der Monde determinants restricted on
 $X^\Phi$. To this end, assume to begin that
 $\Phi$ is maximal in that $(I'.), (I".)$ have \emph{singleton} blocks.
We also assume for now that the singular fibre $X_s$
 in question
is reducible. We work in the local model $H_n$ over
a neighborhood of the origin.
 Consider a Laplace expansion, along the $J$-indexed columns,
 of the mixed Van der Monde determinant that yields $G_{j+n"}$.
 This expansion has one $n\times n$ subdeterminant that is equal to $G_{j,J}$,
and in particular is \emph{constant} along $X^\Phi$; the
complementary submatrix to this,
restricted on the locus $X^\Phi$, itself splits in two blocks, of size
$n'\times n', n"\times n"$, which are themselves 'shifts' of
ordinary Van der Mondes, in the $x_{I'}, y_{I"}$ variables
respectivley, where the exponents are shifted up by $n-j+1$ (resp.
$j-1$). The determinant of this complementary matrix on $X^\Phi$
equals, using block expansion, \beql{cofactor}{
\gamma_j^{(n:I'|I")}=
(x^{I'})^{n-j+1}(y^{I"})^{j-1}\prod\limits_{a<b\in
I'}(x_a-x_b)\prod\limits_{a<b\in I"}(y_a-y_b)} (where
$x^{I'}=\prod\limits_{i\in I'}x_i$ etc.). Note that $x^{I'}$ is a
defining equation for the '$x$-boundary'
\beql{}{\del_xX^{\Phi}=\bigcup\limits_{i\in I'}X^{\Phi\setminus i}
=:X^{\del_x\Phi}} where $\Phi\setminus i$ means remove $i$ from $I'$
and add it to $J$ and map the appropriate locus to $X^\Phi$ as a
Cartier divisor by putting $\theta_x$ at the $i$th coordinate; similarly
$y^{I"}$. The other factors of $\gamma_j^{(n:I'|I")}$ define
respectively the big $x$ diagonal $D_x^\Phi$ and big $y$ diagonal
$D^\Phi_y$ on $X^\Phi$, i.e. \beql{}{
D^\Phi_x=p_{(X')^{I'}}^*(D_{X'}^{I'}),
D^\Phi_y=p_{(X")^{I"}}^*(D_{X"}^{I"}) } where $D_{X'}. D_{X"}$ are
the usual big diagonals.
 Note also that the $G_{j,J}$ are globally defined along $X^\Phi$.
Now define a line bundle on $X^\Phi$ as follows:
\beql{OEPhi}{OE^\Phi_{s,j}=\O_{X^\Phi}
(-(n-j+1)\del_x(X^\Phi)-(j-1)\del_y(X^\Phi)- D_x^\Phi-D_y^\Phi),}
($s$ will be omitted when understood).\par
In the irreducible case there is no distinction globally
between $I'$ and $I"$, so we may as well assume $I"=\emptyset$ and take $D^\Phi_y=0$; the $\del_x$ and $\del_y$ are
still defined, and different,
based on setting the appropriate coordinates equal to $\theta_x$ or $\theta_y$. With this understood, we still define $OE^\Phi_{s,j}$ as in \refp{OEPhi}.. Then the
foregoing calculations have the following conclusion
(NB we are using the quotient convention for projective bundles).

\begin{prop}\label{pol-scroll} Let $X_s$ be a singular fibre and
 $\Phi=(J:I'.|I".)$ a set of multipartition data where $I".=\emptyset$
if $X_s$ is irreducible. Define line bundles $OE^\Phi_{s,j}$ on
$X_s^\Phi$ by

\beql{OEPhi-gen}{
OE^\Phi_{s,j}=OE^{\Phi_{\max}}_{s,j}\otimes\O_{X^\Phi} } where
$\Phi_{\max}$ is the unique maximal partition dominating $\Phi$ and
$OE^{\Phi_{\max}}_{s,j}$ is defined by \refp{OEPhi}.. Then we have
an isomorphism of $\P^1$-bundles over $X^\Phi$
\beql{iso-scroll}{OF^\Phi_{s,j}\simeq\P(OE^\Phi_{s,j}\oplus
OE^\Phi_{s,j+1})} that induces an isomorphism
\beql{iso-polar}{\O(-\Gamma\scl m.)\otimes\O_{OF_s^\Phi}\simeq
\O_{\P(OE_{s,j}^\Phi\oplus OE^\Phi_{s,j+1})}(1);} these isomorphisms
are uniquely determined by the condition that over a neighborhood of
a point in $\mathcal S_{k',k"}\subset X_s^\Phi$,  they take the
generators $G_{j+k"}, G_{j+k"+1}$ to  generators
$\gamma_j^{(n:I'|I")},\gamma_j^{(n:I'|I")}$ of $OE^\Phi_{s,j},
OE^\Phi_{s, j+1}$, respectively.

\end{prop}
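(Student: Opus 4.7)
My plan is to verify the asserted isomorphisms stratum-by-stratum via an explicit local computation in the Blowup-Theorem local model, then check compatibility of these local descriptions via the Localization Formula. First, I reduce to the maximal case: for non-maximal $\Phi$, the inclusion $X^\Phi\subset X^{\Phi_{\max}}$ is closed, and the $(u,v)$-equations \eqref{uv-eq-nodescroll} together with the boundary-datum discussion of \S\ref{globalization} exhibit $OF^\Phi_{s,j}$ as the preimage of $OF^{\Phi_{\max}}_{s,j}$ under the induced embedding of fibered products; since $OE^\Phi_{s,j}$ is defined by restriction in \eqref{OEPhi-gen} and the polarization $\O(-\Gamma\scl m.)$ is globally defined, the maximal case implies the general one. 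So assume $\Phi$ maximal.

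Fix a generic point $p\in\mathcal S_{k',k''}\subset X^\Phi$: the scheme at $p$ consists of an off-node part of length $m-n-k'-k''$, parametrized by the $(I'\setminus K')$- and $(I''\setminus K'')$-coordinates (which are invertible near $p$), together with a length-$(n+k'+k'')$ punctual part at $\theta$. A neighborhood of $p$ on $X\sbr m._B$ factors as a smooth off-node factor times the local model $H_{n+k'+k''}$; inside it, the scroll fibre over $p$ is $C^{n+k'+k''}_{j+k''}\subset\P^{n+k'+k''-1}$ with $\P^1$-homogeneous coordinates $(Z_{j+k''},Z_{j+k''+1})$, and by the Blowup-Theorem isomorphism \eqref{gamma} these coordinates correspond to the mixed Van der Monde generators $(G_{j+k''},G_{j+k''+1})$ viewed as sections of $\O(-\Gamma\scl m.)|_{OF}\simeq\O_{\P^1}(1)$ on the fibre.

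Expand $G_{j+k''}=\pm\det V^{n+k'+k''}_{j+k''}$ by Laplace along the $n$ columns indexed by $J$. The branch conditions $y_a=0$ ($a\in K'$) and $x_a=0$ ($a\in K''$) make the $(k'+k'')\times(k'+k'')$ cofactor on the $K'\cup K''$-columns block-diagonal, and only one choice of rows for the $J$-minor gives a non-vanishing contribution on $X^\Phi$ near $p$, namely $\{1,x,\ldots,x^{n-j},y,\ldots,y^{j-1}\}$ which is precisely the row set of $V^n_j$: any other choice leaves an unmatched $y$-row paired against the $K'$-columns (or $x$-row against the $K''$-columns), producing a zero cofactor. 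The surviving cofactor, with its $K'$- and $K''$-blocks recognized as shifted Van der Mondes exactly as in the paragraph preceding \eqref{cofactor}, evaluates (after absorbing the invertible off-branch coordinates as units) to the global expression $\gamma_j^{(n:I'|I'')}$, yielding
\[
G_{j+k''}\sim G_{j,J}\cdot\gamma_j^{(n:I'|I'')},
\]
with $G_{j,J}$ the tautological fibre coordinate on $C^n_j$ (in the $J$-variables) and $\gamma_j^{(n:I'|I'')}$ pulled back from the base $X^\Phi$. The analogous computation for $G_{j+k''+1}$ produces $\gamma_{j+1}^{(n:I'|I'')}$.

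This factorization exhibits $OF^\Phi_{s,j}$ as $\P(OE^\Phi_{s,j}\oplus OE^\Phi_{s,j+1})$ with $\O(-\Gamma\scl m.)|_{OF}$ realized as $\O_{\P(\cdot)}(1)$ and with $\gamma_j^{(n:I'|I'')},\gamma_{j+1}^{(n:I'|I'')}$ as summand generators; the divisors of these sections recover \eqref{OEPhi} directly, the powers of $x^{I'}$ and $y^{I''}$ contributing the $\partial_x$- and $\partial_y$-multiplicities, and the Van der Monde factors contributing $D_x^\Phi$ and $D_y^\Phi$. The main obstacle is verifying that these stratum-wise local descriptions patch coherently into a single global isomorphism, since both the fibre coordinate $G_{j,J}$ and the index shift $j\mapsto j+k''$ depend on the stratum. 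This compatibility is supplied by the Localization Formula of \S\ref{globalization2}, which shows that the transition from stratum $\mathcal S_{k',k''}$ to $\mathcal S_{k'+1,k''}$ or $\mathcal S_{k',k''+1}$ multiplies $G_{j+k''}$ by precisely the partial-diagonal and branch factors encoded in the expression for $\gamma_j^{(n:I'|I'')}$ as a section of $OE^\Phi_{s,j}$. Uniqueness is immediate: the two $\P^1$-bundles with polarization are identified on each stratum, hence on a dense open, hence everywhere.
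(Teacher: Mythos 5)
Your route is the paper's own: reduce to maximal $\Phi$, use the blowup model to identify $Z_{j+k''},Z_{j+k''+1}$ with the mixed Van der Monde generators, Laplace-expand along the $J$-columns to peel off $\gamma_j^{(n:I'|I'')}$, and patch the stratumwise identifications (the paper patches by the uniqueness clause of the statement rather than by the Localization Formula, a minor difference). But there is a concrete error in your justification of the central factorization $G_{j+k''}\sim G_{j,J}\cdot\gamma_j^{(n:I'|I'')}$: it is false that every row choice other than $\{1,x,\dots,x^{n-j},y,\dots,y^{j-1}\}$ yields a cofactor vanishing identically on $X^\Phi$ near $p$. Your unmatched-row argument overlooks the constant row $x^0$, whose entries equal $1$ in \emph{both} the $K'$- and the $K''$-columns; any row choice whose complement contains $x^0$ together with $k'-1$ pure $x$-rows and $k''$ $y$-rows (or $k'$ and $k''-1$) has a generically nonvanishing cofactor. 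Already for $n=2$, $j=1$, $(k',k'')=(1,0)$ one has
\[
G_1=(x_1-x_2)x_3^2-(x_1^2-x_2^2)x_3+x_1x_2(x_1-x_2),
\]
and the cofactors $x_3$ and $1$ of the last two terms certainly do not vanish on $X^\Phi$ near $p$.

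What actually kills the extra terms is not their cofactors but their $J$-minors: every $J$-minor lies in the ideal of the $J$-diagonal, hence is a combination $\sum_i c_iG_{i,J}$ whose coefficients $c_i$ are symmetric functions of the $J$-variables (and $t$) that vanish on the punctual locus; in the example the extra terms are $-\sigma^x_1(J)\,G_{1,J}\,x_3$ and $\sigma^x_2(J)\,G_{1,J}$, and the $\sigma^x_i(J)$ vanish identically on the scroll, whose $J$-cycle is $n\theta$. This is precisely the ``leading term'' mechanism the paper invokes in the proof of Proposition \ref{bundle-for-node-scroll}. With this substitution --- an order-of-vanishing/leading-term argument in place of your vanishing-cofactor claim --- the factorization, and with it the identification of $OE^\Phi_{s,j}$ via \eqref{cofactor} and \eqref{OEPhi} and the remainder of your argument, goes through essentially as in the paper.
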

\begin{proof}
To begin with, note
that the requirement of compatibility of \refp{iso-scroll}. with \refp
{iso-polar}. determines it uniquely over any open set
of $X^\Phi_s$.\par
Next, note that
 every multipartition set $\Phi$ is dominated by
a maximal one and the appropriate $\P^1$ bundles and polarizations
restrict in the natural way. In fact, quite generally, whenever
$\Phi_1\prec\Phi_2$ are multipartitions, we have a Cartesian diagram
of polarized $\P^1$-bundles: \beql{}{
\begin{matrix}
OF_j^{\Phi_1}&\to&OF_j^{\Phi_2}\\
\downarrow&\square&\downarrow\\
X^{\Phi_1}&\to&X^{\Phi_2}.
\end{matrix}
}
 Therefore is suffices
to prove the assertions in case $\Phi$ is maximal. In that case the
foregoing discussion yields the claimed isomorphisms in a
neighborhood of the 0-stratum $\mathcal S_{0,0}$. A similar argument
applies in a neighborhood of a point in any other stratum. The
compatibility of all these isomorphisms with \refp{iso-polar}.
ensures that the local isomorphisms glue together to a global one.

\end{proof}
Note that \refp{iso-scroll}. reproves Lemma \ref{nodescroll0},
though the latter, of course, does not yield the 'correct'
polarization and is therefore of little use enumeratively.
\begin{example}\label{deg-pol-ord}
We have
\beql{}{OF_{s,1}^{(12:3|\emptyset)}=
\P_{X'}(\O(-2\theta_x)\oplus\O(-\theta_x))
}
Consequently
\beql{}{(-\Gamma\spr 3.)^2.OF_{s,1}^{(12:3|\emptyset)}=-3.
}
Of course, in this example ordering is irrelevant.

\end{example}
\subsubsection{General families: maximal multipartition}
We now take up the extension of Proposition \ref{pol-scroll} to the setting
of an arbitrary nodal family $X/B$, where a maximal node scroll
$F_j^n(\theta)$ is associated to a relative node $\theta$,
or more precisely to a boundary datum
$(T, \delta, \theta)$, as in \S\ref{globalization} and
\S\ref{globalization2}.
In this setting the scrolls $F^n_j(\theta)$  are (polarized,
via the discriminant) $\P^1$-bundles over $(X^\theta_T)\sbr m-n.$
defined for
all $1\leq j<n\leq m$, and we aim to identify them,
or rather more conveniently, their pullbacks over the ordered Hilbert scheme
$(X^\theta_T)\scl m-n.$. As in the foregoing discussion
in the 1-parameter case, the polarized $\P^1$-bundle
$F^n_j(\theta)$ is
just the projectivization of the rank-2 bundle that is
the direct sum of the invertible ideals of the  intermediate diagonals $OD^m_j(\theta), OD^m_{j+1}(\theta)$ via the map
'add $n\theta$'. Now these ideals were determined in
Proposition \ref{bundle-for-node-scroll}. Therefore we conclude

\begin{prop}\label{pol-scroll-global} Let $X/B$
be a family of nodal curves and $ (T, \delta, \theta)$ be a
boundary datum as in \S\ref{globalization}.
Let $OE^n_j(\theta)$ be the line bundle over $(X^\theta_T)\scl m-n.$ defined (in divisor notation)
by
\eqspl{OEtheta}{
OE_j^n(\theta)=& -(n-j+1)(\theta_x)\scl m-n.
-(j-1)(\theta_y)\scl m-n.-\Gamma\scl m-n.\\& +((\pi^\theta)\scl m-n.)^*(\binom{n-j+1}{2}\psi_x+
\binom{j}{2}\psi_y).
}
Then the pullback of the node scroll $F^n_j(\theta)$ on
$(X^\theta_T)\scl m-n.$ is polarized-isomorphic to
$\P(OE^n_j(\theta)\oplus OE_{j+1}^n(\theta)).$
\end{prop}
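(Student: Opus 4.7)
The plan is to deduce the global statement directly from two earlier ingredients: the blowup description of $X\scl m._B$ in terms of the intermediate diagonals $OD^m_j(\theta)$ (Proposition on page with $G^m(\theta)$, i.e.\ the proposition in \S\ref{globalization2}), and the pullback formula of Proposition \ref{bundle-for-node-scroll} for $(\mu^n)^\ast OD^m_j(\theta)$ on $(X^\theta_T)\scl m-n._T$. The hardest step, conceptually, will be to identify the pullback of the node scroll $F^n_j(\theta)$ inside the projective bundle $\P(G^m(\theta))$ as the projectivization of the direct summands in positions $j$ and $j+1$; once this identification is in place, the line bundle computation is completely mechanical.

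First, I would recall that by the Blowup Theorem \ref{blowup} and its refinement in \S\ref{globalization2}, the ordered Hilbert scheme embeds as
\eqspl{emb-G}{X\scl m._B\hookrightarrow \P(G^m(\theta))=\P\Bigl(\bigoplus_{i=1}^{m}G^m_i(\theta)\Bigr),\qquad G^m_i(\theta)=\O(-OD^m_i(\theta)),}
and the tautological quotient $\O_{\P(G^m(\theta))}(1)$ pulls back to $\O_{X\scl m._B}(-\Gamma\scl m.)$. Next I would use the local picture in the model $\tilde H$ (Theorem \ref{Hilb-local}) together with the description of punctual fibres to show that $\bar{OF}^n_j(\theta)$, pulled back to $(X^\theta_T)\scl m-n._T$ via the map $\mu^n$ of Proposition \ref{bundle-for-node-scroll}, coincides with the sub-$\P^1$-bundle cut out by the vanishing of all homogeneous coordinates except $Z_j, Z_{j+1}$ (equivalently, the generators $G^m_j(\theta), G^m_{j+1}(\theta)$). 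This is essentially the global counterpart of the local statement already used in Proposition \ref{pol-scroll} in the $1$-parameter case, where the two non-vanishing $Z$-coordinates of a point of $C^n_j$ correspond to the mixed Van der Monde generators $G_j,G_{j+1}$. Consequently, as polarized $\P^1$-bundles over $(X^\theta_T)\scl m-n._T$,
\eqspl{scroll-as-proj}{OF^n_j(\theta)\cong\P\bigl((\mu^n)^\ast G^m_j(\theta)\oplus (\mu^n)^\ast G^m_{j+1}(\theta)\bigr),}
with the restriction of $\O(-\Gamma\scl m.)$ becoming $\O(1)$.

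It then remains to identify each summand on the right-hand side with $OE^n_j(\theta)$. By Proposition \ref{bundle-for-node-scroll}, noting that $j_0=\min(j,n)=j$ throughout the relevant range,
\eqspl{pullback-OD}{(\mu^n)^\ast OD^m_j(\theta)\sim -\binom{n-j+1}{2}\psi_x-\binom{j}{2}\psi_y+(n-j+1)(\theta_x)\scl m-n.+(j-1)(\theta_y)\scl m-n.+\Gamma\scl m-n.,}
so
\eqspl{pullback-G}{(\mu^n)^\ast G^m_j(\theta)\sim \binom{n-j+1}{2}\psi_x+\binom{j}{2}\psi_y-(n-j+1)(\theta_x)\scl m-n.-(j-1)(\theta_y)\scl m-n.-\Gamma\scl m-n.,}
which matches the definition \eqref{OEtheta} of $OE^n_j(\theta)$ on the nose, after pulling the $\psi$-classes back via $(\pi^\theta)\scl m-n.$. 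The same computation with $j$ replaced by $j+1$ yields $OE^n_{j+1}(\theta)$. Combined with \eqref{scroll-as-proj}, this gives the desired polarized isomorphism.

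The main obstacle, as noted, is the identification \eqref{scroll-as-proj}: one needs to check not only that the two relevant $Z$-coordinates give local generators of $(\mu^n)^\ast G^m_j(\theta)$ and $(\mu^n)^\ast G^m_{j+1}(\theta)$ respectively, but also that these local generators patch to yield a \emph{global} $\P^1$-bundle structure of the stated form. This patching is guaranteed by the functoriality of the blowup embedding \eqref{emb-G} and by the fact, already exploited in the proof of Proposition \ref{pol-scroll}, that outside the two chosen indices the $Z$-coordinates vanish identically along the node scroll; once this is spelled out, no additional global obstruction arises.
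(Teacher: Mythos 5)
Your proposal is correct and follows essentially the same route as the paper: the paper likewise identifies the pulled-back node scroll as the projectivization of the two intermediate-diagonal ideals $(\mu^n)^*G^m_j(\theta)\oplus(\mu^n)^*G^m_{j+1}(\theta)$ (with $-\Gamma\scl m.$ becoming $\O(1)$ via the blowup/$Z$-coordinate description) and then invokes Proposition \ref{bundle-for-node-scroll} to get the formula \eqref{OEtheta}. Your write-up merely spells out the gluing step that the paper leaves implicit.
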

\begin{rem}
Note that interchanging the $x$ and $y$ branches along
$\theta$ interchanges
$OE^n_j(\theta)$ and $OE^n_{n-j+1}(\theta)$, hence also
the node scrolls $F^n_j$ and $F^n_{n-j}(\theta)$.
\end{rem}
\subsubsection{General families: nonmaximal multipartition}
Next we extend the basic formula \eqref{OEtheta} for the line bundles
$OE_j^\Phi(\theta)$ making up the node scroll to the case where
$\Phi$ is a general, not necessarily maximal, multi-partition.
This is essentially a matter of computing the restrictions
of the
various 'constituents' of $OE_j^\Phi(\theta)$ on a
general diagonal locus, and is  readily done based on our earlier results, notably Proposition \ref{disc.diag-ordered}.\par
Thus let $\Phi=
(J:I'.|I".)$ be a full multipartition on $[1,m]$ where
as usual we take $I".=\emptyset$ if $\theta$ is a nonseparating node. We set $n=|J|, k=m-n$ where we may assume $k\leq m-2$, and let
$$OD^\theta_\Phi\subset (X^\theta_T)^k,
\Gamma^\theta_\Phi\subset(X^\theta_T)\scl k.$$
be the associated diagonal loci. Note that these are ordinary
diagonal loci associated to the family $X^\theta_T$ (which is disconnected when $\theta$ is separating); in the nonseparating case, $\Gamma^\theta_\Phi=\Gamma_{(I'.)}$
(on $X^\theta_T$). Therefore, to start with, the restriction
of $\Gamma\scl k.$ on $\Gamma^\theta_\Phi$ is computed by
Proposition \ref{disc.diag-ordered}.
\par
 As for $\theta_x, \theta_y$, the restriction is quite elementary (and comes from the corresponding transversal
intersection on the cartesian product); namely, in the reducible case,
\eqspl{}{
\theta_x\scl k..\Gamma_\Phi^\theta=\sum\limits_\l|I'_\l|p_{\min(I'_\l)}
(\theta_x)\\
\theta_y\scl k..\Gamma_\Phi^\theta=
\begin{cases}\sum\limits_\l|I"_\l|p_{\min(I"_\l)}
(\theta_y), \quad \theta~ {\mathrm{ separating}}\\
\sum\limits_\l|I'_\l|p_{\min(I'_\l)}
(\theta_y), \quad \theta ~{\mathrm{ nonseparating}}
\end{cases}
}

Summarizing, we have
\begin{cor}In the situation above
we fix an arbitrary full multipartition $\Phi$ and
boundary datum $(T,\delta, \theta)$ and identify a divisor
class with the corresponding line bundle. Then we have on
$\Gamma_\Phi=\Gamma_\Phi^\theta$, suppressing the
node $\theta$ for brevity:\nl if $\theta$ is separating, \eqspl{}{
&OE^\Phi_j\sim -\Gamma\scl k..\Gamma_\Phi
  +\sum\limits_\l p_{\min(I'_\l)}^*(-|I'_\l|(n-j+1)\theta_x))\\ &+\sum\limits_\l
p_{\min(I"_\l)}^*(-|I"_\l|(j-1)\theta_y)
+((\pi^\theta)\scl m-n.)^*(\binom{n-j+1}{2}\psi_x+
\binom{j}{2}\psi_y).;}
if $\theta$ is nonseparating,
\eqspl{}{
OE^\Phi_j\sim& -\Gamma\scl k..\Gamma_\Phi
  +\sum\limits_\l p_{\min(I'_\l)}^*(-|I'_\l|((n-j+1)\theta_x +(j-1)\theta_y)\\
&+((\pi^\theta)\scl m-n.)^*(\binom{n-j+1}{2}\psi_x+
\binom{j}{2}\psi_y).}
\end{cor}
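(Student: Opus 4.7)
The plan is to derive this formula as a direct restriction of the formula \eqref{OEtheta} for $OE^n_j(\theta)$ from Proposition \ref{pol-scroll-global} to the diagonal subvariety $\Gamma_\Phi \subset (X^\theta_T)\scl k._T$. Since $OE^\Phi_j$ is by definition the pullback of $OE^n_j(\theta)$ under the inclusion $\Gamma_\Phi \hookrightarrow (X^\theta_T)\scl k._T$, the task is simply to restrict each of the four summands on the RHS of \eqref{OEtheta} to $\Gamma_\Phi$.

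First, the term $-\Gamma\scl k.$ restricts to $-\Gamma\scl k..\Gamma_\Phi$ (this is the single summand we leave unexpanded; it could in principle be further decomposed via Proposition \ref{disc.diag-ordered}, but the corollary as stated does not ask for this). The terms $\binom{n-j+1}{2}\psi_x + \binom{j}{2}\psi_y$ are pulled back from $T$ via $(\pi^\theta)\scl m-n.$, so they restrict trivially to their analogue under the composed projection $\Gamma_\Phi \to (X^\theta_T)\scl k._T \to T$. The main computation is the restriction of the two "boundary" terms $(\theta_x)\scl k.$ and $(\theta_y)\scl k.$, which I would carry out by unraveling the definition
\[
(\theta_x)\scl k. = \sum_{i\in I'\sqcup I"} p_i^*(\theta_x),
\]
and restricting each summand.

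The content is the following. In the separating case, $X^\theta_T = X'_T \sqcup X"_T$, the section $\theta_x$ lives on $X'_T$, and by convention the coordinates indexed by $I'$ (resp.\ $I"$) factor through $X'_T$ (resp.\ $X"_T$). Hence $p_i^*(\theta_x)$ is supported only for $i \in I'$, and on $\Gamma_\Phi$ all coordinates inside a single block $I'_\l$ have been identified, so $p_i^*(\theta_x)$ restricts to $p_{\min(I'_\l)}^*(\theta_x)$ for each $i \in I'_\l$; summing over the $|I'_\l|$ indices in the block and then over $\l$ gives the stated contribution $\sum_\l |I'_\l|\, p_{\min(I'_\l)}^*(\theta_x)$, multiplied by the coefficient $-(n-j+1)$ from \eqref{OEtheta}. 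The analogous computation for $\theta_y$ yields the $I"$ summation. In the nonseparating case $X' = X"$ globally and one takes $I".=\emptyset$ by convention, so both sections $\theta_x$ and $\theta_y$ pull back through the same projections indexed by $I'$, producing the combined expression $-|I'_\l|((n-j+1)\theta_x + (j-1)\theta_y)$ at position $\min(I'_\l)$.

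The main (small) obstacle is keeping the separating/nonseparating bookkeeping straight, especially the convention $I".=\emptyset$ in the nonseparating case and the resulting fact that $\theta_y$-contributions then also land at the $\min(I'_\l)$ coordinates rather than at $I"$-coordinates. Beyond that, the proof is a routine pullback computation and a tabulation of the four summands in \eqref{OEtheta}, using no new geometric input beyond Proposition \ref{pol-scroll-global} and the elementary restriction of a diagonal class $p_i^*\tau$ on a Cartesian product to a partial diagonal.
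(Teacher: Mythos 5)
Your proposal is correct and follows essentially the same route as the paper: the paper also obtains the corollary by restricting each summand of \eqref{OEtheta} to $\Gamma_\Phi^\theta$, computing $\theta_x\scl k..\Gamma_\Phi^\theta=\sum_\l|I'_\l|p_{\min(I'_\l)}^*(\theta_x)$ (and the analogous case split for $\theta_y$ according to whether $\theta$ is separating), leaving the $\Gamma\scl k..\Gamma_\Phi$ term unexpanded and noting the $\psi$ terms pull back from $T$. Your handling of the nonseparating convention $I".=\emptyset$ matches the paper's bookkeeping exactly.
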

\subsubsection{Unordered cases}
Finally we carry our results over to the unordered case,
i.e. node scrolls over diagonal loci in the Hilbert
scheme itself. This is straightforward, as both the scrolls and related line bundles descend. To state the result, we use the following notation:
let
\beql{}{\phi=(n:\un'|\un")} be a full multidistribution,
i.e. a natural number plus 2 distributions  such that
the total length
$$n+\sum\limits_\l \un'(\l)+\sum\limits_\l \un"(\l)=m.$$
The shape of $\phi$, $(n:({n'.}^{\mu'.})|({n".}^{\mu".}))$
is defined as before.
To a multipartition $\Phi$ as above we associate the multidistribution
$$\phi=|\Phi|=(|J|:|I'.|||I".|).$$
The locus $\Gamma^\theta_\phi\subset (X^\theta_T)\sbr k.$,
$k\leq m-2$,
and over it, the scrolls $F_{j}^\phi(\theta)$ are defined as before.
 As before, we let
$$\varpi_{\Phi}:\Gamma_\Phi^\theta\to
\Gamma_\phi^\theta
$$
be the natural symmetrization map, of degree $a(n'.)a(n".)$.
We also define in the separating case, for collections $$\alpha'_1,...,\alpha'_{r'},
\alpha"_1,...,\alpha"_{r"}$$ of cohomology classes on $X$
('twisting classes'),
\beql{}{\Gamma_\phi^\theta\star_k[\alpha'.;\alpha".]
=s_k((X')\spr \mu'_1.[\alpha'_1],...,
(X")\spr \mu"_1.[\alpha"_1],...)
}
where $s_k$ is the $k$th
elementary symmetric function (in all the $r'+r"$
indicated variables). There is an analogous notion, with a single collection of twisting classes, in the nonseparating case.

Then the appropriate line bundles on $\Gamma_\phi$ are here
given up to numerical equivalence by:\par
$\theta$ \underline{separating}:
\eqspl{Ebundles}{
E_{j}^\phi(\theta)  \sim_{\mathrm{num}}
\Gamma\spr k..\Gamma_\phi^\theta
&+\Gamma_\phi^\theta\star_1
[-n'.(n-j+1)\theta_x\ ;
-n".(j-1)\theta_y]
\\
&+((\pi^\theta)\spr m-n.)^*(\binom{n-j+1}{2}\psi_x+
\binom{j}{2}\psi_y);}\par
$\theta$ \underline{nonseparating}:
\eqspl{Ebundles-nonsep}{
E_{j}^\phi(\theta)  &\sim_{\mathrm{num}}
\Gamma\spr k..\Gamma_\phi^\theta
+\Gamma_\phi^\theta\star_1
[-n'.(n-j+1)\theta_x\ ]\\
&+((\pi^\theta)\spr m-n.)^*(\binom{n-j+1}{2}\psi_x+
\binom{j}{2}\psi_y);}

These bundles have the property, easy to check, that they pull back
to $OE_{s,j}$ whenever $\phi$ is the multidistribution associated to
$\Phi$. This suffices to ensure they are the 'correct' bundles at
least up to torsion. Thus,
\begin{prop}\label{pol-scroll-unordered} For each boundary
datum $(T,\delta,\theta)$ and multidistribution $\phi$,
we have a polarized isomorphism of
$\P^1$-bundles over $(X^\theta_T)\sbr k.$
\beql{iso-scroll-unordered}{F^\phi_{j}(\theta)
\simeq\P(E^\phi_{j}(\theta)\oplus
E^\phi_{j+1}(\theta))}
where polarized means it induces an isomorphism
\beql{iso-polar}{\O(-\Gamma\spr m.).{F_{j}^\phi\theta)}
\simeq \O_{\P(E_{j}^\phi(\theta)\oplus E^\phi_{j+1}(\theta))}(1).}
\end{prop}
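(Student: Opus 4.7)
The plan is to descend the ordered version of the statement, namely Proposition \ref{pol-scroll-global} together with its non-maximal extension in the preceding corollary, to the unordered setting via the symmetrization map
\[
\varpi_\Phi:\Gamma_\Phi^\theta\longrightarrow \Gamma_\phi^\theta,
\]
where $\Phi$ is any full multipartition with $|\Phi|=\phi$. The two key inputs are (a) the polarized isomorphism
\[
OF^\Phi_{j}(\theta)\simeq \P\bigl(OE^\Phi_{j}(\theta)\oplus OE^\Phi_{j+1}(\theta)\bigr),
\quad \O(-\Gamma\scl m.)|_{OF^\Phi_j(\theta)}\simeq\O_{\P(\cdot)}(1),
\]
already established on the ordered side, and (b) the fact that both the scroll $F^\phi_j(\theta)$ and the discriminant polarization $\Gamma\spr m.$ are, by their very definitions, descents of their ordered analogues under the quotient by the symmetric-group action.

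First I would verify the assertion in the statement preceding the Proposition: that the pullback $\varpi_\Phi^*E^\phi_j(\theta)$ equals $OE^\Phi_j(\theta)$ at least up to numerical equivalence. This is a term-by-term check using the formulas \eqref{Ebundles}, \eqref{Ebundles-nonsep} and \eqref{OEtheta}. Each of the four constituents pulls back in the expected way: $\varpi^*\Gamma\spr k.=\Gamma\scl k.$; the terms $-(n-j+1)\theta_x\scl{m-n}.$ and $-(j-1)\theta_y\scl{m-n}.$ in the ordered formula pull back from the "branch-hit" divisors $\Gamma_\phi^\theta\star_1[-n'.(n-j+1)\theta_x;-n".(j-1)\theta_y]$ via the identifications $\theta_x\scl k.\cdot\Gamma_\Phi^\theta=\sum_\ell|I'_\ell|p_{\min(I'_\ell)}^*(\theta_x)$ already recorded in the corollary; and the $\psi_x,\psi_y$ terms simply pull back from $T$.

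Next I would build the desired isomorphism. The $\P^1$-bundle $F^\phi_j(\theta)\to\Gamma_\phi^\theta$ pulls back via $\varpi_\Phi$ to the ordered scroll $OF^\Phi_j(\theta)\to\Gamma_\Phi^\theta$, polarized by $-\Gamma\scl m.$. By the ordered result this pullback is polarized-isomorphic to $\P(OE^\Phi_j(\theta)\oplus OE^\Phi_{j+1}(\theta))$, which by step one is the pullback of $\P(E^\phi_j(\theta)\oplus E^\phi_{j+1}(\theta))$. Since $\varpi_\Phi$ is a finite surjection between irreducible varieties and both $\P^1$-bundles and line bundles on a normal variety are determined by their pullbacks modulo finite-order twists, we obtain the asserted polarized isomorphism after possibly adjusting by a torsion line bundle; the existence of the two canonical sections $Q^n_{j,j}, Q^n_{j+1,j}$ descending from the ordered setting pins down the actual line bundles (up to swapping the two summands), so no torsion ambiguity remains.

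The main obstacle will be the descent step itself: the symmetrization map $\varpi_\Phi$ is ramified precisely along the loci where the ordered fibres get identified, so one must check that the isomorphism constructed fibrewise on the ordered side is equivariant under the relevant stabilizer of $\Phi$ in order to descend as a genuine (not merely numerical) polarized isomorphism. The cleanest way is to observe that both sides of \eqref{iso-scroll-unordered} were constructed canonically from the boundary datum $(T,\delta,\theta)$ and $\phi$ alone, with no ordering choices; equivariance of the ordered iso under the block-automorphism group $aut(I.)$ can then be read off from the intrinsic characterization (given after Proposition \ref{pol-scroll}) of the ordered iso as the one taking the generators $G_{j+k''},G_{j+k''+1}$ to $\gamma_j^{(n:I'|I'')},\gamma_{j+1}^{(n:I'|I'')}$, both of which are manifestly symmetric in the blocks of a given size. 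This reduces the unordered statement to the already-proved ordered one, completing the proof.
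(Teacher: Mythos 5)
Your proposal is correct and follows essentially the same route as the paper: descend the ordered polarized isomorphism (Proposition \ref{pol-scroll-global} and its nonmaximal corollary) through the symmetrization map, using the compatibility $\varpi_\Phi^*E^\phi_j(\theta)\simeq OE^\Phi_j(\theta)$; the paper merely organizes this as ``maximal $\phi$ by (faithful) flatness of symmetrization, then nonmaximal $\phi$ by restricting as in Proposition \ref{disc.diag}'' rather than descending the nonmaximal ordered statement directly. The one caveat is your closing claim that the sections $Q^n_{j,j},Q^n_{j+1,j}$ eliminate all torsion ambiguity: they only identify the restrictions of $-\Gamma\spr m.$ to the sections, whose descent is still determined by its pullback only up to a torsion twist, but this is harmless (and matches the paper, which asserts the $E$'s only up to numerical equivalence in \eqref{Ebundles}--\eqref{Ebundles-nonsep}).
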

\begin{proof}
To begin with, in case $\phi$ is maximal the result follows from the
ordered case by (faithful) flatness of the symmetrization map.\par
Given this, one is reduced to checking that the pullback
$E^{\phi_{\max}}_{s,j}$ to $\Gamma_\phi$ is as in \refp{Ebundles}.
This can be done as in the case of relative diagonal loci (see
Proposition \ref{disc.diag})
\end{proof}
To state the next result compactly, we introduce the following
formal polynomial
\beql{}{s_k(a,b)=a^k+a^{k-1}b+...+b^k=(a^{k+1}-b^{k+1})/(a-b).} Thus
$s_0=1, s_1=a+b$ etc. Also, to avoid confusion, we recall that the
polarization on a node scroll is given by $-\Gamma\spr m.$ rather
than $+\Gamma\spr m.$.
\begin{cor}\label{disc^k.scroll} For a node scroll $F=F^\phi_{j}(\theta)$
and  all $\l\geq 2$, we have, setting $e_j=E^\phi_{j}(\theta)$:
\beql{}{(-\Gamma\spr m.)^\l|_F=s_{\l-1}(e_j,e_{j+1})
(-\Gamma\spr m.)-e_je_{j+1}s_{\l-2}(e_j,e_{j+1}).
}
In particular, for any twist $\alpha.$,
 $(-\Gamma\spr m.)^\l .F[\alpha.]$ is a linear combination of twisted node scrolls and twisted node sections.
\end{cor}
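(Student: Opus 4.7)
The plan is to reduce the computation to the standard intersection theory of a rank-$2$ projective bundle, then identify the resulting pieces as tautological classes. By Proposition \ref{pol-scroll-unordered}, $F = F^\phi_j(\theta)$ is polarized-isomorphic to $\P(E^\phi_j(\theta) \oplus E^\phi_{j+1}(\theta))$ in such a way that the restriction $(-\Gamma\spr m.)|_F$ coincides with the tautological $\O(1)$ of the bundle. So, writing $h = (-\Gamma\spr m.)|_F$ and $e_i = c_1(E^\phi_i(\theta))$ pulled back to $F$, the Grothendieck relation for a rank-$2$ projective bundle (quotient convention) gives
\[
h^2 - (e_j + e_{j+1}) h + e_j e_{j+1} = 0,
\]
i.e.\ $h^2 = (e_j+e_{j+1}) h - e_j e_{j+1}$, which is the asserted formula in the base case $\ell=2$.

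Next I would establish the general formula by induction on $\ell$. Assuming $h^\ell = s_{\ell-1}(e_j, e_{j+1}) h - e_j e_{j+1} s_{\ell-2}(e_j, e_{j+1})$, multiplying by $h$ and substituting for $h^2$ yields
\[
h^{\ell+1} = \bigl((e_j+e_{j+1}) s_{\ell-1} - e_j e_{j+1} s_{\ell-2}\bigr) h - e_j e_{j+1} s_{\ell-1}(e_j, e_{j+1}),
\]
so the induction closes provided one checks the elementary polynomial identity $(a+b)s_{\ell-1}(a,b) - ab\cdot s_{\ell-2}(a,b) = s_\ell(a,b)$, which is immediate from expanding $(a+b)(a^{\ell-1}+\cdots+b^{\ell-1})$ and recognizing the cross terms as $ab\cdot s_{\ell-2}(a,b)$.

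For the final ``in particular'' assertion I would then unwind the bundles $E^\phi_j(\theta)$. By \eqref{Ebundles} (resp.\ \eqref{Ebundles-nonsep}) each $e_i$ is a linear combination, on the base $(X^\theta_T)\sbr m-n.$, of the base-discriminant $\Gamma\sbr k.$, boundary classes $\theta_x, \theta_y$ twisted into the appropriate diagonal locus, and the pullback $\psi$ classes from $T$. In particular the $e_i$ are pullbacks of base classes under the $\P^1$-bundle projection, so any monomial $e_j^a e_{j+1}^b$ pulled back to $F$ and capped against a twist $F[\alpha]$ produces precisely a twisted node scroll (by the definition of $F^\phi_j(\theta)[\cdot]$ as a flat pullback from its base). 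Then the displayed formula gives
\[
(-\Gamma\spr m.)^\ell . F[\alpha] = \bigl(s_{\ell-1}(e_j, e_{j+1}) . F[\alpha]\bigr) . (-\Gamma\spr m.) \; - \; e_j e_{j+1} s_{\ell-2}(e_j, e_{j+1}) . F[\alpha],
\]
where the first summand is a combination of twisted node sections (being $-\Gamma\spr m.$ times a twisted node scroll) and the second a combination of twisted node scrolls, completing the claim. The main thing to double-check is purely bookkeeping—that the $e_i$ really are base classes under the projection $F \to (X^\theta_T)\sbr m-n.$—but this is direct from the construction in Proposition \ref{pol-scroll-unordered}, so no real obstacle remains beyond the identity $(a+b)s_{\ell-1}-ab\, s_{\ell-2}=s_\ell$.
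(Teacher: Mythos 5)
Your proof is correct and follows essentially the same route as the paper: the paper's own argument simply invokes the standard self-intersection formula for the $\O(1)$ of a projectivized decomposable rank-$2$ bundle, proved by induction from Grothendieck's relation, which is exactly the induction you carry out (and your verification that the $e_i$ are base classes, so the terms are twisted node scrolls and node sections, matches the intended reading of Proposition \ref{pol-scroll-unordered} and \eqref{Ebundles}).
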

\begin{proof}
It follows from a standard, analogous formula for the self-intersection of the polarization on the projectivization
of an arbitrary decomposable bundle, which can be easily proved
by induction, starting from Grothendieck's formula for the
case $k=2$.
\end{proof}
\begin{rem}
Note that in our case, $e_j-e_{j+1}=\theta_x-\theta_y$, so the latter formula
can be rewritten as \beql{}{(-\Gamma\spr m.)^\l|_F=
((e_j^\l-e_{j+1}^\l)(-\Gamma\spr m.)-e_je_{j+1}
(e_j^{\l-1}-e_{j+1}^{\l-1}))/([\theta_x-\theta_y]).} The dividing by $[\theta_x-\theta_y]$
must be done with care, i.e. one must first expand the numerator in
some ring where $[\theta_x-\theta_y]$ is not a zero divisor, then do the
dividing, and only then impose the remaining relations defining the
Chow ring of $F$.
\end{rem}
\begin{rem}
It is worth noting that intersection products involving
the sections
$\theta_x, \theta_y$ are 'elementary' in view of the fact
that \eqspl{}{
\theta_x^j=(-\omega)^{j-1}.\theta_x, j\geq 1.
} Also, at least on the cartesian product $(X^\theta_T)^k$,
$p_i^*(\theta_x)$ is geometrically a copy of
$(X^\theta_T)^{k-1}$ embedded via inserting $\theta_x$ at the
$i$-th coordinate, and similarly for products $p_i^*(\theta_x)p_j^*(\theta_x), i\neq j$ etc. Also, $\theta_x\theta_y=0$, as the sections are disjoint.
On the other hand intersections on $T$ involving the
$\psi$ classes ultimately reduce to pure psi products,
which are the subject of the Witten conjecture, as proven
by Kontsevich \cite{Kon}.\par
Note that in the 'extreme' case $m=n$, the $E^\phi_j(\theta)$
and the node scroll $F^\phi_j(\theta)$ live on the base itself
$T$ of the boundary datum and we have
\eqspl{E-extreme}{
E^\phi_j(\theta)=\binom{m-j+1}{2}\psi_x+
\binom{j}{2}\psi_y.
}
\end{rem}
\begin{example}
For $m=n=2, F=F^2_1(\theta)$, we have
\eqspl{Gamma^k-on-F}{
(-\Gamma\spr 2.)^k|_F=(\psi_x^{k-1}+\psi_x^{k-2}\psi_y+
...+\psi_y^{k-1})(-\Gamma\spr 2.)-\psi_x\psi_y
(\psi_x^{k-2}+\psi_x^{k-3}\psi_y+
...+\psi_y^{k-2})
.} In particular, for $k=\dim(B)=\dim(F)=1+\dim(T)$, we have
\eqspl{}{
(-\Gamma\spr 2.)^k.F=\int\limits_T(\psi_x^{k-1}+\psi_x^{k-2}\psi_y+
...+\psi_y^{k-1}).
} Note that if $B=\overline{\mathcal M}_g$ and
$T=\overline{\mathcal M}_{i,1}\times\overline{\mathcal M}_{g-i,1}$,
$1\leq i\leq g/2$ (the usual $i$-th boundary component),
the latter integral reduces to
\[\int\limits_{\overline{\mathcal M}_i}\psi_x^{3i-3}
\int\limits_{\overline{\mathcal M}_{g-i}}\psi_y^{3(g-i)-3}
\]
\end{example}
 Note that \eqref{Gamma^k-on-F} and \eqref{Gamma2^k} together imply
\begin{cor}\label{polpowers,m=2}(i) The powers of the polarization on $X\sbr 2._B$ are
\eqspl{Gamma2-kpower}{
(-\Gamma\spr 2.)^k=-\Gamma[\omega^{k-1}]&+\\
\half\sum\limits_s\delta_{s*}( (\psi_x^{k-3}+\psi_x^{k-4}\psi_y+
...+\psi_y^{k-3})(-\Gamma\spr 2.)&-\psi_x\psi_y
(\psi_x^{k-4}+\psi_x^{k-5}\psi_y+
...+\psi_y^{k-4}))
} (ii) The image of the latter class on the symmetric product $X\spr 2._B$
equals
\eqspl{Gamma2-kpower-on-sym}{
-\Gamma[\omega^{k-1}]+
\half\sum\limits_s\delta_{s*}( (\psi_x^{k-3}+\psi_x^{k-4}\psi_y+
...+\psi_y^{k-3})
}
\end{cor}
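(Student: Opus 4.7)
\textbf{Proof plan for Corollary \ref{polpowers,m=2}.}

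The strategy is to combine the two already-established formulas \eqref{Gamma2^k} and \eqref{Gamma^k-on-F}, and then to push the answer forward under the cycle map $\frak c_2: X\sbr 2._B \to X\spr 2._B$.

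For part (i), first rewrite the recursion \eqref{Gamma2^k} as
\[
(\Gamma\spr 2.)^k = (-1)^{k-1}\Gamma\spr 2.[\omega^{k-1}] + \tfrac12\sum_s\frac{1}{\deg(\delta_s)}\delta^2_{s,1*}\bigl((\Gamma\spr 2.)^{k-2}\cdot F^{(2:0|0)}_1(\theta_s)\bigr),
\]
using $\Gamma\spr 2.[(-\omega)^{k-1}]=(-1)^{k-1}\Gamma\spr 2.[\omega^{k-1}]$. Multiply through by $(-1)^k$: the $\omega$-term acquires total sign $(-1)^k(-1)^{k-1}=-1$, while in the node-scroll term one has $(-1)^k(\Gamma\spr 2.)^{k-2}=(-\Gamma\spr 2.)^{k-2}$ since $(-1)^k(-1)^{k-2}=+1$. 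This leaves
\[
(-\Gamma\spr 2.)^k = -\Gamma\spr 2.[\omega^{k-1}] + \tfrac12\sum_s\frac{1}{\deg(\delta_s)}\delta^2_{s,1*}\bigl((-\Gamma\spr 2.)^{k-2}\bigr|_{F}\bigr),
\]
with $F=F^{(2:0|0)}_1(\theta_s)$. Now substitute \eqref{Gamma^k-on-F}, but with $k-2$ in place of $k$, to rewrite $(-\Gamma\spr 2.)^{k-2}|_F$ as $(\psi_x^{k-3}+\psi_x^{k-4}\psi_y+\cdots+\psi_y^{k-3})(-\Gamma\spr 2.)-\psi_x\psi_y(\psi_x^{k-4}+\cdots+\psi_y^{k-4})$. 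This gives exactly the asserted formula for $(-\Gamma\spr 2.)^k$.

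For part (ii), apply $\frak c_{2*}$ to the formula of (i). For each singular value $s$, the composition $\frak c_2\circ\delta^2_{s,1}\colon F\to X\spr 2._B$ factors as $\iota_s\circ\pi_s$, where $\pi_s\colon F\to T_s$ is the $\P^1$-bundle projection and $\iota_s\colon T_s\to X\spr 2._B$ is the embedding $t\mapsto 2\theta_s(t)$. By Proposition \ref{pol-scroll-unordered}, the restriction $-\Gamma\spr 2.|_F$ is the $\O(1)$-polarization of this $\P^1$-bundle, so $\pi_{s*}(-\Gamma\spr 2.|_F)=1$ and $\pi_{s*}(1)=0$. The classes $\psi_x,\psi_y$ are pulled back from $T_s$, so by the projection formula the middle summand in (i) pushes forward to $\tfrac12\sum_s\frac{1}{\deg(\delta_s)}\iota_{s*}(\psi_x^{k-3}+\cdots+\psi_y^{k-3})$, while the third summand (no $-\Gamma\spr 2.$ factor) pushes forward to zero since $\pi_{s*}$ is integration along a positive-dimensional fiber. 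Finally the summand $-\Gamma\spr 2.[\omega^{k-1}]$, being by convention pulled back from the corresponding base class on the symmetric product, pushes forward to the same-named class on $X\spr 2._B$.

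The only delicate points are bookkeeping of signs in the substitution of (i) — which I handle by working uniformly in $(-\Gamma\spr 2.)$ rather than $\Gamma\spr 2.$ — and tracking that the base-class notation $-\Gamma[\omega^{k-1}]$ means a class pulled back from (and pushed forward to) $X\spr 2._B$ so that it indeed reappears unchanged under $\frak c_{2*}$; neither constitutes a real obstacle.
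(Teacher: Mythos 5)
Your proof is correct and follows essentially the same route as the paper: part (i) is precisely the combination of \eqref{Gamma2^k} with \eqref{Gamma^k-on-F} (applied with exponent $k-2$, working uniformly in $-\Gamma\spr 2.$) that the paper invokes, and part (ii) reproduces the paper's observation that under the cycle map the pure node-scroll term collapses while the node-section term pushes down, via $\pi_{s*}(-\Gamma\spr 2.|_F)=1$ and the projection formula, to $\delta_{s*}$ of the psi-sum. The only cosmetic point is that the map $T_s\to X\spr 2._B$, $t\mapsto 2\theta_s(t)$, need not be an embedding (it can have degree $\deg(\delta_s)$ onto its image), but since you only use its pushforward this changes nothing.
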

\begin{proof}
\eqref{Gamma2-kpower} has been proved above; \eqref{Gamma2-kpower-on-sym}
follows because in the last summation in \eqref{Gamma2-kpower},  the terms
without $\Gamma\spr 2.$, i.e. the twisted node scroll,
collapses under the cycle map to $X\spr 2._B$.
\end{proof}

\begin{example}\label{deg-pol}$m=3, n=2, \dim(B)=1$:
\beql{}{(-\Gamma\spr 3.)^2.F_{1}^{(2:1|0)}(\delta)=-3
} (see Example \ref{deg-pol-ord}).
Consequently, in view of Corollary \ref{cube}, we conclude
\eqspl{fourth}{ \int\limits_{X\sbr 4._B}(\Gamma\spr 3.)^4=13\omega^2-9\sigma
} (recall that each $F^{(3,0|0)}_i, i=1,2$ is a line
with respect to the discriminant polarization $-\Gamma\spr 3.$.
\end{example}
\subsection{Tautological module}
We are now in position to give the formal definition of the tautological module $T^m$ and the proof of Theorem \ref{taut-module}.
\begin{defn}\label{taut-mod-def}
Given a cohomology theory $A^.$ and a $\Q$-subalgebra
$R\subset A^.(X)_\Q$
containing the canonical class $\omega$,
the tautological module $T^m_R$ is the $R$-submodule
of $A^.(X\sbr m._B)$ generated by the twisted diagonal classes $\Gamma\subp{n.^{\mu.}}[\alpha.]$ and the direct
images on $X\sbr m._B$ the twisted node scroll classes $F_{j}^\phi(\theta)[\alpha.]$ and the twisted node scroll sections $-\Gamma\spr m..F_{j}^\phi(\theta)[\alpha]$
as $(T,\delta, \theta)$ ranges over a fixed covering system
of boundary data for the family $X/B$. For the default
choice $R=\Q[\omega]$, we denote $T^m_R$ by $T^m$.
\end{defn}
\begin{proof}[Proof of Theorem \ref{taut-module}].
We wish to compute the product of a tautological class $c$ by
$\Gamma\spr m.$. If $c$ is a (twisted) diagonal class
$\Gamma\subp{n.^{\mu.}}[\alpha.]$, this is clear from
Proposition \ref{disc.diag}. If $c$ is a twisted
node scroll class
$F_{s,j}^\phi$, it is obvious. Finally if $c$ is
a node scroll section  $-\Gamma\spr m..F_{s,j}^\phi$
it is clear from the case $k=2$ of Corollary \ref{disc^k.scroll}.
\end{proof}
\begin{rem}
In the important special case of computing a power
$(\Gamma\spr m.)^k$ it is probably more efficient not to
proceed by simple recursion, but rather to apply
just Proposition \ref{disc.diag} repeatedly to
express $(\Gamma\spr m.)^k$ in terms of twisted diagonals
plus classes $(\Gamma\spr m.)^t.F$ for various $t$'s and
various $F$'s; then each of the latter classes can be computed
at once using Corollary \ref{disc^k.scroll}.
\end{rem}
\newsection{Tautological transfer and Chern numbers}
In this chapter we will complete the development of our intersection calculus. First we study the \emph{transfer} operation $\tau_m$, taking cycles on $X\sbr m-1._B$ to cycles
(of dimension 1 larger) on $X\sbr m._B$, via the flaglet Hilbert
scheme $X\sbr\mm._B$. In the Transfer Theorem \ref{taut-tfr} we will show
in fact that for any tautological class $u$ on $X\sbr m-1._B$,
the image $\tau_m(u)$ is a simple linear combination of
basic tautological classes on $X\sbr m._B$. We then review a
splitting principle established in \cite{R}, which expresses the Chern classes of the tautological bundle $\Lambda_m(E)$,
pulled back on $X\sbr \mm._B$, in terms of
those of $\Lambda_{m-1}(E)$, the discriminant polarization
$\Gamma\spr m.$, and base classes. Putting this result together with the Module Theorem and the Transfer Theorem yields the calculus for arbitrary polynomials in the Chern classes of $\Lambda_m(E).$
\newsubsection{Flaglet geometry and the transfer theorem}
In this section we study the $(m,m-1)$ flag (or 'flaglet') Hilbert
scheme, which we view as a correspondence between the Hilbert
schemes for lengths $m$ and $m-1$ providing a way of
transporting cycles, especially tautological ones, between these Hilbert
schemes. We will make strong use of the results of \cite{Hilb}.\par
Thus let
$$X\sbr m,m-1._B\subset X\sbr m.\times _B X\sbr m-1.$$
denote the flag Hilbert scheme, parametrizing
pairs of schemes $(z_1, z_2)$ satisfying
$z_1\supset z_2$. This comes equipped with a (flag) cycle
map
$$c_{m,m-1}:X\sbr m,m-1._B\to X\spr m,m-1._B,$$
where $X\spr m,m-1._B\subset X\spr m._B\times_B X\spr m-1._B$ is the
subvariety parametrizing cycle pairs $(c_{m}\geq c_{m-1}).$ Note
that the \emph{normalization} of $X\spr \mm._B$ may be identified with
$X\spr m-1._B\times _B X$; however the normalization map, though bijective,
is not an isomorphism. Note also that we also have an ordered
version $X\scl m,m-1._B$, with its own cycle map
$$oc_{m,m-1}:X\scl m,m-1._B\to X^m_B.$$
In addition to the obvious projections
\beql{}{\begin{matrix}&&X\sbr\mm._B&&\\
&p_m\swarrow&&\searrow p_{m-1}\\
X\sbr m._B&&&&X\sbr m-1._B
\end{matrix}
}
with respective generic fibres $m$ distinct points (corresponding to removing a point from a
given $m$-tuple)
and a generic fibre of $X/B$ (corresponding to adding a point
to a given $m-1$-tuple), $X\sbr \mm._B$ admits
a natural map
\beql{}{a:X\sbr\mm._B\to X,}
$$(z_1\supset z_2)\mapsto \mathrm{ann}(z_1/z_2)$$
(identifying $X$ with the Hilbert scheme of colength-1 ideals).
Therefore $X\sbr \mm._B$ admits a 'refined cycle map'
(factoring the flag cycle map) \beql{}{c:X\sbr \mm._B\to X\times_B X\spr m-1._B
}
$$c=a\times (c_{m-1}\circ p_{m-1}).$$
\par
Now in \cite{Hilb} (Theorem 5 et seq., especially
Construction 5.4 p.442) we worked out a
complete model for $X\sbr m,m-1._B$, locally over
$X\spr\mm._B$. Let
\begin{eqnarray}H_m&\subset& X\spr m._B\times\tilde C^m
_{[u.,v.]}
\subset X\spr m._B\times\P^{m-1}_{Z.},\\
H_{m-1}&\subset& X\spr m-1._B\times \tilde C^{m-1}
_{[u'.,v'.]}
\subset X\spr m-1._B\times\P^{m-2}_{Z'.}
\end{eqnarray}
be respective local models for $X\sbr m._B, X\sbr m-1._B$
as constructed in \S1 above, with coordinates
as indicated. Consider  the subscheme
\beql{}{H_{\mm}\subset H_m\times_BH_{m-1}
\times_{X\spr m._B\times X\spr m-1._B}X\spr \mm._B
}
defined by the equations
\beql{}{u_i'v_i=(\sigma^x_1-\sigma^{'x}_1)u_iv'_i,\ \
v'_iu_{i+1}=(\sigma^y_1-\sigma^{'y}_1)v_{i+1}u'_i,\ \
 1\leq i\leq m-2
}
or alternatively, in terms of the $Z$ coordinates,
\begin{eqnarray}\label{ZZ'} Z_iZ'_j&=&(\sigma^x_1-\sigma^{'x}_1)
Z_{i+1}Z'_{j-1},\ \  i+1\leq j\leq m-1\\ \nonumber
   &=& (\sigma^y_1-\sigma^{'y}_1)Z_{i-1}Z'_{j+1},l
\ \ 1\leq j\leq i-2.
\end{eqnarray}
To 'explain' these relations in part, note that
in the ordered model over $X^m_B$, we have
$$\sigma^x_1-\sigma^{'x}_1=x_m, \sigma^y_1-\sigma^{'y}_1
=y_m$$
and then the analogue of \refp{ZZ'}. for the $G$ functions
is immediate from \refp{Gdef2}..  Then the result of
\cite{Hilb}, Thm. 5, is that $H_{\mm}$, with its map to
$X\spr \mm._B$ is isomorphic to a neighborhood of the
special fibre over $(mp, (m-1)p)$ of the flag Hilbert
scheme $X\sbr \mm._B$. In fact the result of \cite{Hilb}
is even more precise and identifies $H_{\mm}$ with a subscheme
of $H_m\times_B H_{m-1}$ and even of $H_{m-1}\times_B
\tilde C^m\times_B X$, where the map to $X$ is the annihilator
map $a$ above.\par
As noted in \cite{Hilb}, Thm 5, the special fibre of
the flag cycle map on $H_{\mm}$,
aka the punctual flag Hilbert scheme, is a normal-crossing
chain of $\P^1$'s:
\beql{C-mm-1}{C^{\mm}=\td C^m_1\cup \td C^{m-1}_1\cup \td C^m_2\cup...
\cup \td C^{m-1}_{m-2}\cup \td C^m_{m-1}\subset
 C^m\times C^{m-1}.
}
where the embedding is via
$$\td C^m_i\to C^m_i\times \{Q^{m-1}_i\},\ \
 \td C^{m-1}_i\to \{Q^m_{i+1}\}
\times C^{m-1}_i$$
and in particular,
\beql{}{\td C^m_i\cap \td C^{m-1}_i=\{(Q^m_{i+1}, Q^{m-1}_i)\},
\td C^{m-1}_i\cap \td C^m_{i+1}=\{(Q^m_{i+1}, Q^{m-1}_{i+1})\}
}
where $Q^m_i=(x^{m-i+1}, y^i)$ as usual.
\begin{thm}\label{flag-blowup}
The cycle map $c_{\mm}$ exhibits the flag Hilbert scheme
$X\sbr\mm._B$ as the blow-up of the sheaf of ideals
$\I_{D^{\mm}}:=\I_{D^{m-1}}.\I_{D^m}$ on $X\spr\mm._B$.
\end{thm}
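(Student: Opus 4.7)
The plan is to combine Theorem \ref{blowup} at levels $m$ and $m-1$ with the explicit local model for the flag Hilbert scheme from \cite{Hilb}. By the universal property of blowing up, it suffices to exhibit a natural morphism $\phi: X\sbr\mm._B \to \Bl_{\I_{D^{\mm}}}(X\spr\mm._B)$ that is proper, birational, and a closed immersion locally. For the existence of $\phi$: by Theorem \ref{blowup}, the pullback of $\I_{D^m}$ from $X\spr m._B$ along $p_m\circ c_m$ is invertible, canonically isomorphic to $p_m^*\O(-\Gamma\spr m.)$; similarly for $\I_{D^{m-1}}$ via $p_{m-1}$. The product of these invertible ideals is invertible, so by the universal property $\phi$ exists and is automatically proper and birational.

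To show $\phi$ is an isomorphism, we argue locally over $X\spr\mm._B$ near the maximally singular point $(mp,(m-1)p)$ with $p$ a relative node, and pass to the ordered setting via $\omega_m\times\omega_{m-1}$. Then we are working with the local model $H_{\mm}\subset H_m\times_B H_{m-1}$ cut out by \eqref{ZZ'}, which by \cite{Hilb}, Thm.\ 5 is (the ordered analogue of) the flag Hilbert scheme locally. Corollary \ref{Gs-generate-bigdiag}, applied separately at levels $m$ and $m-1$, supplies explicit generators: the mixed Van der Monde determinants $G_1,\ldots,G_m$ generate $\I_{OD^m}$, and analogous determinants $G'_1,\ldots,G'_{m-1}$ generate $\I_{OD^{m-1}}$. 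The products $G_iG'_j$ therefore generate $\I_{OD^{\mm}}$. Under the isomorphisms $\gamma$ and $\gamma'$ constructed in the proof of Theorem \ref{blowup} at each level, the $G_i$ correspond to the homogeneous coordinates $Z_i$ on $\P^{m-1}$ and the $G'_j$ to $Z'_j$ on $\P^{m-2}$.

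A direct calculation, using \eqref{Gdef2} at both levels together with the identities $\sigma^x_1-\sigma^{'x}_1=x_m$ and $\sigma^y_1-\sigma^{'y}_1=y_m$ valid in the ordered coordinates, shows that the quadratic syzygies among the $G_iG'_j$ are precisely the relations \eqref{ZZ'}. Combining this with the quadratic relations \eqref{Z-rel-quadratic} for each factor, $H_{\mm}$ embedded in $\P^{m-1}_Z\times\P^{m-2}_{Z'}\times\A^{2m}\times B$ is cut out by exactly the same equations that define $\Bl_{\I_{OD^{\mm}}}$ in its natural Proj description. The ordered lift of $\phi$ is therefore proper, birational, and factors through a closed immersion into this ambient bundle, hence is an isomorphism; descending by $\frak S_m\times \frak S_{m-1}$-invariance yields the statement of the theorem.

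The principal obstacle is to verify that the relations \eqref{ZZ'} really generate the \emph{full} ideal of syzygies of the product ideal in the Rees algebra, so that no hidden relations appear along the exceptional locus. This reduces to a local computation on each affine chart $\tilde U_i\cap \tilde U'_j$ of $H_{\mm}$, where the cobasis description of the universal ideal on each factor from Theorem \ref{Hilb-local}(v), extended to the flag case in \cite{Hilb}, Thm.\ 5, allows one to compare the coordinate rings on both sides directly. A subsidiary worry is the non-normality of $X\spr\mm._B$ noted in the excerpt, but this is immaterial since the blow-up construction is local and insensitive to normalization, and the local computation above takes place over affine opens where the scheme structure is explicit.
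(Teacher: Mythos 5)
Your proposal is correct and follows essentially the same route as the paper's (sketched) proof: reduce to the ordered cycle map, use the local model $H_{\mm}$ of \cite{Hilb} with its equations \eqref{ZZ'}, observe that the products $G_iG'_j$ generate the product ideal and satisfy those same relations, and finish with a chart-by-chart local verification. The only cosmetic difference is the direction of the comparison map: you produce $X\sbr\mm._B\to\Bl_{\I_{D^{\mm}}}(X\spr\mm._B)$ from the universal property of blowing up, whereas the paper embeds $\Bl_{\I_{OD^{\mm}}}X^m_B$ into $OH_m\times_{X^m_B}(OH_{m-1}\times_BX)$ via the general closed immersion $\Bl_{\I_1\I_2}Y\hookrightarrow\Bl_{\I_1}Y\times_Y\Bl_{\I_2}Y$ and notes its image lies in $OH_{\mm}$, with both versions deferring the same final local isomorphism check.
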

We shall not really need this result, just the explicit
constructions above, so we just sketch the proof, which
is analogous to that of Theorem \ref{blowup}. To begin
with, it is again sufficient to prove the
ordered analogue of this result, for the 'ordered flag
cycle map'
$$X\scl\mm._B\to X^m_B.$$
Here $X\scl\mm._B$ is embedded as a subscheme of
$X\scl m._B\times_{X^m_B}(X\scl m-1._B\times _B X)$,
and we have already observed that as such, it satisfies
the equations \refp{ZZ'}..\par
Now we will use the following construction. Let $\I_1, \I_2$
be ideals on a scheme $Y$. Then the surjection of
graded algebras
$$(\bigoplus\limits_n \I_1^n)\otimes
(\bigoplus\limits_n \I_2^n)\to
\bigoplus\limits_n (\I_1\I_2)^n$$ yields a closed immersion
\beql{}{\Bl_{\I_1\I_2}Y\hookrightarrow
\Bl_{\I_1}Y\times_Y \Bl_{\I_2}Y;
}
the latter is in turn a subscheme of the Segre subscheme
\beql{}{\P(\I_1)\times_Y \P(\I_2)\subset\P(\I_1\otimes\I_2)
.} In our case, Theorem \ref{blowup} allows us to
identify
$$OH_m\simeq \Bl_{\I_{OD^m}}X^m_B,\ OH_{m-1}\times_B X
\simeq\Bl_{\I_{OD^{m-1}.X^m_B}}X^m_B$$
(where $OH_m=H_m\times_{X\spr m._B}X^m_B$ etc.),
whence an embedding
\beql{}{\Bl_{\I_{OD^{\mm}}}X^m_B
\to OH_m\times_{X^m_B} (OH_{m-1}\times_BX)
}
As observed above, the generators $G_i\cdot G_j'$ satisfy the
analogues of the relations \refp{ZZ'}., so
the image is actually contained in $OH_{\mm}$,
so we have an embedding
\beql{}{\Bl_{\I_{OD^{\mm}}}X^m_B
\to OH_{\mm}.
}
We are claiming that this is an isomorphism. This
can be verified locally, as in the proof of Theorem
\ref{blowup}.
\qed\par
One consequence of the explicit local model for $X\sbr\mm._B$
is the following
\begin{cor}\label{q-flat}
\begin{enumerate}
\item The projection $q_{m-1}$ is flat, with 1-dimensional fibres;\item
Let $z\in X\sbr m-1._B$ be a subscheme of a fibre $X_s$, and let $z_0$ be the part of $z$ supported on nodes of $X_s$, if any. Then
 if $z_0$ is principal (i.e. Cartier)  on $X_s$, the
fibre $q_{m-1}\inv(z)$ is birational to $X_s$ and its general
members are equal to $z_0$ locally at the nodes.
\end{enumerate}
\end{cor}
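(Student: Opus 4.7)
The plan is to prove (i) via miracle flatness once I identify the fibres intrinsically, and to deduce (ii) essentially for free from a clean description of those fibres as projectivizations of ideal sheaves.

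First I would give an intrinsic description of a fibre. Fix $z \in X\sbr m-1._B$ lying in a fibre $X_s$, with ideal sheaf $I_z \subset \O_{X_s}$. A point of $q_{m-1}\inv(z)$ is a scheme $z' \supset z$ of length $\ell(z)+1$, equivalently an ideal $I_{z'} \subset I_z$ with $I_z/I_{z'}$ of length one. Such an inclusion amounts to the choice of a point $p \in X_s$ together with a surjection $I_z \otimes k(p) \twoheadrightarrow k(p)$, i.e.\ a point of $\mathbb{P}(I_z)$ in the quotient convention. Hence $q_{m-1}\inv(z) \cong \mathbb{P}(I_z)$, which is projective of pure dimension $1$ since $I_z$ is generically of rank $1$ on the curve $X_s$; its structure map to $X_s$ is an isomorphism precisely where $I_z$ is invertible, and acquires a $\mathbb{P}^1$ fibre at each node $p$ where the stalk $I_{z,p}$ fails to be principal.

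For (i), with the fibre analysis in hand, I would invoke miracle flatness (Matsumura, CRT Thm.~23.1): the flag Hilbert scheme $X\sbr\mm._B$ is smooth via the local model $H_{\mm}$ of \cite{Hilb}, the target $X\sbr m-1._B$ is smooth as established earlier in this paper, and the fibres are of pure dimension $1$ by the previous paragraph; together these three ingredients force $q_{m-1}$ to be flat. As a sanity check on fibre dimension at the worst points, the local model exhibits the punctual fibre over a maximally singular cycle as the normal-crossing chain of $\mathbb{P}^1$'s $C^{\mm}$ of \eqref{C-mm-1}, which is visibly of pure dimension~$1$.

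For (ii), note that the hypothesis ``$z_0$ principal on $X_s$'' is exactly the condition that $z$ is a Cartier divisor on $X_s$: at smooth points of the support this is automatic, and at each nodal point it is precisely the principality assumption. Under this hypothesis $I_z$ is invertible, so $\mathbb{P}(I_z) \to X_s$ is an isomorphism and $q_{m-1}\inv(z) \cong X_s$, which is a fortiori birational to $X_s$. A general point of this fibre corresponds to a smooth point $p$ of $X_s$ avoiding the (finite) support of $z$, and the associated $z' = z + p$ then agrees with $z_0$ in a neighborhood of each node, giving the last assertion. The main nontrivial input is the smoothness of the flag Hilbert scheme imported from \cite{Hilb}; without it, miracle flatness would have to be replaced by a direct but more laborious verification on the explicit equations \eqref{ZZ'} defining $H_{\mm}$.
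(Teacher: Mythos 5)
Your part (ii) is essentially correct and takes a genuinely different route from the paper's. The paper fixes the nodal part $z_0=I^n_i(a)$ and uses the explicit classification of punctual ideals at a node from \cite{Hilb} to see that any $z'\supset z$ has nodal part of length $n$ (hence equal to $z_0$) or $n+1$, in which case it is forced to be $Q^{n+1}_{i+1}$ and hence unique, so non-generic; you instead observe that principality of $z_0$ makes $I_z$ invertible, so the fibre is in bijection with $X_s$ via the residual point, which gives birationality and the statement about general members at once. That is a clean alternative, with one caveat: you only ever need the \emph{pointwise} description of the fibre (colength-one subideals of $I_z$, a single one over each point where $I_z$ is locally principal, a $\P^1$ over a node where it is not). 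The stronger scheme-theoretic identification $q_{m-1}\inv(z)\cong\P(I_z)$ (implicitly, $X\sbr\mm._B\simeq\P(\I_{m-1})$ over $X\sbr m-1._B\times_BX$) is itself a nontrivial assertion which you do not justify -- and fortunately do not need, either for (ii) or for the fibre-dimension count.

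The weak point is (i). Miracle flatness requires the source to be Cohen--Macaulay, and you secure this by asserting that $X\sbr\mm._B$ is smooth ``via the local model $H_{\mm}$ of \cite{Hilb}.'' That smoothness is nowhere stated in this paper, and it is not among the facts the paper actually imports from \cite{Hilb} (namely the identification of the local model and the description of the punctual flag as a chain of $\P^1$'s); as presented here, $H_{\mm}$ is cut out by the relations \eqref{ZZ'} inside $H_m\times_BH_{m-1}\times_{X\spr m._B\times X\spr m-1._B}X\spr\mm._B$, whose last factor is non-normal, so smoothness is not visible from the model without a computation. The claim is in fact checkable -- e.g. at the deepest flag points $(Q^m_{i+1},Q^{m-1}_i)$ one can write the incidence conditions ``$F_i\in I_{z_{m-1}}$'' in the charts of $H_m$ and $H_{m-1}$, eliminate variables using their linear parts, and find that the remaining equations become identities, exhibiting $H_{\mm}$ locally as a graph, hence smooth -- but that verification (or an exact citation to where \cite{Hilb} or \cite{R} establishes smoothness, respectively flatness) is precisely the content of part (i). The paper itself sidesteps this by quoting the flatness statement from \cite{R}, which makes miracle flatness unnecessary. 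So either supply the chart computation you allude to at the end, or fall back on the citation the paper uses; as written, the load-bearing input of your argument is asserted rather than proved.
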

\begin{proof}
(i) is proven in \cite{R}, and also follows easily from our
explicit model $H_{\mm}$. As for (ii), we may suppose, in the
notation of \cite{Hilb}, that $z_0$ is of type $I^n_i(a)$. Now if $z'\in q_{m-1}\inv(z)$, then the part $z'_0$ of $z'$ supported
on nodes must have length $n$ or $n+1$. In the former case
$z'_0=z_0$, while in the latter case
$z'_0$ must equal $Q^{n+1}_{i+1}$ by
 \cite{Hilb}, Thm. 5 p. 438, in which case $z'$ is unique, hence
 not general.
\end{proof}
Next we define the fundamental {transfer} operation. Essentially, this takes cycles from $X\sbr m-1._B$ to $X\sbr m._B$, but we  also allow the additional flexibility of
 twisting by base classes
via the $m$-th factor. Thus the \emph{twisted transfer map}
$\tau_m$ is defined by
\beql{}{\tau_m:A.(X\sbr m-1._B)\otimes A.(X)\to A_{.}(X\sbr m._B)_\Q,
}
$$\tau_m=q_{m*}(q_{m-1}^*\otimes a^*).$$
Note that this operation raises dimension by 1 and preserves
codimension. Suggestively, and a little abusively, we will
write a typical decomposable element of the source of $\tau_m$
as $\gamma\beta\subp{m}$ where $\gamma\in A.(X\sbr m-1._B), \beta\in A.(X)$.
The following result which computes $\tau_m$ is a key to our inductive computation of Chern numbers.
\begin{thm}
\label{taut-tfr} ({\bf{Tautological transfer}}) $\tau_m$ takes
tautological classes on $X\sbr m-1._B$ to tautological classes on
$X\sbr m._B$. More specifically we have, for any class $\beta\in A.(X)$:
\begin{enumerate}\item
for any twisted diagonal class $\gamma=\Gamma\subp{n.}[\alpha.\spr {\lambda.}.]$,
\beql{}{
\tau_m(\gamma\beta\subp{m})=\Gamma\subp{(n.)\coprod (1)}
[\alpha.\spr{\lambda.}.\coprod\beta]
;}
\item for any twisted node scroll $F[\alpha]=F_j^{(n:n'.|n".)}(\theta)[\alpha',\alpha"]$,

\eqspl{tfr-scroll}{\tau_m(F[\alpha]\beta\subp{m})&=\\
&F_j^{(n:(n'.)\coprod (1)|n".))}[\alpha'\coprod\beta,\alpha"]
 + F_j^{(n:n'.|(n")\coprod (1))}[\alpha',\alpha"\coprod\beta]
}
\item for any twisted node section $\Gamma\spr m-1..F[\alpha]$ with
$F[\alpha]$ as above and
\[ n'.=(n_1^{(\mu'_1)},...), n_1>n_2>...,\]
and ditto $(n".)$,
 we have
\eqspl{tfr-section}{\tau_m(\Gamma\spr m-1..F[\alpha]\beta\subp{m})&=\Gamma\spr m..\tau_m(F[\alpha]\beta\subp{m})\\  -
\sum\limits_\l \mu'_\l n'_\l F_j^{(n:(n'.)^{-\l}|n".)}(\theta)
[\delta_{(n'.),\l}^*(\alpha'\times\beta),\alpha"]&-
\sum\limits_\l \mu"_\l n"_\l F_j^{(n:(n'.)|n".^{-\l})}(\theta)
[\alpha',
\delta_{(n".),\l}^*(\alpha"\times\beta)]
\\  &- n\beta F_j^{(n+1:n'.|n".)}(\theta)
[\alpha]-n\beta F_j^{(n+1:n'.|n".)}(\theta)[\alpha]
}
where $\delta_{(n.),\l}$ is as in \eqref{delta-n-ell}
and where $(n'.)^{+\l}$ is the distribution obtained from $(n'.)$
by replacing one block of size $n'_\l$ with one of size $n'_\l+1$.
\end{enumerate}
\end{thm}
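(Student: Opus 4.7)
The plan is to analyze each of the three cases by exploiting the flat projection $q_{m-1}\colon X\sbr \mm._B\to X\sbr m-1._B$ of relative dimension $1$ (Corollary \ref{q-flat}) together with the explicit local model $H_{\mm}$ and the relations \eqref{ZZ'}.  Geometrically, $\tau_m$ sweeps out a class on $X\sbr m-1._B$ by adding a single new point to each $z'$ in its support, where the new point is $a(z/z')\in X$ and is constrained by $\beta$ via the pullback $a^*\beta$.

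For (i), the locus $\Gamma\subp{n.}[\alpha.]$ is generically parametrized by cycles supported on $\sum\mu(k)$ distinct smooth points, away from any node of $X/B$. By Corollary \ref{q-flat}(ii), the fiber $q_{m-1}\inv(z')$ is generically birational to the underlying fibre of $X/B$, so $a(z/z')$ is generically a smooth point disjoint from $\supp(z')$.  Hence the generic member of $\tau_m(\Gamma\subp{n.}[\alpha.]\beta\subp{m})$ has shape $(n.)\coprod (1)$, with the new singleton carrying the twist $\beta$.  As $q_m$ is birational from this generic locus to its image, this yields the formula in (i).  Case (ii) follows by the same argument applied to a cycle of shape $(n:n'.|n".)$ along a boundary datum $(T,\delta,\theta)$: the generic new point lies either on the $x$-branch or on the $y$-branch of the partial normalization $X^\theta_T$, producing two distinct generic components of $q_{m-1}\inv(F_j^{(n:n'.|n".)}[\alpha])$, one adding a singleton to $(n'.)$ and one to $(n".)$, whence \eqref{tfr-scroll}.

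For (iii), the key step is the projection formula: writing
\[
\tau_m(\Gamma\spr m-1..F[\alpha]\beta\subp{m})=q_{m*}\bigl(q_{m-1}^*\Gamma\spr m-1.\cdot q_{m-1}^*F[\alpha]\cdot a^*\beta\bigr),\quad
\Gamma\spr m..\tau_m(F[\alpha]\beta\subp{m})=q_{m*}\bigl(q_{m}^*\Gamma\spr m.\cdot q_{m-1}^*F[\alpha]\cdot a^*\beta\bigr),
\]
the problem reduces to computing the ``correction divisor'' $D:=q_m^*\Gamma\spr m.-q_{m-1}^*\Gamma\spr m-1.$ on $X\sbr \mm._B$ and then pushing $D\cdot q_{m-1}^*F[\alpha]\cdot a^*\beta$ down by $q_m$.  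Since $z=z'+a(z/z')$, the scheme $z$ is nonreduced precisely when either $z'$ is nonreduced or $a(z/z')$ meets $\supp(z')$; the latter condition cuts out an \emph{incidence divisor} $I$ on $X\sbr \mm._B$, and a local computation in $H_{\mm}$ using \eqref{ZZ'} (together with \eqref{C-mm-1}) identifies $D$ with $I$ up to an exceptional contribution coming from the node itself.  Restricted to $q_{m-1}^*F[\alpha]$, the divisor $I$ breaks up according to which existing block the new point hits: an $x$-block of size $n'_\l$ (yielding an $(n'_\l{+}1)$-block and the first sum in \eqref{tfr-section}), a $y$-block (the second sum), or the node $\theta$ itself (the final two node-scroll terms, with the nodebound block growing from $n$ to $n+1$).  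The combinatorial factors $\mu'_\l n'_\l$ arise from the $\mu'_\l$ blocks of that size, each contributing with local multiplicity $n'_\l$ coming from the $n'_\l$ tangent directions at which a free point can collide with an existing $n'_\l$-fold point, and the twisted class $\delta_{(n'.),\l}^*(\alpha'\times\beta)$ records how the $\beta$-twist from $a^*$ is transported onto the newly enlarged block via the diagonal embedding \eqref{delta-n-ell}.

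The main obstacle will be the precise determination of the multiplicities of $I$ (including at boundary strata where $z'$ has length $>1$ at the node) and the verification that the node contributions in $D\setminus I$ collapse to exactly the $-n\beta F_j^{(n+1:n'.|n".)}[\alpha]$ terms.  This should follow from a direct computation in $H_{\mm}$ combined with Proposition \ref{pol-scroll-unordered} (which controls the restriction of $\Gamma\spr m.$ to node scrolls), and can be cross-checked against Proposition \ref{disc.diag} applied to $\tau_m(F[\alpha]\beta\subp{m})$, since both sides of \eqref{tfr-section} must give the same intersection with any test class.
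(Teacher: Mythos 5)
Your handling of (i) and (ii) is essentially the paper's: (i) is immediate, and for (ii) the paper likewise uses flatness of $q_{m-1}$ together with Corollary \ref{q-flat}(ii) to reduce to a general added point of the fibre, which lands on one of the two branches of $X^\theta_T$. Your plan for (iii) is also the paper's plan — write $q_m^*\Gamma\spr m.=q_{m-1}^*\Gamma\spr m-1.+\Delta\spr m.$ and push $\Delta\spr m.\cdot q_{m-1}^*F[\alpha]\cdot a^*\beta$ down by $q_m$ — but it stops exactly where the real work lies. First, a conceptual point: there is no ``exceptional contribution in $D\setminus I$.'' With the half-discriminant convention for $\Gamma\spr m.$, the difference $\Delta\spr m.$ \emph{is} the pullback of the incidence locus from $X\sbr m-1._B\times_BX$ (locally $\mathrm{disc}(f\cdot(x-w))=\mathrm{disc}(f)\,f(w)^2$, and halving removes the factor $2$), and the node terms are simply the nodebound part of this incidence divisor, i.e.\ the components where the annihilator point lands on $\theta\in\supp(z')$. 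The issue is not an extra component but the multiplicity of $\Delta\spr m.$ along those components.

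That multiplicity — the coefficient $n$ in the last two terms of \eqref{tfr-section} — is the only nontrivial content of (iii), and your proposal leaves it as ``should follow from a direct computation in $H_{\mm}$,'' which is not a proof. Proposition \ref{pol-scroll-unordered} does not supply it (it describes $-\Gamma\spr m.$ restricted to node scrolls on $X\sbr m._B$, not the incidence divisor upstairs on $X\sbr\mm._B$), and cross-checking against Proposition \ref{disc.diag} only tests the answer, it does not establish it. The paper's argument reduces to $m=n+1$, where $F=C^{m-1}_j$ is a single $\P^1$; there the support of $\Delta\spr m.$ on $q_{m-1}\inv(C^{m-1}_j)$ is the chain $\td C^m_j\cup\td C^{m-1}_j\cup\td C^m_{j+1}$ of \eqref{C-mm-1}, of which the middle curve collapses under $q_m$ (a component your sketch never mentions), and the multiplicities of $\td C^m_j$ and $\td C^m_{j+1}$ are shown to be exactly $n=m-1$ by an explicit computation in the coordinates of \cite{Hilb}: a coordinate $b'_{i-1}$ cuts $\td C^m_{j+1}$ with multiplicity one, while on the ordered model all $x_i$ and $y_1,\dots,y_{m-1}$ vanish along the relevant locus, so the incidence equation degenerates to $y_m^{m-1}$, i.e.\ $\Delta\spr m.$ restricts to $\bigl((b'_{i-1})^{m-1}\bigr)$. (This analysis also produces two \emph{distinct} scrolls, of indices $j$ and $j+1$ with nodebound length $n+1$ — note the printed statement repeats the same term twice, evidently a typo.) Your heuristic for the off-node coefficients $\mu'_\l n'_\l$ is fine and is the part the paper also treats as easy; but without the nodebound multiplicity computation, the factor $n$ in \eqref{tfr-section} is not established, so the proposal as it stands has a genuine gap at the decisive step.
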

\begin{proof}
Part (i) is obvious. As for Part (ii), the flatness of $q_{m-1}$ allows us to work over a general $z\in F$ and then Corollary \ref{q-flat}, (ii) allows us to assume that the added point is
a general point on the fibre $X_s$, which leads to \refp{tfr-scroll}..\par
As for (iii), note that on $X\sbr\mm._B$, we can write
\beql{}{q_m^*\Gamma\spr m.=q_{m-1}^*\Gamma\spr m-1.+\Delta\spr m.}
where $\Delta\spr m.$ is the pullback from $X\sbr m-1._B\times_BX$ of the locus of pairs $(z,w)$ where $w$ is a point subscheme of $z$. Restricted on $q_{m-1}^*F$, $\Delta\spr m.$
remains an effective Cartier divisor which splits in two
parts, depending on whether the point $w$ added to a scheme $z\in F$
is in the off-node or nodebound portion of $z$. It is easy
to see that the first part gives rise to the 2nd and 3rd terms
in the RHS of \refp{tfr-section}.. \par
The analysis of the second part, which leads
to the coefficient $n$ in the last two terms of \eqref{tfr-section},
 is a bit more involved.
To begin with, it is easy to see that we may assume $m=n+1$,
in which case $F$ is just a $\P^1$, namely $C^{m-1}_j$.
Now as in \refp{C-mm-1}.,, the special fibre of the cycle map on
$q_{m-1}\inv(C^{m-1}_j)$, as a set, is given by
$\td C^m_j\cup \td C^{m-1}_{j}\cup\td C^m_{j+1}$
and this coincides as a set with $\Delta\spr m..q_{m-1}\inv(C^{m-1}_j)$. As $\td C^{m-1}_j$
 collapses under $q_m$, the proof will be complete if
  we can show that the multiplicity of $\td C^m_j$ and $\td C^m_{j+1}$ on $\Delta\spr m..q_{m-1}\inv(C^{m-1}_j)$
are both equal to $n=m-1$. We will do this for $\td C^m_{j+1}$
 as the case of  $\td C^m_j$ is similar and only notationally more cumbersome.\par
In that case, our assertion
will be an elementary
consequence of the equations on p. 440, l. 9-14 of
\cite{Hilb},
describing the local model $H_{\mm}$, as well as those on
p. 433, describing the analogous local model $H_m$, to which
equations we will be referring constantly in the
remainder of the present proof.
Note that $c_{m-i}$ (resp. $b'_{i-1}$) plays the role of the affine
coordinate $u_i/v_i$ (resp. $v'_{i-1}/u'_{i-1}$). Also our
$j+1$ is the $i$ there. We work on $q_{m-1}\inv(C^{m-1}_j)$.\par
\emph{Claim 1} :  Over a neighborhood of $Q^{m-1}_{j+1}$,
$q_{m-1}\inv(Q^{m-1}_{j+1})$ contains $\td C^m_{j+1}$
with multiplicity 1.\par
 To see this note that the defining equations
of $C^{m-1}_j$ on $X\sbr m-1._B$ are given by setting all $a'_k$ and $d'_k$, as well as $c'_{m-i-1}$ to zero . By loc. cit. p.433
l.9, this implies that we have $b'_1=...=b'_{i-2}=0$
on $q_{m-1}\inv(C^{m-1}_j)$ as well. At a general
point of $C^m_{j+1}$,  $c_{m-i}$ is nonzero. Therefore we may
consider $c_{m-i}$ as a unit. By loc. cit. p.440, eq. (15), we conclude $a_{m-i}=0$. From this we see easily
that all $a_k=d_k=0$ except $d_{i-1}$, which is a local equation for $\td C^m_{j+1}$, while $b'_{i-1}$ is a coordinate along $C^{m-1}_j$ having $Q^{m-1}_{j+1}$ as its unique zero. Now by p.440 l. 14, $b'_{i-1}$ and $d_{i-1}$ differ by the multiplicative unit $-c_{m-i}$, therefore $b'_{i-1}$ generically cut out exactly $\td C^m_{j+1}$, which proves Claim 1.\par
\emph{Claim 2 } : The restriction of $\Delta\spr m.$ on $\td C^{m-1}_j$
is  the subscheme $((b'_{i-1})^n)=(d_{i-1}^n)$.
\par
To prove Claim 2 we can pull back to the ordered version where
the pullback of $\td C^{m-1}_j$ is totally ramified, hence can
be written as $(m-1)U$, where $U$ maps isomorphically to $C^{m-1}_j$.
On the other hand, on the ordered version of $q_{m-1}\inv(C^{m-1}_j)$, we have
$$x_1=...=x_m=y_1=...=y_{m-1}=0$$
(this by the vanishing of all the $a'_k,d'_k$,
which are the elementary symmetric functions in $x_1,...x_{m-1}$ and $y_1,...,y_{m-1}$, respectively,
and the vanishing of $a_{m-i}=x_1+...+x_m$). Therefore, the pullback of $\Delta\spr m.$ is defined by the single
nonzero generator, that is
$$\prod\limits_{k=1}^{m-1}(y_m-y_k)=y_m^{m-1}\sim d_{i-1}^{m-1},$$ $d_{i-1}$ being a coordinate on $U\simto C^{m-1}_j$. From this Claim 2 is obvious.\par
The conjunction of Claims 1 and 2 completes the proof of the Proposition.
\end{proof}

\newsubsection{Full-flag transfer and Chern numbers}
We are now ready to tackle the computation of Chern numbers,
and in fact all polynomials in the Chern classes
of the tautological bundle on the relative Hilbert scheme
$X\sbr m._B$. The computation is based on passing from
$X\sbr m._B$ to the corresponding full-flag Hilbert scheme
$W=W^m(X/B)$ studied in \cite{R} and a \emph{
diagonalization theorem} for the total Chern class of
(the pullback of) a tautological bundle on $W$, expressing it
either as a simple
(factorable) polynomial in diagonal classes
induced from the various $X\sbr n._B,\; n\leq m$, plus base classes, or, more conveniently, as the product of the Chern class of a smaller tautological bundle and a diagonal class.
Given this, we can compute Chern numbers essentially by repeatedly applying
the transfer calculus of the last section.\par
We start by reviewing some results from \cite{R}.
Let
 \beq W^m=W^m(X/B)\stackrel{\pi\spr m.}{\longrightarrow} B \eeq denote the relative
 flag-Hilbert scheme of $X/B$, parametrizing flags
 of subschemes \beq z.=(z_1<...<z_m) \eeq
 where $z_i$ has length $i$ and $z_m$ is contained in some fibre
 of $X/B$. Let \beq w^m:W^m\to X\sbr m._B, w^{ m,i}.:W^m\to X\sbr i._B \eeq be the canonical
 (forgetful) maps. Let \beq a_i:W^m\to X \eeq be  the canonical map
 sending a flag $z.$ to the 1-point support of $z_i/z_{i-1}$
 and \beq a^m=\prod a_i:W^m\to X^m_B \eeq their (fibred) product,
 which might be called the 'ordered cycle map'.
 Let \beq \I_m<\O_{X\sbr m._B\times_BX}\eeq
 be the universal ideal of colength $m$.
 For any coherent sheaf on $X$, set
 \beq \lambda_{m}(E)=p_{{X\sbr m._B*}}(p_{X}^*(E)\otimes
 (\O_{X\sbr m._B\times_BX}/\I_m))\eeq
  These are called the
 \emph{tautological sheaves} associated to $E$; they are locally free if $E$ is. Abusing notation,
 we will also denote by $\lambda_m(E)$ the pullback of the
  tautological sheaf to appropriate flag Hilbert schemes
  mapping naturally to $X\sbr m._B$, such as $W^m$ or $X\sbr
  \mm._B$. With a similar convention, set
  \beql{}{\Delta\spr m.=\Gamma\spr m.-\Gamma\spr m-1.
  .}
 The various tautological sheaves form
 a flag of quotients on $W^m$:
 \beql{fu--flag}{...\lambda_{m.i}(E)\twoheadrightarrow \lambda_{m,i-1}(E)
 \twoheadrightarrow...}
 This flag makes possible a simple formula for the total Chern class of the tautological bundles, namely the following
 \emph{diagonalization theorem} (\cite{R}, Cor. 3.2):
 \begin{thm}\label{tautbun-W}
 The total Chern class of the tautological bundle $\lambda
 _m(E)$ is given by
 \beql{}{c(\lambda_m(E))=\prod_{i=1}^m c(a_i^*(E)(-\Delta\spr i.))
 }
 \end{thm}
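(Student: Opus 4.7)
The plan is to deduce Theorem \ref{tautbun-W} by iterating the flaglet splitting principle (Corollary \ref{tautbun}) along the tower of forgetful morphisms from the full-flag scheme $W^m$ down to $B$. Given that Corollary \ref{tautbun} already does the hard work on the single flaglet $X\sbr\mm._B$, the remaining argument is essentially formal and telescopic.

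First, I would set up the tower. For each $i=1,\dots,m$, there is a natural forgetful morphism
\[
\phi_i : W^m \longrightarrow X\sbr i,i-1._B, \qquad (z_1<\dots<z_m)\longmapsto (z_{i-1}\subset z_i),
\]
and under $\phi_i$ one has $\phi_i^*\lambda_i(E)=\lambda_i(E)|_{W^m}$, $\phi_i^*\lambda_{i-1}(E)=\lambda_{i-1}(E)|_{W^m}$, and the annihilator map $a:X\sbr i,i-1._B\to X$ pulls back to $a_i:W^m\to X$. One also has to verify that the discriminant divisor $\Gamma\spr i.$ on $X\sbr i._B$, pulled back to $W^m$ via $w\spr m,i.$, agrees with its pullback through $\phi_i$; this is immediate from compatibility of the cycle maps $c_i,c_{i,i-1}$ with the forgetful structure, so $\phi_i^*\Delta\spr i.=\Delta\spr i.|_{W^m}$ with $\Delta\spr i.=\Gamma\spr i.-\Gamma\spr i-1.$.

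Next, applying Corollary \ref{tautbun} on $X\sbr i,i-1._B$ provides an equality of total Chern classes
\[
c(\lambda_i(E)) \;=\; c(\lambda_{i-1}(E))\cdot c\bigl(a^*(E)(-\Delta\spr i.)\bigr)
\]
(coming from an exact sequence $0\to a^*(E)(-\Delta\spr i.)\to \lambda_i(E)\to \lambda_{i-1}(E)\to 0$ on the flaglet). Pulling this identity back via $\phi_i$ to $W^m$ and using the identifications above, one obtains on $W^m$
\[
c(\lambda_i(E)) \;=\; c(\lambda_{i-1}(E))\cdot c\bigl(a_i^*(E)(-\Delta\spr i.)\bigr), \qquad i=1,\dots,m.
\]
Telescoping over $i$, with the convention $\lambda_0(E)=0$ (so $c(\lambda_0(E))=1$) and noting $\Delta\spr 1.=\Gamma\spr 1.-\Gamma\spr 0.=0$ so that the $i=1$ factor is simply $c(a_1^*E)=c(\lambda_1(E))$, yields
\[
c(\lambda_m(E)) \;=\; \prod_{i=1}^m c\bigl(a_i^*(E)(-\Delta\spr i.)\bigr),
\]
which is the asserted diagonalization.

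The only substantial point is the input from \cite{R} recalled in Corollary \ref{tautbun}, whose proof rests on producing the line-bundle kernel $a^*(E)(-\Delta\spr i.)$ of $\lambda_i(E)\twoheadrightarrow\lambda_{i-1}(E)$ on the flaglet; this is where the local model $H_{\mm}$ of \S 3 and the identification of $\Gamma\spr m.$ as the blowup polarization from Theorem \ref{blowup} enter in an essential way. Modulo that, the proof above is purely a telescoping of successive flaglet splittings along $W^m\to\cdots$, and the step that requires care is only the compatibility of the pullbacks of $\Gamma\spr i.$ and of the annihilator section through the different forgetful maps, which follows from the cartesian-diagram descriptions of $W^m$ as iterated fibre product of flaglet schemes over the various $X\sbr i._B$.
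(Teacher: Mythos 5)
Your argument is circular as it stands relative to this paper. Theorem \ref{tautbun-W} is not proved here at all: it is imported from \cite{R} (Cor.\ 3.2). Corollary \ref{tautbun} --- the flaglet splitting you take as your main input --- is deduced in the paper \emph{from} Theorem \ref{tautbun-W}: its entire proof is that both sides pull back to the same class on $W^m$ by the theorem, together with generic finiteness of $W^m\to X\sbr\mm._B$. So you cannot invoke Corollary \ref{tautbun} to establish the theorem unless you first give it an independent proof, and that independent proof is exactly the substantive content you have omitted: identifying the kernel of the surjection $\lambda_i(E)\twoheadrightarrow\lambda_{i-1}(E)$ (on the flaglet $X\sbr i,i-1._B$, or on $W^m$) with $a_i^*(E)(-\Delta\spr i.)$, i.e.\ showing that the relevant quotient of universal ideals, restricted along the flag, is $\O(-\Delta\spr i.)$ twisted by $a_i^*(E)$. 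That step needs the local description of the universal ideals and the blowup/discriminant structure (the material behind Theorem \ref{blowup} and the model $H_{\mm}$); you gesture at it in your last sentence but neither you nor this paper supplies it, so your proposal reduces the theorem to its own one-step case rather than proving it.

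Granting that one-step identification, the rest of your argument is fine and is indeed the natural route the paper alludes to via the flag of quotients \eqref{fu--flag}: pull the successive splittings back along $W^m\to X\sbr i,i-1._B$, check that $\Gamma\spr i.$ and the annihilator map pull back compatibly (they do, since the forgetful maps commute with the cycle maps), and telescope with the Whitney formula, the $i=1$ factor being $c(a_1^*E)$ because $\Delta\spr 1.=0$. Two cautions on details: the identity of Corollary \ref{tautbun} is only an equality of Chern classes in $A.(X\sbr\mm._B)_\Q$ (mod torsion), not the sheaf-level exact sequence $0\to a^*(E)(-\Delta\spr i.)\to\lambda_i(E)\to\lambda_{i-1}(E)\to 0$ you assert --- the Chern-class statement suffices for the telescoping, but the exact sequence is stronger than anything recorded in this paper; and the resulting equality on $W^m$ is likewise only rational/modulo torsion. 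The gap, in short, is not the telescoping bookkeeping but the missing independent proof of the one-step splitting that the whole argument leans on.
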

 An analogue of this, more useful for our purposed, holds already on the flaglet Hilbert scheme. It can be proved in the same way, or as an easy consequence of Thm \ref{tautbun-W}
 \begin{cor}\label{tautbun}
 We have an identity in $A.(X\sbr \mm._B)_\Q$:
 \beql{}{c(\lambda_m(E))=c(\lambda_{m-1}(E))c(a_m^*(E)
 (-\Delta\spr m.)).
 }

 \end{cor}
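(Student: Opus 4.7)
The plan is to derive the identity directly on $X\sbr\mm._B$ by exhibiting a short exact sequence of locally free sheaves
\eqspl{SES-plan}{
0\to a_m^*(E)\otimes\O(-\Delta\spr m.)\to \lambda_m(E)\to \lambda_{m-1}(E)\to 0
}
and invoking the multiplicativity of total Chern classes. To build this sequence, I would start from the tautological inclusion $\I_m\subset \I_{m-1}$ of universal ideals on $X\sbr \mm._B\times_BX$, which gives the length-$1$ quotient $\I_{m-1}/\I_m\to 0$ supported along the section cut out by the annihilator map $a_m:X\sbr\mm._B\to X$. Tensoring with $p_X^*E$ and applying the pushforward $p_{X\sbr\mm._B*}$ (which is exact on the torsion piece since $\Z_m$ is finite over the base) yields the displayed sequence with kernel of the form $a_m^*(E)\otimes M$ for a canonically determined line bundle $M$.

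The main substantive step, and the principal obstacle, is identifying the twisting line bundle $M$ as $\O(-\Delta\spr m.)=\O(-\Gamma\spr m.+\Gamma\spr m-1.)$. I would do this using the local model of Chapter~1: over the model $H_{\mm}$ for $X\sbr\mm._B$ described in \S3.1 (see \eqref{ZZ'} and the surrounding discussion in \cite{Hilb}), the difference ideal $\I_{m-1}/\I_m$ can be written explicitly in terms of the generators $F_{j-1},F_j$ of $\I_m$ and the corresponding generators of $\I_{m-1}$, and a direct computation matches the resulting trivialization with the generators $G_j,G_j'$ that, under the Blowup Theorem~\ref{blowup} and Theorem~\ref{flag-blowup}, produce $\Gamma\spr m.$ and $\Gamma\spr m-1.$ respectively. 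Away from the fibre-nodal locus of $\pi$ the identification is essentially the Lehn-type calculation on the smooth locus, so the only genuine content is to check it across the exceptional locus of $c_{\mm}$, where the explicit equations in $H_{\mm}$ make it mechanical; this checks \eqref{SES-plan}.

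Given \eqref{SES-plan}, multiplicativity of total Chern classes immediately yields
\[
c(\lambda_m(E))=c\bigl(a_m^*(E)(-\Delta\spr m.)\bigr)\,c(\lambda_{m-1}(E))
\]
as claimed. As a cross-check (and as an alternative route that avoids identifying $M$ by hand), one can instead factor the full-flag forgetful map $w^m:W^m\to X\sbr m._B$ through the natural map $r:W^m\to X\sbr\mm._B$ coming from the identification $W^m\simeq W^{m-1}\times_{X\sbr m-1._B}X\sbr\mm._B$, pull both sides of the desired identity back along $r$, apply Theorem~\ref{tautbun-W} in length $m$ (to the left side) and in length $m-1$ (to the $c(\lambda_{m-1}(E))$ factor on the right), observe that the final factor $c(a_m^*(E)(-\Delta\spr m.))$ is literally the $i=m$ term of the product in Theorem~\ref{tautbun-W}, and descend via the surjectivity of $r^*$ on rational Chow groups (which holds because $r$ is proper with geometrically connected, generically finite fibres isomorphic to the full-flag scheme $W^{m-1}$ over the generic point).
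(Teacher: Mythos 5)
Your ``cross-check'' route is exactly the paper's proof: pull both sides back to the full-flag scheme $W^m$, apply Theorem \ref{tautbun-W} (for length $m$, with the length-$(m-1)$ product absorbed into $c(\lambda_{m-1}(E))$ and the $i=m$ factor giving $c(a_m^*(E)(-\Delta\spr m.))$), and descend along $W^m\to X\sbr\mm._B$. Your primary route is genuinely different: it would prove the identity directly on $X\sbr\mm._B$ from the short exact sequence
\[
0\to a_m^*(E)\otimes M\to \lambda_m(E)\to \lambda_{m-1}(E)\to 0,\qquad M\simeq\O(-\Delta\spr m.),
\]
obtained by pushing forward $0\to \I_{m-1}/\I_m\to\O/\I_m\to\O/\I_{m-1}\to 0$ twisted by $p_X^*E$. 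That is attractive because it bypasses $W^m$ altogether and makes the corollary self-contained; but be aware that the identification $M\simeq\O(-\Delta\spr m.)$ is not a routine bookkeeping step --- it is precisely the content of the splitting principle of \cite{R} underlying Theorem \ref{tautbun-W}, and your proposal only gestures at the verification across the exceptional locus of $c_{\mm}$ (the check in the local model $H_{\mm}$, matching the trivialization of $\I_{m-1}/\I_m$ against the generators $G_j$, $G_j'$ that cut out $\Gamma\spr m.$ and $\Gamma\spr m-1.$). So as written the first route reproves, rather than uses, the result imported from \cite{R}, and its key step remains a sketch; the paper's two-line deduction buys brevity at the cost of quoting the full-flag theorem.

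Two small corrections to the descent step in your second route: what you need is \emph{injectivity}, not surjectivity, of $r^*$ on the relevant rational classes, and this follows from $r$ being proper, surjective and generically finite via the projection formula ($r_*r^*\gamma=\deg(r)\,\gamma$); no connectedness of fibres is needed. Indeed the generic fibre of $W^m\to X\sbr\mm._B$ is not a connected copy of $W^{m-1}$ but a finite set of $(m-1)!$ points (the orderings of the length-$(m-1)$ subscheme), which is exactly why the argument lands in $A.(X\sbr\mm._B)_\Q$ rather than in integral Chow.
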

 \begin{proof}
 By Theorem \ref{tautbun-W}, the RHS and LHS pull back to the same class in $W^m$. As the projection $W^m\to X\sbr \mm._B$ is generically finite, they agree mod torsion.
 \end{proof}

 Motivated by this result we make the following definition.
\begin{defn} Let $R$ be a $\Q$-subalgebra of $A(X)$ containing
the canonical class $\omega$. The Chern tautological ring on $X\sbr m._B$,
denoted
$$TC^m_R=TC^m_R(X/B),$$ is the $R$-subalgebra of $A(X\sbr m._B)_\Q$ generated by
  the Chern classes of $\lambda_m(E)$ and the discriminant class $\Gamma\spr m.$.
   \end{defn}
   \begin{rem}
   If $E$ is a line bundle, then it is easy to see from Theorem
   \ref{tautbun-W} that
   $$c_1(\lambda_m(E))=mc_1(E)-\Gamma\spr m..\qed$$
   \end{rem}
   The following is the main result of this paper.
  \begin{thm}
  There is a computable inclusion
  \beql{}{TC^m_R\to T^m_R.}
  More explicitly, any polynomial in the Chern classes of $\lambda_m(E)$, in particular the Chern numbers, can be computably expressed as a linear combination of standard
  tautological classes: twisted diagonal classes, twisted node scrolls, and twisted node sections.
  \end{thm}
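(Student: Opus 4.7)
The plan is to prove the theorem by induction on $m$, the base case $m=1$ being immediate since $X\sbr 1._B\cong X$, $\lambda_1(E)=E$, $\Gamma\spr 1.=0$, so $TC^1_R=R=T^1_R$. The inductive engine rests on three already-established ingredients working in concert: the flaglet splitting principle (Corollary \ref{tautbun}), the Transfer Theorem \ref{taut-tfr}, and the Module Theorem \ref{taut-module}. The strategy for the inductive step is to pull back to the flaglet $X\sbr\mm._B$, expand using the splitting principle, and then push back down via $q_m$, interpreting the pushforward in terms of transfers and products with $\Gamma\spr m.$.

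More concretely, I would first show that each individual Chern class $c_k(\lambda_m(E))$ lies in $T^m_R$. On $X\sbr\mm._B$, with projections $q_m$ (generically of degree $m$) and $q_{m-1}$, plus annihilator $a_m:X\sbr\mm._B\to X$, Corollary \ref{tautbun} gives
\[
q_m^*c(\lambda_m(E)) = q_{m-1}^*c(\lambda_{m-1}(E))\cdot c(a_m^*(E)(-\Delta\spr m.)),
\]
with $\Delta\spr m. = \Gamma\spr m.-\Gamma\spr m-1.$. Applying $\tfrac{1}{m}q_{m*}$ and using the projection formula to pull powers of $\Gamma\spr m.=q_m^*\Gamma\spr m.$ outside, each summand takes the form
$(\Gamma\spr m.)^s\cdot q_{m*}(q_{m-1}^*\alpha\cdot a_m^*\beta) = (\Gamma\spr m.)^s\cdot \tau_m(\alpha\cdot\beta\subp{m})$,
where $\alpha$ is a polynomial in $c_\bullet(\lambda_{m-1}(E))$ and $\Gamma\spr m-1.$ on $X\sbr m-1._B$ and $\beta\in R$. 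By the inductive hypothesis $\alpha\in T^{m-1}_R$ (after closing under products with $\Gamma\spr m-1.$ via the Module Theorem at level $m-1$); the Transfer Theorem then places $\tau_m(\alpha\cdot\beta\subp{m})$ in $T^m_R$; and finally the Module Theorem at level $m$ absorbs the outer factor $(\Gamma\spr m.)^s$. Hence $c_k(\lambda_m(E))\in T^m_R$.

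For arbitrary polynomials in the $c_k(\lambda_m(E))$ and $\Gamma\spr m.$, I would run the identical argument with an arbitrary monomial $M=\prod_i c_{k_i}(\lambda_m(E))\cdot(\Gamma\spr m.)^s$ replacing a single Chern class: its pullback $q_m^*M$ is again a polynomial in $q_{m-1}^*(c_\bullet(\lambda_{m-1}(E)))$, $a_m^*(c_\bullet(E))$, $\Gamma\spr m.$, $\Gamma\spr m-1.$, and after projection-formula reorganization the same transfer/Module combination finishes the job. An equivalent, perhaps more systematic, route is to pass up to the full-flag scheme $W^m$ and apply Theorem \ref{tautbun-W} once to rewrite any Chern polynomial as a polynomial in base classes $a_i^*(c_\bullet(E))$ and the $\Delta\spr i.$, then descend back down through the chain $W^m\to X\sbr m,m-1._B\to\cdots\to X\sbr m._B$ by iterated application of the Transfer Theorem, with the Module Theorem invoked at each stage to absorb discriminant powers.

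The real obstacle is not logical but combinatorial: each application of $\tau_m$ to a twisted node section (Theorem \ref{taut-tfr}(iii)) generates correction terms involving $\Gamma\spr m.\cdot\tau_m(\cdots)$ plus extra node-scroll pieces, and each subsequent multiplication by $\Gamma\spr m.$ via Proposition \ref{disc.diag} and Corollary \ref{disc^k.scroll} spawns further diagonals and scrolls with shifted twists. Bookkeeping the explicit coefficients $\beta_{n,j}$, $\nu\subp{n.}$, $\mu\subp{\un}$ through the induction will be tedious but mechanical; termination is guaranteed because every recursive call strictly reduces either the $m$-index of a Chern factor or the number of such factors remaining, so the procedure produces an effective algorithm writing each element of $TC^m_R$ as a $\Q$-linear combination of twisted diagonals, twisted node scrolls, and twisted node sections.
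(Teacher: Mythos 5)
Your proposal is correct and follows essentially the same route as the paper's own proof: induction on $m$, pulling back to $X\sbr\mm._B$ and applying the splitting principle of Corollary \ref{tautbun}, then invoking the Transfer Theorem \ref{taut-tfr} and absorbing discriminant powers with the Module Theorem \ref{taut-module} via the projection formula. Your explicit $\tfrac{1}{m}q_{m*}$ bookkeeping and the aside on the full-flag variant are just slightly more detailed renderings of the same argument.
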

\begin{proof} For $m=1$ the statement is essentially vacuous.
For $m=2$ it is a consequence of the Module Theorem
\ref{taut-module}. For general $m$, we
assume inductively
the result is true for $m-1$. Given any polynomial $P$ in the
Chern classes of $\lambda_m(E)$, Corollary \ref{tautbun} implies
that we can write its pullback on $X\sbr\mm._B$ as a sum of terms of
the form $p_{X\sbr m-1._B}^*Q.(\Gamma\spr m.)^k.S$ where $Q\in
TC^{m-1}_R$. By induction, $Q\in T^{m-1}_R$, so by the Transfer Theorem
\ref{taut-tfr}, $\tau_m(Q)\in T^m_R$. By the projection formula and
the Module Theorem \ref{taut-module}, it follows that $P\in T^m_R$.
\end{proof}
\begin{rem}
This result suggests the natural question: is $T^m_R$ a ring? more
ambitiously, is the inclusion $TC^m_R\to T^m_R$ an equality?
\end{rem}
\newsubsection{Example: trisecants to one space curve curve}
If $X$ is a smooth curve of degree $d$ and genus $g$ in $\P^3$, the virtual degree of its
trisecant scroll, i.e. the virtual number of trisecant lines to $X$ meeting a generic line, is given by $c_3(\bigwedge\limits^2 (\Lambda_3(\O_X(1)))$, which can be easily computed to be
\eqspl{}{
\frac{1}{6}(2d^3-12d^2+16d-3d(2g-2)+6(2g-2))
}
\newsubsection{Example: trisecants in a pencil } With $X/B$ as above ($B$ a smooth curve),
 suppose \beq f:X\to\P^{2m-1}\eeq is a
 morphism. One, quite special, class of examples of this situation
 arises as what we call a {\it{generic rational pencil}}; that is,
 generally,
 the normalization of the family of rational curves of fixed degree $d$
  in $\P^r$ (so
 $r=2m-1$ here) that are incident to a generic collection $A_1,
 ...A_k$ of linear spaces, with
 \beq (r+1)d+r-4=\sum(\text{codim}(A_i)-1); \eeq see \cite{R3} and
 references therein, or \cite{RA} for an 'executive summary'. While
 our result seems new in this case, we note that it applies to curves of
 arbitrary genus.
 \par Returning to the general situation,
 one expects a finite number $N_m$ of curves $f(X_b)$ to
 admit an m-secant $(m-2)$-plane, and this number can be evaluated as
 follows. Let $G=G(m-1,2m)$ be the Grassmannian of $(m-2)$-planes
  in $\P^{2m-1}$,
 with rank-$(m+1)$ tautological subbundle $S$, and let $L=f^*\O(1).$
 Then \beq m!N_m=
 \int\limits_{W^m\times G}c_{m(m+1)}(S^*\boxtimes w^*\lambda_m(L)) \eeq
 \beq =\int\limits_{W^m\times G}c_{m+1}(S^*(L\subpr 1.))
 c_{m+1}(S^*(L\subpr 2.-\Delta\spr 2.))
 \cdots c_{m+1}(S^*(L\subpr m.-\Delta\spr m.)) \eeq
 \beq =\int\limits_{W^m\times G}
 \prod\limits_{i=1}^m(\sum\limits_{j=0}^{m+1}\binom{m+1}{j}
 c_{m+1-j}(S^*)(L\spr
 i.-\Delta\spr i.)^j) \eeq
 \beq =\sum\limits_{|(j.)|=m+1}\int\limits_Gc_{m+1-j_1,...,m+1-j_m}(S^*)
 \int\limits_{W^m}(L\subpr
 1.)^{j_1}(L\subpr 2.-\Delta \spr 2.)^{j_2}...(L\subpr m.-\Delta\spr
 m.)^{j_m} \eeq
where $c_{u,v,w...}=c_uc_vc_w\cdots$ and, applied to $S^*$, represents
the condition that an $(m-2)$-plane in $\P^{2m-1}$ meet a generic
collection of planes of respective dimensions $u,v,w,..$. Note that only terms with
$j_1+...+j_k\leq k+1, \forall k$, can contribute. By the intersection calculus developed above,
this number can be computed in terms of the characters
\[b=L^2,
d=\deg_\pi(L), \omega^2, \sigma, \omega.L, \deg_\pi(\omega)=2g-2,
g=\text{fibre genus;}\] in the generic rational pencil case, all these
characters can be computed by recursion on $d$.\par Suppose now that
$m=3$, where the only relevant $(j.)$ are
\beq (2,1,1), (1,1,2), (2,0,2), (1,2,1),(1,0,3),
(0,3,1),(0,2,2),(0,1,3),    (0,0,4). \eeq
 In each of these cases, it is easy to see that the $G$
 integral evaluates to 1. The $W$ integrals
 may be
evaluated by the calculus developed above. The general
procedure is to proceed inductively, each time transferring the
leftmost factor from the $X\sbr i._B$ from whence it came to
$X\sbr i+1._B$. We will repeatedly be using Corollaries
 \ref{disc-square} and \ref{cube}, as well as
 standard projection formulas (for the symmetrization map). After the first transfer (to $X\sbr 2._B$, we will treat the resulting term as polynomial in $\Gamma\spr 3.$ and break things up according to the power of $\Gamma\spr 3.$ involved.
 The main multiplication rules to be used are the following.
 We will use the notation $[\alpha,...]$, for a base class $\alpha$, in place of $\Gamma\subp{1.}[\alpha,...]$, where $(1.)$ is a trivial (singleton blocks only) distribution.
 We also recall that $\Gamma\spr 3.[\alpha]$ is short for
 $\Gamma\spr 3.[\alpha,1]$).\ls
 {\bf\underline{Multiplication rules}}
\begin{enumerate}

 \item
 $$ [\alpha,\beta,\gamma].\Gamma\spr 3.= 2(\Gamma\spr 3.[\alpha.\beta,\gamma]+\Gamma\spr 3.[\alpha.\gamma,\beta]+
 \Gamma\spr 3.[\beta.\gamma,\alpha])$$
\item  $$\Gamma\spr 3.[\alpha,\beta].\Gamma\spr 3.=
 \Gamma\subp{3}[\alpha.\beta]-\Gamma\spr 3.[\alpha.\omega,\beta]$$
 \end{enumerate}
 \ls
 \flushright{$\lrcorner$}
\flushleft{ The detailed computation follows.}
\begin{enumerate}
 \itemul{(2,1,1)} that is,
 $ L\subp{1}^2(L\subp{2}-\Delta\spr 2.)(L\subp{3}-\Delta\spr{3}.)$.
  First,
  $$\tau_2(L\subp{1}^2(L\subp{2}-\Delta\spr 2.))=
  [L^2,L]-2\Gamma\spr 2.[L^2].$$ Next,
  $$\tau_3(([L^2,L]-2\Gamma\spr 2.[L^2])L\subp{3}=
  [L^2,L\spr 2.]-2\Gamma\spr 3.[L^2,L]=bd^2-bd,$$
$$\tau_3([L^2,L]-2\Gamma\spr 2.[L^2]).\Gamma\spr 3.
=4\Gamma\spr 3.[L^2,L]-2\Gamma\subp{3}[L^2]=2bd-2b$$ Thus the total
is $bd^2-3bd+2b=$\fbox{$b(d-1)(d-2)$}.

\itemul{(1,1,2)}: We treat this as a polynomial in $\Gamma\spr 3.$. The terms are as follows. Degree 0:
\eqsp{
\tau_3(\tau_2(L\subp{1}(L\subp{2}-\Gamma\spr 2.)(\Gamma\spr 2.)^2)+2\tau_2(L\subp{1}(L\subp{2}-\Gamma\spr 2.)\Gamma\spr 2.)
L\subp{3}+\tau_2(L\subp{1}(L\subp{2}-\Gamma\spr 2.))L\subp{3}^2)
=\\ \intertext{(as the $(\Gamma\spr 2.)^2$ doesn't contribute for dimension reasons)}
4\Gamma\spr 3.[L^2,L]+4\Gamma\spr 3.[L.\omega,L]+[L\spr 2.,L^2]-2\Gamma\spr 3.[L,L^2]= 2bd+2dL.\omega+bd^2-bd=\\
bd+bd^2+2dL.\omega
}
degree 1: similarly
\eqsp{
-2\tau_3(2\Gamma\spr 2.[L^2]+2\Gamma\spr 2.[L.\omega]+
[L\spr 2.]L\subp{3}-2\Gamma\spr2.[L]L\subp{3})\Gamma\spr 3.=\\
-2(2\Gamma\spr 3.[L^2]+2\Gamma\spr 3.[L.\omega]+[L\spr 3.]-2\Gamma\spr 3.[L,L])\Gamma\spr 3.=\\
-2(2\Gamma\subp{3}[L^2]+2\Gamma\subp{3}[L.\omega]+6\Gamma\spr 3.[L^2,L]-2\Gamma\subp{3}[L^2]+2\Gamma\spr 3.[L.\omega,L])=\\
-2(2b+2L.\omega+3bd-2b+dL.\omega)=
-2(2L.\omega+3bd+dL.\omega)
}
degree 2:
\eqsp{
([L\spr 2.]-2\Gamma\spr 3.[L])(\Gamma\spr 3.)^2=
(2\Gamma\spr 3.[L^2]+4\Gamma\spr 3.[L,L]-2\Gamma\subp{3}[L]
+2\Gamma\spr 3.[L.\omega])\Gamma\spr 3.=\\
2\Gamma\subp{3}[L^2]+4\Gamma\subp{3}[L^2]-4\Gamma\spr 3.[L.\omega,L]+6\Gamma\subp{3}[L.\omega]+2
\Gamma\subp{3}[L.\omega]= 6b-2dL.\omega+8L.\omega
}
total:\framebox{$
-5bd+bd^2+6b-2dL.\omega+4L.\omega
$}
\itemul{(2,0,2)} This case is similar to (1,1,2) and easier.
The result is
\begin{center}
{\framebox{$-2bd-b(2g-2)+2b$.}}
\end{center}

\itemul{(1,2,1)} This is again quite similar to the (2,1,1) case treated above, and yields

\begin{center}{\framebox{$(bd-2b-L.\omega)(d-2)$}}.
\end{center}

\itemul{(1,0,3)}
Again we consider this as a polynomial in $\Gamma\spr 3.$ and
compute term by term. For degree 0 we have \eqsp{ \tau_3(6\Gamma\spr
2.[L](L\subp 3)^2-6\Gamma\spr 2.[L.\omega]L\spr 3.)= 3bd-3dL.\omega
 }
 degree 1:
 \eqsp{
 -3([L,L^2]+4\Gamma\spr 3.[L,L]-2\Gamma\spr 3.[\omega.L])\Gamma\spr 3.=\\
 -3(
 2\Gamma\spr 3.[L,L^2]+2\Gamma\spr3.[L^2,L]+
 4\Gamma\subp 3 [L^2]-4\Gamma\spr 3.[L.\omega,L]-2
 \Gamma\subp {3} [\omega.L])=\\
 -3(bd+bd+4b-2d\omega.L-2\omega.L)
 }
For degree 2:
\eqsp{
3([L\spr 2.]+2\Gamma\spr 3.[L])(\Gamma\spr 3.)^2=
3(2\Gamma\spr 3.[L^2]+4\Gamma\spr 3.[L,L]+2\Gamma\subp 3[L]-2
\Gamma\spr 3.[\omega.L])\Gamma\spr 3.=\\
3(2\Gamma\subp3[L^2]+4\Gamma\subp3[L^2]-4\Gamma\spr 3.[\omega.L,L]+\-6\Gamma\subp3[\omega.L]-2\Gamma\subp3[\omega.L])=\\
3(2b+4b-2d\omega.L-6\omega.L-2\omega.L)
}

 For degree 3: by Corollary \ref{cube}, we get \eqsp{24\omega.L-d\omega^2
 }
 Summing up, we get \framebox{$-3bd-3dL.\omega+6b+6L.\omega-d\omega^2
 $}

\item[\underline{(0,3,1)}] Expanding, we get
\eqsp{
\tau_2((L\subp{2}-\Delta\spr 2.)^3)=
-6\Gamma\spr 2.[L^2]-6\Gamma\spr 2.[\omega.L]-2(\Gamma\spr 2.)^3=
-3b-3\omega.L-(\omega^2-\sigma)
}
Since this is a point cycle, multiplying by $L\subp{3}-\Delta\spr 3.$
or $L\spr 3.-\Gamma\spr 3.$ again just multiplies the coefficient by
$d-2$, for a total of
\framebox{$(-3b-3L.\omega-(-\sigma+\omega^2))(d-2)$}.

\item[\underline{(0,2,2)}]
Again working degree by degree in $\Gamma\spr 3.$, we get:\par
degree 0:
\eqsp{
\tau_3(2\tau_2((L\subp2^2-2L\subp2\Gamma\spr 2.+
(\Gamma\spr 2.)^2)\Gamma\spr 2.)L\subp3)+
\tau_3(\tau_2(L\subp2^2-2L\subp2\Gamma\spr 2.+
(\Gamma\spr 2.)^2)(L\subp3)^2)=\\
4\Gamma\spr 3.[L^2,L]+8\Gamma\spr 3.[L.\omega,L]+
[\omega^2-\sigma,L]-4\Gamma\spr 3.[L,L^2]-2\Gamma\spr 3.
[\omega,L^2]=\\
4d\omega.L+2d(\omega^2-\sigma)-b(2g-2)
} degree 1:
\eqsp{
-2\tau_3(\tau_2((L\subp2^2-2L\subp2\Gamma\spr 2.+
(\Gamma\spr 2.)^2)\Gamma\spr 2.)+
\tau_2(L\subp2^2-2L\subp2\Gamma\spr 2.+
(\Gamma\spr 2.))L\subp3)\Gamma\spr 3.=\\
-2(2\Gamma\spr 3.[L^2]+4\Gamma\spr 3.[\omega.L]
+[\omega^2+\sigma]+[L^2,L]-4\Gamma\spr 3.[L,L]
-2\Gamma\spr 3.[\omega,L])\Gamma\spr 3.=\\
-2(2b+4\omega.L+2(\omega^2-\sigma)+2bd-4b+2d\omega.L
-2\omega.L+2d\omega^2)=\\
4b-4\omega.L-4(\omega^2-\sigma)-4bd-4d\omega.L-2d\omega^2
}
degree 2:
\eqsp{
([L^2]-4\Gamma\spr 3.[L]-2
\Gamma\spr 3.[\omega]+F_1^{(2:1|0)}+F_1^{(2:0|1)})(\Gamma\spr 3.)^2=\\
(4\Gamma\spr 3.[L^2]+2\Gamma\spr 3.[1,L^2]-4\Gamma\subp3[L]+4\Gamma\spr 3.[L.\omega]-2\Gamma\subp3[\omega]+2\Gamma\spr 3.[\omega^2])\Gamma\spr 3.-6\sigma=\\
4\Gamma\subp3[L^2]+2\Gamma\subp3[L^2]-2\Gamma\spr 3.[\omega,L^2]
+12\Gamma\subp3[L.\omega]+4\Gamma\subp3[L.\omega]+6\Gamma\subp3[\omega^2]
+2\Gamma\subp3[\omega^2]-6\sigma=\\
=4b+2b-b(2g-2)+12L.\omega+4L.\omega+6\omega^2+2\omega^2-6\sigma=
6b-b(2g-2)+16\omega.L+8\omega^2-6\sigma
} The total is \framebox{$
-2d\sigma+10b+12\omega.L+4\omega^2-2\sigma-4bd-2b(2g-2)
$}

\item[\underline{(0,1,3)}] Again we consider this as polynomial in $\Gamma\spr 3.$.
The terms are:\par degree 0:
\eqsp{
\tau_3((L\subp{2}-\Gamma\spr 2.)(3L\subp{3}^2\Gamma\spr 2.
+3 L\subp{3}(\Gamma\spr 2.)^2)&=6\Gamma\spr 3.[L,L^2]
+6\Gamma\spr 3.[\omega,L^2]
-6\Gamma\spr 3.[\omega.L,L]-3[(\omega^2-\sigma),L]=\\
&3db+3b(2g-2)-3d\omega.L-3d(\omega^2-\sigma)
}
\par degree 1:
\eqsp{
-3\tau_3((L\subp{2}-\Gamma\spr 2.)(L\subp{3}^2+2L\subp{3}\Gamma\spr 2.
+(\Gamma\spr 2.)^2))\Gamma\spr 3.=\\
3(-[L,L^2]+2\Gamma\spr 3.[1,L^2]
-4\Gamma\spr 3.[L,L]+4\Gamma\spr 3.[\omega,L]-2\Gamma\spr 3.[\omega.L]
+[\omega^2-\sigma])\Gamma\spr 3.=\\
3(-2bd-b(2g-2)-2b+2d\omega.L+2d\omega^2+2\omega.L
+2(\omega^2-\sigma))
}\par degree 2:
\eqsp{
&3\tau_3((L\subp{2}-\Gamma\spr 2.)(L\subp{3}+\Gamma\spr 2.))(\Gamma\spr 3.)^2
=\\
&3([L,L]-2\Gamma\spr 3.[1,L]+2\Gamma\spr 3.[L]+2\Gamma\spr 3.[\omega])
(\Gamma\spr 3.)^2
=\\
&3((2\Gamma\spr 3.[L^2]+4\Gamma\spr 3.[L,L]+2\Gamma\spr 3.[\omega,L]
-2\Gamma\subp{3}[L]-2\Gamma\spr3.[L.\omega] +2\Gamma\subp{3}[L]
-2\Gamma\spr 3.[\omega^2]+2\Gamma\subp{3}[\omega])
\Gamma\spr 3.=\\
&3(2\Gamma\subp{3}[L^2]-4\Gamma\spr
3.[L.\omega,L]+4\Gamma\subp{3}[L^2]
-2\Gamma\spr 3.[\omega^2,L]+2\Gamma\subp{3}[\omega.L]
+6\Gamma\subp{3}[L.\omega]
-2\Gamma\subp{3}[L.\omega]
-6\Gamma\subp{3}[L.\omega]\\
&-2\Gamma\subp{3}[\omega^2] -6\Gamma\subp{3}[\omega^2])=
 3(6b-2dL.\omega-d\omega^2-8\omega^2)
 }
 degree 3:
 \eqsp{
-\tau_3(L\subp{2}-\Gamma\spr 2.)(\Gamma\spr 3.)^3=
-([L]-2\Gamma\spr 3.)(\Gamma\spr 3.)^3=\\
\intertext{(using \ref{cube} and \ref{deg-pol})}
=12L.\omega-d\omega^2+26\omega^2+2(-6\sigma-3\sigma)+d\sigma
} Total:\framebox{$
-3db-3d\omega.L-d\omega^2+4d\sigma+12b+18\omega.L+8\omega^2-24\sigma
$}
\itemul{(0,0,4)}Proceeding as above, we get: degree 0:
\eqsp{
\tau_3(12(\Gamma\spr 2.)^2(L\subp{3})^2+8(\Gamma\spr 2.)^3L\subp{3})
=-12\Gamma\spr 3.[\omega,L^2]+8\Gamma\spr 3.[\omega^2,L]+
4[\omega^2-\sigma,L]=\\
-6b(2g-2)+4d\omega^2+4d(\omega^2-\sigma).
}
degree 1:
\eqsp{
-4\tau_3((\Gamma\spr 2.+L\subp{3})^3)\Gamma\spr 3.=
-4\tau_3(3\Gamma\spr 2.L\subp{3}^2
+3(\Gamma\spr 2.)^2L\subp{3}+(\Gamma\spr 2.)^3)\Gamma\spr 3.=\\
-4(6\Gamma\spr 3.[1,L^2]-6\Gamma\spr 3.[\omega,L]+[\omega^2-\sigma])\Gamma\spr 3.
=\\ -4(6\Gamma\subp{3}[L^2]-6\Gamma\spr 3.[\omega,L^2]-
6\Gamma\subp{3}[\omega.L]+6\Gamma\spr 3.[\omega^2,L]+2[\omega^2-\sigma])=\\
-24b+12b(2g-2)+24\omega.L-8\omega^2+8\sigma
}
degree 2:\eqsp{
+6\tau_3((L\subp{3}+\Gamma\spr 2.)^2)(\Gamma\spr 3.)^2=
6\tau_3(L\subp{3}^2+2L\subp{3}.\Gamma\spr 2.+(\Gamma\spr 2.)^2)
(\Gamma\spr 3.)^2=\\
6([L^2]+4\Gamma\spr 3.[L]-2\Gamma\spr 3.[\omega]+\half(F^{(2:1|0)}_1
+F^{(2:0|1)}_1))(\Gamma\spr 3.)^2=\\
6(4\Gamma\spr 3.[L^2]+2\Gamma\spr3.[1,L^2]+4\Gamma\subp{3}[L]
-4\Gamma\spr3.[L.\omega]-2\Gamma\subp{3}[\omega]+2
\Gamma\spr 3.[\omega^2]+\half(F^{(2:1|0)}_1
+F^{(2:0|1)}_1)(\Gamma\spr 3.))\Gamma\spr 3.=\\
6(4\Gamma\subp{3}[L^2]+2\Gamma\subp{3}[L^2]-2\Gamma\spr3.[\omega,L^2]
-12\Gamma\subp{3}[L.\omega]
-4\Gamma\subp{3}[L.\omega]
+6\Gamma\subp{3}[\omega^2]
+2\Gamma\subp{3}[\omega^2]-3\sigma)=\\
6(4b+2b-b(2g-2)-12L.\omega-4L.\omega+6\omega^2+6\omega^2+3\sigma)
=6(6b-b(2g-2)-16L.\omega+8\omega^2-3\sigma)
}
degree 3:\eqsp{
-4\tau_3(\Gamma\spr 2.+L\subp{3})(\Gamma\spr 3.)^3=
-4(2\Gamma\spr 3.+[L])(\Gamma\spr 3.)^3=\\
-4(-24\Gamma\subp{3}[\omega.L]+2\Gamma\spr 3.[\omega^2,L]
+2(\Gamma\spr 3.)^4)=-4(-24\omega.L+d\omega^2+2(12\Gamma\subp{3}[\omega^2]
+\Gamma\subp{3}[\omega^2]-6\sigma-3\sigma)=\\
-4(-24\omega.L+d\omega^2-d\sigma+26\omega^2-18\sigma) } degree 4:\eqsp{
\tau_3(\tau_2(1))
(\Gamma\spr 3.)^4=6(13 \omega^2-9\sigma) } Total:\framebox
{$
12b+24\omega.L+14\omega^2 $}\par
{\bf {Grand total}}:
\framebox{${(3d^2-25d+60)b+(-12d+72)L.\omega+(-3d+28)\omega^2
-3b(2g-2)+(3d-20)\sigma}$}

\end{enumerate}
This formula has been obtained by other means by Ethan Cotterill \cite{cot}
\newsubsection{Example: double points}
Let $X/B$ be an arbitrary nodal family and $f:X\to\P^n$ a morphism. Consider the relative double points of $f$, i.e. double
points on fibres. This locus is given on $X\scl 2._B$ as the
degeneracy locus of a bundle map
\[\phi:(n+1)\O\to\Lambda_2(L), L:=f^*\O(1).\]
By Porteus, the virtual fundamental class of this locus
is given by the Segre class $s_n(\Lambda_2(L)^*)$, which equals
\eqspl{}{
\sum\limits_{i=0}^n(L_1)^{n-i}(L_2-\Gamma)^i, \Gamma=\Gamma\scl 2..
}
The powers of $\Gamma$ can be evaluated using Corollary \ref{polpowers,m=2}.
Pushing the result down to $X^2_B$ for simplicity yields
\eqsp{
\sum\limits_{i=0}^nL_1^{n-i}L_2^i-
\sum\limits_{i=0}^nL_1^{n-i}(\sum\limits_{j=1}^i
(\Gamma[\omega^{j-1}]+\sum\limits_{s,k}
\delta_{s*}(\psi_x^{j-2-k}\psi_y^k))
L^{i-j})
}
To describe the direct image of this on $B$, we need some notation. Recall that $\kappa_j=\pi_*(\omega^{j+1})$. Extending this, we may set
\eqspl{}{
\kappa_j(L)=\pi_*(L^{j+1}), \kappa_{i,j}(L,M)=
\pi_*(L^{i+1}M^{j+1}).
} Note that in our case $\kappa_j(L)$ may be interpreted
as the class of the locus of curves meeting a generic $\P^{n-j}$. Also,
for each boundary datum $(T_s, \delta_s, \theta_s)$, $T_s$
admits a map to $\P^n$ via either the $x$ or $y$-section
(the two maps are the same), via which we can pull back $L^j$,
which corresponds to the locus of boundary curves whose node
$\theta_s$ meets $\P^{n-j}$.
 Then pushing the above down to $B$ yields
\eqspl{}{
2m_2=(-1)^n(\sum\limits_{i=1}^{n-1}\kappa_{i-1}(L)
\kappa_{n-i-1}(L)-\kappa_{n-j-1, j-2}(L,\omega)
+\sum\limits_{s,k}
\delta_{s*}(L^{n-j}\psi_x^{j-2-k}\psi_y^k))
}
More generally, for any smooth variety $Y$ of dimension $n$
 and map $f:X\to Y$, one can use the double-point formula of \cite{R}, Th. 3.3ter, p. 1208, to evaluate the
class of the double-point locus in $X^2_B$ in terms of the diagonal
class $\Delta_Y$ on $Y\times Y$ as
\eqspl{}{
2m_2&=(f^2)^*(\Delta_Y)+\sum\limits_{i\geq 1}(-\Gamma^i)c_{n-i}(T_Y)\\
&=(f^2)^*(\Delta_Y)+\sum\limits_{i\geq 1}(
-\Gamma[\omega^{i-1}]+\half\sum\limits_{s,j}\delta_{s*}(\psi_x^{i-j-3}\psi_y^j))
c_{n-i}(T_Y)
}
Applying this set-up to the case $L=\omega$, one would like
in principle to
be able to compute fundamental classes of loci of hyperelliptics
(and more generally, $\frak M^r_d$ loci in $\overline{\frak M}_g$).
The problem is that the naive notion of canonical curve
in $\P^{g-1}$ is ill-behaved over the boundary and requires substantial
modification there. This work is currently in progress.
\bibliographystyle{amsplain}
\bibliography{mybib}
\end{document}